\newcommand{\lto}{\longrightarrow}
\newcommand{\tdr}{\td{r}}
\newcommand{\IR}{\mathbb{R}}
\newcommand{\lb}{\linebreak[1]}
\newcommand{\IC}{\mathbb{C}}
\newcommand{\CC}{\mathcal{C}}
\newcommand{\IN}{\mathbb{N}}
\newcommand{\IZ}{\mathbb{Z}}
\newcommand{\ZZ}{\mathcal{Z}}
\newcommand{\YY}{\mathcal{Y}}
\newcommand{\NN}{\mathcal{N}}
\newcommand{\HH}{\mathcal{H}}
\newcommand{\MM}{\mathcal{M}}
\renewcommand{\SS}{\mathcal{S}}
\newcommand{\XX}{\mathcal{X}}
\newcommand{\s}{\mathfrak{s}}
\newcommand{\eps}{\varepsilon}
\newcommand{\la}{\lambda}
\newcommand{\ov}[1]{\overline{#1}}
\newcommand{\td}[1]{\widetilde{#1}}
\newcommand{\textQQqq}[1]{\qquad \text{#1} \qquad}
\newcommand{\textQQq}[1]{\qquad \text{#1} \quad}
\newcommand{\textQq}[1]{\quad \text{#1} \quad}
\DeclareMathOperator{\genus}{genus}
\DeclareMathOperator{\cat}{cat}
\DeclareMathOperator{\loc}{loc}
\DeclareMathOperator{\Int}{Int}
\DeclareMathOperator{\diam}{diam}
\DeclareMathOperator{\inj}{inj}
\DeclareMathOperator{\supp}{supp}
\DeclareMathOperator{\DIV}{div}
\DeclareMathOperator{\reg}{reg}
\DeclareMathOperator{\sing}{sing}
\DeclareMathOperator{\good}{good}
\DeclareMathOperator{\disc}{disc}
\DeclareMathOperator{\fat}{fat}
\DeclareMathOperator{\gen}{gen}
\newcommand{\dotcup}{\ensuremath{\mathaccent\cdot\cup}}
\newcommand{\EMPTY}[1]{}
\newtheorem{Theorem}[equation]{Theorem}
\newtheorem{Lemma}[equation]{Lemma}
\newtheorem{Corollary}[equation]{Corollary}
\newtheorem{Proposition}[equation]{Proposition}
\newtheorem{Claim}[equation]{Claim}
\newtheorem*{MOC}{Multiplicity One Conjecture}
\newtheorem*{ExGeneral}{Existence for General Initial Data}
\newtheorem*{ExS2}{Well-posedness  for 2-Spheres}
\newtheorem*{ExGeneric}{Existence for Generic Initial Data}
\theoremstyle{definition}
\newtheorem{Definition}[equation]{Definition}
\theoremstyle{remark}
\numberwithin{equation}{section}
\title{On the Multiplicity One Conjecture for Mean Curvature Flows of surfaces}
\author{Richard H  Bamler and Bruce Kleiner}
\address{Department of Mathematics, UC Berkeley, CA 94720, USA}
\email{rbamler@berkeley.edu}
\address{Courant Institute of Mathematical Sciences, New York University, 251 Mercer Street, New York, NY 10012}
\email{bkleiner@cims.nyu.edu}
\thanks{R.B. was supported by NSF grants DMS-1906500 and DMS-2204364.
B.K. was supported by NSF grants DMS-2005553 and DMS-2305397.}
\date{\today}
\begin{document}
\begin{abstract}
We prove the Multiplicity One Conjecture for mean curvature flows of surfaces in~$\IR^3$.
Specifically, we show that any blow-up limit of such mean curvature flows has multiplicity one.
This has several applications.
First, combining our work with results of Brendle and Choi--Haslhofer--Hershkovits--White, we show that any level set flow starting from an embedded surface diffeomorphic to a 2-spheres does not fatten.
In fact, we obtain that the problem of evolving embedded 2-spheres via the mean curvature flow equation is well-posed within a natural class of singular solutions.
Second, we use our result to remove an additional condition in recent work of Chodosh--Choi--Mantoulidis--Schulze.
This shows  that  mean curvature flows starting from any \emph{generic} embedded surface only incur cylindrical or spherical singularities.
Third, our approach offers a new regularity theory for solutions of mean curvature flows that flow through singularities.
Among other things, this theory also applies to the innermost and outermost flow of \emph{any} embedded surface and shows that all singularity models of such flows must have multiplicity one.
It also establishes equality of the fattening time with the discrepancy time.
Lastly, we obtain a number of further results characterizing a separation phenomenon of mean curvature flows of surfaces.
\end{abstract}

\maketitle
\tableofcontents

\section{Introduction}
\subsection{Background  and overview}

A mean curvature flow describes the evolution of a family of embedded hypersurfaces $\MM_t \subset \IR^{n+1}$ that move in the direction of the mean curvature vector:
\[ \partial_t \mathbf{x} = \mathbf{H}. \]
This equation is the gradient flow of the area functional and the natural analog of the heat equation for an evolving hypersurface.
Initially, it tends to smooth out geometries over brief time-intervals. 
However, due to its inherent non-linearity, the mean curvature flow equation frequently leads to the formation of  singularities. 
Beyond these singularities, the flow loses its smooth form and ceases to exist.

Mean curvature flow first arose as a mathematical model in the physics literature in the 1950s and has been studied intensively by mathematicians since Brakke's work in 1978 \cite{Brakke_1978}.
For the last 45 years, one of the central goals was to provide a satisfactory mathematical treatment of this fundamental PDE, which is of interest in geometry and physics, as well as from potential applications within mathematics.
This includes the development of a theory of mean curvature flow that allows for a continuation of the flow after the formation of singularities.
More specifically, in the case of surfaces, the aim has been to establish the following:
\begin{quote} \it
Given a smoothly embedded, compact surface $\MM_0 \subset \IR^3$ there is a mean curvature flow that is smooth on the complement of a ``small'' singular set, near which a change of topology may occur. 
\end{quote}
Moreover, it has been expected that:
\begin{quote} \it
If the surface $\MM_0$ is chosen generically, then this flow is unique and the singularities are modeled by cylinders and spheres.
Moreover, the topological change can be understood in terms of these models.
\end{quote}
Over the years, various approaches have been used to address these problems.
Our work in this paper leads to a novel regularity theory for mean curvature flows of surfaces in $\IR^3$, which unifies these past approaches and closely aligns with numerous anticipated phenomenological aspects.
In particular, we will give a satisfactory answer to both questions above.

\medskip
A central concern arising in the analysis of singularity formation of mean curvature flows in $\IR^3$ has been the issue of multiplicity.
This issue has often been entwined with mass cancellation or non-uniqueness phenomena and is usually seen as one of the key obstacles toward a complete existence and partial regularity theory.
The goal of this paper is to address this issue and rule out the possibility of higher multiplicity.
In particular, we will resolve a conjecture of Ilmanen from~1995 \cite{Ilmanen_sing_MCF_surf} known as the \emph{Multiplicity One Conjecture.}

\medskip
\medskip
Let us now provide some further background.
The question of singularity formation and the role of multiplicity was already implicit in Brakke's seminal work \cite{Brakke_1978}.
Mean curvature flow gained further prominence within the geometric analysis community due to the work of Huisken \cite{Huisken_1984}, which described the singularity formation in the convex case.
A second major breakthrough in the history of mean curvature flow was Huisken's discovery of a monotonicity formula \cite{Huisken_monotonicity}.
This formula laid the foundations for studying  singularity formation in more general settings.
It suggested that tangent flows --  blow-ups of the flow  at a singular point -- must be self-similar and satisfy a ``shrinker'' equation.
Building on these insights, Ilmanen, in his influential 1995 preprint \cite{Ilmanen_sing_MCF_surf}, extended these techniques and showed that tangent flows of 2-dimensional mean curvature flows must be \emph{smooth} self-similar shrinkers.
Notably, Ilmanen's result did not preclude the possibility that such smooth shrinkers occur with higher multiplicity. 
For example, it did not rule out the possibility that a flow could develop several parallel sheets, possibly connected via small catenoidal necks, which would collapse to the smooth shrinker in the limit.
Recognizing this caveat, Ilmanen posed the following conjecture.

\begin{MOC}[Ilmanen \cite{Ilmanen_sing_MCF_surf}] \label{Conj_Mult1}
Every tangent flow of a 2-dimensional mean curvature flow is a smooth self-similar shrinker \emph{of multiplicity one.}
\end{MOC}

In this paper, we prove a strong form of this conjecture, which includes not just the case of smooth mean curvature flows and which characterizes more than just tangent flows.
In fact, our resolution will apply to a more general class of mean curvature flows through singularities, which addresses the earlier question regarding the theory of existence.
Furthermore, we show that \emph{all} blow-ups (not just tangent flows) have multiplicity one.

We remark that the 1-dimensional analog of the Multiplicity One Conjecture, concerning curve shortening flow, has been proven in \cite{Grayson_1987}, see also \cite{Huisken_1998, Andrews_Bryan_2011}.

There have been numerous approaches to making sense of the mean curvature flow equation in the singular setting.
Brakke's work led to the definition of a weak version of the mean curvature flow equation using the concept of varifolds.
The papers \cite{Osher_Sethian_1988,ES_I,ES_II,ES_III,ES_IV,Chen_Giga_Goto_1991} characterized mean curvature flows as evolving of level sets.
The notion of weak set flows, which was introduced in \cite{Ilmanen_ell_reg, White_00},  characterizes the mean curvature flow equation via a viscosity principle.
Another approach was mean curvature flows with surgery   in the 2-convex case \cite{Huisken_Sinestrari_09, Brendle_Huisken_16, Haslhofer_Kleiner_17, Haslhofer_Kleiner_17_surgery}.
To our knowledge, the best known regularity result so far was no better than varifold regularity for almost every time for general initial data, whereas in the generic case, the solution was only known to be smooth almost everywhere for almost every time \cite{Ilmanen_ell_reg}.
Our new existence and partial regularity theory, which arises from our resolution of the Multiplicity One Conjecture, marks a significant departure from these previous approaches.
It is more elementary, but also, in a certain sense, more general as it unifies previous approaches.
Using the new class of \emph{almost regular} mean curvature flows, we show:

\begin{ExGeneral}
Given a smoothly embedded, compact surface $\MM_0 \subset \IR^3$ there is an innermost flow $\MM^-$ and outermost flow $\MM^+$ \cite{Hershkovits_White_nonfattening}, which is almost regular.  
The singular set of both flows has codimension at least 4 in the spacetime sense.
The level set flow starting from $\MM_0$ is non-fattening if and only if $\MM^- = \MM^+$. 
In this case, the mean curvature flow equation has a unique solution within the class of almost regular flows.
\end{ExGeneral}

\medskip

Our understanding of mean curvature flows of surfaces has come a long way, especially in the last two decades.
In landmark work on the mean convex case, White \cite{White_00,White_03} established existence and uniqueness of the flow, optimal bounds on the dimension of the singular set, and showed that singularities are spherical or cylindrical, and have multiplicity one.  

With the mean convex case now well understood, focus has shifted to general mean curvature flows. 
Ilmanen and White \cite{White_icm_2002} provided an example of a level set flow exhibiting a phenomenon known as ``fattening'', which occurs due to the existence of a non-cylindrical or spherical singularity.
In this case, the flow cannot be \emph{uniquely} extended past the first singular time as a flow of surfaces.

Despite these counterexamples, the significance of flows with cylindrical and spherical singularities persists 
beyond the realm of \emph{mean convex} flows. 
This importance was recognized in two pivotal results:
\begin{itemize}
\item Work of Colding--Minicozzi \cite{Colding_Minicozzi_2012_generic_I} identified  cylinders and spheres as the only singularity models that are \emph{stable} under perturbation.
\item Brendle's work \cite{Brendle_genus0_16} showed cylinders and spheres are the only non-trivial 2-dimensional shrinkers of \emph{genus zero}.
\end{itemize}
The work of Hershkovits--White \cite{Hershkovits_White_nonfattening} and Choi--Haslhofer-Hershkovits \cite{Choi_Haslhofer_Hershkovits_2022}  guaranteed the existence of a unique mean curvature flow ``through singularities'' as long as these singularities are modeled on cylinders or spheres \emph{of multiplicity one.}
Combined with the previous two results this suggested that a general mean curvature flow should mimic mean convex flows, given that its initial condition is a topological 2-sphere or chosen generically. 

The caveat here, however, is again the Multiplicity One Conjecture.
For example, while all tangent flows in genus zero flows must indeed be cylinders or spheres due to Brendle's work, it was not known whether they occur with multiplicity one.
This is critical to understanding evolution past these singularities.
A similar challenge arises with generic initial data.
Chodosh--Choi--Mantoulidis--Schulze \cite{chodosh2023meani,chodosh2023mean} showed that the mean curvature flow starting from a generic surface can be evolved uniquely through cylindrical and spherical singularities, provided they have multiplicity one.
The resolution of the Multiplicity One Conjecture is thus the missing piece in both discussions, allowing us to show:

\begin{ExS2}
If $\MM_0 \subset \IR^3$ is a smoothly embedded surface homeomorphic to a 2-sphere, then there is a \emph{unique} almost regular mean curvature flow $\MM$ and this flow only has cylindrical or spherical singularities.
In other words, in the case of topological 2-spheres, the mean curvature flow equation is well-posed within the class of almost regular flows.
\end{ExS2}
\medskip

\begin{ExGeneric}
If $\MM_0 \subset \IR^3$ is a \emph{generic} smoothly embedded, compact surface, then there is a \emph{unique} almost regular mean curvature flow $\MM$ and this flow only has cylindrical or spherical singularities.
\end{ExGeneric}
\medskip

We also refer to further related work \cite{Zhou_20_mult_1, Li_Wang_22}.

\bigskip
\subsection{Statement of the main results} \label{subsec_mainresults}
The main results of this paper concern a class of singular mean curvature flows in $\IR^3$, which we will call \emph{almost regular} (see Definition~\ref{Def_almost_regular}).
Roughly speaking, an almost regular flow is a weak set flow \cite{Ilmanen_sing_MCF_surf, White_00} that is regular at almost every time and which is the support of a unit-regular, multiplicity one Brakke flow.
For the purpose of this introduction, it is sufficient to know that this class includes most of the commonly used types of mean curvature flows ``through singularities''.
In fact, we have the following result, which will actually be a consequence of our theory.

\begin{Theorem} \label{Thm_nonfattening_almost_regular}
Any outer and innermost mean curvature starting from a compact, embedded surface in $\IR^3$ is almost regular.
In particular, every smooth mean curvature flow and every non-fattening level set flow with compact time-slices is almost regular.
\end{Theorem}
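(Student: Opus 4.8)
The plan is to verify the three defining features of an almost regular flow — that it is a weak set flow, that it is regular at almost every time, and that it is the support of a unit-regular, multiplicity-one Brakke flow — for the innermost flow $\MM^-$ and the outermost flow $\MM^+$ associated with a smoothly embedded compact surface $\MM_0 \subset \IR^3$. First I would recall the construction of $\MM^\pm$: one thickens $\MM_0$ slightly to a pair of nearby surfaces bounding a thin collar, runs the (smooth, then level set) flow from these, and extracts $\MM^-$ and $\MM^+$ as the appropriate nested limits; equivalently one can use the biggest/smallest weak set flows lying strictly inside/outside, as in the level set formulation of \cite{Ilmanen_ell_reg, White_00}. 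From this construction it is essentially immediate that $\MM^\pm$ are weak set flows and that they arise as Hausdorff limits of smooth mean curvature flows; in particular they carry an associated Brakke flow obtained by passing to a subsequential limit of the smooth flows (using Huisken monotonicity for the compactness), and standard results give that such limits are integral, unit-regular Brakke flows.

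The crux is to upgrade ``integral Brakke flow'' to ``multiplicity-one Brakke flow whose support is $\MM^\pm$'' and to obtain regularity at a.e.\ time; this is where the Multiplicity One Conjecture — already proved in the body of the paper in the strong form asserting that every blow-up of a $2$-dimensional mean curvature flow has multiplicity one — does the essential work. The key step is: apply the (strong) Multiplicity One theorem at \emph{every} point of the flow, for tangent flows and more generally for all blow-ups, to conclude that the Gaussian density is everywhere $< 2$; combined with Ilmanen's local regularity theorem and White's local regularity theorem, a density bound strictly below $2$ forces the flow to be smooth, with multiplicity one, on a spacetime-open dense set, and in fact forces the singular set to have parabolic Hausdorff codimension at least $4$ — which is stronger than what we need here, namely smoothness at a.e.\ time. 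Multiplicity one of the Brakke flow then follows because at a regular point the Gaussian density tends to $1$, and integrality plus the monotonicity formula propagate this; and the support of the Brakke flow coincides with $\MM^\pm$ because the smooth approximators converge in Hausdorff distance and no mass is lost in the limit (again by unit-regularity and the density bound).

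The main obstacle — and the reason this is stated as a consequence of the theory rather than an easy corollary — is the passage from the pointwise multiplicity-one statement to the uniform structural conclusions: one must rule out that the flow, while having multiplicity-one tangent flows everywhere, nonetheless fails to be smooth on a large set, e.g.\ via a sequence of finer and finer necks or sheets that individually are multiplicity one but collectively obstruct regularity. This is handled by the quantitative stratification / $\eps$-regularity machinery developed earlier in the paper: a multiplicity-one bound together with the classification of low-entropy shrinkers in $\IR^3$ (cylinders, spheres, planes) gives an $\eps$-regularity theorem, and a standard covering and dimension-reduction argument then yields the codimension-$4$ bound on the singular set, hence a.e.-time regularity. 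Finally, for the last sentence of the theorem I would argue separately that a smooth flow is trivially almost regular (it is its own innermost and outermost flow), and that for a non-fattening level set flow with compact time-slices the inner and outer flows coincide by definition of non-fattening, so that flow is $\MM^- = \MM^+$ and hence almost regular by the first part.
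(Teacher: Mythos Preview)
Your proposal has a fundamental circularity. The Multiplicity One result proved in this paper (Theorem~\ref{Thm_main}) and the key integral estimate underlying it (Theorem~\ref{Thm_key_thm}) are both stated and proved \emph{only for almost regular flows}. You cannot invoke them on $\MM^\pm$ to deduce that $\MM^\pm$ is almost regular; that is precisely what is to be shown. There is no version of the multiplicity-one statement in the paper that applies to an arbitrary unit-regular integral Brakke flow in $\IR^3$, so the step ``apply the (strong) Multiplicity One theorem at every point of the flow'' is not available at this stage.

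You have also misidentified the content of Definition~\ref{Def_almost_regular}. Beyond a.e.-time regularity and the existence of an associated unit-regular Brakke flow, almost regularity requires the integral bound $\int_{t_1}^{t_2}\int_{U\cap\MM_{\reg,t}} r_{\loc}^{-2}\,d\HH^2\,dt<\infty$ (Property~\ref{Def_almost_regular_2}) and a uniform genus bound (Property~\ref{Def_almost_regular_3}). Your outline does not address either; the $r_{\loc}^{-2}$ bound in particular is the quantitative input that makes the entire theory run, and it does not follow from a density bound or from $\eps$-regularity alone.

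The paper's route breaks the circularity by an approximation argument that bootstraps through flows already known to be almost regular. One first observes (Lemma~\ref{Lem_nonfat_prop}, relying on \cite{Hershkovits_White_nonfattening,Choi_Haslhofer_Hershkovits_White_22,Choi_Haslhofer_Hershkovits_2022}) that any level set flow is almost regular on $[0,T_{\gen})$, where $T_{\gen}$ is the first time a non-generic tangent flow appears; the $r_{\loc}^{-2}$ and genus bounds here come from the explicit structure of cylindrical/spherical singularities, not from multiplicity one. Applying Theorem~\ref{Thm_main} to this almost regular restriction, combined with \cite{chodosh2023mean}, gives Corollary~\ref{Cor_generic}: for \emph{generic} initial data $T_{\gen}=\infty$, so the flow is almost regular for all time. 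One then approximates $\Sigma$ from the outside (resp.\ inside) by such generic $\Sigma_i$, obtains almost regular flows $\MM^i$, and passes to the limit using the compactness theorem for almost regular flows (Theorem~\ref{Thm_main_compactness}), which yields an almost regular limit $\MM^\infty$. A set-theoretic argument with the nested level set flows $\mathcal{K}^{i,\pm}$ then identifies $\MM^\infty$ with $\MM^\pm$. The non-fattening case is handled by showing $\MM=\mathcal{K}^+\cap\mathcal{K}^-$ and comparing with $\MM^\pm$ at regular times.
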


We can now state our main result, which resolves the Multiplicity One Conjecture for bounded almost regular mean curvature flows in $\IR^3$.

\begin{Theorem} \label{Thm_main}
Let $\MM$ be a bounded almost regular mean curvature flow in $\IR^3$ over a time-interval $I$.
Consider any blow-up limit of $\MM$, by which we mean a Brakke flow that is a weak limit of a sequence of parabolic rescalings $\MM^i = \lambda_i (\MM - (x_i, t_i))$ for $\lambda_i \to \infty$ and uniformly bounded $(x_i, t_i)$.
This includes any tangent flow based at any finite time $\leq \sup I$.
Then any such blow-up limit is associated with an almost regular mean curvature flow.
In particular, almost all its tangent planes have multiplicity one.
\end{Theorem}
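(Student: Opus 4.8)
The plan is to deduce this from a compactness argument for Brakke flows, combined with the scale-invariance of the conditions defining an almost regular flow (Definition~\ref{Def_almost_regular}); the one genuinely new ingredient is an \emph{a priori} estimate that prevents a build-up of multiplicity under the rescaling.

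First I would set up the compactness. Since $\MM$ is bounded and, being almost regular, is the support of a unit-regular, integral, multiplicity-one Brakke flow, Huisken's monotonicity formula together with the boundedness of $\MM$ gives a uniform upper bound on the Gaussian densities $\Theta(\MM,(x,t),r)$ over all points and scales, hence a uniform entropy bound and uniformly bounded area ratios for the rescaled flows $\MM^i = \lambda_i (\MM - (x_i,t_i))$. By Brakke-flow compactness, after passing to a subsequence $\MM^i$ converges weakly to an integral Brakke flow $\MM^\infty$; unit-regularity passes to the limit by White's local regularity theorem, and the closure of weak set flows under local Hausdorff convergence (via the avoidance principle) shows that the support of $\MM^\infty$ is a weak set flow. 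What remains --- and this is the crux --- is to show that $\MM^\infty$ is multiplicity one and regular at almost every time, i.e.\ genuinely almost regular, rather than merely a weak limit of almost regular flows.

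For this I would invoke the main quantitative output of the regularity theory developed in the body of the paper: a local, scale-invariant estimate to the effect that on any parabolic ball where the flow is close as a varifold to a plane of multiplicity $\geq 2$, but \emph{not} close to a single smooth multiplicity-one sheet, a definite amount of some monotone quantity must be expended --- for instance a localized entropy deficit, or a ``separation functional'' measuring how far apart the nearby graphical sheets of the flow are, whose non-collapsing is governed by the linear parabolic (Jacobi-type) equation satisfied by the difference of two sheets. Since the total budget of this quantity along $\MM$ is finite (monotonicity plus boundedness) and the estimate is invariant under parabolic rescaling, only boundedly many pairwise-disjoint such ``bad'' balls can occur at any fixed scale; passing to the limit, $\MM^\infty$ can have no point of Gaussian density $\geq 2$ outside a set that is negligible in the relevant parabolic, codimension-$\geq 4$ sense. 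Combined with Brakke--Allard regularity and Ilmanen's theorem that $2$-dimensional tangent flows are smooth shrinkers, this forces the time-$t$ slice of $\MM^\infty$ to have unit density $\HH^2$-almost everywhere for almost every $t$, so $\MM^\infty$ is multiplicity one; the same scale-invariant bounds control the size of its singular set, so $\MM^\infty$ is smooth at almost every time and is therefore almost regular. The ``in particular'' assertion is then immediate: for any of the full-measure set of regular times $t$ the time-$t$ slice of $\MM^\infty$ is a smooth embedded surface, every point of which carries a multiplicity-one tangent plane, so the set of points of $\MM^\infty$ with a multiplicity-one tangent plane has full two-dimensional measure in almost every time-slice, hence full measure with respect to the spacetime Radon measure of $\MM^\infty$.

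The hard part is the quantitative ``no higher multiplicity'' estimate used in the third paragraph; setting up the compactness and deducing the tangent-plane statement use only standard compactness, closure, and $\eps$-regularity results. What is genuinely difficult --- and what constitutes the heart of the paper --- is showing that the configuration of parallel sheets joined by small catenoidal necks envisaged by Ilmanen cannot survive an unbounded parabolic rescaling. This requires a sharp, local, scale-invariant separation (or non-collapsing) estimate for nearby sheets of an almost regular flow, robust enough to be iterated along the blow-up sequence and to remain stable under the passage to the weak limit.
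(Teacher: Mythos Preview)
Your high-level architecture is correct and matches the paper: Theorem~\ref{Thm_main} is deduced as an immediate corollary of the compactness theorem (Theorem~\ref{Thm_main_compactness}), whose proof combines standard Brakke compactness with a scale-invariant quantitative estimate preventing multiplicity build-up; and you correctly identify that the hard content is this estimate, and that its proof rests on a separation function satisfying a favorable parabolic inequality.

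Where your sketch diverges from the paper is in the \emph{form} and \emph{use} of the key estimate. You propose a quantitative-stratification flavored mechanism: balls close to a multiplicity-$\geq 2$ plane but not to a smooth sheet each consume a definite amount of a monotone budget, so there are boundedly many, hence the density-$\geq 2$ set is negligible. The paper's route is different. It first proves the integral lower bound of Theorem~\ref{Thm_key_thm},
\[
\int_{\MM_{\reg,t}} \Big( \log \tfrac{r_{\loc}}{\sqrt{t_0-t}} \Big)_{-} \, \rho_{(x_0,t_0)} \, d\HH^2 \geq -C,
\]
uniformly along the blow-up sequence. This is used contrapositively in the proof of Theorem~\ref{Thm_main_compactness}\ref{Thm_main_compactness_a}: if some point of $\MM^i$ were, at its own local scale $r_{\loc}$, $\eps$-close to a nontrivial minimal surface or to a plane with a nearby parallel sheet, the integral above could be made $\leq -H$ for arbitrarily large $H$, a contradiction. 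Hence the set $\MM^{i,(\eps,A)}_{\reg}$ of Definition~\ref{Def_CNT_new} is empty for the rescalings, whereupon Theorem~\ref{Thm_L2_bound} gives the uniform bound $\int r_{\loc}^{-2}\, d\HH^2\, dt \leq C$. From this $L^1$-in-spacetime control on $r_{\loc}^{-2}$ the almost-regularity of the limit is extracted by a component-counting argument (Claims~\ref{Cl_int_limit} and~\ref{Cl_components}): at almost every time the slices $\MM^i_t$ split into a part converging smoothly and a part collapsing to isolated points that go extinct immediately after. Your proposed route via Gaussian-density bounds and Brakke--Allard regularity would still need to handle the collapse of nearby sheets away from high-density points; the paper's $r_{\loc}^{-2}$ bound addresses this directly.
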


This theorem can be used to remove an extra assumption in  recent results due to Hershkovits--White, Choi--Haslhofer--Hershkovits,   and Chodosh--Choi--Mantoulidis--Schulze \cite{Hershkovits_White_nonfattening,Choi_Haslhofer_Hershkovits_2022,chodosh2023meani,chodosh2023mean}.
As a consequence, we obtain:

\begin{Corollary} \label{Cor_S2}
Given any smoothly embedded surface $\Sigma \subset \IR^3$ that is diffeomorphic to a 2-sphere, its level set flow is almost regular and non-fattening.
Its tangent flows are  planes, cylinders or spheres and we have uniqueness within the class of almost regular flows.
\end{Corollary}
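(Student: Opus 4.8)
The plan is to deduce Corollary~\ref{Cor_S2} by feeding the compact topological $2$-sphere $\Sigma$ into the \emph{Existence for General Initial Data} theorem, applying the Multiplicity One Conjecture (Theorem~\ref{Thm_main}) to the resulting flows, and then invoking Brendle's classification of genus-zero shrinkers together with the uniqueness-through-singularities theory of Hershkovits--White and Choi--Haslhofer--Hershkovits--White. None of the hard analysis is new at this stage; the content of the proof is organizing these inputs so that their hypotheses are genuinely met.

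In more detail: I would first set $\MM_0 := \Sigma$ and apply the \emph{Existence for General Initial Data} theorem to obtain the innermost flow $\MM^-$ and the outermost flow $\MM^+$; by Theorem~\ref{Thm_nonfattening_almost_regular} these are almost regular, and since $\Sigma$ is compact the avoidance principle confines each flow to a shrinking ball and forces finite extinction, so $\MM^\pm$ are \emph{bounded} almost regular flows and Theorem~\ref{Thm_main} applies. The next step is to examine the tangent flows of $\MM^\pm$ at an arbitrary spacetime point: by Ilmanen's regularity theorem each is a smooth self-similar shrinker, and by Theorem~\ref{Thm_main} it is associated with an almost regular flow whose tangent planes are almost everywhere of multiplicity one; since a smooth connected shrinker has constant density, this upgrades to the statement that every tangent flow of $\MM^\pm$ is a \emph{multiplicity-one} smooth shrinker. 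Because $\Sigma$, and hence $\MM^\pm$, has genus zero, every such shrinker has genus zero (the genus bound is lower semicontinuous under blow-up), so by Brendle's theorem each tangent flow is a multiplicity-one plane, round cylinder, or round sphere. Planar tangent flows mark regular points by White's local regularity theorem, so every \emph{singular} point of $\MM^-$ and of $\MM^+$ is modeled on a multiplicity-one cylinder or sphere. The theory of Hershkovits--White and Choi--Haslhofer--Hershkovits--White then yields uniqueness of the mean curvature flow through such singularities, which forces $\MM^- = \MM^+$; by the \emph{Existence for General Initial Data} theorem this is equivalent to non-fattening of the level set flow of $\Sigma$, which therefore coincides with $\MM^\pm$, is unique within the class of almost regular flows, and --- having compact time-slices --- is almost regular by Theorem~\ref{Thm_nonfattening_almost_regular}. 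Its tangent flows are those of $\MM^\pm$, hence planes, cylinders, or spheres, completing the proof.

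The step I expect to be most delicate is not the final assembly but ensuring the hypotheses of Choi--Haslhofer--Hershkovits--White hold \emph{pointwise in spacetime}: their uniqueness theory requires that \emph{every} singularity --- not merely every singularity at almost every time --- be a multiplicity-one cylinder or sphere, so one genuinely needs Theorem~\ref{Thm_main} in the form that controls all tangent flows based at all finite times $\leq \sup I$, and one needs the passage from ``almost all tangent planes have multiplicity one'' to honest multiplicity one of the smooth shrinker (via Ilmanen's smoothness and constancy of density on a connected shrinker). A secondary technical point is confirming that the level set flow really does coincide with $\MM^- = \MM^+$ once these agree, so that both ``almost regular'' and ``non-fattening'' transfer to it; this should follow directly from the characterization in the \emph{Existence for General Initial Data} theorem.
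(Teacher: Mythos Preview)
Your proposal is correct and uses the same essential ingredients as the paper --- Theorem~\ref{Thm_main}, Brendle's genus-zero classification, and the Hershkovits--White / Choi--Haslhofer--Hershkovits--White uniqueness theory --- but the packaging differs. The paper does not work with $\MM^-$ and $\MM^+$ separately via Theorem~\ref{Thm_nonfattening_almost_regular}; instead it invokes a single black-box Lemma~\ref{Lem_nonfat_prop}, which paraphrases \cite{Hershkovits_White_nonfattening, Choi_Haslhofer_Hershkovits_White_22, Choi_Haslhofer_Hershkovits_2022} to produce a maximal time $T_{\gen}$ such that on $[0,T_{\gen})$ the level set flow already agrees with $\MM^\pm$, is almost regular with $\genus \leq \genus(\Sigma) = 0$, and has only generic blow-ups, while \emph{at} time $T_{\gen}$ some tangent flow is non-generic. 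The proof then simply applies Theorem~\ref{Thm_main} at time $T_{\gen}$ to get multiplicity one, and Brendle to get a contradiction, forcing $T_{\gen}=\infty$. Your route reaches the same conclusion but detours through Theorem~\ref{Thm_nonfattening_almost_regular} (whose proof itself relies on Corollary~\ref{Cor_generic} and the compactness Theorem~\ref{Thm_main_compactness}) and requires you to supply by hand two facts that Lemma~\ref{Lem_nonfat_prop} already packages: that $\genus(\MM^\pm) = 0$ (not automatic from Theorem~\ref{Thm_nonfattening_almost_regular} as stated; you need the argument in Lemma~\ref{Lem_nonfat_prop}\ref{Lem_nonfat_prop_e} that the canonical-neighborhood structure near generic singularities only allows reverse connected sums and component removal), and that cylindrical/spherical singularities for $\MM^\pm$ separately suffice to force $\MM^- = \MM^+$. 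Both approaches are sound; the paper's is shorter because Lemma~\ref{Lem_nonfat_prop} absorbs the bookkeeping.
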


\begin{Corollary} \label{Cor_generic}
Given any smoothly embedded, compact surface $\Sigma \subset \IR^3$, there is a sequence of surfaces $\Sigma_i \xrightarrow{C^\infty} \Sigma$   
such that for each $i$:
\begin{itemize}
\item  The level set flow $\MM^i$ starting from $\Sigma_i$ is almost regular and non-fattening.
\item All tangent flows of $\MM^i$ are planes, cylinders or spheres.
\item $\MM^i$ is the unique almost regular flow starting from $\Sigma_i$.
\end{itemize}
In particular, the set of surfaces whose level set flow is nonfattening and has only mean convex singularities is open and dense in the smooth topology.
\end{Corollary}

Note that in both corollaries, the flows are unique, so they agree with the unit-regular, cyclic Brakke flows defined in the references given above.  
We remark that by \cite{daniels_holgate} the flows in Corollaries~\ref{Cor_S2} and \ref{Cor_generic} may be approximated by mean curvature flows with surgery.

The proof of Theorem~\ref{Thm_main} relies on the following key estimate, which provides an integral lower bound on the local scale function.

\begin{Theorem} \label{Thm_key_thm}
Let $\MM$ be  an almost regular mean curvature flow in $\IR^3$ over the time-interval $[0,T)$, $T<\infty$, starting from a compact, smooth surface $\MM_0$.
Then there is a constant $C(\MM_0, \lb T, \lb \genus(\MM)) \lb < \infty$, which depends continuously on $\MM_0$ in the $C^2$-sense and continuously on $T$, such that for all $x_0 \in \IR^3$, $t_0 \in [0,T]$ and $0\leq  t < t_0$ we have
\[ \int_{\MM_{\reg,t}} \bigg({ \log \Big(\frac{r_{\loc}(\cdot,t)}{\sqrt{t_0 - t}} \Big)  }\bigg)_-  \rho_{(x_0,t_0)}(\cdot,t) \, d\HH^2 \geq    -C . \]
\end{Theorem}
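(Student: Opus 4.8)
The plan is to set up a monotonicity-type argument for the quantity on the left-hand side, controlling its time derivative along the flow. Write $F(t) = \int_{\MM_{\reg,t}} \big(\log(r_{\loc}(\cdot,t)/\sqrt{t_0-t})\big)_- \rho_{(x_0,t_0)}(\cdot,t)\, d\HH^2$. The strategy is to show that $F$ is, up to controlled error terms, almost monotone in $t$ as $t \uparrow t_0$, or more precisely that $F$ cannot decrease too rapidly. First I would fix scaling conventions: the combination $r_{\loc}/\sqrt{t_0-t}$ is parabolically scale-invariant at $(x_0,t_0)$, so the integrand is natural under the rescaling flow centered at $(x_0,t_0)$, and by Huisken monotonicity $\int \rho_{(x_0,t_0)}\, d\HH^2$ is bounded and monotone. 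The negative-part cutoff $(\,\cdot\,)_-$ is what makes the claimed lower bound plausible: only points where $r_{\loc}$ is small compared to the parabolic scale contribute, and these are exactly the points near which the flow is developing a singularity.

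Next I would compute the evolution of $F$. Differentiating under the integral sign (justified on $\MM_{\reg,t}$ using smoothness away from the singular set, and handling the singular set via the codimension-$4$ estimate from the existence theory so that it has measure zero and contributes nothing in the limit), one obtains three types of terms: (i) the Huisken monotonicity term coming from $\partial_t(\rho\, d\HH^2)$, which has a favorable sign after integration; (ii) a term from $\partial_t$ of the explicit factor $-\tfrac12\log(t_0-t)$, which is $\tfrac{1}{2(t_0-t)}\int \mathbf{1}_{\{r_{\loc} < \sqrt{t_0-t}\}}\rho\, d\HH^2 \geq 0$; and (iii) the term from $\partial_t \log r_{\loc}$, i.e.\ the rate of change of the local scale function. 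Step (iii) is the crux: one needs an a priori bound of the form $|\partial_t \log r_{\loc}| \le C\, r_{\loc}^{-2}$ plus lower-order terms, reflecting that the local scale cannot collapse faster than the natural parabolic rate. Here I would invoke the pseudolocality / interior estimates built into the almost-regular framework — the scale function $r_{\loc}$ is essentially the radius on which the flow looks smooth and graphical, and parabolic regularity gives that its logarithmic time derivative is controlled by curvature, hence by $r_{\loc}^{-2}$. On the set $\{r_{\loc} < \sqrt{t_0-t}\}$ this term is then dominated by (ii), so (iii) is absorbed.

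The main obstacle I anticipate is making step (iii) — the control on the collapse rate of $r_{\loc}$ — rigorous and, crucially, \emph{integrable against $\rho_{(x_0,t_0)}$}: the pointwise bound $r_{\loc}^{-2}$ is not integrable against the Gaussian density on its own, so one genuinely needs to exploit that the bad region has small Gaussian measure, and this is presumably where the multiplicity-one mechanism and the genus bound enter. Concretely, controlling $\int \mathbf{1}_{\{r_{\loc}<\sqrt{t_0-t}\}} r_{\loc}^{-2}\rho\, d\HH^2$ requires a covering/packing argument: regions of scale $r \approx r_{\loc}$ are essentially disjoint balls, each carrying Gaussian weight $\gtrsim r^2 \rho$ by the lower area bound for minimal-type surfaces, and the number of such balls at a given scale is controlled by the genus (this is where $\genus(\MM)$ must appear, since topology bounds the number of necks). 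Summing the geometric series over dyadic scales $r \in (0,\sqrt{t_0-t})$ yields the finite constant $C$. The dependence on $\MM_0$ in the $C^2$ sense and on $T$ then comes from the corresponding dependence in Huisken monotonicity (total Gaussian density controlled by area and diameter of $\MM_0$) and in the local regularity constants; one would track these through and invoke a continuity/compactness argument to upgrade "finite" to "continuous in $\MM_0, T$." After establishing $\tfrac{d}{dt}F(t) \ge -C_1$ (with possible mild non-integrable-in-time corrections handled by the same Gaussian-weight bookkeeping), integrating from $t$ to $t_0$ and using that $F(t) \to 0$ as $t \uparrow t_0$ (the region $\{r_{\loc} < \sqrt{t_0-t}\}$ concentrating at the singular set, which carries zero Gaussian mass in the limit by the codimension bound) gives the asserted lower bound $F(t) \ge -C$.
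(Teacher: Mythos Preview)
Your proposal contains a genuine gap at the crux, step (iii). You claim that on the set $\{r_{\loc} < \sqrt{t_0-t}\}$ the term coming from $|\partial_t \log r_{\loc}| \le C\, r_{\loc}^{-2}$ is dominated by the good term (ii), namely $\tfrac{1}{2(t_0-t)}\int \mathbf{1}_{\{r_{\loc}<\sqrt{t_0-t}\}}\rho\, d\HH^2$. But the inequality goes the wrong way: on that very set $r_{\loc}^{-2} > (t_0-t)^{-1}$, so (iii) is \emph{larger} than (ii), not smaller. This is exactly the critical-exponent obstruction the paper flags explicitly at the start of Section~\ref{sec_int_bound}: $r_{\loc}^{-2}$ is comparable to the maximal function of $|A|^2$, and bubbling (regions modeled on minimal surfaces such as catenoidal necks) can make $\int r_{\loc}^{-2}\rho\, d\HH^2$ arbitrarily large at a fixed time even though $\int |A|^2 \rho\, d\HH^2$ stays bounded. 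Your fallback covering argument (``number of balls at each scale controlled by genus'') does not rescue this: a single catenoidal neck has bounded genus but carries an unbounded integral of $r_{\loc}^{-2}$ over its asymptotically planar ends if those ends approach each other slowly.

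The paper's actual mechanism is substantially different and bypasses $\partial_t \log r_{\loc}$ altogether. The central object is not $r_{\loc}$ but the \emph{separation function} $\s$ (Section~\ref{sec_sep_fct}), which satisfies the sharp viscosity inequality $\square \log \s \ge 0$ (Proposition~\ref{Prop_log_u_super_sol}); this is the replacement for your step (iii) and it has a genuinely favorable sign, not merely a bound. The quantity that is tracked is $\mathfrak{S}_{(x_0,t_0),a}(\tau)$ (Definition~\ref{Def_S}), built from $\s_{\loc}=\min\{\tfrac12\s,\td r_{\loc}\}$ together with a cutoff $\eta$ (Lemma~\ref{Lem_asspt_eta_rho}) that localizes away from regions where $\square\log\s\ge 0$ fails. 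On the support of $\eta$ one needs an integral bound on $r_{\loc}^{-2}$, but crucially Theorem~\ref{Thm_L2_bound} provides this only \emph{after excluding the minimal-surface-like regions} $\MM_{\reg}^{(\eps,A)}$, and the bound is by the drop in Gaussian area $\Theta_{(x_0,t_0)}(2\tau_2)-\Theta_{(x_0,t_0)}(\tfrac12\tau_1)$ (hence telescoping under iteration), not by a fixed constant per unit time. The minimal-surface regions themselves are handled by the structure of their ends (Lemmas~\ref{Lem_ends_min_surf}, \ref{Lem_ends_min_surf_improved}) and the $\int|A|^2$ bound (Lemma~\ref{Lem_intA2bound}), which is where the genus actually enters. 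The proof then iterates a dichotomy (Propositions~\ref{Prop_step_bound}, \ref{Prop_step_bound_improved}) across dyadic scales rather than integrating a differential inequality up to $t_0$; in particular your terminal condition $F(t)\to 0$ is neither used nor obviously available.
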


Here $\rho_{(x_0,t_0)} (x,t) = \frac1{4\pi(t_0-t)} e^{-\frac{|x-x_0|^2}{4(t_0-t)}}$ is the usual ambient Gaussian weight and $r_{\loc}(x,t)$ denotes local scale function, which is roughly the largest $r> 0$ such that on $\MM_{\reg}$ intersected with the parabolic ball $P(x,t, r) = B(x,r) \times [t- r^2, t+r^2]$ we have an upper bound on the second fundamental form of the form $|A| \leq c r^{-2}$ and a lower bound on the normal injectivity radius of the form $\inj^\perp \geq cr$.
The notation $a_- = \min \{ a, 0 \}$ denotes the negative part.

\begin{Corollary} \label{Cor_no_min_surface_blowup}
Let $\MM$ be a bounded almost regular mean curvature flow in $\IR^3$ over the time-interval $[0,T)$, $T < \infty$.
Then no blow-up limit (in the Brakke or smooth sense) is a non-trivial minimal surface.
\end{Corollary}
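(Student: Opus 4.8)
The plan is to argue by contradiction, exploiting the fact that the class of blow-up limits of $\MM$ is closed under taking further blow-ups --- in particular, under passing to a tangent flow at spatial infinity of a \emph{static} blow-up limit --- so that Theorem~\ref{Thm_main} can be applied iteratively, together with the classical monotonicity formula for minimal surfaces.

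Suppose for contradiction that some blow-up limit $\MM^\infty=\lim_i\lambda_i(\MM-(x_i,t_i))$ is (the static Brakke flow of) a non-trivial minimal surface $\Sigma\subset\IR^3$, i.e.\ $\Sigma$ is not a multiplicity-one plane. By Theorem~\ref{Thm_main}, $\MM^\infty$ is associated with an almost regular mean curvature flow; since this flow is static and $2$-dimensional while almost regular flows are smooth off a spacetime-singular set of codimension at least $4$, the set $\Sigma$ is in fact a smooth, properly embedded minimal surface, and it inherits the finite Gaussian density bound of $\MM$ (Gaussian density passes to weak limits of rescalings), so the density ratios $r\mapsto \HH^2(\Sigma\cap B_r(q))/\pi r^2$ are finite and nondecreasing.

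Next I would verify that a tangent flow of $\MM^\infty$ at infinity is again a blow-up limit of $\MM$ in the sense of Theorem~\ref{Thm_main}. Fix $p_0\in\Sigma$; by the monotonicity formula there is a subsequential limit $\mathcal C=\lim_j\nu_j(\MM^\infty-(p_0,0))$ with $\nu_j\to 0$, and $\mathcal C$ is the static flow of a minimal cone in $\IR^3$. Writing $\MM^\infty-(p_0,0)=\lim_i\lambda_i(\MM-(x_i+p_0/\lambda_i,\,t_i))$ and diagonalizing over $(i,j)$, one gets $\mathcal C=\lim_j(\nu_j\lambda_{i(j)})(\MM-(x_{i(j)}+p_0/\lambda_{i(j)},\,t_{i(j)}))$, where $\nu_j\lambda_{i(j)}\to\infty$ (for each $j$, $\nu_j\lambda_i\to\infty$ as $i\to\infty$, so choose $i(j)$ with $\nu_j\lambda_{i(j)}\geq j$) and the recentering parameters stay uniformly bounded; hence $\mathcal C$ is a blow-up limit of $\MM$. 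Applying Theorem~\ref{Thm_main} to $\mathcal C$: it is associated with an almost regular flow, hence smooth, so as a static $2$-dimensional minimal cone in $\IR^3$ with smooth link it must be a single plane $P$ through the origin with some integer multiplicity $m\geq 1$; and since almost every tangent plane of $\mathcal C$ has multiplicity one, $m=1$. Therefore $\Theta_\infty(\Sigma)=m=1$. On the other hand, monotonicity gives that for every $q\in\Sigma$ the ratio $r\mapsto \HH^2(\Sigma\cap B_r(q))/\pi r^2$ is nondecreasing, tends to the (integer) multiplicity of $\Sigma$ at $q$ as $r\to0$, and tends to $\Theta_\infty(\Sigma)=1$ as $r\to\infty$; hence it is identically $1$, so $\Sigma$ is a multiplicity-one cone with vertex $q$. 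As this holds for every $q\in\Sigma$ and $\Sigma$ is smooth, $\Sigma$ is a single multiplicity-one plane, contradicting its non-triviality.

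The step I expect to be the main obstacle is the passage in the previous paragraph: one must check carefully that the tangent flow at infinity of the static limit $\MM^\infty$ genuinely arises as a blow-up limit of $\MM$ in the precise sense demanded by Theorem~\ref{Thm_main} (a weak limit of parabolic rescalings by factors tending to infinity with uniformly bounded recentering), and that the Brakke-flow convergence $\MM^i\to\MM^\infty$, combined with the a priori density bound, transfers to the density and cone structure of $\Sigma$ --- this is exactly where the diagonalization and the compactness/monotonicity theory of minimal surfaces are used. Alternatively, one could bypass the iteration and work directly from Theorem~\ref{Thm_key_thm}: rescaling the key estimate to $\MM^i$ and letting $i\to\infty$ yields $\int_\Sigma\big(\log(r_{\loc}^\Sigma/\sqrt\sigma)\big)_-\,\rho_{(0,0)}(\cdot,-\sigma)\,d\HH^2\geq -C$ for all $\sigma>0$ with a fixed $C$; if $\Sigma$ is non-flat then $\Theta_\infty(\Sigma)>1$, so the blow-down $\sigma^{-1/2}\Sigma$ has, near the unit ball, a region of definite area along which distinct sheets (or a singularity of the limiting cone) come within $o(\sqrt\sigma)$ of one another, which forces $r_{\loc}^\Sigma=o(\sqrt\sigma)$ there and drives the left-hand side to $-\infty$ as $\sigma\to\infty$, a contradiction.
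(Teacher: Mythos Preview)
Your approach is essentially the paper's: pass to the blow-down of $\Sigma$, note via a diagonal argument that it is again a blow-up limit of $\MM$, and apply Theorem~\ref{Thm_main} to force it to be a multiplicity-one plane, contradicting non-triviality of $\Sigma$.  The paper's execution is a bit shorter: since $\Sigma$ inherits $\Theta(\Sigma)<\infty$ and finite genus, it has finite total curvature, so Lemma~\ref{Lem_ends_min_surf} immediately identifies the blow-down as a plane of multiplicity $\geq 2$ (the ends are asymptotic to $\geq 2$ parallel planes whenever $\Sigma$ is not a single plane), and one application of Theorem~\ref{Thm_main} finishes.  Your route via the monotonicity formula and a second application of Theorem~\ref{Thm_main} to the cone $\mathcal{C}$ works as well and avoids Lemma~\ref{Lem_ends_min_surf}, at the cost of a few more lines.

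One caution: your justification that $\Sigma$ (and later $\mathcal{C}$) is smooth reads ``almost regular flows are smooth off a spacetime-singular set of codimension at least $4$'', which appears to invoke Theorem~\ref{Thm_Msing_dim}.  That would be circular, since the proof of Theorem~\ref{Thm_Msing_dim} uses the present corollary.  Fortunately you do not need Theorem~\ref{Thm_Msing_dim} here: for a \emph{static} flow, Definition~\ref{Def_almost_regular}\ref{Def_almost_regular_11} (regular at almost every time) already forces every time-slice to be smooth, since the time-slices are identical.  Replace the codimension-$4$ remark by this direct observation and the argument is clean.
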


By ``non-trivial minimal surface'' we mean any complete minimal surface $\Sigma \subset \IR^3$ that is not a single affine plane.
So a union of parallel affine planes is also viewed as non-trivial.

We also obtain the following compactness theorem for almost regular mean curvature flows.

\begin{Theorem} \label{Thm_main_compactness}
Let $\MM^i \subset \IR^3 \times I_i$ be a sequence of bounded almost regular mean curvature flows satisfying one of the following two properties:
\begin{itemize}
\item $\MM^i$ is a blow-up sequence of a bounded almost regular mean curvature flow $\MM \subset \IR^3 \times I$ in the sense that $\MM^i = \lambda_i ( \MM - (x_i, t_i))$, where $\lambda_i \to \infty$ and $(x_i, t_i)$ is uniformly bounded.
\item $I_i = [0, T_i)$, we have smooth convergence $\MM_0^i \to M_\infty \subset \IR^3$ to some surface and $\genus(\MM^i)$ is uniformly bounded.
\end{itemize}
Suppose that $I_i \to I_\infty$ for some interval $I_\infty$, which we assume to be open in the first case and of the form $[0,T_\infty)$ in the second case.
Then the following is true:
\begin{enumerate}[label=(\alph*)]
\item \label{Thm_main_compactness_a} For any $[T_1, T_2] \subset  I_\infty$ and any bounded open subset $U \subset \IR^3$ there is a uniform constant $C(T_1,T_2,U) < \infty$ such that for all $i$ we have
\[ \int_{T_1}^{T_2} \int_{U \cap \MM^i_{\reg,t}} r_{\loc}^{-2} \, d\HH^2 dt \leq C(T_1,T_2,U). \]
\item \label{Thm_main_compactness_b} After passing to a subsequence, the Brakke flows associated with the flows $\MM^i$ converge weakly to a  Brakke flow, which is the associated flow of an almost regular mean curvature flow $\MM^\infty \subset \IR^3 \times I_\infty$.
Moreover, for every regular time $t \in I_\infty$ of $\MM^\infty$ we have local smooth convergence $\MM^i_t \to \MM^\infty_t$.
\end{enumerate}
\end{Theorem}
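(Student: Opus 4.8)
The plan is to deduce part \ref{Thm_main_compactness_a} from Theorem \ref{Thm_key_thm} and then use it to extract a limiting almost regular flow in part \ref{Thm_main_compactness_b}. In the blow-up case, the rescaled flows $\MM^i$ all satisfy the hypotheses of Theorem \ref{Thm_key_thm} on compact time subintervals, with a uniform constant: the quantity $C(\MM_0,T,\genus(\MM))$ is scale-invariant in the appropriate sense, and $\genus(\MM^i)\le\genus(\MM)$ by the monotonicity of genus under parabolic rescaling and weak limits. In the second case the uniform genus bound is an explicit hypothesis and the continuous dependence of $C$ on the (smoothly converging) initial data $\MM_0^i$ and on $T_i$ gives a uniform bound. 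So in both cases we have a uniform integral estimate of the form in Theorem \ref{Thm_key_thm}.

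To pass from that logarithmic estimate to the clean bound in \ref{Thm_main_compactness_a}, first I would fix $[T_1,T_2]\subset I_\infty$ and a bounded open $U$, and choose finitely many points $(x_0,t_0)$ with $t_0$ slightly above $T_2$ so that the Gaussian weights $\rho_{(x_0,t_0)}(\cdot,t)$ are bounded below by a positive constant on $U\times[T_1,T_2]$ — this is possible because $t_0-t$ stays in a compact subinterval of $(0,\infty)$ and $|x-x_0|$ stays bounded, so $\rho_{(x_0,t_0)}$ is bounded above and below there. Then on $U\times[T_1,T_2]$ one has $(t_0-t)^{-1}\asymp 1$, so controlling $\int r_{\loc}^{-2}$ amounts to controlling $\int (r_{\loc}/\sqrt{t_0-t})^{-2}$ on the region where $r_{\loc}\le\sqrt{t_0-t}$ (on the complementary region $r_{\loc}^{-2}$ is bounded and the local area is bounded by Huisken monotonicity and the Gaussian density bound). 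On that region, $r_{\loc}/\sqrt{t_0-t}\le 1$, so $s^{-2}$ and $(\log s)_-^2$ are comparable up to absolute constants for $s\in(0,1]$ only away from $s=0$ — the honest move is that $s^{-2}\lesssim e^{(\log 1/s)} = 1/s$ is false, so instead I would use that $r_{\loc}^{-2}$ is not directly dominated by $|\log r_{\loc}|$; rather one needs a refinement. The correct mechanism is that Theorem \ref{Thm_key_thm} is applied not once but along a continuum (or dyadic family) of basepoints $(x_0,t_0)$, and integrating the logarithmic bound in $t_0$ (equivalently, differentiating Huisken's monotone quantity and using that $r_{\loc}$ can only be small on a controlled portion of spacetime) upgrades the $\log$-integrability to the desired $r_{\loc}^{-2}$-integrability; this is the standard passage from a pointwise-in-time Gaussian $\log$ bound to a spacetime integral bound, analogous to how an $L\log L$ bound at each time yields an $L^1$ spacetime bound after integrating the monotonicity formula. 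I expect this bookkeeping — choosing the family of basepoints and carrying out the $t_0$-integration with the right weights — to be the main technical obstacle in part \ref{Thm_main_compactness_a}.

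For part \ref{Thm_main_compactness_b}, I would proceed as follows. The flows $\MM^i$ are supports of unit-regular, multiplicity-one, cyclic Brakke flows with uniformly bounded Gaussian density (by Huisken monotonicity and the bounded-initial-data or blow-up hypothesis), so by Ilmanen's/White's compactness theorem for Brakke flows we may pass to a subsequence converging weakly to a Brakke flow $\MM^\infty$ on $\IR^3\times I_\infty$, which is again unit-regular and cyclic (these properties are preserved under weak limits). It remains to show $\MM^\infty$ is \emph{almost regular}, i.e.\ that it is a weak set flow, regular at almost every time, and equal to the support of this Brakke flow. The codimension-4 / partial-regularity structure should follow once we know regularity holds for a.e.\ time; and regularity for a.e.\ time is exactly where \ref{Thm_main_compactness_a} enters: the uniform bound $\int_{T_1}^{T_2}\int_{U\cap\MM^i_{\reg,t}} r_{\loc}^{-2}\,d\HH^2\,dt\le C$ passes to the limit by lower semicontinuity (using local smooth convergence on the open dense set of regular times of $\MM^\infty$, plus that $r_{\loc}$ is lower semicontinuous under smooth convergence), giving the same bound for $\MM^\infty$; by Fubini, for a.e.\ $t$ the slice integral $\int_{U\cap\MM^\infty_{\reg,t}} r_{\loc}^{-2}\,d\HH^2$ is finite, which combined with the local regularity theory for mean curvature flow (Brakke's theorem / White's local regularity, using that finiteness of $\int r_{\loc}^{-2}$ forces the singular set in that time slice to be $\HH^2$-negligible and in fact empty for surfaces) shows $\MM^\infty_t$ is a smooth surface for a.e.\ $t$. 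The local smooth convergence $\MM^i_t\to\MM^\infty_t$ at regular times of $\MM^\infty$ is then the standard consequence of White's local regularity theorem together with the multiplicity-one (hence unit density) conclusion, which is available here precisely because Theorem \ref{Thm_main} rules out higher multiplicity. I expect the main subtlety in \ref{Thm_main_compactness_b} to be verifying that the limit Brakke flow's support is again a weak set flow and that no spurious multiplicity or mass drop occurs at the non-regular times — but this is exactly what the uniform scale bound from \ref{Thm_main_compactness_a}, combined with unit-regularity, is designed to preclude.
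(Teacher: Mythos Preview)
Your approach to part~\ref{Thm_main_compactness_a} has a genuine gap, which you partially recognize: there is no way to upgrade the logarithmic integral bound of Theorem~\ref{Thm_key_thm} to an $r_{\loc}^{-2}$ bound by integrating over basepoints or any ``standard $L\log L$ to $L^1$'' mechanism. The function $s\mapsto s^{-2}$ simply is not controlled by $s\mapsto (\log s)_-$ near $s=0$, and no averaging in $(x_0,t_0)$ recovers the missing decay. The paper's proof does not proceed this way. Instead, the $r_{\loc}^{-2}$ integral bound comes from Theorem~\ref{Thm_L2_bound}, which already bounds $\int r_{\loc}^{-2}$ over the complement of the set $\MM_{\reg}^{(\eps,A)}$ of points locally modeled on nontrivial minimal surfaces (or planes with the auxiliary condition). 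On the excluded set $\XX^{t_0,\delta}$ one has $r_{\loc}\gtrsim\sqrt{t_0-t}$ trivially. The role of Theorem~\ref{Thm_key_thm} is then \emph{not} to provide the integral bound directly, but to show by contradiction that $\MM_{\reg}^{i,(\eps,A)}=\emptyset$ for some $\eps>0$ uniform in $i$: if a sequence of points $(x_i,t_i)$ were locally modeled on minimal surfaces at scale $r_i=r_{\loc}(x_i,t_i)$, then one constructs test points $(x_i,t_i+Q^2r_i^2)$ at which the logarithmic Gaussian integral of Theorem~\ref{Thm_key_thm} can be made arbitrarily negative, violating the uniform lower bound. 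This is the key idea you are missing.

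For part~\ref{Thm_main_compactness_b}, your outline is in the right spirit but has two issues. First, you invoke Theorem~\ref{Thm_main} to rule out higher multiplicity in the limit, but in the paper Theorem~\ref{Thm_main} is a \emph{consequence} of Theorem~\ref{Thm_main_compactness}, so this is circular. Second, your claim that finiteness of $\int_{\MM^\infty_{\reg,t}} r_{\loc}^{-2}$ forces the singular set of the time-slice to be empty is not quite an argument: $r_{\loc}$ is set to zero on $\MM_{\sing}$, so the integral over $\MM_{\reg,t}$ carries no direct information about $\MM_{\sing,t}$. The paper instead uses the uniform $r_{\loc}^{-2}$ bound on the \emph{approximating} flows $\MM^i$ at a.e.\ time to control the component structure of $\MM^i_t$ (bounding the number of components and relating $r_{\loc}$ to component diameter), obtaining a decomposition $\MM^i_t=M'_i\,\dotcup\,M''_i$ where $M'_i$ converges smoothly and $M''_i$ Hausdorff-converges to a discrete set; the weak-set-flow avoidance principle then shows the $M''_i$ components go extinct shortly after time $t$, forcing smooth convergence to the limit slice for slightly later times. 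Multiplicity one in the limit is obtained from this smooth convergence, not from Theorem~\ref{Thm_main}.
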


We remark that the methods in our paper provide an alternate proof of the fact that the space of complete, smooth shrinkers of multiplicity one in $\IR^3$ is compact (under mild regularity assumptions at infinity); see \cite{Colding_Minicozzi_12}.

We also answer a question from \cite[Remark~2.3]{Hershkovits_White_nonfattening} in the case of 2-dimensional mean curvature flows.
In terms of this reference, the following theorem asserts that the fattening time $T_{\fat}$ is equal to the discrepancy time $T_{\disc}$.

\begin{Theorem} \label{Thm_Tfat_Tdisc}
Given a smoothly embedded, compact surface $\Sigma \subset \IR^3$, there is a time $T \in (0,\infty]$ such that the level set flow $\MM$ starting from $\Sigma$ is almost regular.
Moreover, on $\IR^3 \times [0,T]$ the flow $\MM$ agrees with the innermost flow $\MM^-$ and outermost flow $\MM^+$ starting from $\Sigma$.

If $T < \infty$, then past time $T$ the flow fattens and the inner and outermost flows begin to differ from $\MM$.
More specifically, there is a sequence of times $t_i \searrow T$ such that:
\begin{enumerate}[label=(\alph*)]
\item \label{Thm_Tfat_Tdisc_a} $\MM_{t_i}$ is fat, i.e., it has non-empty interior.
\item \label{Thm_Tfat_Tdisc_b} $\MM_{t_i} \neq \MM^+_{t_i}, \MM^-_{t_i}$.
\end{enumerate}
Lastly, if $\MM_t$ is not fat for all $t$ in an open time-interval $I \subset [0,\infty)$, then the flows $\MM, \MM^-, \MM^+$ restricted to $\IR^3 \times I$ agree.
\end{Theorem}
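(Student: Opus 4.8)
The plan is a maximal-time argument resting on a localized non-fattening principle. Call a time $t \geq 0$ \emph{good} if $\MM$ is almost regular on $[0,t]$ and $\MM = \MM^- = \MM^+$ there, and let $T \in (0,\infty]$ be the supremum of the good times (so that $T$ will turn out to be both the discrepancy time and the fattening time). Since $\Sigma$ is smooth and compact, $\MM$ agrees on some $[0,\eps)$, $\eps > 0$, with the unique smooth mean curvature flow, which is almost regular and on which the level set flow and the flows $\MM^\pm$ all coincide with it; hence $T > 0$. The structural input I would use is the following consequence of Theorem~\ref{Thm_nonfattening_almost_regular} and the Existence for General Initial Data theorem: \emph{if the restriction of $\MM$ to a time-interval $J$ is non-fattening (its spacetime track has empty interior over $J$), then $\MM|_J$ is almost regular and $\MM = \MM^- = \MM^+$ on $J$.} I also record two soft facts. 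First, if every time-slice $\MM_t$ with $t \in J$ has empty interior in $\IR^3$, then $\MM|_J$ is non-fattening: a non-empty open subset of spacetime contains a box $U \times (a,b)$ with $(a,b) \subseteq J$, which would force $\MM_t \supseteq U$ for $t \in (a,b)$. Second, the flows $\MM^-$ and $\MM^+$ — and hence any flow coinciding with one of them on a time-interval — have no fat time-slice, since their singular sets have spacetime-codimension at least $4$ (Existence for General Initial Data theorem), so each slice is a smooth surface off a set of dimension zero.

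Granting these, I argue as follows. On $[0,T)$, $\MM$ is almost regular and equals $\MM^\pm$ by definition; to include the endpoint I would invoke the continuity properties of the level set flow: $\MM$ and the globally defined flows $\MM^\pm$ have closed spacetime tracks that agree over $[0,T)$, hence agree over $[0,T]$ as well, so $\MM = \MM^- = \MM^+$ on $[0,T]$ and in particular $\MM$ is almost regular there. This proves the first paragraph of the theorem, and by the second soft fact $\MM_t$ is non-fat for all $t \in [0,T]$. Now suppose $T < \infty$. If for some $\delta > 0$ no slice $\MM_t$ with $t \in (T,T+\delta)$ were fat, then $\MM_t$ would be non-fat for every $t \in [0,T+\delta)$, so by the first soft fact together with the structural input $\MM$ would be almost regular on $[0,T+\delta)$ and equal there to $\MM^\pm$, contradicting the maximality of $T$. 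Hence there is a sequence $t_i \searrow T$ with $\MM_{t_i}$ fat, proving assertion~\ref{Thm_Tfat_Tdisc_a}; since $\MM^\pm$ have no fat slice, $\MM_{t_i} \neq \MM^-_{t_i}$ and $\MM_{t_i} \neq \MM^+_{t_i}$, which is assertion~\ref{Thm_Tfat_Tdisc_b} and exhibits the inner and outermost flows beginning to differ from $\MM$. Finally, the last statement of the theorem is immediate: if $\MM_t$ is not fat for all $t$ in an open interval $I$, then by the first soft fact $\MM|_I$ is non-fattening, and the structural input gives $\MM = \MM^- = \MM^+$ on $I$.

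The main obstacle is the structural input, that is, upgrading Theorem~\ref{Thm_nonfattening_almost_regular} to arbitrary sub-intervals and identifying the restricted flow with the innermost and outermost flows. Its almost-regularity half is precisely where the heart of the paper is needed — the Multiplicity One theorem (Theorem~\ref{Thm_main}) and the attendant regularity theory — since non-fattening by itself yields only varifold-type control, and one must rule out higher multiplicity in order to promote $\MM|_J$ to an almost regular flow. For the identification $\MM = \MM^\pm$ on $J$, once $\MM|_J$, $\MM^-|_J$, $\MM^+|_J$ are all almost regular one would compare, at a regular time $t \in J$, the smooth multiplicity-one domains bounded by $\MM^-_t$ and by $\MM^+_t$ with the relation $\MM_t = \MM^-_t \cup \MM^+_t$ (which holds because $\MM_t$ has empty interior) to force $\MM^-_t = \MM^+_t$, and then propagate the equality to all of $J$ by continuity; this is essentially the uniqueness assertion of the Existence for General Initial Data theorem, applied on the subinterval $J$. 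A secondary technical point is the semi-continuity of the level set flow used to close the good interval at its right endpoint.
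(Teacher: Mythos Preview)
Your maximal-time framework is sound and, once the ``structural input'' is in hand, the deductions of \ref{Thm_Tfat_Tdisc_a}, \ref{Thm_Tfat_Tdisc_b} and the last statement go through as you describe. The real issue is that your sketch for the structural input is circular. You want to show that non-fattening of $\MM|_J$ implies $\MM = \MM^\pm$ on $J$; to do this you propose first establishing almost regularity of $\MM|_J$ and then comparing domains at a regular time via $\MM_t = \MM^-_t \cup \MM^+_t$. But almost regularity of $\MM|_J$ is not available from Theorem~\ref{Thm_nonfattening_almost_regular}, which only treats $\MM^\pm$; and the relation $\MM_t = \MM^-_t \cup \MM^+_t$ is not a consequence of $\MM_t$ having empty interior---it requires knowing that $\MM \subset \mathcal{K}^+ \cap \mathcal{K}^-$ \emph{and} that $\mathcal{K}^+ \cap \mathcal{K}^- \subset \MM$, i.e., precisely the identity $\MM = \mathcal{K}^+ \cap \mathcal{K}^-$. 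Without that identity there is no mechanism to pass from ``$\MM_t$ is thin'' to any relation between $\MM_t$ and $\MM^\pm_t$.

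The paper supplies exactly this missing identity, and does so by an approximation argument you do not have: one takes the generic approximants $\Sigma_i \to \Sigma$ from Corollary~\ref{Cor_generic} on both sides, obtains nested level set flows $\mathcal{K}^{i,\pm}$ with $\bigcap_i \mathcal{K}^{i,\pm} = \mathcal{K}^\pm$, and uses the strict avoidance $\partial \mathcal{K}^{i,\pm} \cap \mathcal{K}^\pm = \emptyset$ (from \cite{Ilmanen_1993}) to show that $\partial(\mathcal{K}^{i,+} \cap \mathcal{K}^{i,-}) = \partial \mathcal{K}^{i,+} \,\dotcup\, \partial \mathcal{K}^{i,-}$. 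This forces each $\mathcal{K}^{i,+} \cap \mathcal{K}^{i,-}$ to be a weak set flow, hence so is the intersection $\mathcal{K}^+ \cap \mathcal{K}^-$, and maximality of the level set flow gives $\MM = \mathcal{K}^+ \cap \mathcal{K}^-$. This identity holds \emph{regardless} of fattening, and from it one reads off directly that at any time regular for both $\MM^\pm$, the slice $\MM_t$ is fat if and only if $\MM_t \neq \MM^\pm_t$. The last statement and the choice of $T$ then follow as in your outline (with the avoidance principle, not merely closedness of tracks, used to pass from regular times to all times in $I$). So the paper's route is not a different packaging of your structural input---it replaces it with a concrete set-theoretic identity proved via generic approximation, and that step is where your argument currently has no content.
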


Combining Theorem~\ref{Thm_main}
with earlier work 
\cite{Cheeger_Haslhofer_Naber_13,chodosh2023mean,bernstein_wang_topological_property,colding_minicozzi_singular_set_generic,Wang_Lu_asymptotic_2016},
we obtain the following results on the singular set of an almost regular mean curvature flow.

\begin{Theorem}
\label{Thm_Msing_dim}
Let $\MM$ be a bounded almost regular mean curvature flow in $\IR^3$.  Then:
\begin{enumerate}[label=(\alph*)]
\item \label{Thm_Msing_dim_a} The singular set has spacetime Minkowski dimension $\leq 1$.  
\item \label{Thm_Msing_dim_b} The set 
of nongeneric singular points is  closed, countable, and
has spacetime Minkowski  dimension $0$.
\item \label{Thm_Msing_dim_c} The set of nongeneric singular points is backward isolated, i.e. if $(x,t)\in\MM$ is a nongeneric singular point, then $(x,t)$ is the only nongeneric singular point in the backward parabolic ball $P^-(x,t,r)$,   for some $r>0$.    If $\MM$ is an outer or innermost flow, then the nongeneric singularities form 
 a finite set of cardinality bounded by the genus of $\MM_0$.
\item \label{Thm_Msing_dim_d} On the complement of the nongeneric singular points, the singular set is contained in a locally finite collection of Lipschitz curves (with respect to the parabolic metric), plus a collection of isolated spherical singular points.
\end{enumerate}
Similar statements hold if we drop the assumption of $\MM$ being bounded and assume instead that $\MM$ is a blow-up limit of bounded almost regular mean curvature flows as in Theorem~\ref{Thm_main_compactness}.
\end{Theorem}

\bigskip
Lastly, we give a proof of a conjecture of Ilmanen on the asymptotic structure of tangent flows \cite[p.39]{Ilmanen_lectures_mcf_related_equations}, removing a finite topology condition from \cite{Wang_Lu_asymptotic_2016} (see also \cite{song_maximum_principle_self_shrinkers}).
\begin{Theorem}
\label{Thm_asymptotic_structure_tangent_flow}
Let $\MM$ be a tangent flow of a bounded almost regular mean curvature flow or, more generally, a shrinker which is a limit of bounded almost regular mean curvature flows as in Theorem~\ref{Thm_main_compactness}.
Then the collection $\mathcal{E}$ of ends of $\MM_{-1}$  is finite and there a collection $\{\Gamma_E\}_{E\in \mathcal{E}}$ of pairwise disjoint   subsets of the unit sphere $S^2$ such that for every $E\in \mathcal{E}$ one of the following holds:
  
\begin{itemize}
\item $\Gamma_E\subset S^2$ is a smooth simple closed curve, and $r^{-1}E$ converges smoothly on compact subsets of $\IR^3\setminus\{0\}$ to the cone over $\Gamma_E$ as $r\rightarrow\infty$.  
\item $\Gamma_E=\{v_E\}$ is a single point, and $E-rv_E$ converges smoothly on compact sets to the round cylinder of radius $\sqrt{2}$ with axis $\IR v_E$ as $r\rightarrow\infty$.  
\end{itemize}
 \end{Theorem}

We remark that the arguments in this paper adapt with minor modifications to the setting of Riemannian 3-manifolds which are either compact or satisfy appropriate bounds at infinity.  


\bigskip
\subsection{Outline of the proof}
To illustrate the idea of the proof of Theorem~\ref{Thm_main}, let us first consider a simplified case.
Suppose that $\MM$ is a smooth mean curvature flow in $\IR^3$ over the time-interval $[-1,0)$ that develops a singularity at the final time.
Assume that its tangent flow at the origin $(\vec 0, 0)$ is a 2-sphere of multiplicity 2 -- a behavior which the Multiplicity One Conjecture alleges to be impossible.
More specifically, we assume that every time-slice $\MM_t$ is a double sheeted approximation of the round-shrinking sphere $2\sqrt{-t} \, S^2$, where we allow both sheets to be connected by a small catenoidal neck.
For example on the complement of a neighborhood of this catenoidal neck, the time-slice $\MM_t$ could look like 
$$\big((2-\delta(t))\sqrt{-t} \, S^2 \big) \cup  \big( (2+\delta(t)) \sqrt{-t} \, S^2 \big), \qquad \delta(t) \to 0.$$
To make our simplifying assumptions concrete, we choose a family of points $x_{\cat}(t) \in 2\sqrt{-t} \, S^2$ (the center of the catenoidal neck) and radii $0 < r_{\cat}(t) \ll \sqrt{-t}$ (its scale) such that:
\begin{enumerate}[label=(\Alph*)]
\item \label{simass1} The complement $\MM_t \setminus \ov{B(x_{\cat}(t), 100 r_{\cat}(t))}$ is the union of the graphs of two functions $u_{1,t} < u_{2,t}$ over open subsets of the sphere $2\sqrt{-t}\, S^2$ with $|u_1|, |u_2| \ll \sqrt{-t}$.
\item \label{simass2} Near $x_{\cat}(t)$ the flow is close to a Catenoid at scale $r_{\cat}(t)$, which moves and changes size slowly at this scale.
By this we mean that the rescaling $r_{\cat}^{-1}(t) ( \MM_t - x_{\cat}(t))$ restricted to the ball $B(\vec 0, \eps^{-1})$ is $\eps$-close to a Catenoid of fixed size restricted to the same ball. Moreover,
$$|x'_{\cat}(t)|, \; |r'_{\cat}(t)|  \leq \eps r^{-1}_{\cat}(t) .$$
Here $\eps > 0$ is a small constant, which we will choose later.
\end{enumerate}

Let us now introduce the \emph{separation function} $\s$, which will be the key quantity in our analysis.
It will measure the separation of the two sheets defined by the graphs of $u_1$ and $u_2$, so we will roughly have $\s \approx u_2 - u_1$ on the complement of the catenoidal neck.
More specifically, for every point $x \in \MM_t$ we define $\s (x,t)$ to be the infimum over all $r > 0$ such that the intersection $\MM_t \cap B(x,r)$ is disconnected.
In order to simplify our discussion, let us assume that:
\begin{enumerate}[label=(\Alph*),start=3]
\item \label{prop_C} If $(x,t) \in \MM_t \setminus \ov{B(x_{\cat}(t), 100 r_{\cat}(t))}$, then we assume $\s(x,t)$ to be equal to the distance to the opposite graph from Assumption~\ref{simass1}, i.e., the graph that does not contain $x$.
For example, this can be ensured by imposing appropriate  derivative bounds on $u_1, u_2$.
Moreover, we assume that $\s$ is smooth.
\end{enumerate}
In Proposition~\ref{Prop_log_u_super_sol} we will derive the following crucial differential inequality on the separation function:
\begin{equation} \label{eq_sqs_outline}
 \square \log \s = (\partial_t - \triangle) \log \s \geq 0. 
\end{equation}
This inequality intuitively expresses the flow's inclination to uphold separation between two disconnected sheets.
However, it is important to note that this differential inequality  might only be valid in regions where the flow can be understood by the union of two graphs, so on $\MM_t \setminus \ov{B(x_{\cat}(t), 100 r_{\cat}(t))}$.
It may fail near the catenoidal neck.

Let us now recall the Gaussian background density on $\IR^3 \times \IR$
\[ \rho(x,t) := \frac1{4\pi (-t)} e^{-\frac{|x|^2}{4(-t)}} \]
The restriction of $\rho$ to the flow is a subsolution to the conjugate heat equation, that is
\begin{equation} \label{eq_outline_sqrho}
 \square^* \rho = (-\partial_t - \triangle + |\mathbf{H}|^2 ) \rho \geq 0, 
\end{equation}
where $\mathbf{H}$ denotes the mean curvature vector (see Lemma~\ref{Lem_entropy_rho} for further details).
Note that integrating this inequality in space and time implies Huisken's monotonicity formula.
See Lemma~\ref{Lem_entropy_rho} for more details.

Our strategy will now be to integrate the bound \eqref{eq_sqs_outline} against the density $\rho$ over the part of the flow where this bound holds.
To achieve this, we need to introduce a cutoff function on $\MM$.
To do this, fix a large constant $100 \leq R \leq \eps^{-1}$.
By composing the distance function to $x_{\cat}(t)$ divided by $r_{\cat}(t)$ with a suitable cutoff function, we can obtain a smooth function $\eta (\cdot, t) \in C^\infty(\MM_t)$ with
\begin{alignat*}{2}
 \eta &\equiv 0 &&\textQQq{on} \MM_t \setminus B(x_{\cat}(t), Rr_{\cat}(t)), \\
 \eta &\equiv 1 &&\textQQq{on} \MM_t \cap B(x_{\cat}(t), \tfrac12 Rr_{\cat}(t)) 
\end{alignat*}
\begin{equation} \label{eq_eta_bounds_outline}
 |\nabla \eta| \leq 10 R^{-1} r_{\cat}^{-1}(t), \qquad |\partial_t \eta| \leq C(R) \eps r_{\cat}^{-2}(t).  
\end{equation}
Using this function, we can define the following time-dependent quantity, for $\tau \in (0,1]$,
\[ \mathfrak{S}( \tau ) := \int_{\MM_{-\tau}} \big( (1-\eta(\cdot,-\tau)) \log (\s(\cdot,-\tau)) + \eta(\cdot,-\tau) \log ( r_{\cat}(-\tau)) \big) \rho(\cdot, -\tau)\, d\HH^2. \]
Roughly speaking, $\mathfrak{S}(\tau)$ measures the ``average separation'' of the flow at time $t=-\tau$.
The integral on their right-hand side can be viewed as modification of the integral of $\log \s$ over $\MM_t$.
The term $r_{\cat}$, which is usually comparable to $\s$, serves as a substitute for $\s$ in regions where $\s$ has undesirable behavior.
Moreover, the integration is performed over the weight $\rho$, but at least in our setting, we have $\rho \approx \frac{c}{\tau}$ on $\MM_t$.

Let us now compute the time-derivative of the quantity $\mathfrak{S}(\tau)$.
Omitting the arguments for clarity, we obtain using integration by parts
\begin{equation*}
 \mathfrak{S}'(\tau) = -\int_{\MM_{-\tau}}  \square \big( (1-\eta) \log \s + \eta \log r_{\cat}) \big)  \rho \, d\HH^2 + \int_{\MM_{-\tau}} \big( (1-\eta) \log \s + \eta \log r_{\cat}) \big) \square^* \rho \, d\HH^2 .
\end{equation*}
Since $\s, r_{\cat} \ll 1$, the term in the parenthesis of the second integral is negative.
Combining this with \eqref{eq_outline_sqrho} and \eqref{eq_sqs_outline} yields after another integration by parts (see Lemma~\ref{Lem_evol_formula} for further details)
\begin{align}
 \mathfrak{S}'(\tau) &\leq -\int_{\MM_{-\tau}}  \square \big( (1-\eta) \log \s + \eta \log r_{\cat}) \big)  \rho \, d\HH^2 \notag \\
 &\leq -\int_{\MM_{-\tau}}   \bigg(  (1-\eta) \square\log \s - (\square \eta) \log \s + 2 \frac{\nabla \eta \cdot \nabla \s}{\s}  + \square \big( \eta \log  r_{\cat}   \big) \bigg)   \rho \, d\HH^2 \notag \\
 &\leq -\int_{\MM_{-\tau}} \bigg( - (\partial_t \eta) \log \Big({\frac{\s}{r_{\cat}}}\Big) \rho +  \eta \frac{r'_{\cat}}{r_{\cat}} \rho + \frac{\nabla \eta \cdot \nabla \s}{\s} \rho - \log \Big({ \frac{\s}{r_{\cat}}}\Big)   \nabla \eta \cdot \nabla \rho \bigg) d\HH^2. \label{eq_SS_estimate_outline} %
\end{align}
Using the bounds \eqref{eq_eta_bounds_outline} on $\eta$, the fact that $|\nabla \s| \leq 1$ and the fact that
\[ C^{-1}(R) r_{\cat} \leq \s \leq C(R) r_{\cat} \textQQqq{on} U:= \MM_ {-\tau} \cap A(x_{\cat}, \tfrac12 R r_{\cat}, R r_{\cat}), \]
allows us to deduce that
\[ \mathfrak{S}'(\tau) \leq  \frac{C(R)\eps}{\tau} + \frac{C}{\tau} \int_U \frac{|\nabla \s|}{Rr_{\cat} \s} d\HH^2 \]
The last integral is scaling invariant.
So since $r_{\cat}^{-1} (U - x_{\cat})$ is $\eps$-close to the annular region $\hat U := \Sigma_{\cat} \cap A(\vec 0, \frac12 R, R)$ of a Catenoid $\Sigma_{\cat} \subset \IR^3$ of fixed size, we obtain for sufficiently small $\eps$
\[ \int_U \frac{|\nabla \s|}{Rr_{\cat} \s} d\HH^2
\approx \int_{\Sigma_{\cat} \cap A(\vec 0, \frac12 R, R)} \frac{|\nabla \s|}{R \s} d\HH^2. \]
It can be checked easily that this term converges to $0$ as $R \to \infty$, for example, by realizing that on $\Sigma_{\cat}$ we have $\frac{|\nabla \s|}{ \s} = o(r^{-1})$.
So by choosing $R$ sufficiently large and subsequently $\eps$ sufficiently small, depending on a given $\beta > 0$, we obtain 
\[  \mathfrak{S}'(\tau) \leq  \frac{\beta}{\tau} . \]
Thus
\begin{equation} \label{eq_SS_lower_outline}
 \mathfrak{S}(\tau) \geq  \beta \log \tau - C_1 
\end{equation}
However, by the definition of $\mathfrak{S}(\tau)$, and the fact that $\s, r_{\cat} \ll \sqrt{\tau}$, we must have
\[ \mathfrak{S}(\tau) \leq  \big( C_2 + \tfrac12 \log \tau \big) \int_{\MM_{-\tau}} \rho(\cdot, -\tau) \, d\HH^2 = \big( C_2 + \tfrac12 \log \tau \big) \Theta(\tau). \]
The last term, which is the Gaussian area, is uniformly bounded from below.
So we obtain a contradiction to \eqref{eq_SS_lower_outline} for small $\beta$ and $\tau$, which concludes our sketch of the proof under simplifying assumptions.

\medskip

Let us now extend the discussion of our strategy to the general setting. 
When dealing with a general flow, several complexities arise:
\begin{itemize}
\item The flow might lack a straightforward description as presented in Properties~\ref{simass1}--\ref{prop_C} above. 
For instance, it could comprise \emph{numerous} sheets connected by necks of varying scales. Moreover, these necks' geometries might be modeled on more complicated minimal surfaces, and there might not be any bound on the size of the region consisting of such necks.
\item Certain regions within the flow may defy categorization as ``sheets'', i.e., unions of graphs, or ``necks'' modeled on minimal surfaces as we have presumed. 
For instance, some areas might closely resemble the Grim Reaper translating soliton times $\IR$ or exhibit entirely unknown geometries.

An anticipated behavior could unfold as follows: 
Imagine the flow contains a region initially resembling a Catenoid, gradually expanding its hole until the solution resembles a rotationally symmetric soliton with a Grim Reaper-like cross-section. 
Eventually, this process might reverse, leading the flow to converge back to a Catenoid of larger scale.
\item Even if describing the flow as a union of sheets is feasible within a certain region, two sheets might unexpectedly collapse and nullify each other, resulting in a localized drop in mass.
\item Higher multiplicity may occur and there may be non-spherical singularity models, such as affine planes, on which the Gaussian density $\rho$ is not almost constant.
\end{itemize}

A key insight, which allows us to overcome these difficulties, is the fact that the size of the region where the flow deviates too much from our characterization of ``sheets'' and ``necks'' can be controlled sufficiently well.
This will allow us to show that this region hardly affects the evolution of $\mathfrak{S}(\tau)$.
To be more specific, in Section~\ref{sec_int_bound}, we will choose a set of points $\MM_{\reg}^{(\eps)} \subset \MM$, which roughly consists of points near which the flow is either locally close to a union of sheets or a non-trivial minimal surface.
We will then show in Theorem~\ref{Thm_L2_bound} that whenever $r_{\loc}$ is sufficiently small locally, then
\begin{equation} \label{eq_outline_rlocm2}
 \int_{t_0 - \tau_2}^{t_0 - \tau_1} \frac1{t_0 - t} \int_{(\MM_{\reg}  \setminus \MM_{\reg}^{(\eps)})_t \cap B(x_0, A \sqrt{t_0 - t})} r_{\loc}^{-2} \, d\HH^2 \, dt \leq C(A,\eps) \big( \Theta_{(x_0,t_0)}(2\tau_2) - \Theta_{(x_0,t_0)}(\tfrac12 \tau_1) \big). 
\end{equation}
Here the left-hand side is the integral over the complement of $\MM_{\reg}^{(\eps)}$ over the time-interval $[t_0 - \tau_2, t_0-\tau_1]$, localized to balls of radius $B(x_0, A \sqrt{t_0 -t})$.
The right-hand side denotes the difference between two Gaussian area quantities.

In vague terms, we will bound the influence of the region $\MM \setminus \MM_{\reg}^{(\eps)}$ on the change $\mathfrak{S}(\tau_1) - \mathfrak{S}(\tau_2)$ by the left-hand side of \eqref{eq_outline_rlocm2}. 
As this influence is bounded by the change of a bounded and monotone quantity, this will lead to a uniform upper bound, which can be absorbed in he constant $C_1$ from \eqref{eq_SS_lower_outline}.

To overcome the remaining challenges, our approach hinges on two critical elements: a careful selection of the cutoff function $\eta$ and the choice of a proper definition of the subset $\MM_{\reg}^{(\eps)}$.
These finer points are beyond the scope of this outline.

A further aspect that is worth discussing, however, is our strategy for dealing with the possibility of sudden local mass drop.
For instance, we must address scenarios where a region of the flow, at some scale $r_0 > 0$, closely resembles the union of an almost flat sheet and a nearby component, which may be close to a double sheet with a catenoidal neck and which vanishes almost abruptly at some time $t_0$.
In this case the separation function $\s$ restricted to the former sheet increases quickly at time $t_0$.
While this aligns with the notion behind \eqref{eq_sqs_outline}, it presents a problem shortly after time $t_0$.
More precisely, at such times we may have $r_1 :=\s \gg r_0$ and the almost flat sheet may have non-trivial geometry at scale $r_1$.
As a result, the bound \eqref{eq_sqs_outline} may fail in this region approximately for times in the time-interval $[t_0,t_0 + r_1^2]$.
To address this issue, we will use a smoothed version $\td r_{\loc}$ of the local scale function as a surrogate for $\s$ during this time-interval.
This function is spatially almost constant and increases from $r_0$ to $r_1$ in the time-direction, so it also satisfies the bound \eqref{eq_sqs_outline}.
In order to exploit this fact in a general setup, we will merge the quantities $\s$ and $\td r_{\loc}$ into a single quantity by setting
\[ \s_{\loc} := \min \big\{ \tfrac12 \s, \td r_{\loc} \big\}. \]
This modified quantity continues to meet the condition in \eqref{eq_sqs_outline} while encapsulating the features of both $\s$ and $\td r_{\loc}$ in the intended regions (see Subsection~\ref{subsec_mod_sep} for further details).
\bigskip

\subsection{Structure of the paper}
In Section~\ref{sec_Prelim} we discuss the necessary preliminaries for the proof, with Subsection~\ref{subsec_almostreg} containing the definition of almost regular mean curvature flow.
Section~\ref{sec_sep_fct} introduces the separation function and establishes the crucial estimate \eqref{eq_sqs_outline}.
In Section~\ref{sec_smooth_rloc} we define a smoothed version $\td r_{\loc}$ of the local scale function $r_{\loc}$ and discuss its fundamental properties.
In Section~\ref{sec_int_bound}, we derive the integral bound~\eqref{eq_outline_rlocm2}.
In Section~\ref{sec_main_argument} we carry out the main proof.
This section cointains the construction of the cutoff function $\eta$ in Subsection~\ref{subsec_cutoff} and the definition of the modification of the separation function in Subsection~\ref{subsec_mod_sep}.
Subsection~\ref{subsec_integral_estimate} introduces the quantity $\mathfrak{S}(\tau)$ and derives an integral estimate akin to \eqref{eq_SS_estimate_outline}.
In Subsection~\ref{subsec_proof_key} we combine our results thus far to establish the key estimate of this paper, Theorem~\ref{Thm_key_thm}.
Section~\ref{sec_remaining_thms} contains the proofs of the remaining main theorems.

\subsection{Acknowledgements}
We like to thank Reto Buzano, Otis Chodosh, Robert Haslhofer and David Hoffman for comments and corrections on an earlier version of this paper.

\bigskip

\section{Preliminaries} \label{sec_Prelim}
\subsection{Terminology and basic definitions} \label{subsec_terminology}
In the following, we will often consider objects living in the spacetimes $\IR^{n+1} \times \IR$ or $\IR^{n+1} \times I$, where $I$ denotes an interval.
We denote by $\mathbf{x} : \IR^{n+1} \times \IR \to \IR^{n+1}$ and $\mathbf{t} : \IR^{n+1} \times \IR \to \IR$ the projections on the space and time coordinates.
We also define the \textbf{time vector field} $\partial_{\mathbf{t}}$ on $\IR^{n+1} \times \IR$ as the unit vector field on the second factor.
Note that $\partial_{\mathbf{t}} \mathbf{t} \equiv 1$.
Given a point $(x,t) \in \IR^{n+1} \times \IR$, we define the (backwards) parabolic balls
\[ P(x,t,r) := B(x,r) \times [t-r^2, t+r^2], \qquad
P^-(x,t,r) := B(x,r) \times [t-r^2, t]. \]
If $\MM \subset \IR^{n+1} \times \IR$ is a subset of spacetime, $(x,t) \in \IR^n \times \IR$ is a point and $\lambda > 0$, then we define the \textbf{parabolic rescaling by $\lambda$, centered at $(x,t)$,} by
\[ \lambda \big(\MM - (x,t) \big) := \big\{ \big(\lambda (x'-x), \lambda^2 (t'-t) \big) \;\; : \;\; (x',t') \in \MM \big\}. \]

If $\MM \subset \IR^{n+1} \times \IR$ and $t \in \IR$ is a time, then we write $\MM_t := \MM \cap \mathbf{t}^{-1}(t) \subset \IR^{n+1} \times \{t \}$ to denote the \textbf{time-$t$-slice} of $\MM$.
Depending on the context, we will sometimes also view $\MM_t$ as a subset of $\IR^{n+1}$.
For example, we will sometimes write $\MM_t \cap B(x,r)$  to denote the intersection of $\MM_t \subset \IR^{n+1} \times \{t \}$ with $B(x,r) \times \{ t \}$ and we will sometimes view this set as a subset of $\IR^{n+1}$ or of $\IR^{n+1} \times \{ t \}$, depending on the setting.
Similarly, if $u : \MM \to \IR$ denotes a function defined on $\MM$ (or on the ambient spacetime), then we will often write
\[ \int_{\MM_t} u \, d\HH^{n} =  \int_{\MM_t} u(\cdot,t) \, d\HH^{n} \]
to denote the integral over the time-$t$-slice of $\MM_t$ with respect to the $n$-dimensional Hausdorff measure.
In the first integral, we view $\MM_t$ as a subset of $\IR^{n+1} \times \{ t \}$, so the additional argument ``$(\cdot, t)$'' becomes obsolete.
If $I' \subset \IR$ is a set of times, most commonly an interval, then we define the \textbf{time-$I'$-slab} by $\MM_{I'} := \MM \cap \mathbf{t}^{-1}(I')$.
We also sometimes use the shorthand notation $\MM_{<t_0} := \MM_{(-\infty,t_0)}$, etc.

If $\MM \subset \IR^{n+1} \times I$, then we call a point $(x,t) \in \MM$  \textbf{regular} (within $\IR^{n+1} \times I$) if a neighborhood of $(x,t)$ within $\MM$ is a smooth hypersurface with boundary in $\IR^{n+1} \times \partial I$ such that $d\mathbf{t}$ restricted to this hypersurface vanishes nowhere.
This implies that there is a neighborhood $U$ of $x \in \IR^{n+1}$ such that for $t'$ close to $t$ the time-slices $\MM_{t'} \cap U$ form a smooth family of $n$-dimensional submanifolds.
We denote by $\MM_{\reg} \subset \MM$ the (open) subset of regular points and we will commonly write $\MM_{\reg,t}$ instead of $(\MM_{\reg})_t$.
We also denote by $\MM_{\sing} := \MM \setminus \MM_{\reg}$ the set of all \textbf{singular} points.
We define the \textbf{normal time vector field} $\partial_t$ to be unique the vector field along $\MM_{\reg}$, which is tangent to $\MM_{\reg}$ and has the property that the difference $\partial_t - \partial_{\mathbf{t}}$ is spatial (i.e., $d\mathbf{t} (\partial_t - \partial_{\mathbf{t}}) =0$) and normal when restricted to each time-slice $\MM_{\reg, t}$.
Then $\partial_t u$, for some sufficiently regular function $u \in \MM \to \IR$, is the \textbf{normal time derivative,} meaning the time derivative of an observer who is confined to $\MM$ and moves with $\MM$ in its normal direction.
We admit that the symbols $\partial_t$ and $\partial_{\mathbf{t}}$ are typographically similar.
However, in the rest of the paper we will almost exclusively work with the normal time derivative $\partial_t$ whenever we perform a computation on $\MM_{\reg}$.

A \textbf{smooth mean curvature} flow in $\IR^{n+1}$ (over a time-interval $I$) is given by a subset $\MM \subset \IR^{n+1} \times I$ such that $\MM_{\reg} = \MM$ and such that the family of submanifolds $\MM_t \subset \IR^{n+1}$ moves by its mean curvature, meaning that
\begin{equation} \label{eq_MCF_equation}
 \partial_t \mathbf{x} = \mathbf{H}, 
\end{equation}
where $\partial_t$ is the normal time derivative and $\mathbf{H}$ is the spatial vector field that restricts to the mean curvature vector field on every time-slice $\MM_t \subset \IR^{n+1} \times \{t \}$.
Note that \eqref{eq_MCF_equation} is equivalent to $\partial_t = \partial_{\mathbf{t}} + \mathbf{H}$.
\medskip

A \textbf{weak set flow} (over a time-interval $I$) is a closed subset $\MM \subset \IR^{n+1} \times I$ with the property that for any compact, smooth mean curvature flow $\MM'$ over a subinterval $[t_0, t_1] \subset I$ we have the following exclusion principle:
\[ \text{If $\MM_{t_0} \cap \MM'_{t_0} = \emptyset$, then $\MM_t \cap \MM'_t = \emptyset$ for all $t \in [t_0, t_1]$.} \]
We call $\MM$ a \textbf{maximal weak set flow} or \textbf{level set flow} if there is no other weak set flow $\MM' \supsetneq \MM$ with the property that $\MM'_t = \MM_t$ for some time $t \in I$.
Every smooth mean curvature flow with the property that time-slabs over compact time-intervals are compact is a level set flow.
Conversely, every weak set flow without singular points is a smooth mean curvature flow.

For any closed, smoothly embedded hypersurface $\Sigma \subset \IR^{n+1}$ there is a unique level set flow $\MM \subset \IR^{n+1} \times [0,\infty)$ with $\MM_0 = \Sigma$.
However, this flow may \textbf{fatten,} meaning it may have non-empty interior as a subset of $\IR^{n+1} \times [0,\infty)$.
There are two additional canonical ways of evolving $\Sigma$ by a weak set flow.
Let $K^+ \subset \IR^{n+1}$ be the compact domain bounded by $\Sigma$ and define $K^- := \ov{\IR^{n+1} \setminus K^+}$ to be the corresponding outer domain.
Let $\mathcal{K}^-, \mathcal{K}^+ \subset \IR^3 \times [0, \infty)$ be the level set flows starting from $ K^-, K^+$, respectively.
We define $\MM^- := \partial \mathcal{K}^-$ to be the \textbf{innermost flow} and $\MM^+ := \partial \mathcal{K}^+$  the \textbf{outermost flow} starting from $\Sigma$.
Note that the boundary is taken within $\IR^{n+1} \times [0,\infty)$, so the initial time-slices agree $\MM_0 = \MM^-_0 = \MM^+_0 = \Sigma$.
Both flows $\MM^-, \MM^+$ are weak set flows with initial condition $\MM_0 = \Sigma$.
It is elementary to check  that $\MM \subset \MM^\pm$.
Moreover, if $\MM_{[0,T)} = \MM^-_{[0,T)} = \MM^+_{[0,T)}$, then $\MM_{[0,T)}$ is non-fattening \cite[Remark~2.3]{Hershkovits_White_nonfattening}.
The converse statement is unknown in general, but shown for $n=2$ in Theorem~\ref{Thm_Tfat_Tdisc}.
\medskip

We will also use another weak formulation of the mean curvature equation known as Brakke flow.
An $n$-dimensional \textbf{Brakke flow} (over a time-interval $I$) is given by a family of Radon measures $(\mu_t)_{t \in I}$ on $\IR^{n+1}$ such that:
\begin{enumerate}
\item For almost all $t \in I$ the measure is integer $\HH^{n}$-rectifiable and the associated varifold has locally bounded first variation with variational vector field $\mathbf{H} \in L^1(\IR^{n+1}, \mu_t)$.
\item We have $\int_{t_1}^{t_2} \int_{K} |\mathbf{H}|^2 \, d\mu_t \, dt < \infty$ for any compact $K \subset \IR^{n+1}$ and $[t_1,t_2] \subset I$.
\item For any $[t_1,t_2] \subset I$ and compactly supported, non-negative $u \in C^1_c(\IR^{n+1} \times [t_1,t_2])$ we have
\[ \int_{\IR^{n+1}} u(\cdot, t) \, d\mu_t \bigg|_{t=t_1}^{t=t_2} \leq \int_{t_1}^{t_2} \int_{\IR^{n+1}} \big( \partial_{\mathbf{t}} u + \nabla u \cdot \mathbf{H} - u |\mathbf{H}|^2\big) d\mu_t \, dt. \]
\end{enumerate}
A Brakke flow $(\mu_t)_{t \in I}$ is called \textbf{regular} on an open subset $U \subset \IR^{n+1} \times I$ if there is a smooth, properly embedded hypersurface $N \subset U$ with $\partial N \subset \IR^{n+1} \times \partial I$ such that $d\mathbf{t}$ restricted to $N$ is nowhere vanishing and $\mu_t \lfloor U_t = \HH^{n} \lfloor N_t$ for all $t$.
If $\MM \subset \IR^{n+1} \times I$ is a smooth mean curvature flow, then $(\mu_t := \HH^{n} \lfloor \MM_t)_{t \in I}$ is a Brakke flow that is regular everywhere.
The \textbf{support} of a Brakke flow $(\mu_t)_{t \in I}$ is defined to be
\[ \MM := \ov{\bigcup_{t \in I} (\supp \mu_t) \times \{ t \} }. \]
where the closure is taken within $\IR^{n+1} \times I$.
This closure is a weak set flow \cite[10.7]{Ilmanen_ell_reg}.
It is also clear that if $(\mu_t)_{t \in I}$ is regular at some point $(x_0, t_0) \in \MM$, then $(x_0,t_0) \in \MM_{\reg}$.

We define the ambient Gaussian density centered at $(x_0,t_0) \in \IR^{n+1} \times \IR$ by
\begin{equation} \label{eq_def_rho}
 \rho_{(x_0,t_0)} (x,t) := \frac{1}{4\pi(t_0-t)} e^{-\frac{|x-x_0|^2}{4(t_0 -t)}}, \qquad t < t_0. 
\end{equation}
For an $n$-dimensional Brakke flow $(\mu_t)_{t \in I}$ in $\IR^{n+1}$ we define  \textbf{Huisken's Gaussian area}, whenever $t_0 - \tau \in I$, as
\begin{equation} \label{eq_def_Theta}
 \Theta_{(x_0,t_0)}(\tau) := \int_{\IR^{n+1}} \rho_{(x_0,t_0)} (\cdot,t_0-\tau) \, d\mu_{t_0 - \tau}. 
\end{equation}
Recall that $\Theta_{(x_0,t_0}(\tau)$ is non-decreasing in $\tau$ (see also Lemma~\ref{Lem_entropy_rho}), so if $t_0 \leq \sup I$, then we can define the \textbf{Gaussian density at $(x_0,t_0)$} by
\[ \Theta\big((\mu_t)_{t \in I},(x_0,t_0)\big):= \lim_{\tau \searrow 0} \Theta_{(x_0,t_0)}(\tau). \]
Note that $\Theta((\mu_t)_{t \in I},(x_0,t_0) \geq 1$ if and only if $(x_0, t_0)$ is contained in the support of $(\mu_t)_{t \in I}$.
A Brakke flow $(\mu_t)_{t \in I}$ is called \textbf{unit-regular} if it is regular near every point $(x_0, t_0)$ that satisfies $\Theta((\mu_t)_{t \in I}, \lb (x_0,t_0)) = 1$.
We also we define
\[ \Theta\big((\mu_t)_{t \in I} \big) := \sup_{\substack{(x_0,t_0), \tau>0, \\ t_0 -\tau \in I}} \Theta_{(x_0,t_0)}(\tau). \]

\bigskip

\subsection{Local scale and almost regular mean curvature flows} \label{subsec_almostreg}
In this paper we will mainly consider mean curvature flows in $\IR^3$ with relatively mild singularities, which we will call \emph{almost regular} mean curvature flows.
It will turn out that this class is sufficiently general, as it includes the most common types of singular mean curvature flows.
To define this regularity class, consider first a weak set flow $\MM \subset \IR^{n+1} \times I$

\begin{Definition} \label{Def_rloc}
For any point $(x,t) \in \IR^{n+1} \times I$, we define the \textbf{local scale $r_{\loc}(x,t) \in [0, \infty]$} at $(x,t)$ as follows.
If $(x,t) \in \MM_{\sing}$, then we set $r_{\loc}(x,t) := 0$.
Otherwise, we define $r_{\loc}(x,t)$ to be the supremum of all $r > 0$ such that the following is true for all $t'\in [t-r^2, t+r^2] \cap I$:
\begin{enumerate}[label=(\arabic*)]
\item \label{Def_rloc_1} $[t-r^2, t+r^2] \cap I$ is compact.
\item \label{Def_rloc_2} $P(x,t,r) \cap \MM_{\sing} = \emptyset$.
\item \label{Def_rloc_3} We have the following bound on the second fundamental form 
\[ \big| A_{\MM_{\reg,t'}} \big| \leq 10^{-2} r^{-1} \textQQqq{on} B(x,r) \cap \MM \]
\item \label{Def_rloc_4} The intersection $B(x,r) \cap \MM_{t'}$ is connected.
\end{enumerate}
\end{Definition}

Note for any $(x,t) \in \MM_{\reg}$ the definition implies a lower bound on the normal injectivity radius to $\MM_{\reg,t}$ at $x$ of the form $c r_{\loc}(x,t)$, for some dimensional constant $c>0$. 
We also remark that the bound on the second fundamental form in Property~\ref{Def_rloc_3} has been chosen in such a way that any $t'\in [t-r^2, t+r^2]$ the intersection $B(x,r) \cap \MM_{t'}$ is the graph of a smooth function over an open subset of any tangent plane to $B(x,r) \cap \MM_{t'}$.
In addition, the condition guarantees that for any smaller ball $B(x',r') \subset B(x,r)$ the intersection $B(x',r') \cap \MM_{t'}$ is connected as well.
As a result, we obtain the following regularity property, which we will frequently use throughout this paper.

\begin{Lemma} \label{Lem_rloc_Lipschitz}
For any $t \in I$ the function $r_{\loc}(\cdot, t)$ is $1$-Lipschitz.
More generally, we have the following spacetime bound.
If $P(x',t',r') \subset P(x,t, r_{\loc}(x,t))$, then $r_{\loc}(x',t') \geq r'$.
Note that this bound implies a $C^{\frac12}$-H\"older bound on $r_{\loc}$ in spacetime.
\end{Lemma}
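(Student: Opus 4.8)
\textbf{Proof plan for Lemma~\ref{Lem_rloc_Lipschitz}.}

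The plan is to prove the spacetime monotonicity statement first, since the $1$-Lipschitz bound in a single time-slice is just its restriction to $t' = t$ (with $r' = r_{\loc}(x,t) - |x'-x|$, letting $r'$ approach that value). So suppose $P(x',t',r') \subset P(x,t,r_{\loc}(x,t))$; I want to show $r_{\loc}(x',t') \geq r'$. If $r_{\loc}(x,t) = \infty$ or $r_{\loc}(x,t) = 0$ the claim is trivial or vacuous, so assume $0 < r_{\loc}(x,t) < \infty$. The strategy is to verify conditions \ref{Def_rloc_1}--\ref{Def_rloc_4} of Definition~\ref{Def_rloc} at $(x',t')$ with radius $r'$ directly, using that all four hold at $(x,t)$ for every radius $r < r_{\loc}(x,t)$, hence (by taking a supremum/limit, since the conditions are closed under shrinking $r$) ``morally'' at radius $r_{\loc}(x,t)$ itself on the open parabolic ball. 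First I would observe the elementary containment $P(x',t',r') \subset P(x,t,\rho)$ for all $\rho < r_{\loc}(x,t)$ sufficiently close to $r_{\loc}(x,t)$ is not quite automatic — rather, the hypothesis $P(x',t',r') \subset P(x,t,r_{\loc}(x,t))$ gives $B(x',r') \subset B(x,r_{\loc}(x,t))$ and $[t'-r'^2,t'+r'^2] \subset [t - r_{\loc}(x,t)^2, t + r_{\loc}(x,t)^2]$, and I need a small amount of room; I would handle this by proving $r_{\loc}(x',t') \geq r'$ whenever the \emph{closed} parabolic ball $P(x',t',r')$ is contained in the \emph{open} one $\mathring P(x,t,r_{\loc}(x,t))$, and then get the closed-in-closed case by applying this to radii $r' - \delta$ and letting $\delta \searrow 0$ (the defining supremum for $r_{\loc}(x',t')$ is then $\geq r' - \delta$ for all $\delta$).

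The core of the argument is then four short verifications. For \ref{Def_rloc_1}: compactness of $[t'-r'^2, t'+r'^2]\cap I$ follows since it is a closed subset of $[t-r_{\loc}(x,t)^2, t+r_{\loc}(x,t)^2]\cap I$, and the latter is compact because \ref{Def_rloc_1} holds at $(x,t)$ for radii approaching $r_{\loc}(x,t)$; more directly, $[t'-r'^2,t'+r'^2]$ is a closed bounded interval and $I$ is an interval, so the intersection is compact as soon as it is bounded and contains its endpoints — the content is that it does not extend beyond $I$, which is inherited. For \ref{Def_rloc_2}: $P(x',t',r') \cap \MM_{\sing} \subset \mathring P(x,t,r_{\loc}(x,t)) \cap \MM_{\sing}$, and for any $\rho < r_{\loc}(x,t)$ we have $P(x,t,\rho) \cap \MM_{\sing} = \emptyset$; since $\mathring P(x,t,r_{\loc}(x,t)) = \bigcup_{\rho < r_{\loc}(x,t)} P(x,t,\rho)$, the open ball misses $\MM_{\sing}$ entirely, hence so does $P(x',t',r')$. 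For \ref{Def_rloc_3}: for each $t'' \in [t'-r'^2,t'+r'^2]\cap I$ and each $y \in B(x',r') \cap \MM$, pick $\rho$ with $|y - x| < \rho < r_{\loc}(x,t)$ and $t'' \in [t-\rho^2, t+\rho^2]$ (possible since $(y,t'')$ lies in the open parabolic ball), giving $|A_{\MM_{\reg,t''}}|(y) \leq 10^{-2}\rho^{-1}$; taking the infimum over admissible $\rho$ down to $\max\{|y-x|, \sqrt{|t''-t|}\}$, and noting that this quantity is $< r_{\loc}(x,t)$ but could exceed $r'$ — wait, here is where I must be careful: I do \emph{not} get the bound $10^{-2} r'^{-1}$ on the nose from $(x,t)$; I get $10^{-2} (\text{something} < r_{\loc}(x,t))^{-1}$. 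The fix is that the something is $\geq$ the corresponding quantity measured from $(x',t')$, which is $\leq r'$. Concretely: if $|y - x'| \leq r'$ and $|t'' - t'| \leq r'^2$, I claim there is $\rho \leq r'$ — no. Let me restate: the correct and standard observation is that condition \ref{Def_rloc_3}, like \ref{Def_rloc_4}, is \emph{monotone in $r$} in the sense that if it holds at $(x,t)$ with radius $R$ then it holds at $(x,t)$ with every radius $r \leq R$, AND it enjoys a parabolic-ball transplantation: if $P(x',t',r') \subset P(x,t,R)$ then the bound $|A| \leq 10^{-2}R^{-1} \leq 10^{-2}r'^{-1}$ holds on $B(x',r')\cap\MM_{t''}$ for $t'' \in [t'-r'^2,t'+r'^2]$ because $B(x',r') \subset B(x,R)$ and $[t'-r'^2,t'+r'^2]\subset[t-R^2,t+R^2]$ — this is exactly the inequality $10^{-2}R^{-1} \leq 10^{-2}r'^{-1}$ coming from $r' \leq R$. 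So I should run the argument with $R = \rho$ ranging over $\rho < r_{\loc}(x,t)$, each time getting $|A| \leq 10^{-2}\rho^{-1}$ on $B(x',r')\cap\MM_{t''}$ \emph{only} when $P(x',t',r')\subset P(x,t,\rho)$, i.e., only for $\rho$ close enough to $r_{\loc}(x,t)$, and then $10^{-2}\rho^{-1} < 10^{-2} r'^{-1}$ requires $\rho > r'$ — which holds since $\rho$ is close to $r_{\loc}(x,t) \geq r'$. Good, so letting $\rho \to r_{\loc}(x,t)$ gives $|A| \leq 10^{-2} r_{\loc}(x,t)^{-1} \leq 10^{-2} r'^{-1}$ on all of $B(x',r')\cap\MM_{t''}$. (The remark after Definition~\ref{Def_rloc} already records that this transplantation of \ref{Def_rloc_3}, together with the connectedness it forces on subballs, is the whole point of the $10^{-2}$ choice.) For \ref{Def_rloc_4}: the remark after the definition states precisely that \ref{Def_rloc_3} at scale $R$ forces $B(x'',r'')\cap\MM_{t''}$ to be connected for every subball $B(x'',r'')\subset B(x,R)$; apply this with $R = \rho \to r_{\loc}(x,t)$ and the subball $B(x',r')$ (valid once $P(x',t',r')\subset P(x,t,\rho)$), so $B(x',r')\cap\MM_{t''}$ is connected for all $t'' \in [t'-r'^2,t'+r'^2]\cap I$.

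Having verified \ref{Def_rloc_1}--\ref{Def_rloc_4} at $(x',t')$ for the (closed) radius $r' - \delta$ for every $\delta > 0$ — using $P(x',t',r'-\delta) \subset \mathring P(x,t,r_{\loc}(x,t))$ — the definition of $r_{\loc}$ gives $r_{\loc}(x',t') \geq r' - \delta$, hence $r_{\loc}(x',t') \geq r'$. Finally, for the $1$-Lipschitz statement in a fixed slice: given $x, x' \in \IR^{n+1}$ and $t \in I$ with $r_{\loc}(x,t) > |x-x'|$, pick any $r' < r_{\loc}(x,t) - |x-x'|$ and $\delta$-small; then $P(x',t,r') \subset \mathring P(x,t, r_{\loc}(x,t))$, so $r_{\loc}(x',t) \geq r'$, and letting $r' \nearrow r_{\loc}(x,t) - |x-x'|$ yields $r_{\loc}(x',t) \geq r_{\loc}(x,t) - |x-x'|$; by symmetry $|r_{\loc}(x,t) - r_{\loc}(x',t)| \leq |x - x'|$ (the inequality being trivial if one of the values is $0$ or both exceed the distance in a way that makes it vacuous — it holds in all cases). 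The $C^{1/2}$-Hölder bound in spacetime is then the standard consequence: if $|t - t'| \leq r_{\loc}(x,t)^2$ then $r_{\loc}(x,t') \geq r_{\loc}(x,t) - |t-t'|^{1/2} \cdot (\text{const})$ follows by the same transplantation with a parabolic ball of radius $\sim r_{\loc}(x,t) - |t-t'|^{1/2}$, and combining with the spatial bound gives the claim.

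\textbf{Main obstacle.} The only real subtlety — and the place where a naive argument goes wrong — is the one flagged above in step \ref{Def_rloc_3}: one cannot directly transplant a bound ``at scale $r_{\loc}(x,t)$'' because that scale is a supremum, not attained; one must work with $\rho < r_{\loc}(x,t)$, observe that $P(x',t',r')$ sits inside $P(x,t,\rho)$ once $\rho$ is close enough to $r_{\loc}(x,t)$ (which forces $\rho > r'$), and only then let $\rho \to r_{\loc}(x,t)$. Everything else is bookkeeping with parabolic-ball inclusions plus the already-stated remark that condition \ref{Def_rloc_3} propagates connectedness and the second-fundamental-form bound to all subballs of $B(x,r)$ at all times in $[t-r^2,t+r^2]\cap I$.
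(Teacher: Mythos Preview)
Your proposal is correct and complete. The paper itself omits the proof entirely, treating the lemma as an immediate consequence of Definition~\ref{Def_rloc} together with the remark following it (that the $10^{-2}$ bound in \ref{Def_rloc_3} forces connectedness of $B(x'',r'')\cap\MM_{t''}$ for every subball), which is exactly the transplantation mechanism you spell out; your handling of the supremum via $\rho \nearrow r_{\loc}(x,t)$ is the right way to make this rigorous.
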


We can now state the definition of an almost regular mean curvature flow.

\begin{Definition} \label{Def_almost_regular}
A weak set flow $\MM \subset \IR^3 \times I$ is called \textbf{almost regular} (or an \textbf{almost regular mean curvature flow}) if the following is true:
\begin{enumerate}[label=(\arabic*)]
\item  \label{Def_almost_regular_11} For almost all $t \in I$ and for any $t \in \partial I \cap I$ we have $\MM_t = \MM_{\reg,t}$, so $\MM_t \subset \IR^3$ is a properly embedded, smooth surface.
We call such times \textbf{regular times,} and all other times \textbf{singular times.}
\item \label{Def_almost_regular_2} For any $[t_1, t_2] \subset  I$ and any bounded subset $U \subset \IR^3$ we have
\[ \int_{t_1}^{t_2} \int_{U \cap \MM_{\reg,t}} r_{\loc}^{-2} \, d\HH^2 \, dt < \infty. \]
\item  \label{Def_almost_regular_3} There is an integer $G$ such that for almost all regular times $t$ the genus of $\MM_t$ is $\leq G$.
We denote the smallest such integer by $\genus (\MM)$.
\item  \label{Def_almost_regular_4} There is a unit-regular Brakke flow $(\mu_t)_{t \in I}$ such that $\mu_t = \HH^2 \lfloor \MM_t$ for almost all $t \in I$ and for all $t \in \partial I \cap I$.
\end{enumerate}
We call $\MM$ \textbf{bounded} if all time-slices $\MM_t \subset \IR^3$, $t \in I$ (and thus all time-slabs $\MM_{[t_1,t_2]}$, $[t_1,t_2] \subset I$) are compact. 
\end{Definition}

The next lemma shows that the Brakke flow from Property~\ref{Def_almost_regular_4} is uniquely determined by the subset $\MM \subset \IR^3 \times I$, so we will often refer to it as the \textbf{Brakke flow associated with} $\MM$.
In fact, the existence of this Brakke flow already follows from Properties~\ref{Def_almost_regular_11}--\ref{Def_almost_regular_3}; the primary point of Property~\ref{Def_almost_regular_4} is its unit-regularity.
The following lemma also shows that the Brakke flow associated with an almost regular flow experiences no mass drop (see \cite{Metzger_Schulze_08,Payne_20} for related results).

We also remark that throughout this paper, we will understand that Gaussian area quantities $\Theta_{(x_0,t_0)}(\tau)$ are taken with respect to the  Brakke flow $(\mu)_{t \in I}$ associated to an almost regular mean curvature flow $\MM$.
In particular, we will often write $\Theta(\MM)$ instead of $\Theta((\mu)_{t \in I})$.

\begin{Lemma} \label{Lem_Brakke_unique}
If $\MM$ is an almost regular flow, then the associated Brakke flow is uniquely determined by $\mu_t = \HH^2 \lfloor \MM_{\reg,t}$ for all $t \in I$.
Moreover, for any test function $u \in C^1_c (\IR^3 \times [t_1,t_2])$, where $[t_1, t_2] \subset \Int I$, the map $t \mapsto \int_{\IR^3} u(\cdot, t) \, d\mu_t$ is continuous and we have equality in the defining property of the Brakke flow:
\begin{equation} \label{eq_eq_Brakke}
 \int_{\IR^3} u(\cdot, t) d\mu_t \bigg|_{t=t_1}^{t=t_2}=\int_{t_1}^{t_2} \int_{\IR^3} \big(\partial_{\mathbf{t}} u + \nabla u \cdot \mathbf{H} - u |\mathbf{H}|^2 \big) d\mu_t dt, 
\end{equation}
\end{Lemma}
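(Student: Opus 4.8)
The plan is to establish the three assertions in order: (i) that $\mu_t = \HH^2\lfloor\MM_{\reg,t}$ for \emph{every} $t\in I$ (not just a.e.\ $t$), (ii) continuity of $t\mapsto\int u\,d\mu_t$, and (iii) the equality \eqref{eq_eq_Brakke}. The starting point is Property~\ref{Def_almost_regular_4}: there is some unit-regular Brakke flow $(\mu_t)$ with $\mu_t=\HH^2\lfloor\MM_t=\HH^2\lfloor\MM_{\reg,t}$ for a.e.\ $t$ and for $t\in\partial I\cap I$. For the first assertion, fix an arbitrary singular time $t_0\in\Int I$ and a bounded open $U\subset\IR^3$. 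The integral bound in Property~\ref{Def_almost_regular_2} forces $\HH^2(U\cap\MM_{\sing,t_0})=0$ — more precisely, since $r_{\loc}$ is $C^{1/2}$-H\"older in spacetime by Lemma~\ref{Lem_rloc_Lipschitz} and $\MM_{\sing,t_0}=\{x: r_{\loc}(x,t_0)=0\}$, a standard Vitali/covering argument combined with \ref{Def_almost_regular_2} shows that the time-slice $\MM_{\sing,t_0}$ has vanishing $\HH^2$-measure (otherwise $\int r_{\loc}^{-2}$ would diverge on a slab around $t_0$). Hence $\HH^2\lfloor\MM_{\reg,t_0}=\HH^2\lfloor\MM_{t_0}$. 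To identify this with $\mu_{t_0}$: by Brakke's general structure theory (upper-semicontinuity of $\mu_t(K)$ from one side, lower-semicontinuity of $\supp$, plus the monotonicity formula), the measures $\mu_t$ vary in a controlled way, and the fact that $\mu_t=\HH^2\lfloor\MM_{\reg,t}$ on a dense set of times together with local smooth convergence of regular time-slices pins down $\mu_{t_0}$ on the open set $U\cap\MM_{\reg,t_0}$; the complement carries no mass on either side. This yields $\mu_{t_0}=\HH^2\lfloor\MM_{\reg,t_0}$, proving uniqueness of the associated Brakke flow as well.

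For continuity of $t\mapsto\int_{\IR^3}u(\cdot,t)\,d\mu_t$ on $[t_1,t_2]\subset\Int I$: the Brakke inequality (property (3) in the definition of Brakke flow) applied to $u$ and to $-u$ plus a time-cutoff gives one-sided bounds on the increments, so the map has bounded variation and its one-sided limits exist everywhere; at regular times it is continuous by smoothness of the flow. The key input ruling out jumps at singular times is \emph{no mass drop}: one shows $\lim_{t\to t_0^-}\mu_t(\IR^3\cap\text{(ball)})\le\mu_{t_0}(\text{ball})$ from Brakke's inequality and the reverse inequality $\ge$ from Fatou/lower semicontinuity of support together with part (i) identifying $\mu_{t_0}$ with full $\HH^2$-measure on $\MM_{t_0}$. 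Combining, $\mu_t\to\mu_{t_0}$ weakly as $t\to t_0$, which gives continuity of $\int u\,d\mu_t$. With continuity in hand and both one-sided Brakke inequalities (using $\pm u$), the defining inequality must be an equality, yielding \eqref{eq_eq_Brakke}; this is the classical argument that a Brakke flow with no mass loss satisfies Brakke's inequality with equality (equivalently, is a ``Brakke flow with equality'' or integral Brakke flow without cancellation).

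The main obstacle is the no-mass-drop step — specifically, upgrading the a.e.\ identity $\mu_t=\HH^2\lfloor\MM_{\reg,t}$ to all times and showing weak continuity of $(\mu_t)$ across singular times. The one-sided (upper) bound is soft, coming directly from Brakke's inequality; the difficulty is the lower bound, i.e.\ showing no mass is lost as $t\to t_0$. Here Property~\ref{Def_almost_regular_2} is doing the real work: it controls the size of the singular slice and, via the $C^{1/2}$-regularity of $r_{\loc}$, prevents a positive-measure chunk of the surface from being ``hidden'' in the singular set at time $t_0$; combined with lower semicontinuity of the support of a weak set flow and the unit-regularity from \ref{Def_almost_regular_4}, one recovers the full mass. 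I would expect the cleanest route to cite or adapt the existing no-mass-drop results of Metzger--Schulze and Payne referenced in the text, checking that their hypotheses are implied by Properties~\ref{Def_almost_regular_11}--\ref{Def_almost_regular_4}, rather than reproving everything from scratch.
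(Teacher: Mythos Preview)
There is a genuine gap: you cannot apply the Brakke inequality to $-u$. Property~(3) in the definition of a Brakke flow requires the test function to be \emph{nonnegative}; this one-sidedness is exactly what allows Brakke flows to lose mass instantaneously. So your argument for continuity (``one-sided bounds on the increments from $u$ and $-u$'') and your derivation of the equality \eqref{eq_eq_Brakke} (``both one-sided Brakke inequalities'') both fail at this point. Citing Metzger--Schulze or Payne does not repair this, since those results also need hypotheses you have not verified, and in any case the paper's argument is self-contained.

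The paper takes a different route that avoids any abstract no-mass-drop statement. Set $\hat\mu_t:=\HH^2\lfloor\MM_{\reg,t}$. The key step is to prove the \emph{equality} \eqref{eq_eq_Brakke} directly for $\hat\mu_t$ (not for $\mu_t$). To do this, one multiplies $u$ by a cutoff $\psi_\eps=\psi(\eps^{-1}\td r)$, where $\td r$ is a smoothing of $r_{\loc}$; since $u\psi_\eps$ is compactly supported in $\MM_{\reg}$, the Fundamental Theorem of Calculus gives an exact identity, and the error terms involving $\partial_t\psi_\eps$ and $\nabla\psi_\eps\cdot\mathbf{H}$ are supported on $\{\eps\le r_{\loc}\le 8\eps\}$ and are controlled by $\int r_{\loc}^{-2}\,d\HH^2\,dt$ restricted to that set, hence vanish as $\eps\to 0$ by Property~\ref{Def_almost_regular_2}. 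Once $\hat\mu_t$ is known to satisfy the equality, one compares with the Brakke \emph{inequality} for $\mu_t$: if $t_1$ is regular (so $\mu_{t_1}=\hat\mu_{t_1}$), the two together give $\int u\,d\mu_{t_2}\le\int u\,d\hat\mu_{t_2}$; interchanging $t_1,t_2$ (with $t_2$ regular) gives the reverse, hence $\mu_t=\hat\mu_t$ for all $t$. Continuity of $t\mapsto\int u\,d\mu_t$ then follows immediately from the equality \eqref{eq_eq_Brakke}, since the right-hand side is an integral over $[t_1,t_2]$ of a fixed $L^1$ function. Your attempt to first prove $\HH^2(\MM_{\sing,t_0})=0$ via a covering argument is also shaky as stated (Property~\ref{Def_almost_regular_2} integrates over $\MM_{\reg,t}$, not the singular set), but this issue is moot once one adopts the cutoff approach.
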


\begin{proof}
Set $\hat\mu_t := \HH^2 \lfloor \MM_{\reg,t}$, so $\hat\mu_t = \mu_t$ for all regular times.
We first verify that \eqref{eq_eq_Brakke} holds for $\mu_t$ replaced with $\hat\mu_t$.
Since we can split the integral on the right-hand side into two, we may assume without loss of generality that $t_1$ or $t_2$ is a regular time.
Fix a test function $u \in C^1_c (\IR^3 \times [t_1,t_2])$.
Since by Lemma~\ref{Lem_rloc_Lipschitz}, the functions $r_{\loc}(\cdot, t)$ and $r^2_{\loc}(x, \cdot)$ are $1$-Lipschitz for every fixed $t \in I$ and $x \in \IR^3$, respectively, we can find a smoothing $\td r : \IR^3 \times I \to [0,\infty)$ such that $\frac12 r_{\loc} \leq \td r \leq 2 r_{\loc}$ and $|\nabla \td r|,|\partial_t \td r^2 | \leq 2$.
Let $\psi : [0,\infty) \to [0,1]$ be a cutoff function with $\psi \equiv 0$ on $[0,2]$, $\psi \equiv 1$ on $[4, \infty)$ and $|\psi'| \leq 10$.
Define $\psi_\eps : \IR^3 \times [t_1, t_2]$ by
\[ \psi_\eps (x,t) := \psi (\eps^{-1} \td r(x,t)) \]
Since $\psi_\eps$ vanishes in a neighborhood of $\MM_{\sing}$, we find that for small enough $\eps > 0$
\begin{multline} \label{eq_Rut1t2}
 \int_{\IR^3} (u\psi_\eps) (\cdot, t) d\hat\mu_t \bigg|_{t=t_1}^{t=t_2}
= \int_{t_1}^{t_2} \int_{\IR^3} \big(\partial_{\mathbf{t}} (u\psi_\eps) + \nabla (u\psi_\eps) \cdot \mathbf{H} - (u\psi_\eps) |\mathbf{H}|^2 \big) d\hat\mu_t dt  \\
= \int_{t_1}^{t_2} \int_{\IR^3} \big(\partial_{\mathbf{t}} u + \nabla u \cdot \mathbf{H} - u |\mathbf{H}|^2 \big)\psi_\eps d\hat\mu_t dt +  \int_{t_1}^{t_2} \int_{\IR^3} u \big( \partial_{\mathbf{t}} \psi_\eps + \nabla \psi_\eps \cdot \mathbf{H} \big) d\hat\mu_t dt.
\end{multline}
Note that since $u \psi_\eps$ is $C^1$ with compact support in $\MM_{\reg}$, the integral of $u\psi_\eps$ over a time slice $\MM_{\reg,t}$ is well-defined, and the first equality holds by the Fundamental Theorem of Calculus.
The absolute value of the last integral is bounded by
\begin{align*}
 C \int_{t_1}^{t_2} \int_{\IR^3} & \big( |\partial_{\mathbf{t}} \psi_\eps| + |\nabla \psi_\eps|  |\mathbf{H}| \big) d\hat\mu_t dt \\
&\leq C \int_{t_1}^{t_2} \int_{\IR^3} |\partial_{\mathbf{t}} \psi_\eps| \, d\hat\mu_t dt
+ C \bigg( \int_{t_1}^{t_2} \int_{\IR^3}  |\nabla \psi_\eps|^2  d\hat\mu_t dt \bigg)^{1/2} \bigg( \int_{t_1}^{t_2} \int_{\IR^3} |\mathbf{H}|^2  d\hat\mu_t dt \bigg)^{1/2}  \\
&\leq C  \int_{t_1}^{t_2} \int_{\{ \eps \leq r_{\loc} \leq 8\eps \}}  r_{\loc}^{-2} \,d\hat\mu_t \, dt + C \bigg( \int_{t_1}^{t_2} \int_{\{ \eps \leq r_{\loc} \leq 8\eps \}} r_{\loc}^{-2} d\hat\mu_t \, dt \bigg)^{1/2} \xrightarrow[\eps \to 0 ]{} 0,
\end{align*}
where $C$ is a generic constant.
Moreover, by Property~\ref{Def_almost_regular_2} of Definition~\ref{Def_almost_regular},
\[ \int_{t_1}^{t_2} \int_{\IR^3} \big|\big(\partial_{\mathbf{t}} u + \nabla u \cdot \mathbf{H} - u |\mathbf{H}|^2 \big| d\hat\mu_t dt \leq C \int_{t_1}^{t_2} \int_{\MM_t} \big( 1+r_{\loc}^{-1}+ r_{\loc}^{-2} \big) \, d\HH^2 \, dt < \infty. \]
Combine this with \eqref{eq_Rut1t2} and note that for some $i \in \{ 1,2\}$ the time $t_i$ is regular, so
\[  \int_{\IR^3} (u\psi_\eps) (\cdot, t_i) d\hat\mu_{t_i} \xrightarrow[\eps \to 0]{} \int_{\IR^3} u (\cdot, t_i) d\hat\mu_{t_i}, \]
where the limit is finite. 
It follows that the same convergence holds for both $i =1,2$ and that \eqref{eq_eq_Brakke} holds  in the limit for $\mu_t$ replaced with $\hat\mu_t$.

Comparing \eqref{eq_eq_Brakke} with the definition of a Brakke flow implies that if $t_1$ is regular, then
\begin{align*}
 \int_{\IR^3} u(\cdot, t_2) d\mu_{t_2}
&= \int_{\IR^3} u(\cdot, t) d\mu_{t} \bigg|_{t=t_1}^{t=t_2} + \int_{\IR^3} u(\cdot, t_1) d\mu_{t_1} \\
&\leq \int_{t_1}^{t_2} \int_{\IR^3} \big(\partial_{\mathbf{t}} u + \nabla u \cdot \mathbf{H} - u |\mathbf{H}|^2 \big) d\mu_t dt + \int_{\IR^3} u(\cdot, t_1) d\mu_{t_1} \\
&= \int_{t_1}^{t_2} \int_{\IR^3} \big(\partial_{\mathbf{t}} u + \nabla u \cdot \mathbf{H} - u |\mathbf{H}|^2 \big)  d\hat \mu_t dt + \int_{\IR^3} u(\cdot, t_1) d\hat\mu_{t_1} \\
&=  \int_{\IR^3} u(\cdot, t) d\hat\mu_{t} \bigg|_{t=t_1}^{t=t_2} + \int_{\IR^3} u(\cdot, t_1) d\hat\mu_{t_1}
= \int_{\IR^3} u(\cdot, t_2) d\hat\mu_{t_2}.
\end{align*}
This shows that $\mu_t \leq \hat\mu_t$ for all $t \in I$.
Repeating the same argument, but interchanging the roles of $t_1, t_2$ yields the opposite inequality.
So $\mu_t = \hat\mu_t$, which finishes the proof.
\end{proof}
\bigskip

\subsection{Further properties of mean curvature flows} \label{subsec_furth_props}
Let $\MM \subset \IR^{n+1} \times I$ be a weak set flow.
We will often consider the \textbf{heat operator}
\[ \square = \partial_t - \triangle \]
on the regular part $\MM_{\reg}$.
Here $\partial_t$ denotes the normal time derivative and $\triangle$ the spatial Laplacian.
A function $u \in C^2 (\MM_{\reg})$ satisfying $\square u = 0$ is called \textbf{solution to the heat equation.}
Likewise, if $\square u \leq 0$ or $\square u \geq 0$, then we call $u$ a \textbf{sub-solution} or \textbf{super-solution,} respectively.
The formal conjugate operator is called the \textbf{conjugate heat operator}
\[ \square^* = - \partial_t - \triangle + |\mathbf{H}|^2. \]
As before, a function $v \in C^2 (\MM_{\reg})$ satisfying $\square^* u = 0$ is called \textbf{solution to the conjugate heat equation} and we may also use the terms sub-solution and super-solution if $\square^* v \geq 0$ or $\square^* v \leq 0$, respectively. 
We remark that the conjugacy relation between $\square$ and $\square^*$ implies that whenever $u, v \in C^2_c(\MM_{\reg})$ and $I$ is open, then
\[ \int_I \int_{\MM_{\reg,t}} (\square u) v \, d\HH^2 dt 
= \int_I \int_{\MM_{\reg,t}}  u (\square^* v) \, d\HH^2 dt. \]
Now recall the ambient Gaussian density function from \eqref{eq_def_rho}:
\[ \rho_{(x_0,t_0)} (x,t) := \frac{1}{4\pi(t_0-t)} e^{-\frac{|x-x_0|^2}{4(t_0 -t)}}, \qquad t < t_0  \]
and the Gaussian area quantity $\Theta_{(x_0,t_0)}(\tau)$ from \eqref{eq_def_Theta}.
The following lemma states Huisken's monotonicity formula \cite{Huisken_monotonicity} for the entropy both in global (i.e., integral) and local (i.e., differential) form.

\begin{Lemma} \label{Lem_entropy_rho}
Let $(\mu_t)_{t \in I}$ be a Brakke flow on $\IR^{n+1}$  with $\Theta((\mu_t)_{t \in I}) < \infty$ and denote by $\mathbf{H}$ its variational (mean curvature) vector field.
Fix $(x_0, t_0) \in \IR^{n+1} \times \IR$.
\begin{enumerate}[label=(\alph*)]
\item \label{Lem_entropy_rho_a} 
The Gaussian area $\Theta_{(x_0,t_0)}(\tau)$ is non-decreasing in $\tau$, as long as $\tau > 0$ and $t_0 - \tau \in I$.
\item \label{Lem_entropy_rho_b} In fact for $0 < \tau_1 < \tau_2$ with $[t_0 -\tau_2,t_0-\tau_1] \subset I$ we have
\begin{equation} \label{eq_TT_minus}
 \Theta_{(x_0,t_0)}(\tau_2) - \Theta_{(x_0,t_0)}(\tau_2)
\geq \int_{t_0-\tau_2}^{t_0-\tau_1} \int_{\MM_{\reg,t}}\bigg| \frac{\mathbf{x}^\perp}{2(t_0-\mathbf{t})} + \mathbf{H} \bigg|^2 \rho_{(x_0,t_0)} \, d\mu_t \, dt, 
\end{equation}
where $\mathbf{x}^\perp$ denotes the projection of $\mathbf{x}$ onto the normal bundle of each time-slice $\MM_{\reg,t}$.
\item  \label{Lem_entropy_rho_bb} If $(\mu_t)_{t \in I}$ is the Brakke flow associated with a bounded almost regular mean curvature flow $\MM \subset \IR^3 \times I$, then we even have equality in \eqref{eq_TT_minus}.
\item \label{Lem_entropy_rho_c} Consider a weak set flow $\MM \subset \IR^{n+1} \times I$ and let $\square^* = -\partial_t - \triangle + |\mathbf{H}|^2$ be the conjugate heat operator on $\MM_{\reg}$.
Then we have on $\MM_{\reg, <t_0}$
\[ \square^* \rho_{(x_0,t_0)} = \bigg| \frac{\mathbf{x}^\perp}{2(t_0- \mathbf{t})} + \mathbf{H} \bigg|^2 \rho_{(x_0,t_0)} \geq 0 . \]
\end{enumerate}
\end{Lemma}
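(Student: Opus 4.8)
The plan is to prove the pointwise identity in part~\ref{Lem_entropy_rho_c} first, as it is the analytic core of the lemma, and then to derive parts~\ref{Lem_entropy_rho_a}, \ref{Lem_entropy_rho_b} and~\ref{Lem_entropy_rho_bb} from it by inserting $\rho_{(x_0,t_0)}$ into the defining (in)equality of a Brakke flow. For~\ref{Lem_entropy_rho_c} I would translate so that $(x_0,t_0)=(0,0)$, write $\tau:=-\mathbf{t}>0$, and compute directly on $\MM_{\reg}$. Recall that there $\partial_t = \partial_{\mathbf{t}} + \mathbf{H}\cdot\nabla$ and that for the restriction of an ambient function $f$ one has $\triangle_{\MM_t} f = \triangle_{\IR^{n+1}} f - \nabla^2 f(\nu,\nu) + \mathbf{H}\cdot\nabla f$, with $\nu$ a unit normal and all right-hand derivatives ambient. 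From \eqref{eq_def_rho} one gets $\nabla\rho = -\tfrac{\mathbf{x}}{2\tau}\rho$, $\nabla^2\rho = \big(-\tfrac1{2\tau}\id + \tfrac{\mathbf{x}\otimes\mathbf{x}}{4\tau^2}\big)\rho$, hence $\triangle_{\IR^{n+1}}\rho = \big(-\tfrac{n+1}{2\tau}+\tfrac{|\mathbf{x}|^2}{4\tau^2}\big)\rho$, $\nabla^2\rho(\nu,\nu) = \big(-\tfrac1{2\tau}+\tfrac{|\mathbf{x}^\perp|^2}{4\tau^2}\big)\rho$, and $\partial_{\mathbf{t}}\rho = \big(\tfrac{n}{2\tau}-\tfrac{|\mathbf{x}|^2}{4\tau^2}\big)\rho$. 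Substituting into $\square^*\rho = -\partial_t\rho - \triangle_{\MM_t}\rho + |\mathbf{H}|^2\rho = -\partial_{\mathbf{t}}\rho - \triangle_{\IR^{n+1}}\rho + \nabla^2\rho(\nu,\nu) - 2\,\mathbf{H}\cdot\nabla\rho + |\mathbf{H}|^2\rho$, the $\mathbf{H}$-free terms telescope to $\tfrac{|\mathbf{x}^\perp|^2}{4\tau^2}\rho$ while $-2\,\mathbf{H}\cdot\nabla\rho = \tfrac{\mathbf{H}\cdot\mathbf{x}^\perp}{\tau}\rho$, and the three surviving terms assemble into $\big|\mathbf{H}+\tfrac{\mathbf{x}^\perp}{2\tau}\big|^2\rho \ge 0$. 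This is the classical Huisken computation \cite{Huisken_monotonicity}.

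For~\ref{Lem_entropy_rho_b}: combining~\ref{Lem_entropy_rho_c} with $\partial_{\mathbf{t}}\rho + \nabla\rho\cdot\mathbf{H} = \partial_t\rho$ shows that, on the regular part, the integrand occurring in the Brakke inequality satisfies $\partial_{\mathbf{t}}\rho + \nabla\rho\cdot\mathbf{H} - \rho|\mathbf{H}|^2 = -\big|\mathbf{H}+\tfrac{\mathbf{x}^\perp}{2(t_0-\mathbf{t})}\big|^2\rho - \triangle_{\MM_t}\rho$, while $\int_{\MM_t}\triangle_{\MM_t}\rho\,d\mu_t = 0$ by the first variation formula for $\mu_t$ (equivalently, the divergence theorem when $\MM_t$ is a closed surface). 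I would therefore apply the Brakke inequality (Property~(3) of the definition) with the test function $\rho_{(x_0,t_0)}\phi_R$, where $\phi_R\equiv1$ on $B(x_0,R)$, $\supp\phi_R\subset B(x_0,2R)$, $|\nabla\phi_R|\le C/R$, $|\triangle\phi_R|\le C/R^2$, and let $R\to\infty$; with $[t_1,t_2]=[t_0-\tau_2,t_0-\tau_1]$ this gives
\[ \Theta_{(x_0,t_0)}(\tau_1) - \Theta_{(x_0,t_0)}(\tau_2) \;\le\; -\int_{t_0-\tau_2}^{t_0-\tau_1}\int_{\MM_{\reg,t}}\Big|\tfrac{\mathbf{x}^\perp}{2(t_0-\mathbf{t})} + \mathbf{H}\Big|^2 \rho_{(x_0,t_0)}\, d\mu_t\, dt, \]
which is \eqref{eq_TT_minus}; part~\ref{Lem_entropy_rho_a} is the special case obtained by discarding the nonnegative right-hand side.

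The one genuinely delicate point --- and what I expect to be the main (if routine) obstacle --- is that $\rho_{(x_0,t_0)}$ is not compactly supported, so I must verify that the cutoff errors, namely integrals of $\rho\,(|\nabla\phi_R| + |\triangle\phi_R| + |\nabla\phi_R||\mathbf{H}|)$ over the annulus $B(x_0,2R)\setminus B(x_0,R)$, tend to $0$. For this I would use: (i) the hypothesis $\Theta((\mu_t)_{t\in I})<\infty$, which on evaluating \eqref{eq_def_Theta} at the center $(x_0,t+r^2)$ and scale $r^2$ yields the Euclidean volume bound $\mu_t(B(x_0,r))\le C(\Theta)\,r^n$ uniformly for $t$ in a compact time-interval, so that the Gaussian decay of $\rho$ on the annulus defeats this polynomial volume growth; and (ii) the bound $\int_{t_1}^{t_2}\int|\mathbf{H}|^2\rho\,d\mu_t\,dt<\infty$, which --- combined with Cauchy--Schwarz --- controls the $|\nabla\phi_R||\mathbf{H}|$ term and itself follows from Property~(2) of the Brakke definition together with (i) and a standard bootstrapping within the cutoff argument. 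All of this is standard; see e.g.\ \cite{Huisken_monotonicity, Ilmanen_ell_reg}. By contrast, identity~\ref{Lem_entropy_rho_c} is a straightforward calculation and~\ref{Lem_entropy_rho_a} is immediate from~\ref{Lem_entropy_rho_b}.

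Finally, for~\ref{Lem_entropy_rho_bb} I would run the argument of~\ref{Lem_entropy_rho_b} again, but invoking Lemma~\ref{Lem_Brakke_unique}: for a bounded almost regular flow one has $\mu_t = \HH^2\lfloor\MM_{\reg,t}$ for all $t\in I$ and, crucially, \eqref{eq_eq_Brakke} holds as an \emph{equality} for every $u\in C^1_c(\IR^3\times[t_1,t_2])$ with $[t_1,t_2]\subset\Int I$. Since $\MM$ is bounded, $\bigcup_{t\in[t_1,t_2]}\MM_t$ is bounded, so for large $R$ the cutoff $\phi_R$ is identically $1$ on a neighborhood of the flow over $[t_1,t_2]$ and all cutoff errors vanish identically; moreover each regular time-slice $\MM_{\reg,t}$ is a closed surface, so $\int_{\MM_{\reg,t}}\triangle_{\MM_{\reg,t}}\rho\,d\HH^2 = 0$ exactly. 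Hence the displayed inequality above becomes an equality, i.e.\ equality holds in \eqref{eq_TT_minus}. A one-line approximation --- applying the equality on $[t_0-\tau_2,\,t_0-\tau_1-\delta]$ and letting $\delta\searrow0$, using monotonicity and continuity of $\tau\mapsto\Theta_{(x_0,t_0)}(\tau)$ --- covers the case $t_0-\tau_1\in\partial I$, which is not directly admitted by \eqref{eq_eq_Brakke}.
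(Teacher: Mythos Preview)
Your proposal is correct and follows essentially the same approach as the paper: the pointwise computation of~\ref{Lem_entropy_rho_c} is the same direct calculation (organized slightly differently), while for~\ref{Lem_entropy_rho_a},~\ref{Lem_entropy_rho_b} the paper simply cites them as well known and for~\ref{Lem_entropy_rho_bb} invokes Lemma~\ref{Lem_Brakke_unique}, exactly as you do. Your write-up is in fact more detailed than the paper's, which dispatches~\ref{Lem_entropy_rho_a},~\ref{Lem_entropy_rho_b} by reference; one small remark is that your computation in~\ref{Lem_entropy_rho_c} uses the $n$-dimensional normalization $(4\pi\tau)^{-n/2}$ for $\rho$, whereas the paper's \eqref{eq_def_rho} fixes the $n=2$ normalization $(4\pi\tau)^{-1}$ --- this does not affect the argument, but you should be aware of the discrepancy.
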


\begin{proof}
Assertions~\ref{Lem_entropy_rho_a}, \ref{Lem_entropy_rho_b} are well known; see for example \cite[Corollary~5.13]{Schulze_intro_Brakke}.
Assertion~\ref{Lem_entropy_rho_bb} follows from the same computation combined with the fact that we have equality in the defining property of the Brakke flow by Lemma~\ref{Lem_Brakke_unique}.

To verify Assertion~\ref{Lem_entropy_rho_c}, suppose without loss of generality that $(x_0,t_0) = (\vec 0, 0)$ and write $\rho = \rho_{(x_0,t_0)}$.
Its normal time derivative is
\[ -\partial_t \rho = \bigg({ - \frac{1}{|\mathbf{t}|}  + \frac{|\mathbf{x}|^2}{4\mathbf{t}^2} }\bigg) \rho  -  \mathbf{H} \cdot \nabla \bigg( \frac1{4\pi|\mathbf{t}|} e^{-|\mathbf{x}|^2/4|\mathbf{t}|} \bigg) 
= \bigg({ - \frac{1}{|\mathbf{t}|}  + \frac{|\mathbf{x}|^2}{4\mathbf{t}^2} +  \frac{\mathbf{x} \cdot \mathbf{H}}{2|\mathbf{t}|}} \bigg) \rho,    \]
Write $\mathbf{N}$ for a unit normal vector field of $\MM_{\reg, t}$ and $\mathbf{H} = H \, \mathbf{N}$.
Then the spatial Laplacian of $\rho$ can be computed as
\[ \triangle \rho 
= \triangle_{\IR^3} \rho  - \partial^2_{\mathbf{N}} \rho  + H \, \partial_{\mathbf{N}} \rho 
= \bigg({ - \frac{3}{2|\mathbf{t}|} + \frac{|\mathbf{x}|^2}{4|\mathbf{t}|^2} + \frac1{2|\mathbf{t}|} - \frac{(\mathbf{x} \cdot \mathbf{N})^2}{4|\mathbf{t}|^2} - H \frac{\mathbf{x} \cdot \mathbf{N}}{2|\mathbf{t}|} }\bigg) \rho   . \]
It follows that
\[ \square^* \rho = - \partial_t \rho  - \triangle \rho  + H^2 \rho 
= \bigg( \frac{(\mathbf{x} \cdot \mathbf{N})^2}{4|\mathbf{t}|^2} + H \frac{\mathbf{x} \cdot \mathbf{N}}{|t|} + H^2 \bigg) \rho 
= \bigg( \frac{\mathbf{x} \cdot \mathbf{N}}{2|\mathbf{t}|} + H \bigg)^2 \rho  \geq 0
. \qedhere \]
\end{proof}
\bigskip

The next lemma, which is essentially \cite[Theorem~4]{Ilmanen_sing_MCF_surf} provides an upper bound on the $L^2$-norm of the second fundamental form of a bounded almost regular mean curvature flow in terms of its entropy and genus.

\begin{Lemma} \label{Lem_intA2bound}
Let $\MM \subset \IR^3 \times I$ be a bounded almost regular mean curvature flow and $(x_0, t_0) \in \IR$ and $0 < \tau_1 < \tau_2$ with $\tau_2 \geq 2\tau_1$ and $[t_0 - \tau_2, t_0 - \tau_1] \subset I$.
Suppose that
\[ \Theta_{(x_0,t_0)}(\tau_2) \leq A, \qquad \genus\big(\MM_{[t_0 - \tau_2, t_0 - \tau_1]} \big) \leq A. \]
Then 
\[ \int_{t_0 - \tau_2}^{t_0 - \tau_1} \frac1{t_0 -t} \int_{\MM_{\reg,t} \cap B(x_0, A \sqrt{t_0 - t})} |A|^2 d\HH^2 dt \leq C(A) \log \bigg({\frac{\tau_2}{\tau_1}}\bigg). \]
\end{Lemma}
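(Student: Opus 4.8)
The plan is to establish the equivalent, scale--invariant weighted bound
\[
\int_{t_0-\tau_2}^{t_0-\tau_1} \int_{\MM_{\reg,t}} |A|^2 \, \rho \, d\HH^2 \, dt \;\leq\; C(A) \log\Big( \tfrac{\tau_2}{\tau_1} \Big), \qquad \rho := \rho_{(x_0,t_0)},
\]
which implies the lemma since $\rho(x,t) \geq \frac{1}{4\pi(t_0-t)}e^{-A^2/4}$ on $B(x_0,A\sqrt{t_0-t})$, since the singular times are null by Definition~\ref{Def_almost_regular}\ref{Def_almost_regular_11} (so $\MM_{\reg,t}=\MM_t$ for a.e.\ $t$), and since $\mu_t = \HH^2\lfloor\MM_{\reg,t}$ by Lemma~\ref{Lem_Brakke_unique}. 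On the regular part, where $n=2$, the plan is to use the Gauss equation $|A|^2 = |\mathbf{H}|^2 - 2K$ ($K$ the Gauss curvature of the time-slice) and bound the two terms separately; the genus hypothesis enters only through the $K$-term, which is essentially \cite[Theorem~4]{Ilmanen_sing_MCF_surf} and is the step I expect to be the main obstacle.

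For the $|\mathbf{H}|^2$-term I would use Huisken's monotonicity and two applications of Young's inequality. On $\MM_{\reg,t}$,
\[
\tfrac12 |\mathbf{H}|^2 \;\leq\; \Big| \mathbf{H} + \tfrac{\mathbf{x}^\perp}{2(t_0-t)} \Big|^2 + \tfrac{|\mathbf{x}^\perp|^2}{4(t_0-t)^2} \;\leq\; \Big| \mathbf{H} + \tfrac{\mathbf{x}^\perp}{2(t_0-t)} \Big|^2 + \tfrac{|x-x_0|^2}{4(t_0-t)^2},
\]
where $\mathbf{x}^\perp$ denotes the normal projection of $x-x_0$. The spacetime integral against $\rho$ of the first term on the right is $\leq \Theta_{(x_0,t_0)}(\tau_2)\leq A$ by Lemma~\ref{Lem_entropy_rho}\ref{Lem_entropy_rho_b}. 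For the second term the plan is to use the elementary identity $\tfrac{|x-x_0|^2}{(t_0-t)^2}\rho = \tfrac{4}{t_0-t}\rho - 4\,\partial_{\mathbf{t}}\rho$: the first piece integrates to $\leq 4A\log(\tau_2/\tau_1)$ because $\Theta_{(x_0,t_0)}(\tau)\leq A$ for $\tau\leq\tau_2$ (monotonicity), while the second is handled by the Brakke inequality applied to $\rho$, using once more Young's inequality in the form $\nabla\rho\cdot\mathbf{H} - |\mathbf{H}|^2\rho = \big(-\tfrac{\mathbf{x}^\perp\cdot\mathbf{H}}{2(t_0-t)} - |\mathbf{H}|^2\big)\rho \leq \tfrac{|x-x_0|^2}{8(t_0-t)^2}\rho$; the term $\tfrac12\iint\tfrac{|x-x_0|^2}{(t_0-t)^2}\rho$ that then appears on the right is finite for a bounded almost regular flow on $[t_0-\tau_2,t_0-\tau_1]$ (using $|A|\lesssim r_{\loc}^{-1}$ and Definition~\ref{Def_almost_regular}\ref{Def_almost_regular_2}) and is absorbed. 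With $\tau_2\geq 2\tau_1$ this gives $\iint|\mathbf{H}|^2\rho\leq C(A)\log(\tau_2/\tau_1)$.

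The harder half is $-\iint K\,\rho\leq C(A)\log(\tau_2/\tau_1)$. For a.e.\ $t$ the slice $\MM_t=\MM_{\reg,t}$ is a compact smooth closed surface of genus $\leq A$, so $\int_{\MM_t}K\,d\HH^2 = 2\pi\chi(\MM_t)\geq 2\pi(2-2A)$ by Gauss--Bonnet; the issue is inserting the weight $\rho$. I would localize using the superlevel sets $\Omega_r:=\MM_t\cap B(x_0,r)$ of $\rho(\cdot,t)$: since $\rho(\cdot,t)$ is a decreasing radial function of $|x-x_0|$, the co-area formula gives $\int_{\MM_t}K\rho\,d\HH^2 = \int_0^\infty w(r)\big(\int_{\Omega_r}K\,d\HH^2\big)\,dr$ for an explicit positive weight $w$, and Gauss--Bonnet on $\Omega_r$ gives $\int_{\Omega_r}K\,d\HH^2 = 2\pi\chi(\Omega_r) - \int_{\partial\Omega_r}\kappa_g\,d\HH^1$. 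The topology is bounded from below by the genus hypothesis — a boundary circle enclosing no topology contributes a near-disk of positive curvature, and only finitely many (at most $\sim A$) components near $x_0$ can enclose topology — and the geodesic curvature along $\partial\Omega_r = \MM_t\cap S(x_0,r)$ is bounded by $\int_{\partial\Omega_r}\big(r^{-1}+|A_{\MM_t}|\big)\,d\HH^1$ since $\partial\Omega_r$ is a transverse intersection of $\MM_t$ with a round sphere. Integrating in $r$, the $r^{-1}$-contribution becomes a length--over--scale term controlled, after integration in $t$, by the local area bounds that follow from $\Theta_{(x_0,t_0)}(\tau_2)\leq A$ and Huisken's monotonicity, while the $|A_{\MM_t}|$-contribution is $\lesssim\int_{\MM_t}\tfrac{|x-x_0|}{t_0-t}\,\rho\,|A|\,d\HH^2\leq\big(\int_{\MM_t}|A|^2\rho\big)^{1/2}\big(\int_{\MM_t}\tfrac{|x-x_0|^2}{(t_0-t)^2}\rho\big)^{1/2}$, which, by the bound on $\iint\tfrac{|x-x_0|^2}{(t_0-t)^2}\rho$ from the previous step and Young's inequality, is absorbed into $\tfrac14\iint|A|^2\rho$ on the left. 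Making this precise — in particular the geodesic-curvature estimate for surface--sphere intersections and the scale-invariant, Morse-theoretic bookkeeping of boundary components — is the technical heart, and the part I expect to be the main difficulty.

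Finally I would combine the two halves, absorb the $\tfrac12\iint|A|^2\rho$ generated in the Gauss-curvature step into the left-hand side, and absorb the remaining additive constants into $C(A)\log(\tau_2/\tau_1)$ using $\tau_2\geq 2\tau_1$. All slice integrals are over $\MM_{\reg,t}$, which equals $\MM_t$ off a null set of times; Huisken's monotonicity (Lemma~\ref{Lem_entropy_rho}) and the Brakke inequality (Lemma~\ref{Lem_Brakke_unique}) apply to the associated Brakke flow. If one wishes to avoid applying Gauss--Bonnet on slices that are smooth closed surfaces only for a.e.\ $t$, one first multiplies by a spacetime cutoff supported away from $\MM_{\sing}$, built from a smoothing of $r_{\loc}$ as in the proof of Lemma~\ref{Lem_Brakke_unique}, and passes to the limit using Definition~\ref{Def_almost_regular}\ref{Def_almost_regular_2}.
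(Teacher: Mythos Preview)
Your approach is correct and rests on the same skeleton as the paper's---the Gauss identity $|A|^2=|\mathbf{H}|^2-2K$, with the $|\mathbf{H}|^2$ piece handled by Huisken's monotonicity and the $K$ piece by a localized Gauss--Bonnet argument controlled by the genus---but your execution is considerably more laborious than what the paper actually does. The paper first reduces to the dyadic case $\tau_2=2\tau_1$ (the general case follows by summing over $[2^k\tau_1,2^{k+1}\tau_1]$, and the right-hand side telescopes to $C(A)\log(\tau_2/\tau_1)$); after this reduction $\log(\tau_2/\tau_1)=\log 2$ is a constant, so one only needs a $C(A)$ bound. This immediately dispenses with your identity $\tfrac{|x-x_0|^2}{(t_0-t)^2}\rho=\tfrac{4}{t_0-t}\rho-4\partial_{\mathbf t}\rho$ and the subsequent absorption step: on $B(x_0,A\sqrt{t_0-t})$ one has the crude pointwise bound $|x^\perp|^2/(t_0-t)^2\le A^2/(t_0-t)$, which integrates to $C(A)\log 2$ directly.

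For the $K$ term the paper does not carry out your weighted Gauss--Bonnet-with-boundary computation at all; it simply invokes \cite[Theorem~3]{Ilmanen_sing_MCF_surf}, which already gives, slice by slice, a bound on $\int_{\MM_t\cap B}|A|^2$ in terms of the genus, the local area, and $\int_{\MM_t\cap B'}|\mathbf{H}|^2$ on a larger ball, and then integrates in $t$. Your co-area/Gauss--Bonnet sketch is essentially the \emph{content} of Ilmanen's proof, so you are reproving a cited black box, and doing so in a harder (Gaussian-weighted) setting than necessary since the lemma only asks for the integral over $B(x_0,A\sqrt{t_0-t})$. Your handling of the Euler characteristic (``a boundary circle enclosing no topology contributes a near-disk of positive curvature'') is the correct idea but would need care to make rigorous---this is exactly the bookkeeping Ilmanen already did. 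In short: same strategy, but the paper's dyadic reduction plus citation yields a three-line proof where your route requires a page of estimates and an absorption argument.
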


\begin{proof}
Note that it is enough to verify the bound assuming that $\tau_2 = 2 \tau_1$; in this case $\log(\frac{\tau_2}{\tau_1}) = \log 2$.
Lemma~\ref{Lem_entropy_rho}\ref{Lem_entropy_rho_b} implies  that
\[ \int_{t_0 - \tau_2}^{t_0 - \tau_1} \frac1{t_0 -t} \int_{\MM_{\reg,t} \cap B(x_0, A \sqrt{t_0 - t})} |\mathbf{H}|^2 d\HH^2 dt \leq C(A). \]
Integrating \cite[Theorem~3]{Ilmanen_sing_MCF_surf} and using this bound implies the desired estimate.
Note that $\MM_{\reg,t}= \MM_t$ for almost all $t$.
\end{proof}

The next lemma, which is also essentially due to Ilmanen \cite{Ilmanen_sing_MCF_surf}, concerns limits of bounded almost regular mean curvature flows.
It states that if the mean curvature of such flows converges to zero in the $L^2_{\loc}$-sense, then the limit is given by a smooth stationary minimal surface of possibly higher, but locally constant, multiplicity.
Similarly, if the Gaussian area (based at the origin) of such flows converges to a constant function, then the limit is a smooth shrinker of possibly higher, but constant multiplicity.

We recall here that a shrinker is defined to be a properly embedded smooth hypersurface $\Sigma \subset \IR^{n+1}$ satisfying the equation
\[  \frac{\mathbf{x}^\perp}{2|t|} + \mathbf{H} = 0, \]
where $\mathbf{x}^\perp$ denotes the projection of $\mathbf{x}$ to the normal space of $\Sigma$.
As a result, for any $(x_0, t_0) \in \IR^{n+1} \times \IR$, the set
\[  (x_0,t_0) + \bigcup_{t < 0} \big( |t|^{1/2} \Sigma \big) \times \{ t \}  \subset \IR^{n+1} \times \IR, \]
is a smooth mean curvature flow over $(-\infty, t_0)$, which we will often also refer to as \textbf{shrinker based at $(x_0, t_0)$.}

\begin{Lemma} \label{Lem_Brakke_to_min_shrink}
Consider a sequence of bounded almost regular mean curvature flows $\MM^i$ in $\IR^3 \times I_i$ whose associated Brakke flows weakly converge to a Brakke flow $(\mu_{t}^\infty)_{t \in I_\infty}$ in $\IR^3 \times I_\infty$, where we assume $I_\infty$ to be open.
Suppose that $\Theta (\MM^i)$ and $\genus(\MM^i)$ are both uniformly bounded.
Then the following is true:
\begin{enumerate}[label=(\alph*)]
\item \label{Lem_Brakke_to_min_shrink_a} Suppose that for every bounded open subset $U \subset \IR^3$ and compact interval $[t_1, t_2] \subset I_\infty$ we have
\begin{equation} \label{eq_H2_to0}
 \int_{t_1}^{t_2} \int_{\MM^i_{\reg,t} \cap U} |\mathbf{H}|^2 d\HH^2 \, dt \xrightarrow[i \to \infty]{} 0. 
\end{equation}
Then $(\mu_{t}^\infty)_{t \in I_\infty}$ is stationary.
More specifically, there is a minimal surface $\Sigma \subset \IR^3$ of finite total curvature and a locally constant function $k : \Sigma \to \IN$ such that $d\mu_{\infty,t} = k \, d(\HH^2 \lfloor \Sigma)$ for all $t \in I_\infty$.
\item \label{Lem_Brakke_to_min_shrink_b} Suppose that $\sup I_\infty \leq 0$ and that for some $0 < \tau'_i < \tau''_i$ with $\lim_{i\to\infty} (- \tau'_i) = \sup I_\infty$ and $\lim_{i\to\infty} (- \tau''_i) = \inf I_\infty$ we have
\begin{equation}\label{eqn_gaussian_area_asym_constant}
 \Theta^{\MM^i}_{(\vec 0,0)} (\tau''_i) - \Theta^{\MM^i}_{(\vec 0,0)} (\tau'_i) \xrightarrow[i\to\infty]{} 0. 
 \end{equation}
Then $(\mu_{t}^\infty)_{t \in I_\infty}$ is a smooth shrinker with possibly higher multiplicity.
That is there is a smooth shrinker $\Sigma \subset \IR^3$ and a locally constant function $k : \Sigma \to \IN$ such that $d\mu^\infty_{t} = k_t \, d(\HH^2 \lfloor (|t|^{1/2} \Sigma))$ for all $t \in I_\infty \setminus \{ 0\}$, where $k_t (x) := k(|t|^{-1/2} x)$.
\end{enumerate}
Furthermore, in both cases we have
\begin{equation} \label{eq_Theta_genus_bounded_limit}
 \Theta(\Sigma) \leq \liminf_{i \to \infty} \Theta(\MM^i), \qquad 
\genus(\Sigma) \leq  \liminf_{i \to \infty} \genus(\MM^i). 
\end{equation}
\end{Lemma}

\begin{proof}
We follow the lines of the proof in \cite{Ilmanen_sing_MCF_surf}.
First note that the Brakke flow limit $(\mu_t^\infty)_{t\in I_\infty}$ is stationary in the setting of Assertion~\ref{Lem_Brakke_to_min_shrink_a}  and a (Brakke flow) shrinker in the setting of Assertion~\ref{Lem_Brakke_to_min_shrink_b}; this follows in \ref{Lem_Brakke_to_min_shrink_a} from \eqref{eq_H2_to0}  and the fact that for almost regular mean curvature flows equality holds in the Brakke flow condition \eqref{eq_eq_Brakke}, while  in \ref{Lem_Brakke_to_min_shrink_b} the self-similarity follows as in \cite[Lemma 8]{Ilmanen_sing_MCF_surf}. 

Consider first the setting of Assertion~\ref{Lem_Brakke_to_min_shrink_a}.
The bounds on $\mathbf{H}$ and $|A|^2$ imply that after passing to a subsequence, there is a dense subset $I' \subset I_\infty$  of times $t$ with the following properties: $t$ is a regular time for all $\MM^i$, we have weak convergence $\HH^2 \lfloor \MM^i_t \to \mu^\infty_t$ and for any bounded, open subset $U \subset \IR^3$ we have
\[ \sup_i \int_{\MM^i_t \cap U} |A|^2 d\HH^2 < \infty, \qquad \int_{\MM^i_t \cap U} |\mathbf{H}|^2 \xrightarrow[i\to\infty]{} 0. \]
Then following the arguments in the proof of \cite[Theorem~1]{Ilmanen_sing_MCF_surf}, we may conclude that for any $t \in I'$ we have $d\mu^\infty_t = k_t d( \HH^2 \lfloor \Sigma_t)$ for some smooth, complete, embedded minimal surface $\Sigma_t$ and locally constant function $k_t : \Sigma_t \to \IN$.
As $(\mu_t^\infty)_{t\in I_\infty}$ is stationary, both $k_t$ and $\Sigma_t$ are constant in time.
Moreover, \eqref{eq_Theta_genus_bounded_limit} holds for $\Sigma = \Sigma_t$.
By continuity, we can extend the characterization $d\mu^\infty_t = k_t \, d(\HH^2 \lfloor \Sigma_t)$, $t \in I'$, to all $t \in I_\infty$.

The proof adapts in a straight forward way to the setting of Assertion~\ref{Lem_Brakke_to_min_shrink_b}.
\end{proof}

We also recall the following fact, which implies that we can take the $k$ in Lemma~\ref{Lem_Brakke_to_min_shrink}\ref{Lem_Brakke_to_min_shrink_b} to be a constant.

\begin{Lemma} \label{Lem_shrinker_connected}
Every shrinker $\Sigma \subset \IR^3$ is connected.
\end{Lemma}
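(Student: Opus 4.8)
\subsection*{Proof proposal (plan)}

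The plan is to argue by contradiction. Suppose $\Sigma$ is disconnected and write $\Sigma = \Sigma_1 \cup \Sigma_2$ as a disjoint union of two nonempty subsets; since the connected components of a manifold are simultaneously open and closed, $\Sigma_1$ and $\Sigma_2$ are properly embedded, pairwise disjoint, and each is again a shrinker. Conceptually, the mechanism is that a shrinker is an $f$-minimal hypersurface in $(\IR^3,\delta,e^{-f})$ with $f=\tfrac14|x|^2$, so that the Bakry--\'Emery Ricci tensor $\Ric+\nabla^2 f=\tfrac12\delta$ is strictly positive; this should yield a Frankel-type obstruction to two disjoint closed components. When one of $\Sigma_1,\Sigma_2$ is compact this can be made precise directly: a Euclidean segment $\gamma$ realizing $\dist(\Sigma_1,\Sigma_2)$ exists, meets both hypersurfaces orthogonally, and the second variation of length along $\gamma$ in the directions of a parallel orthonormal frame of its normal bundle, after rewriting the boundary terms via the $f$-minimality of the $\Sigma_i$, equals $-\int_\gamma(\Ric+\nabla^2 f)(\gamma',\gamma')\,ds<0$, contradicting the minimality of $\gamma$. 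I would, however, handle the general case by a single argument, as follows.

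The shrinker based at $(\vec 0,0)$ restricts on the two components to smooth mean curvature flows $\MM^i:=\bigcup_{t<0}\bigl(|t|^{1/2}\Sigma_i\bigr)\times\{t\}$, $i=1,2$, with $\MM^1_t\cap\MM^2_t=\emptyset$ for all $t<0$. By the asymptotic structure of shrinkers recalled in the proof of Lemma~\ref{Lem_Brakke_to_min_shrink} (uniformly bounded second fundamental form and normal injectivity radius, and asymptotically conical or cylindrical ends, cf.\ \cite{Wang_Lu_asymptotic_2016}), the avoidance principle applies to $\MM^1,\MM^2$, so $t\mapsto\dist(\MM^1_t,\MM^2_t)$ is non-decreasing on $(-\infty,0)$. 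On the other hand, by self-similarity this distance equals $|t|^{1/2}\dist(\Sigma_1,\Sigma_2)$, which tends to $0$ as $t\nearrow 0$. Hence $\dist(\Sigma_1,\Sigma_2)=0$; in particular neither $\Sigma_1$ nor $\Sigma_2$ is compact, since a compact component would be at positive distance from the disjoint closed set $\Sigma_2$.

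It remains to exclude $\dist(\Sigma_1,\Sigma_2)=0$ for non-compact components. Choose $p_i\in\Sigma_1$, $q_i\in\Sigma_2$ with $|p_i-q_i|\to 0$; since $\Sigma_1,\Sigma_2$ are closed and disjoint, $|p_i|\to\infty$, so $p_i,q_i$ lie on ends of $\Sigma$. Blowing $\Sigma$ down, each such end converges to a smooth multiplicity-one minimal cone in $\IR^3$, i.e.\ to a plane through the origin; because $|p_i-q_i|/|p_i|\to0$, the two limiting planes both contain the ray through $\lim p_i/|p_i|$. If these planes were transverse, the two ends would be forced to intersect near the line where the planes meet, contradicting $\Sigma_1\cap\Sigma_2=\emptyset$; hence the two ends are asymptotic to one and the same plane $P$. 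But a shrinker end that is a graph over $P$ decaying to $P$ must coincide with $P$ — by a standard spectral analysis of the linearized shrinker operator on $P$ (whose eigenfunctions are polynomials, hence admit no nontrivial decaying solution) together with unique continuation for the elliptic shrinker equation — and then unique continuation forces $\Sigma_1=\Sigma_2=P$, contradicting disjointness. I expect this last case, namely controlling how two disjoint non-compact components can approach one another at infinity, to be the main obstacle; the remaining ingredients are a routine combination of Huisken's monotonicity formula, parabolic rescaling, and the known asymptotics of shrinker ends.
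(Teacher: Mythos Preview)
The paper does not give its own proof of this lemma; it simply cites \cite[Theorem~3.11]{Choi_Haslhofer_Hershkovits_White_22} and \cite{Brendle_genus0_16}. Those references establish the result via the weighted Frankel property: shrinkers are $f$-minimal in $(\IR^3,\delta,e^{-|x|^2/4})$, where $\Ric_f=\tfrac12\delta>0$, and one shows directly (essentially your second-variation sketch, carried out without assuming compactness) that two disjoint complete $f$-minimal hypersurfaces cannot coexist. Your outline of this argument in the compact case is correct and is, in fact, the argument that the references make work in general; the point is that the Gaussian weight forces the minimizing segment to exist even when both components are non-compact, so no separate asymptotic analysis is needed.

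Your alternative route via the avoidance principle is attractive: the monotonicity of $t\mapsto\dist(\MM^1_t,\MM^2_t)$ for two bounded-geometry flows, combined with self-similarity, cleanly forces $\dist(\Sigma_1,\Sigma_2)=0$ and hence rules out compact components. The difficulty is entirely in your step~4. First, you only treat asymptotically conical ends; in $\IR^3$ a shrinker end may instead be asymptotically cylindrical (cf.\ \cite{Wang_Lu_asymptotic_2016}), and this case is not addressed. Second, even for conical ends, the assertion ``a shrinker end asymptotic to a plane $P$ must coincide with $P$'' is not a matter of ``standard spectral analysis'': it is precisely L.~Wang's uniqueness theorem for asymptotically conical shrinkers, and should be cited as such rather than rederived. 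Third, your points $p_i,q_i$ could lie on varying ends as $i\to\infty$, so you need to pass to a subsequence and fix a single pair of ends before running the blow-down argument. These gaps are all fillable, but once you invoke Wang's uniqueness and handle the cylindrical case separately, the argument is no longer shorter than the direct Frankel approach you sketched first---so you may as well push that one through instead.
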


\begin{proof}
See \cite[Theorem~3.11]{Choi_Haslhofer_Hershkovits_White_22}, or the arguments of \cite{Brendle_genus0_16}.
\end{proof}

\bigskip

\subsection{Minimal surfaces}
Given a properly embedded minimal surface $\Sigma \subset \IR^3$, we denote by 
\[ \Theta(\Sigma) := \lim_{r \to \infty} r^{-2} \HH^2 (\Sigma \cap B(\vec 0, r) ) \]
its asymptotic area ratio.
Note that this is the same as $\Theta(\MM)$ for the corresponding stationary flow $\MM := \Sigma \times \IR$.
We recall that the condition $\Theta(\Sigma), \genus(\Sigma) < \infty$ is equivalent to the fact that $\Sigma$ has finitely many components and finite total curvature, i.e., $\int_{\Sigma} |K_{\Sigma}| d\HH^2 < \infty$, where $K_\Sigma$ denotes the intrinsic sectional curvature \cite{Osserman_64,White_87}, \cite[Theorems~8.1, 8.3]{Chodosh_Mantoulidis_MS_notes}.
The following lemma discusses further properties of such surfaces. 
Amongst other things, it characterizes the asymptotic geometry of their ends.
In the following we define $\mathbf{r}(x):=|x|$ for $x \in \IR^3$ to be the radial function.

\begin{Lemma} \label{Lem_ends_min_surf}
Let $\Sigma \subset \IR^3$ be a properly embedded minimal surface with $\Theta(\Sigma), \genus(\Sigma) < \infty$.
Then
\begin{enumerate}[label=(\alph*)]
\item \label{Lem_ends_min_surf_aa1} $\Sigma$ is either connected or a union of finitely many parallel affine planes.
\item \label{Lem_ends_min_surf_aa2} If $\Sigma$ is not a union of affine planes, then $\int_{\Sigma} |A|^2 d\HH^2 \geq 2\pi$.
\end{enumerate}
Moreover, after possibly applying a rotation around the origin, there are pairwise disjoint open subsets $\Sigma_1, \ldots, \Sigma_k \subset \Sigma$ such that $\Sigma \setminus (\Sigma_1 \cup \ldots \cup \Sigma_k)$ is compact and such that for some $R > 0$:
\begin{enumerate}[label=(\alph*), start=3]
\item \label{Lem_ends_min_surf_a} Each $\Sigma_i$ is the graph of a smooth function $u_i \in C^\infty( \IR^2 \setminus \ov{B(\vec 0, R)})$.
\item \label{Lem_ends_min_surf_b} We have $u_1 < \ldots < u_k$.
\item \label{Lem_ends_min_surf_c} For any $j = 1, \ldots, k$ and $m \geq 0$ we have the asymptotics $\mathbf{r}^{m-1} |\nabla^m u_j| \to 0$ as $\mathbf{r} \to \infty$.
\item \label{Lem_ends_min_surf_d} For any $j = 1, \ldots, k-1$ we have the following asymptotics as $\mathbf{r} \to \infty$
\[ \mathbf{r}  \, \frac{|\nabla (u_{j+1} - u_j)|}{u_{j+1}-u_j} \lto 0, \qquad \mathbf{r}  \, (u_{j+1} - u_j) \lto \infty \]
\end{enumerate}
\end{Lemma}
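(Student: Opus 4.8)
\emph{Overall strategy.} The plan is to reduce to the case in which $\Sigma$ is connected and not an affine plane, and then to read off all four conclusions from the classical structure theory of complete embedded minimal surfaces of finite total curvature (Osserman's conformal model and the asymptotic analysis of embedded ends due to Schoen; see also \cite{Chodosh_Mantoulidis_MS_notes}). If $\Sigma$ is disconnected, I would argue as follows: every connected component is a connected, properly embedded minimal surface, and distinct components are disjoint, so by the strong halfspace theorem of Hoffman--Meeks any two of them are parallel affine planes; hence all components are mutually parallel affine planes, and since $\Theta(\Sigma)<\infty$ while each plane contributes a fixed positive amount to $\Theta(\Sigma)$, there are only finitely many. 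This already gives Assertion~\ref{Lem_ends_min_surf_aa1}; in this case \ref{Lem_ends_min_surf_aa2} is vacuous, and after one rotation making the planes horizontal, Assertions~\ref{Lem_ends_min_surf_a}--\ref{Lem_ends_min_surf_d} hold with each $\Sigma_i$ one of the planes and each $u_i$ a distinct constant. So from now on I assume $\Sigma$ is connected and not an affine plane.

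\emph{Assertion~\ref{Lem_ends_min_surf_aa2}.} Since $\Theta(\Sigma),\genus(\Sigma)<\infty$, Osserman's theorem presents $\Sigma$ conformally as a closed Riemann surface with finitely many punctures removed, with Gauss map extending to a branched holomorphic map $g$ to $S^2$; since $\Sigma$ is not a plane, $g$ is nonconstant, so $\deg g\ge 1$. Using $|A|^2=-2K$ for minimal surfaces in $\IR^3$ together with $K\,d\HH^2=-g^\ast\omega_{S^2}$, where $\omega_{S^2}$ is the area form of the unit sphere, integration gives $\int_\Sigma|A|^2\,d\HH^2=2\int_\Sigma|K|\,d\HH^2=8\pi\deg g\ge 8\pi\ge 2\pi$.

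\emph{Assertions~\ref{Lem_ends_min_surf_a}--\ref{Lem_ends_min_surf_c}.} By Schoen's analysis of the ends of a complete embedded minimal surface of finite total curvature, all limiting tangent planes agree up to sign, so after a single rotation about the origin every end is asymptotically horizontal, and there is a common $R>0$ such that $\Sigma\cap\{\mathbf r>R\}$ is exactly the disjoint union of the graphs of functions $u_1,\dots,u_k\in C^\infty(\IR^2\setminus\overline{B(\vec 0,R)})$, one per end, each admitting an expansion $u_i(x)=\alpha_i\log|x|+\beta_i+\frac{\gamma_i\cdot x}{|x|^2}+O(|x|^{-2})$ with the differentiated bounds $|\nabla^m u_i|=O(|x|^{-m})$ for $m\ge 1$. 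Taking $\Sigma_i$ to be the graph of $u_i$ over $\{\mathbf r>R\}$ yields \ref{Lem_ends_min_surf_a} and compactness of $\Sigma\setminus(\Sigma_1\cup\dots\cup\Sigma_k)$; since the graphs are pairwise disjoint over the connected domain $\{\mathbf r>R\}$, the $u_i$ are totally ordered pointwise, and after relabeling $u_1<\dots<u_k$, which is \ref{Lem_ends_min_surf_b}; and \ref{Lem_ends_min_surf_c} is immediate from the expansion, since $\mathbf r^{-1}|u_i|=O(\mathbf r^{-1}\log\mathbf r)\to 0$ and $\mathbf r^{m-1}|\nabla^m u_i|=O(\mathbf r^{-1})\to 0$ for $m\ge 1$.

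\emph{Assertion~\ref{Lem_ends_min_surf_d}, and the main obstacle.} Fix $j$ and set $w:=u_{j+1}-u_j>0$, with $w=(\alpha_{j+1}-\alpha_j)\log|x|+(\beta_{j+1}-\beta_j)+\frac{(\gamma_{j+1}-\gamma_j)\cdot x}{|x|^2}+O(|x|^{-2})$. If $\alpha_{j+1}\ne\alpha_j$, then positivity forces $\alpha_{j+1}>\alpha_j$, so $w\sim(\alpha_{j+1}-\alpha_j)\log|x|\to\infty$ and $|\nabla w|\sim(\alpha_{j+1}-\alpha_j)|x|^{-1}$, giving $\mathbf r\,w\to\infty$ and $\mathbf r\,|\nabla w|/w\sim 1/\log\mathbf r\to 0$. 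If $\alpha_{j+1}=\alpha_j$ but $\beta_{j+1}\ne\beta_j$, then $w\to\beta_{j+1}-\beta_j>0$ and $|\nabla w|=O(|x|^{-2})$, so both asymptotics hold again. The remaining possibility --- two consecutive ends asymptotic to the same affine plane with equal logarithmic coefficients --- is the heart of the matter and has to be excluded: in that case $w=\frac{(\gamma_{j+1}-\gamma_j)\cdot x}{|x|^2}+O(|x|^{-2})$, whose displayed leading term changes sign unless $\gamma_{j+1}=\gamma_j$, so positivity forces $w=O(|x|^{-2})$; but $w$ is a positive solution on $\{\mathbf r>R\}$ of the uniformly elliptic divergence-form equation obtained by subtracting the minimal surface equations for $u_j$ and $u_{j+1}$, whose coefficients differ from the flat ones by $O(|x|^{-2})$, and an iteration that repeatedly splits off the harmonic part --- using that a harmonic function on an exterior planar domain decaying like $O(|x|^{-2})$ has a Laurent expansion whose leading term has vanishing average on each circle $\{|x|=r\}$, hence changes sign --- forces $w$ to decay faster than every power of $|x|$, whence $w\equiv 0$ by unique continuation, contradicting that $E_j$ and $E_{j+1}$ are distinct ends. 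This asymptotic rigidity step in the degenerate case is the main obstacle; all the other steps are direct applications of the structure theory.
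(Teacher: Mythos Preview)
Your treatment of \ref{Lem_ends_min_surf_aa1}--\ref{Lem_ends_min_surf_c} is correct and essentially matches the paper, which also cites the standard structure theory (halfspace theorem, Osserman, Schoen). For \ref{Lem_ends_min_surf_d} you take a genuinely different route: the paper argues entirely by blow-up and Liouville --- first a point-picking argument to show $\mathbf r\,|\nabla\log v|\le C$, then a second rescaling to show this quantity tends to zero, each step reducing to the statement that a nonnegative harmonic function on $\IC$ is constant --- and then obtains $\mathbf r\,(u_{j+1}-u_j)\to\infty$ by radial integration. Your approach instead reads both conclusions directly off the Schoen expansion via a case analysis on the leading coefficients of $w=u_{j+1}-u_j$. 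In the first two cases ($\alpha_{j+1}\ne\alpha_j$, or $\alpha_{j+1}=\alpha_j$ with $\beta_{j+1}\ne\beta_j$) this is cleaner than the paper's argument and avoids any limit process; the paper's blow-up method, by contrast, is insensitive to the fine structure of the expansion and treats all cases uniformly.

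There is a gap in your degenerate case $\alpha_{j+1}=\alpha_j$, $\beta_{j+1}=\beta_j$. The iteration is sound: splitting off the harmonic part and using that any nonzero harmonic function $O(\mathbf r^{-N})$ on an exterior planar domain has a sign-changing leading mode, positivity of $w$ forces $w=O(\mathbf r^{-N})$ for every $N$. But the final inference, ``whence $w\equiv 0$ by unique continuation,'' is not justified as stated: unique continuation concerns infinite-order vanishing at an \emph{interior} point, not super-polynomial decay at infinity. To close the argument you can either (i) invoke the Harnack inequality on dyadic annuli --- the rescaled equation is uniformly elliptic, so $\inf_{|x|=2r}w\ge c\,\inf_{|x|=r}w$ with $c>0$ independent of $r$, which forbids super-polynomial decay of a positive solution; or (ii) perform the 2D inversion $y=x/|x|^2$ and check that, because the coefficients of the divergence-form equation differ from the identity by $O(|x|^{-2})=O(|y|^2)$, the transformed operator extends across $y=0$, so that $\tilde w(y):=w(y/|y|^2)$ vanishes to infinite order at the origin and strong unique continuation then applies. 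Either repair works, but as written the conclusion is missing this step.
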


\begin{proof}
Assertions~\ref{Lem_ends_min_surf_aa1} and \ref{Lem_ends_min_surf_aa2} are proven in \cite{Hoffman_Meeks_90, Meeks_Simon_Yau_82} and \cite[Theorems~8.1, 8.3]{Chodosh_Mantoulidis_MS_notes}.
Assertions~\ref{Lem_ends_min_surf_a}--\ref{Lem_ends_min_surf_c} are well known; for example they follow from the fact that the Gauss maps extends continuously to the conformal compactification of $\Sigma$ (see again \cite[Theorems~8.3]{Chodosh_Mantoulidis_MS_notes}) together with standard higher derivative estimates. 

To see Assertion~\ref{Lem_ends_min_surf_d} fix some $j = 1, \ldots, k-1$, set $v := u_{j+1} - u_j$ and consider the minimal surface equations for $\Sigma_{j}, \Sigma_{j+1}$ in terms of $u_j, u_j + v$.
\[ \DIV \bigg({ \frac{\nabla u_j}{\sqrt{1+|\nabla u_j|^2}} }\bigg) = \DIV \bigg({ \frac{\nabla (u_j+v)}{\sqrt{1+|\nabla (u_j+v)|^2}} }\bigg) = 0. \]
Our goal will be to show that $\mathbf{r} |\nabla \log v| \to 0$ as $\mathbf{r} \to \infty$.
We will achieve this by a blow-up argument, which allow us to linearize the equation for $v$.
For this purpose let us define the non-linear operators for $a > 0$ on smooth vector-valued functions $(u', v') \in C^\infty(U; \IR^2)$ on open subsets $U \subset \IR^2$:
\[ F_a (u', v') := \bigg( \DIV \bigg({ \frac{\nabla u'}{\sqrt{1+|\nabla u'|^2}} }\bigg), \frac1{a} \DIV \bigg({ \frac{\nabla (u'+a v')}{\sqrt{1+|\nabla (u'+a v')|^2}} - \frac{\nabla u'}{\sqrt{1+|\nabla u'|^2}} }\bigg) \bigg).  \]
So $F_{a}(u_j, a^{-1} v) = 0$ for any $a > 0$.
We have the following compactness property:

\begin{Claim} \label{Cl_cptness}
For fixed $U \subset \IR^2$ consider a sequence of solutions $(u''_i,v''_i) \in C^\infty(U; \IR^2)$ of $F_{a_i}(u''_i, v''_i) = (0,0)$, where the nunbers $a_i > 0$ are uniformly bounded from above.
Suppose that both $u''_i$ and $v''_i$ are uniformly bounded in $C^1_{\loc}$.
Then, after passing to a subsequence, we have $C^\infty_{\loc}$-convergence $u''_i \to u''_\infty$ and $v''_i \to v''_\infty$, where $(u''_\infty, v''_\infty) \in C^\infty_{\loc}(U; \IR^2)$.
If $a_\infty := \lim_{i \to \infty} a_i > 0$, then $F_{a_\infty} (u''_\infty, v''_\infty) = 0$.
If $a_\infty = 0$ and $u''_\infty \equiv const$, then $\triangle v''_\infty = 0$.
\end{Claim}

\begin{proof}
This follows from the fact that the family of operators $F_a$ satisfies uniform Schauder estimates, as long as $a  > 0$ is bounded.
The last statement follows from passing the identity $F_{a_i}(u''_i, v''_i) = (0,0)$ to the limit.
\end{proof}

As a first step we prove the following uniform bound:

\begin{Claim} \label{Cl_bound_log}
We have $\mathbf{r} |\nabla \log v| \leq C$ for some uniform $C < \infty$.
\end{Claim}

\begin{proof}
Suppose by contradiction that there is a sequence $z_i \in \IR^2$ with $z_i, (\mathbf{r} |\nabla \log v|)(z_i) \to \infty$.
We will use a point picking argument to find a similar sequence with better properties.
Fix some large $i$ and set $r_i := \frac1{10} |z_i|$.
We apply the following algorithm.
Let $z'_i := z_i$ and $r'_i := r_i$.
If $|\nabla \log v| \leq 2|\nabla \log v|(z'_i)$ on $B(z'_i, r'_i)$, then we stop.
Otherwise, we replace $z'_i$ with a point $z''_i \in B(z'_i, r'_i)$ such that $|\nabla \log v|(z''_i) > 2|\nabla \log v|(z'_i)$ and we replace $r'_i$ with $r''_i := \frac12 r'_i$.
The algorithm has to terminate at a finite number of steps since it produces a sequence of points $z'_i$ with consecutive distances $r_i$, $\frac12 r_i$, \ldots and the values of $|\nabla \log v|$ at these points diverge.
After termination, we obtain a point $z'_i \in B(z_i, 2 r_i)$ and a number $r'_i \leq r_i = \frac1{10} |z_i| \leq 2 |z'_i|$ such that
\begin{equation} \label{eq_pp_res_1}
 \sup_{B(z'_i, r'_i)} |\nabla \log v| \leq 2|\nabla \log v|(z'_i) 
\end{equation}
and
\begin{equation} \label{eq_pp_res_2}
 r'_i |\nabla \log v|(z'_i) \geq r_i |\nabla \log v|(z_i) \xrightarrow[i \to \infty]{} \infty. 
\end{equation}
From now on we will work with the sequence of points $z'_i \to \infty$ and radii $r'_i$. 

Set $\lambda_i := |\nabla \log v|(z'_i)$  and consider the rescalings
\[  u'_i (x) := \lambda_i u_j ( \lambda_i^{-1} x + z'_i), \qquad v'_i (x) := \lambda_i v ( \lambda_i^{-1} x + z'_i). \]
Note that the graphs of $u'_i$ and $u'_i + v'_i$ are the shifted and rescaled minimal surfaces $\lambda_i (\Sigma_j - z'_i), \lambda_i (\Sigma_{j+1} - z'_i)$.
Due to the choice of $\lambda_i$ we have
\begin{equation} \label{eq_nabvpi_is_1}
 |\nabla \log v'_i |(\vec 0) = 1 
\end{equation}
and by \eqref{eq_pp_res_1} and \eqref{eq_pp_res_2} we have $\lambda_i r'_i \to \infty$ and
\[ |\nabla \log v'_i | \leq 2 \textQQq{on} B(\vec 0, \lambda_i r'_i). \]
Moreover, since $\mathbf{r} \geq \frac12 |z_i|$ on  $B(z'_i, r'_i)$, we also have $\nabla^m u'_i \to 0$ in $C^0_{\loc}$ for any $m \geq 1$ due to Assertion~\ref{Lem_ends_min_surf_c}.
By the same assertion we also have
\[ a_i := v'_i (\vec 0) = \lambda_i v(z'_i) = |\nabla v|(z'_i) \xrightarrow[i \to \infty]{} 0. \]
So if we set $u''_i := u'_i - u'_i (\vec 0)$ and $v''_i := a_i^{-1} v'_i$, then we can use Claim~\ref{Cl_cptness} to conclude that for a subsequence
\[ (u''_i, v''_i) \xrightarrow[i\to\infty]{C^\infty_{\loc}} (0, v''_\infty), \]
where $v'' \in C^\infty(\IR^2)$ with $\triangle v''_\infty = 0$.
The fact that $v \geq 0$ implies that $v''_\infty \geq 0$.
Since $v''_\infty$ is harmonic and defined on $\IR^2 = \IC$, we can find a holomorphic function $f  \in C^\infty(\IC)$ such that $v''_\infty = \Re (f) \geq 0$.
But this implies that $f$ and therefore $v''_\infty$ is constant, in contradiction to \eqref{eq_nabvpi_is_1}.
\end{proof}

To finish the proof of Assertion~\ref{Lem_ends_min_surf_d}, suppose by contradiction that for a sequence $z_i \in \IR^2$ with $z_i \to \infty$ we have
\begin{equation} \label{eq_lower_rlog}
 \big( \mathbf{r} |\nabla \log v| \big)(z_i) > c > 0. 
\end{equation}
Let $\lambda_i := |z_i|^{-1} \to 0$ and consider  the rescalings
\[ u'_i (x) := \lambda_i u_j(\lambda_i^{-1} x), \qquad
 v'_i (x) := \lambda_i v(\lambda_i^{-1} x). \]
Note again that the graphs of $u'_i$ and $u'_i + v'_i$ are the rescaled minimal surfaces $\lambda_i \Sigma_j, \lambda_i \Sigma_{j+1}$.
By Assertion~\ref{Lem_ends_min_surf_c} we have $u'_i \to 0$ in $C^\infty_{\loc}$ on $\IR^2 \setminus \{ \vec 0 \}$.
As the bound from Claim~\ref{Cl_bound_log} is scaling invariant, we still have $\mathbf{r} |\nabla \log v'_i| \leq C$ for the same $C$.
Fix some $z_0 \in \IR^2 \setminus \{ \vec 0 \}$ and note that by Assertion~\ref{Lem_ends_min_surf_c}
\[ a_i := v'_i (z_0) = \lambda_i v(\lambda_i^{-1} z_0) \xrightarrow[i\to\infty]{} 0. \]
It follows that we still have $\mathbf{r} |\nabla \log (a_i^{-1} v'_i)| \leq C$.
So by Claim~\ref{Cl_bound_log} we obtain, after passing to a subsequence,
\[ (u'_i, a_i^{-1} v'_i) \xrightarrow[i\to\infty]{C^\infty_{\loc}} (0, v''_\infty) \]
on $\IR^2 \setminus \{ \vec 0 \}$, where $\triangle v''_\infty = 0$.
Due to \eqref{eq_lower_rlog} we must have $v''_i \not\equiv const$ and since $v \geq 0$, we have $v''_\infty \geq 0$.
Since $v''_\infty$ is harmonic on $\IR^2 \setminus \{ \vec 0 \} = \IC \setminus \{ 0 \}$, we can find a holomorphic function $f \in C^\infty(\IC)$ such that $v''_\infty (e^{ix}) = \Re ( f(x) ) \geq 0$.
This implies again that $f$ and thus $v''_\infty$ is constant, giving us the desired contradiction.
This implies the first statement of Assertion~\ref{Lem_ends_min_surf_d}.
The second statement follows by radial integration of the first statement.
\end{proof}
\bigskip

The next lemma is a compactness theorem for minimal surfaces with uniformly bounded genus and asymptotic area ratio, assuming only uniform control near the origin.
It states that such surfaces either subsequentially converge in the local smooth topology or the component passing through the origin  converges to a plane in a local intrinsic sense.

\begin{Lemma} \label{Lem_min_surf_conv_or_plane}
Consider a sequence of properly embedded minimal surfaces $\Sigma_i \subset \IR^3$ containing the origin for which $\Theta(\Sigma_i)$ and $\genus(\Sigma_i)$ are uniformly bounded.
Suppose that in a neighborhood of the origin the second fundamental form and normal injectivity radius of $\Sigma_i$ are uniformly bounded from above and below, respectively.
Then after passing to a subsequence, one of the following is true:
\begin{enumerate}[label=(\alph*)]
\item \label{Lem_min_surf_conv_or_plane_a} After identifying the tangent spaces $T_{\vec 0} \Sigma_i$ at the origin with $\IR^2$, the intrinsic exponential maps at the origin converge in $C^\infty_{\loc}$ to the exponential map of a plane passing through the origin. 
\item \label{Lem_min_surf_conv_or_plane_b} We have local smooth convergence $\Sigma_i \to \Sigma_\infty$, where $\Sigma_\infty \subset \IR^3$ is a connected, properly embedded minimal surface with
\[ \Theta (\Sigma_\infty) \leq \liminf_{i \to \infty} \Theta(\Sigma_i), \qquad 
\genus( \Sigma_\infty) \leq \liminf_{i \to \infty} \genus(\Sigma_i). \]
\end{enumerate}
\end{Lemma}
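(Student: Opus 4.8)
The plan is to run a bubbling compactness argument for embedded minimal surfaces, bootstrapping from the assumed control near the origin and using the genus and area-ratio bounds to control the total curvature globally. First I would record some consequences of the hypotheses. By the control near the origin and interior elliptic estimates for the minimal surface equation, there is a fixed $r_0>0$, independent of $i$, so that for each $i$ the component $\Sigma_i'$ of $\Sigma_i$ through the origin meets $B(\vec 0,r_0)$ in a single graphical sheet over $T_{\vec 0}\Sigma_i$ with uniformly bounded $C^k$-norms for all $k$; after a rotation and passing to a subsequence, $\Sigma_i\cap B(\vec 0,r_0)$ converges smoothly (in particular the limit contains $\vec 0$). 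Moreover, since $\vec 0\in\Sigma_i$ and $\Sigma_i$ is properly embedded, the monotonicity formula gives $\HH^2(\Sigma_i\cap B(\vec 0,R))\le\Theta(\Sigma_i)R^2$ for all $R$, and combining this with the genus bound and Gauss--Bonnet (recall $|A_{\Sigma_i}|^2=-2K_{\Sigma_i}$) produces a uniform bound $\int_{\Sigma_i}|A|^2\le C$. In particular, a standard $\eps$-regularity argument shows that curvature can concentrate at only finitely many points.

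The dichotomy is according to whether the second fundamental form of $\Sigma_i'$ stays uniformly bounded on every intrinsic ball centered at the origin. In the first case, $\sup_i\sup_{B_{\Sigma_i'}(\vec 0,R)}|A|<\infty$ for each $R$. Using the curvature bound, the injectivity-radius control near the origin, and the absence of thin necks or handles (a thin neck carries large curvature, and the number of handles is controlled by the genus), the pointed surfaces $(\Sigma_i',\vec 0)$ have locally uniformly bounded geometry, so the intrinsic exponential maps $\exp_{\vec 0}^{\Sigma_i}\colon\IR^2\to\IR^3$ subconverge in $C^\infty_{\loc}$ to the exponential map of a complete minimal surface $\Sigma_\infty$ of finite total curvature, with $\Theta(\Sigma_\infty)\le\liminf_i\Theta(\Sigma_i)$ and $\genus(\Sigma_\infty)\le\liminf_i\genus(\Sigma_i)$ by lower-semicontinuity (the monotonicity formula and a genus count, as in Lemma~\ref{Lem_Brakke_to_min_shrink}). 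If $\Sigma_\infty$ is a plane, which then passes through the origin, we are in case~(a). Otherwise $\Sigma_\infty$ is non-flat; the strong maximum principle for minimal surfaces makes the limit embedded, the $C^\infty_{\loc}$-convergence of exponential maps upgrades to local smooth convergence $\Sigma_i'\to\Sigma_\infty$ in $\IR^3$, and $\Sigma_\infty$ is connected by Lemma~\ref{Lem_ends_min_surf}\ref{Lem_ends_min_surf_aa1}, so we are in case~(b).

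It then remains to rule out the second case, where the curvature of $\Sigma_i'$ blows up at a uniformly bounded intrinsic distance from the origin. A point-picking argument produces points $y_i\in\Sigma_i'$ with $|A|(y_i)=\lambda_i\to\infty$, at intrinsic distance from the origin bounded above and --- since $|A|$ is bounded on $B(\vec 0,r_0)$ --- bounded below, such that the rescalings $\lambda_i(\Sigma_i'-y_i)$ have bounded curvature on large balls and converge in $C^\infty_{\loc}$ to a complete embedded minimal surface $\hat\Sigma$ with $|A_{\hat\Sigma}|(\vec 0)=1$, finite total curvature, and genus and area ratio bounded by those of the $\Sigma_i$. Thus $\hat\Sigma$ is non-flat and not a union of planes, so by Lemma~\ref{Lem_ends_min_surf} it is connected with $k\ge 2$ ends (a single end would force a plane), which after a rotation are nearly parallel graphs $u_1<\dots<u_k$ over $\IR^2\setminus\ov{B(\vec 0,R)}$ with $|u_j|=o(\mathbf r)$ and $\mathbf r\,(u_{j+1}-u_j)\to\infty$. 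Tracing the end of $\hat\Sigma$ that connects back toward the old origin --- through the intermediate annular region around $y_i$, which after a definite amount of rescaling has bounded curvature and is close to a union of planes at every scale --- down to the original scale, and using the slow variation of the sheet separation (a Jacobi-field estimate as in the proof of Lemma~\ref{Lem_ends_min_surf}\ref{Lem_ends_min_surf_d}), one finds that $\Sigma_i'$ has two sheets passing within distance $o(1)$ of the origin. This contradicts the lower bound on the normal injectivity radius near the origin, so the second case does not occur and the proof is complete via the first case.

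The main obstacle is precisely this last step: one must propagate the multi-sheeted asymptotic structure of the bubble $\hat\Sigma$ through the whole range of intermediate scales between $\lambda_i^{-1}$ and the intrinsic distance to the origin, with sharp enough control on how fast consecutive sheets separate, to conclude that they remain $o(1)$ apart --- and hence violate the normal injectivity radius bound --- once they reenter the controlled neighborhood of the origin. A subsidiary technical point, which is what makes the area-ratio bound (equivalently, the monotonicity formula) essential rather than cosmetic, is keeping careful track throughout of the comparison between intrinsic and extrinsic distances on the $\Sigma_i$.
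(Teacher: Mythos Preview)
Your approach is considerably more hands-on than necessary, and it has the gap you yourself identify: in the ``second case'' you must propagate the multi-sheeted structure of the bubble $\hat\Sigma$ from scale $\lambda_i^{-1}$ all the way out to scale $\sim 1$, but the smooth convergence $\lambda_i(\Sigma_i'-y_i)\to\hat\Sigma$ holds only on compact sets, so the end asymptotics of $\hat\Sigma$ say nothing directly about the rescaled surface at radius $\mathbf r\sim\lambda_i$. Closing this requires a genuine multi-scale iteration --- showing each dyadic annulus is close to a union of planes and tracking the sheet separation via a Harnack-type Jacobi estimate --- which you have sketched but not supplied. There is also a smaller loose end in your first case: to land in~(b) you need the full $\Sigma_i$, not just $\Sigma_i'$, to converge, i.e.\ that the other components $\Sigma_i\setminus\Sigma_i'$ leave every compact set when $\Sigma_\infty$ is non-flat; this is not automatic from intrinsic convergence of $\Sigma_i'$ alone.

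The paper sidesteps all of this by working at the varifold level. Apply Lemma~\ref{Lem_Brakke_to_min_shrink}\ref{Lem_Brakke_to_min_shrink_a} to the stationary flows $\Sigma_i\times\IR$: a subsequence converges to a smooth embedded minimal surface $\Sigma_\infty$ with locally constant integer multiplicity (smoothness of the limit is Ilmanen's theorem, already packaged in that lemma). The injectivity-radius bound at the origin forces the component $\Sigma_\infty'\ni\vec 0$ to have multiplicity one. If $\Sigma_\infty'=\Sigma_\infty$ this is multiplicity-one smooth convergence, giving case~(b); otherwise $\Sigma_\infty$ is disconnected, hence by Lemma~\ref{Lem_ends_min_surf}\ref{Lem_ends_min_surf_aa1} a union of parallel affine planes, so $\Sigma_\infty'$ is a plane and case~(a) holds. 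The curvature-blowup dichotomy and bubbling analysis are simply not needed once one knows the limit is smooth: higher multiplicity away from the origin, rather than an obstruction to be ruled out by contradiction, is precisely what produces case~(a).
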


\begin{proof}
By Lemma~\ref{Lem_Brakke_to_min_shrink} (applied to the associated stationary flows $\MM^i = \Sigma_i \times \IR$), we can pass to a subsequence of the $\Sigma_i$, which converges to a minimal integer varifold whose support is a smooth, complete minimal surface $\Sigma_\infty \subset \IR^3$ and whose multiplicity is locally constant.
Due to the uniform bounds at the origin, the component $\Sigma'_\infty \subset \Sigma_\infty$ containing the origin must have multiplicity 1.
If $\Sigma'_\infty \subsetneq \Sigma_\infty$, then by Lemma~\ref{Lem_ends_min_surf}\ref{Lem_ends_min_surf_aa1} the surface $\Sigma_\infty$ must be a union of parallel affine planes, which implies Assertion~\ref{Lem_min_surf_conv_or_plane_a}.
If $\Sigma'_\infty = \Sigma_\infty$, then Assertion~\ref{Lem_min_surf_conv_or_plane_b} holds.
\end{proof}
\bigskip

\subsection{Analytic preliminaries}
The following lemma shows that any viscosity supersolution to a linear parabolic equation is also a supersolution in the distributional sense.
We remark that the asserted bound \eqref{eq_Lstarleq} follows directly from the identity \eqref{eq_def_Lstar} if $u$ is of regularity $C^2$.

\begin{Lemma} \label{Lem_conj_L}
Let $T_1 < T_2$ and $U \subset \IR^n$ be an open subset and consider elliptic operators
\[ L = a_{ij} \partial_i \partial_j + b_i \partial_i + c, \qquad L^* = a_{ij}^* \partial_i \partial_j + b_i^* \partial_i + c^* \]
on $U \times [T_1,T_2]$, where $a_{ij},a^*_{ij}, b_i, b^*_i, c, c^* \in C^\infty (U \times [T_1, T_2])$ such that $(a_{ij}), (a^*_{ij})$ are symmetric and positive definite matrices at every point and time.
Suppose that the associated parabolic operators $\partial_t - L$ and $-\partial_t - L^*$ are formally adjoint with respect to a weight function $\varphi \in C^\infty (U \times [T_1, T_2])$, $\varphi > 0$, in the sense that for any compactly supported $u', v' \in C^2_c (U \times (T_1, T_2))$ we have
\begin{equation} \label{eq_def_Lstar}
 \int_{T_1}^{T_2} \int_{U} \big( (\partial_t - L)u'\big) (x,t) v' (x,t) \varphi(x,t) \, dx\, dt 
= \int_{T_1}^{T_2} \int_{U} u'(x,t)  \big( (-\partial_t - L^*) v' \big) (x,t) \varphi(x,t) \, dx\, dt. 
\end{equation}
Consider a function $u : U \times [T_1, T_2] \to \IR$ with the following properties:
\begin{enumerate}[label=(\roman*)]
\item \label{Lem_conj_L_i} $|u| \leq C$ for some uniform $C < \infty$.
\item \label{Lem_conj_L_ii} For all $t \in [T_1,T_2]$ the function $u(\cdot, t)$ is Lipschitz for some uniform Lipschitz constant.
\item $u$ is lower semi-continuous on $U \times [T_1,T_2]$
\item \label{Lem_conj_L_iv} $u$ is continuous from the left in time, in the sense that whenever $(x_i, t_i) \to (x,t)$ with $t_i \leq t$, then $u(x,t) = \lim_{i \to \infty} u(x_i,t_i)$.
\item We have $(\partial_t - L) u \geq 0$ in the viscosity sense, meaning that whenever we have $u \geq \ov u$ for some $\ov u \in C^\infty(U \times [T_1, t_0])$, $t_0 \in (T_1, T_2]$ with equality at some point $(x_0, t_0)$, then $((\partial_t - L) \ov u) (x_0, t_0) \geq 0$.
\end{enumerate}
Then for any compactly supported $v \in C^2_{c} (U\times [T_1, T_2])$ with $v \geq 0$ we have
\begin{equation} \label{eq_Lstarleq}
 \int_{T_1}^{T_2} \int_{U} u(x,t) \big( (-\partial_t - L^*) v\big) (x,t) \varphi(x,t) \, dx \, dt \geq - \int_{U} u (x,t) v(x,t) \varphi(x,t) \, dx \bigg|_{t=T_1}^{t=T_2}. 
\end{equation}
\end{Lemma}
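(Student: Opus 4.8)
The plan is to approximate the lower-semicontinuous supersolution $u$ from below by a sequence of $C^2$ subsolutions-in-the-viscosity-sense that actually \emph{are} $C^2$, via sup-convolution (or its parabolic analog), and then pass the classical integration-by-parts identity \eqref{eq_def_Lstar} to the limit. Concretely, for small $\eps>0$ define the parabolic sup-convolution
\[ u_\eps (x,t) := \sup_{(y,s)} \Big( u(y,s) - \tfrac1{2\eps}\big( |x-y|^2 + |t-s|^2 \big) \Big), \]
the sup taken over $(y,s)$ in $U\times[T_1,T_2]$ with $s\le t$ (the one-sided constraint in time is what matches hypothesis~\ref{Lem_conj_L_iv}). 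Standard arguments show $u_\eps \searrow u$ pointwise as $\eps\to 0$ (using lower semicontinuity and left-continuity in time), $u_\eps$ is semiconvex hence twice differentiable a.e.\ with a one-sided parabolic jet at every point, and $u_\eps$ is still a viscosity supersolution of $(\partial_t - L_\eps)u_\eps \ge 0$ on a slightly shrunken domain, where $L_\eps$ is $L$ with coefficients perturbed by an amount $\to 0$ (the perturbation comes from the fact that the touching point $(y,s)$ is displaced from $(x,t)$ by $O(\eps)$; the uniform bound \ref{Lem_conj_L_i} controls this displacement, and the Lipschitz bound \ref{Lem_conj_L_ii} gives $|x-y|, |t-s| = O(\eps)$, which tightens the coefficient error). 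By Alexandrov's theorem and the viscosity inequality, $(\partial_t - L_\eps) u_\eps \ge 0$ holds in the distributional sense as a nonnegative measure.

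Once $u_\eps$ is in hand, the argument is: apply the classical identity to $u_\eps$ against $v\varphi$, keeping the boundary terms,
\[ \int_{T_1}^{T_2}\!\!\int_U u_\eps\, \big((-\partial_t - L^*_\eps)v\big)\varphi\,dx\,dt
= \int_{T_1}^{T_2}\!\!\int_U \big((\partial_t - L_\eps)u_\eps\big) v\,\varphi\,dx\,dt
\;-\; \int_U u_\eps\, v\,\varphi \,dx\Big|_{t=T_1}^{t=T_2}, \]
where the first term on the right is $\ge 0$ because $v\ge 0$, $\varphi>0$, and $(\partial_t - L_\eps)u_\eps \ge 0$ distributionally (this step needs a short mollification-of-$u_\eps$ subargument, or one invokes that a distribution which is a nonnegative measure pairs nonnegatively with a nonnegative $C^2_c$ test function times the positive weight). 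This yields
\[ \int_{T_1}^{T_2}\!\!\int_U u_\eps\, \big((-\partial_t - L^*_\eps)v\big)\varphi\,dx\,dt \;\ge\; -\int_U u_\eps\, v\,\varphi \,dx\Big|_{t=T_1}^{t=T_2}. \]
Now let $\eps\to 0$: on the left, $u_\eps\to u$ pointwise and boundedly by \ref{Lem_conj_L_i}, $L^*_\eps \to L^*$ with all coefficients converging locally uniformly, and $v$ has compact support, so dominated convergence applies; on the right, one needs $u_\eps(\cdot,T_2)\to u(\cdot,T_2)$ and $u_\eps(\cdot,T_1)\to u(\cdot,T_1)$ pointwise boundedly, which follows from the pointwise monotone convergence $u_\eps\searrow u$ together with \ref{Lem_conj_L_i} for dominated convergence on the compact support of $v(\cdot,T_i)$. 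This gives exactly \eqref{eq_Lstarleq}.

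The main obstacle is the boundary-term bookkeeping at $t=T_2$, combined with the time-endpoint behavior of the sup-convolution. The one-sided (past-only) sup-convolution is essential: if one took the two-sided version, $u_\eps(\cdot, T_2)$ would involve values of $u$ \emph{after} $T_2$ (undefined) or, if truncated, would not converge correctly to $u(\cdot,T_2)$; the left-continuity hypothesis \ref{Lem_conj_L_iv} is precisely what makes the one-sided construction converge at the right endpoint. A secondary technical point is verifying that $u_\eps$ remains a supersolution up to and including $t=T_2$ rather than only on $(T_1,T_2)$ — here one uses that a $C^2$ function touching $u_\eps$ from below at $(x_0,T_2)$ can be used in the viscosity definition with $t_0 = T_2$ allowed, as in hypothesis~(v). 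I would also need to check the domain-shrinkage from the sup-convolution does not lose the compact support of $v$, which is automatic once $\eps$ is small relative to $\dist(\supp v, \partial U)$.
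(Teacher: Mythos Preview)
Your approach has a genuine error: sup-convolution preserves viscosity \emph{sub}solutions, not supersolutions. For a lower semicontinuous supersolution $u$ (tested from below), the correct regularization is the \emph{inf}-convolution
\[
u_\eps(x,t) = \inf_{(y,s),\, s\le t}\Big(u(y,s) + \tfrac{1}{2\eps}\big(|x-y|^2+(t-s)^2\big)\Big),
\]
which gives $u_\eps \le u$, $u_\eps \nearrow u$ (here your observation about left-continuity is exactly what is needed), $u_\eps$ semiconcave, and $u_\eps$ still a supersolution of a perturbed equation via the standard shifted-test-function argument. With sup-convolution, that argument produces a function touching $u$ from \emph{above} at the displaced point, which says nothing about the supersolution property; there is simply no mechanism by which $u_\eps = \sup(u - \text{penalty})$ inherits $(\partial_t - L_\eps)u_\eps \ge 0$. (Heuristically: sup-convolution makes functions more convex, whereas supersolutions of $-a_{ij}\partial_{ij}u \ge 0$ are concave-like.) So the line ``$u_\eps$ is still a viscosity supersolution of $(\partial_t - L_\eps)u_\eps \ge 0$'' is false as written.

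Even after switching to inf-convolution, a second gap remains: for a semiconcave $u_\eps$, Alexandrov's theorem plus the viscosity inequality gives $(\partial_t - L_\eps)u_\eps \ge 0$ only pointwise a.e., and one must still argue that the singular part of $D^2 u_\eps$ (which is $\le 0$ for semiconcave functions) contributes with the correct sign to obtain the distributional inequality against $v\varphi \ge 0$; this is true but requires an extra step you have not supplied. The paper avoids all of this with a different, more elementary method: partition $[T_1,T_2]$ into $k$ equal subintervals and on each $[t_i,t_{i+1}]$ replace $u$ by the classical $C^2$ solution $\tilde u_i$ of $(\partial_t - L)\tilde u_i = 0$ with initial data $u(\cdot,t_i)$ (cut down near $\partial\Omega$). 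A comparison principle, proved directly from hypothesis~(v), yields $u \ge \tilde u_i$; short-time estimates for the linear equation with bounded Lipschitz initial data give $|\tilde u_i - u(\cdot,t_i)| \le \Psi(1/k)\to 0$ on $\Omega'$. One then integrates by parts on each smooth $\tilde u_i$, telescopes the boundary terms using $\tilde u_i(\cdot,t_i) = u(\cdot,t_i) \ge \tilde u_{i-1}(\cdot,t_i)$ and $v \ge 0$, and passes to the limit $k\to\infty$.
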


\begin{proof}
Without loss of generality, we may assume that $[T_1, T_2] = [0,T]$.
Choose compact domains $\Omega' \subset \Omega \subset U$ with smooth boundary such that $\supp v \subset \Int \Omega' \times [T_1,T_2]$ and $\Omega' \subset \Int \Omega$.
We need the following maximum principle.

\begin{Claim} \label{Cl_max_princ}
Let $[t_0, t_1] \subset [0,T]$ and consider a solution $$\td u \in C^0(\Omega \times [t_0,t_1]) \cap C^2(\Int \Omega \times (t_0, t_1))$$ of $(\partial_t - L) \td u = 0$ with $u \geq \td u$ on $(\Omega \times \{ t_0 \} ) \cup ( \partial \Omega \times [t_0, t_1])$.
Then $u \geq \td u$ on $\Omega \times [t_0, t_1]$.
\end{Claim}

\begin{proof}
For $\eps > 0$ consider the function $\td u_\eps (x,t) := \td u(x,t) - \eps (t-t_0) - \eps$.
Then $u > \td u_\eps $ on $(\Omega \times \{ t_0 \} ) \cup ( \partial \Omega \times [t_0, t_1])$.
Choose $t^* \in [t_0, t_1]$ maximal with the property that $u \geq \td u_\eps$ on $\Omega \times [t_0 , t^*)$.
By the continuity from the left in time, we even have $u \geq \td u_\eps$ on $\Omega \times [t_0 , t^*]$ and by the lower semi-continuity, we must have equality at some point if $t^* < t_1$.
However, this would imply that $-\eps = (\partial_t - L) \td u_\eps \geq 0$ at this point.
So $u \geq \td u_\eps$ on $\Omega \times [t_0, t_1]$ and the claim follows by taking the limit $\eps \to 0$.
\end{proof}

As an application of this claim we obtain:

\begin{Claim} \label{Cl_find_tdu}
There is a function $\Psi : \IR_+ \to \IR_+$ with $\lim_{\eps \to 0} \Psi(\eps)= 0$ such that for any $[t_0, t_1] \subset [T_1,T_2]$ we can find a solution  $\td u \in C^0(\Omega \times [t_0,t_1]) \cap C^2(\Int \Omega \times (t_0, t_1))$ of $(\partial_t - L) \td u = 0$ such that 
\begin{enumerate}[label=(\alph*)]
\item $u(\cdot, t_0) = \td u (\cdot, t_0)$ on $\Omega'$.
\item $u \geq \td u$ on $\Omega \times [t_0, t_1]$.
\item $|\td u(x,t) - u(x, t_0) | \leq \Psi (t_1 -t_0)$ for all $(x,t) \in \Omega' \times [t_0 , t_1]$.
\end{enumerate}
\end{Claim}

\begin{proof}
Choose a continuous function $F : \Omega \times [-C,\infty) \to \IR$, where $C$ is the constant from Assumption~\ref{Lem_conj_L_i}, such that:
\begin{enumerate}[label=(\arabic*)]
\item $F (x,s) = s$ whenever $x$ is in a neighborhood of $\Omega'$.
\item $F(x,s) \leq s$ for all $(x,s) \in \Omega \times \IR$.
\item $F(x,s) = -C$ whenever $x \in \partial\Omega$.
\end{enumerate}
Let $\td u$ be the unique solution of $(\partial_t - L) \td u = 0$ with the boundary condition
\[ \td u(x,t) = F(x,u(x,t)) \textQQq{if} (x,t) \in (\Omega \times \{ t_0 \} ) \cup ( \partial \Omega \times [t_0, t_1]). \]
Then the bound $u \geq \td u$ follows from Claim~\ref{Cl_max_princ} and the bound on $|\td u(x,t) - u(x, t_0) |$ follows from the fact that $\td u(\cdot, t_0) = u(\cdot, t_0)$ in a neighborhood of $\Omega'$ and Assumptions~\ref{Lem_conj_L_i}, \ref{Lem_conj_L_ii}, for example via a blow-up argument.
\end{proof}

Let now $k \geq 0$ be a large number and set $t_i := \frac{i}k T$.
For each $i = 0, \ldots, k-1$ consider the solution $\td u_i \in C^0 (\Omega \times [t_i, t_{i+1}]) \cap C^2 (\Int \Omega \times (t_i, t_{i+1}))$ from Claim~\ref{Cl_find_tdu}, for $[t_0, t_1]$ replaced with $[t_i, t_{i+1}]$.
Choose $C' < \infty$ such that $|(-\partial_t - L^*) v| \leq C'$ and define the function $u^{(k)} : \Omega \times [0,T) \to \IR$ by
\[ u^{(k)} (x, t) := u(x, t_i) \qquad \text{if} \quad (x,t) \in [t_i, t_{i+1}). \]
By the fact that $u$ is measurable and by Assumptions~\ref{Lem_conj_L_i}, \ref{Lem_conj_L_ii}, \ref{Lem_conj_L_iv}, we find that $u^{(k)} \to u$ in $L^2 (\Omega \times [0,T))$ as $k \to \infty$ due to dominated convergence.
Therefore, omitting the arguments $(x,t)$ for clarity, we have
\begin{multline} \label{eq_approx_steps}
 \bigg| \int_{0}^{T} \int_{U} u  \big( (-\partial_t - L^*) v \big) \varphi \, dx \, dt
- \sum_{i=0}^{k-1} \int_{t_i}^{t_{i+1}} \int_{\Omega} \td u_i \big( (-\partial_t - L^*) v \big) \varphi \, dx \, dt \bigg| \\ \displaybreak[1] 
= \bigg| \int_{0}^{T} \int_{U} \big( u  - u^{(k)} \big) \big( (-\partial_t - L^*) v \big) \varphi \, dx \, dt
- \sum_{i=0}^{k-1} \int_{t_i}^{t_{i+1}} \int_{\Omega} \big( \td u_i - u(\cdot, t_i) \big) \big( (-\partial_t - L^*) v \big) \varphi \, dx \, dt \bigg| \\
\leq \bigg| \int_{0}^{T} \int_{U} \big( u  - u^{(k)} \big) \big( (-\partial_t - L^*) v \big) \varphi \, dx \, dt \bigg| + \int_0^T \int_\Omega C' \Psi(\tfrac1k T) \, dx \, dt \xrightarrow[k\to\infty]{} 0.   
\end{multline}
Since $\td u_i$ is $C^2$ on $(\Int \Omega) \times (t_0, t_1)$, we obtain, using the fact that $v \geq 0$ and $\td u_{i} (\cdot, t_i) = u(\cdot, t_i) \geq \td u_{i-1} (\cdot, t_i)$ and $u (\cdot, T) \geq \td u_{k-1} (\cdot ,T)$  on $\Omega'$,
\begin{align*}
 \sum_{i=0}^{k-1} \int_{t_i}^{t_{i+1}} \int_{\Omega} & \td u_i \big( (-\partial_t - L^*) v \big) \varphi \, dx \, dt \\
&= \sum_{i=0}^{k-1} \bigg( \int_{t_i}^{t_{i+1}}  \int_{\Omega} \big(( \partial_t - L)\td u_i \big) v \varphi \, dx \, dt - \int_{\Omega} \td u_i (x, t) v(x, t) dx  \bigg|_{t=t_i}^{t=t_{i+1}} \bigg) \\
&= - \int_{\Omega} \td u_{k-1} (x,T) v(x,T) \varphi(x,T) \, dx + \int_{\Omega} u (x,t) v(x,0) \varphi(x,0) \, dx \\
&\qquad\qquad + \sum_{i=1}^{k-1} \int_{\Omega} \big(\td u_{i} (x,t_i) - \td u_{i-1} (x,t_i)\big) v(x,t_i) \varphi(x,t_i) \, dx \\
&\geq - \int_{\Omega} u (x,t) v(x,t) \varphi(x,t) \, dx \bigg|_{t=0}^{t=T}.
\end{align*}
Combining this with \eqref{eq_approx_steps} yields the desired result.
\end{proof}
\bigskip

\section{Evolution of the separation function} \label{sec_sep_fct}
Let $\MM \subset \IR^{n+1} \times I$ be a (not necessarily maximal) weak set flow.
In this section, we define a separation function, which is roughly comparable to twice the normal injectivity radius.
This separation function and its evolution inequality \eqref{eq_sq_log_s} will play a key role in establishing a separation bound between neighboring sheets in a mean curvature flow.

\begin{Definition}
For any $(x,t) \in \MM$ we define the \textbf{separation} $\s(x,t) \in [0, \infty]$ to be the infimum over all $r > 0$ such that $\MM_t \cap B(x,r)$ is not connected.
The function $\s : \MM \to [0, \infty]$ is called the \textbf{separation function.}
We call $(x,t) \in \MM$ \textbf{good} if the following holds:
\begin{enumerate}
\item $0 < r:=\s(x,t) < \infty$.
\item The closure of $\MM_t \cap B(x,r)$ is contained in $\MM_{\reg,t}$.
\item For any $r' \in (0,r]$ the distance sphere $S(x,r')$ is transverse to $\MM_{\reg,t}$ at any point that belongs to the closure of $\MM_t \cap B(x,r)$.
\end{enumerate}
We denote the subset of good points by $\MM_{\good} \subset \MM$.
\end{Definition}

Note that by Definition~\ref{Def_rloc}\ref{Def_rloc_4} we have $\s \geq r_{\loc}$.

To elucidate the goodness property, observe that if $(x,t) \in \MM_{\good}$ and $r:=\s(x,t) < \infty$, then the intersection  $\MM_t \cap \ov{B(x,r)}$ is disconnected.
The component of this intersection that contains $x$ is the closure of $\MM_t \cap B(x,r)$.
It is a smooth hypersurface  whose boundary lies in $S(x,r)=\partial B(x,r)$ and meets $S(x,r)$ transversally.
All other components lie in $S(x,r)$.
Moreover, the intersections $\MM_t \cap B(x,r')$ for $0 < r' < r$ are all connected.
On the other hand, if $(x,t) \not\in \MM_{\good}$ and $r:=\s(x,t) < \infty$, then $\MM_t \cap \ov{B(x,r)}$ may be connected, but tangent to $S(x,r)$.

Let us now discuss basic regularity properties of the separation function (restricted to $\MM_{\good}$).

\begin{Lemma} \label{Prop_sep}
The following is true:
\begin{enumerate}[label=(\alph*)]
\item \label{Prop_sep_a} $\MM_{\good} \subset \MM$ is backwards  open, meaning that any $(x,t) \in \MM_{\good}$ has a small backwards parabolic neighborhood $P^- (x,t,\eps) = B(x,\eps) \times [t- \eps^2,t]$, which is contained in $\MM_{\good}$.
In particular, all time-slices $\MM_{\good,t} \subset \MM_t$ are open.
\item \label{Prop_sep_bb} $\s$ restricted to $\MM_{\good}$ is lower semi-continuous.
\item \label{Prop_sep_b} $\s$  restricted to $\MM_{\good}$  is continuous from the left in time, in the sense that for any $(x_i,t_i) \to (x,t) \in \MM_{\good}$ with $t_i \leq t$ we have $\s(x_i,t_i) \to \s(x,t)$.
\item \label{Prop_sep_c} $\s(\cdot, t)$ restricted to each time-slice $\MM_{\good,t}$, $t \in I$, is  locally 1-Lipschitz.
\end{enumerate}
\end{Lemma}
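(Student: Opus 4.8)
Fix a good point $(x,t)$ and write $r:=\s(x,t)\in(0,\infty)$. Condition (2) of goodness forces $x\in\MM_{\reg,t}$, so near $(x,t)$ the flow $\MM$ is a smooth sheet $N$; let $K:=\overline{\MM_t\cap B(x,r)}$ be the good component, a compact connected smooth hypersurface with boundary, with $K\subset\MM_{\reg,t}$, $\partial K\subset S(x,r)$, and — by condition (3) — such that $f:=|\cdot-x|$ has \emph{no} critical points on $K$ with value in $(0,r]$, so that $K'_\rho:=\{p\in K:f(p)\le\rho\}=\overline{\MM_t\cap B(x,\rho)}$ is for every $\rho\in(0,r]$ a compact connected smooth manifold with transversal boundary. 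All remaining components $C_1,\dots,C_m$ of $\MM_t\cap\overline{B(x,r)}$ lie on $S(x,r)$, hence at positive distance from $K$; and since $\MM_{\reg}$ is open in $\MM$ and $K\times\{t\}$ is compact, $\MM$ is a smooth hypersurface on a \emph{two-sided} spacetime neighbourhood $V\times(t-\eta,t+\eta)$ of $K\times\{t\}$, giving a smooth family of slices $N_{t'}:=\MM_{t'}\cap V\supset K$. The plan for (b) is to show $\liminf_{(x',t')\to(x,t)}\s(x',t')\ge\rho$ for every $\rho<r$ and then let $\rho\nearrow r$. If this failed one would have $(x_i,t_i)\to(x,t)$ with $\MM_{t_i}\cap B(x_i,\rho)$ disconnected; choosing $z_i$ in a component of $\MM_{t_i}\cap B(x_i,\rho)$ other than that of $x_i$ and passing to a subsequence, $z_i\to z_0\in\MM_t\cap\overline{B(x,\rho)}$ by closedness of $\MM$, and since $\overline{B(x,\rho)}$ misses $S(x,r)\supset C_1\cup\cdots\cup C_m$ one gets $z_0\in K'_\rho$. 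Then for large $i$ both $x_i$ and $z_i$ lie in $V$, hence in $N_{t_i}$, and by smooth convergence $N_{t_i}\to N_t$ plus stability of the regular value $\rho$ (also under moving the centre) both lie in the interior of the "clean piece" $\{p\in N_{t_i}\cap\mathcal U:|p-x_i|\le\rho\}$ ($\mathcal U$ a fixed tubular neighbourhood of $K'_\rho$), which is a connected subset of $\MM_{t_i}\cap B(x_i,\rho)$ containing both — a contradiction. This argument uses neither $t'\le t$ nor goodness of $(x',t')$, so (b) holds two-sidedly.

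\textbf{Proof of (d) and (a).} For (d) the same mechanism at the fixed time $t$ shows $\s(\cdot,t)$ cannot drop faster than distance: if $|x'-x|<r$ and $\rho<r-|x'-x|$ then $B(x',\rho)\subset B(x,r)$ forces $\MM_t\cap B(x',\rho)\subset K^\circ\subset N$, and one identifies $\MM_t\cap B(x',\rho)$ with the interior of the clean piece $\{p\in N:|p-x'|\le\rho\}$ of the single sheet $N$ (again, no $C_j$ can enter $B(x',\rho)$), which is connected; hence $\s(x',t)\ge r-|x'-x|$, and combined with (a) (which makes every $x'$ in a small ball about $x$ good, so the bound applies with $x,x'$ interchanged) this gives the local $1$-Lipschitz bound on $\MM_{\good,t}$. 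For (a): given (b) and (c), on a small backwards parabolic ball $P^-(x,t,\eps)$ one has $\tfrac12 r\le\s(x',t')\le 2r<\infty$, which is condition (1); and the "no junk" content of (b) shows that $\overline{\MM_{t'}\cap B(x',\s(x',t'))}$ is, for $\eps$ small, a small perturbation of $K$ sitting inside $V\times(t-\eta,t+\eta)\subset\MM_{\reg}$, with boundary lying on $S(x',\s(x',t'))$ and meeting it transversally (transversality is open and stable under the perturbations of surface, centre and radius; Sard's lemma is used to choose the radii so the relevant levels are regular), which verifies conditions (2)–(3). Hence $P^-(x,t,\eps)\subset\MM_{\good}$, and slicing at $t'=t$ shows $\MM_{\good,t}$ is open in $\MM_t$.

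\textbf{The main obstacle: the "$\limsup$" half of (c).} By (b) it remains to prove $\limsup_{(x_i,t_i)\to(x,t),\,t_i\le t}\s(x_i,t_i)\le r$, and this is where the hypothesis $t_i\le t$ and a genuinely new input are needed. Fix $\eps>0$; since $\MM_t\cap B(x,\rho)$ is connected for $\rho<r$ while $\s(x,t)=r$, there is $r''\in(r,r+\eps)$ with $\MM_t\cap B(x,r'')$ disconnected, the separation being realised between (a piece containing) $K$ and a point $q$ with $|q-x|\in[r,r'')$ lying on some $C_j\subset S(x,r)$. The idea is to apply the exclusion principle in the definition of weak set flow \emph{backwards in time}: testing against a tiny, essentially stationary smooth mean curvature flow through $(q,t)$ on $[t',t]$ shows $\MM_{t'}$ must meet every fixed neighbourhood of $q$ for all $t'<t$ near $t$, i.e. the disconnecting component does not disappear as $t$ decreases — precisely what \emph{can} fail as $t$ increases, which is why only left-continuity is claimed. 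One must then upgrade this to genuine disconnectedness of $\MM_{t_i}\cap B(x_i,r'')$ for large $i$: combining the backwards-persistent "far" piece near $q$ with the clean piece near $K$ from (b), and using $\operatorname{dist}(K,\bigcup_jC_j)>0$ together with a separating hypersurface disjoint from $\MM_t$ (hence disjoint from $\MM_{t'}$ for nearby $t'$, by closedness of $\MM$), one concludes $\s(x_i,t_i)\le r''<r+\eps$; letting $\eps\to 0$ and invoking (b) yields $\s(x_i,t_i)\to r$. The hard part — and the step I expect to demand the most care — is making this passage from "the disconnecting component persists near $q$" to "a disconnection persists at a controlled radius" rigorous when the weak set flow may be singular and geometrically uncontrolled near the $C_j$, so that the separating hypersurface must be chosen carefully to avoid the possibly wild part of $\MM_t$ inside $B(x,r+\eps)$.
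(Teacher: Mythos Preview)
Your arguments for (b) and (a) are essentially the paper's, and are fine. For (d) the paper takes a slightly slicker route: once the open neighbourhood $U\subset\MM_{\reg}$ of $\overline{\MM_t\cap B(x,r)}$ is fixed, one observes directly that for $x'$ near $x$ one has $\s(x',t)=d(x',\MM_t\setminus U)$, and the $1$-Lipschitz bound is immediate from this distance formula without appealing to (a).

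The real divergence is in (c), and here the paper's argument is both simpler and avoids exactly the obstacle you flag. You try to prove \emph{directly} that a disconnection of $\MM_{t_i}\cap B(x_i,r'')$ persists backwards in time, which forces you to build a separating hypersurface through possibly wild geometry near the $C_j$. The paper instead argues by contradiction from the \emph{opposite} assumption: suppose $\s(x_i,t_i)>r'$ for some fixed $r'\in(r,r+\eps)$. Then $\MM_{t_i}\cap B(x_i,r')$ is \emph{connected} by definition, and since its component through $x_i$ is (by transversality and openness) exactly $U\cap\MM_{t_i}\cap B(x_i,r')$, the whole intersection lies in $U$. Now pick any $z\in\MM_t\cap B(x,r')$ with $z\notin U$ (which exists since $r'>\s(x,t)$), use the weak set flow property to produce $z_i\in\MM_{t_i}$ with $z_i\to z$, and note that for large $i$ one has $z_i\in\MM_{t_i}\cap B(x_i,r')\subset U$, contradicting $z\notin U$ by openness of $U$. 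No separating hypersurface, no control on the wild part of $\MM_t$ outside $U$, is ever needed: the contradiction lives entirely in the statement ``$z_i\in U$ but $z_i\to z\notin U$''. Your worry about the hard step is well-founded for your route, but the contrapositive formulation dissolves it.
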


\begin{proof}
To prove Assertions~\ref{Prop_sep_a}--\ref{Prop_sep_c} fix $(x_i, t_i) \to (x,t) \in \MM_{\good}$ and set $r := \s(x,t)$.
Choose an open neighborhood $U \subset \MM_{\reg}$ of the closure of $\MM_t \cap B(x,r)$ that is relatively compact in $\MM_{\reg}$ and has the following property for all $r''  \in (0,r+\eps)$ for some $\eps > 0$: 
\begin{enumerate}
\item The intersection $U \cap \MM_t \cap B(x,r'')$ is connected and its closure is contained in $U$.
\item The distance spheres $S(x,r'')$ is transverse to $U \cap \MM_t$.
\end{enumerate}

Let us first prove Assertion~\ref{Prop_sep_bb}.
Fix some $r' \in (0 , r)$.
By the openness of the transversality condition, we obtain that for large $i$ the set $U \cap \MM_{t_i} \cap B(x_i, r'')$ is connected and its closure is contained in $U$ for any $r'' \in (0,r']$.
So to prove Assertion~\ref{Prop_sep_bb}, it suffices to verify that $U \cap \MM_{t_i} \cap B(x_i, r') = \MM_{t_i} \cap B(x_i, r')$ for large $i$.
Suppose by contradiction that, after passing to a subsequence, we can find points $y_i \in \MM_{t_i} \cap B(x_i, r')$, which are not contained in $U$.
After passing to another subsequence, we have $y_i \to y_\infty \in \MM_t \cap B(x,r) \subset U$, in contradiction to the openness of $U$.

Let us now prove Assertion~\ref{Prop_sep_b}; so assume that $t_i \leq t$.
Due to the lower semi-continuity it suffices to assume by contradiction that $r_\infty := \lim_{i \to \infty} \s(x_i,t_i) > \s(x,t)$.
Let $r' \in  (r, r_\infty)$ such that $r' < r+\eps$.
By openness of the transversality condition, it follows that for large $i$ the connected component of $\MM_{t_i} \cap B(x_i, r')$ containing $x_i$ is equal to $U \cap \MM_{t_i} \cap B(x_i,r')$.
On the other hand, since $\s(x_i,t_i) > r'$ for large $i$, we find that $\MM_{t_i} \cap B(x_i,r')$ is connected, so it must be contained in $U$.
Since $r' > \s(x,t)$, we can find a point $z \in \MM_t \cap B(x,r')$ with $z \not\in U$.
We claim that there is a sequence of points $z_i \in \MM_{t_i}$ with $z_i \to z$.
If not, then we could find a $\delta > 0$ such that $B(z,\delta) \cap \MM_{t_i} = \emptyset$ for some large $i$, which contradicts $z \in \MM_t$ due to the fact that $\MM$ is a weak set flow.
By our previous conclusions, we conclude that for large $i$ we have $z_i \in \MM_{t_i} \cap B(x_i, r') \subset U$, in contradiction to the openness of $U$.

To see Assertion~\ref{Prop_sep_a}, suppose again that $t_i \leq t$ and note that Assertion~\ref{Prop_sep_b} implies $\s(x_i, t_i) \to \s(x,t) = r$.
So due to the openness of the transversality condition we have $(x_i,t_i) \in \MM_{\good}$ for large $i$.

Lastly, we need to verify Assertion~\ref{Prop_sep_c}; so assume that $t_i = t$.
By the openness of the transversality condition, we obtain that for large enough $i$ the intersections $U \cap \MM_t \cap B(x_i, r'')$ are connected for all $r'' \in (0, r+ \frac12 \eps)$.
Thus, for large $i$ we have $\s(x_i,t) = d( x_i, \MM_t \setminus U )$,
which immediately implies that $|\s(x_i,t)- \s(x,t)| \leq |x_i - x|$.
\end{proof}
\medskip

We will now state the main estimate involving the separation function.
Recall that $\square = \partial_t - \triangle$ denotes the heat operator on $\MM_{\reg}$, where $\partial_t$ denotes the normal time derivative and $\triangle$ the spatial Laplacian.

\begin{Proposition} \label{Prop_log_u_super_sol}
If $\MM$ is a weak set flow, then on $\MM_{\good}$ we have in the viscosity sense
\begin{equation} \label{eq_sq_log_s}
 \square \log \s \geq 0.
\end{equation}
\end{Proposition}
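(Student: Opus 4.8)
The plan is to reduce \eqref{eq_sq_log_s} to a pointwise computation on the ``sheet'' cut out by the separation function, together with a barrier argument in spacetime. Fix a good point $(x_0,t_0) \in \MM_{\good}$ and set $r_0 := \s(x_0,t_0)$. By definition of goodness, the connected component $\Sigma_0$ of $\MM_{t_0} \cap \ov{B(x_0,r_0)}$ containing $x_0$ is a smooth hypersurface with boundary on $S(x_0,r_0)$, meeting it transversally, and there is at least one further component lying on $S(x_0,r_0)$. The first step is to observe that for $(x,t) \in \MM_{\good}$ near $(x_0,t_0)$, the quantity $\s(x,t)$ equals the distance from $x$ to the ``other'' piece of $\MM_t$, i.e.\ $\s(x,t) = \dist(x, \MM_t \setminus U)$ for a suitable relatively compact neighborhood $U \subset \MM_{\reg}$ of $\Sigma_0$ as constructed in the proof of Lemma~\ref{Prop_sep}. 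Thus locally $\s$ is an extrinsic distance-type function between two disjoint smooth pieces of the flow, and one may compute $\square \log \s$ classically wherever $\s$ is smooth.

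The second step is the pointwise computation. Let $y_0 \in \MM_{t_0} \setminus U$ realize the distance $r_0 = |x_0 - y_0|$, so the segment $[x_0,y_0]$ is normal to $\MM_{\reg,t_0}$ at both endpoints. Introduce the smooth ``candidate lower barrier'': near $(x_0,t_0)$ pick any smooth point $y(t)$ of the far piece with $y(t_0) = y_0$ moving by the flow's normal motion, and consider $\ov u(x,t) := \log |x - y(t)|$ restricted to $\MM_{\reg}$ near $(x_0,t_0)$. Then $\ov u \le \log \s$ with equality at $(x_0,t_0)$ (since $\s$ is the infimal such distance), so it suffices to show $(\square \ov u)(x_0,t_0) \ge 0$, which is exactly the viscosity supersolution inequality. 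Now $\square \log |x - y(t)|$ on a mean curvature flow is a standard computation: writing $d = |x - y|$, one gets
\begin{equation*}
\square \log d = \frac{1}{d^2}\Big( \tfrac12 |\nabla^{\MM} d^2|^2 - 1 + \langle x - y, \mathbf{H}_x\rangle - \langle x-y, \dot y\rangle \Big) + (\text{Laplacian terms}),
\end{equation*}
and at the critical configuration the tangential gradient terms and the curvature/velocity terms conspire — using $\langle x_0-y_0,\mathbf H\rangle = 0$ at $x_0$ (normality), $\dot y_0 = \mathbf H_{y_0}$ with $x_0-y_0$ normal at $y_0$ too, and the fact that the distance between two MCF sheets is a supersolution of the heat equation by the standard avoidance/monotonicity computation (cf.\ the computation underlying Huisken's monotonicity and the elementary fact $\square \dist \ge 0$ between disjoint flows). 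Taking the logarithm only improves the sign since $\square \log d = d^{-2}\square d^{\,2}/2 - d^{-4}|\nabla d^2|^2/2 \cdot(\dots)$ — more precisely $\square \log d^2 = d^{-2}\square d^2 - |\nabla \log d^2|^2 \cdot d^{?}$; the point is that one shows $\square d^2 \ge 2|\nabla d|^2 \cdot(\text{something} \le 2)$ forcing $\square \log d \ge 0$. I would carry this out by first verifying $\square\, d^2 \ge 2$ where $d^2 = |x-y(t)|^2$ with $y$ on the opposite sheet and $x \mapsto$ nearest-point, then deducing $\square \log d = \tfrac{1}{2d^2}\square d^2 - \tfrac{1}{2}|\nabla \log d^2|^2 \ge \tfrac{1}{d^2} - \tfrac12|\nabla \log d^2|^2$, and finally noting $|\nabla^{\MM}\log d^2| = \tfrac{2}{d^2}|(x-y)^{\top}| \le \tfrac{2}{d}$, so $\tfrac12|\nabla\log d^2|^2 \le \tfrac{2}{d^2}$ — which is \emph{not} quite enough, so the genuine work is to show $|(x-y)^\top| = o(d)$ near the realizing configuration, i.e.\ that $x-y$ is nearly normal, giving the strict sign.

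The third step is to handle the viscosity formulation honestly: since $\s$ restricted to $\MM_{\good}$ is only lower semicontinuous, locally Lipschitz in space (Lemma~\ref{Prop_sep}\ref{Prop_sep_c}), and left-continuous in time (Lemma~\ref{Prop_sep}\ref{Prop_sep_b}), it is exactly the class of function to which Lemma~\ref{Lem_conj_L} applies. So I would phrase the conclusion as: at any point $(x_0,t_0)$ and for any smooth $\ov u \le \log \s$ on a backward parabolic neighborhood with equality at $(x_0,t_0)$, the above computation gives $(\square \ov u)(x_0,t_0) \ge 0$; this is precisely \eqref{eq_sq_log_s} in the viscosity sense, and combined with Lemma~\ref{Lem_conj_L} it upgrades to the distributional statement needed later. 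The main obstacle, as indicated above, is the curvature bookkeeping in the pointwise computation — specifically, showing that at a point where $\s$ is touched from below by a smooth barrier, the segment from $x_0$ to the realizing point $y_0$ is sufficiently close to normal at \emph{both} ends that the tangential error terms are controlled, so that $\square \log d \ge 0$ genuinely holds rather than just $\square d \ge 0$. The sign improvement from passing to the logarithm (which is what ultimately makes \eqref{eq_sqs_outline} in the outline work, rather than a mere distance bound) has to come out of this analysis, and that is where I expect the real content of Proposition~\ref{Prop_log_u_super_sol} to lie.
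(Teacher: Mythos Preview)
Your proposal has a genuine gap at the core of Step~2: the inequality between $\ov u$ and $\log \s$ goes the wrong way. Since $\s(x,t)$ is the \emph{infimum} of $|x-y|$ over points $y$ on the far sheet, fixing a single smooth trajectory $y(t)$ gives $\s(x,t) \le |x - y(t)|$, i.e.\ $\log \s \le \ov u$, not $\ov u \le \log \s$. Thus $\ov u$ touches $\log \s$ from \emph{above}, which is the wrong side for testing a viscosity supersolution. Even if you had the sign right, computing $\square \ov u \ge 0$ for one particular smooth function below $\log \s$ would not establish the viscosity inequality; you must handle an \emph{arbitrary} smooth $v \le \log \s$ with equality at the point. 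Your own Step~2 computation is correspondingly inconclusive --- you note that the naive estimate $|\nabla^{\MM} d| \le 1$ is ``not quite enough'' and leave the crucial cancellation unidentified.

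The paper's argument is structurally different. It starts from an arbitrary smooth test function $v \le \log \s$ with equality at the base point, assumes for contradiction that $\square v < 0$ (equivalently $\square u + |\nabla u|^2 < 0$ for $u = e^v$), and then \emph{uses $u$ itself to build a smooth family of hypersurfaces $\Sigma_t$} passing through the realizing point $z$ on the far sheet. After normalizing so that $\s(\vec 0,0)=1$ and $z=(0,\ldots,0,\sin\theta,\cos\theta)$, the first-order touching condition forces $|\nabla u|^2 = \sin^2\theta$ at the origin. One then computes the mean curvature and normal velocity of $\Sigma_0$ at $z$, and the weak set flow barrier principle (Lemma~\ref{Lem_Sigma_compare}) yields $\partial_t u \ge \triangle u - (1-\cos\theta)^2$. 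Combining this with $\square u + |\nabla u|^2 < 0$ gives $0 > \sin^2\theta - (1-\cos\theta)^2 = 2\cos\theta(1-\cos\theta) \ge 0$, a contradiction. The passage to the logarithm enters precisely through the $|\nabla u|^2 = \sin^2\theta$ term, and the algebraic identity is what makes the sign work --- this is the ``genuine work'' you sensed but did not locate.
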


We recall that this means that for any $(x,t) \in \MM_{\good}$ the following is true. Any smooth function that is locally defined in a neighborhood of $(x,t)$ in $\MM_{\reg, \leq t}$ and satisfies $v \leq \log \s$ with equality at $(x,t)$ must satisfy $\square v \geq 0$.
To prove Proposition~\ref{Prop_log_u_super_sol}, we need the following lemma.

\begin{Lemma} \label{Lem_Sigma_compare}
Let $\MM \subset \IR^{n+1} \times (-T,0]$, $T > 0$, be a weak set flow and consider a smooth family $\{ F_t : \Sigma \to \IR^{n+1} \}_{t \in (-T,0]}$ of embeddings, where $\Sigma$ is  a closed $n$-dimensional manifold.
Suppose that each $\Sigma_t := F_t (\Sigma)$, $t \in (-T,0)$, bounds a closed domain that is disjoint from $\MM_t$ and suppose that there is a point $y = F_0 (\ov y) \in \Sigma_0 \cap \MM_0$.
Denote by $\mathbf{H}_y$, $\partial_{t} F_0(\ov y)$, $\mathbf{N}_y$, the mean curvature, velocity and outward pointing normal vectors of $\Sigma_0$ at $y$.
Then $\partial_{t} F_0(\ov y) \cdot  \mathbf{N}_y \geq \mathbf{H}_y \cdot  \mathbf{N}_y$.
\end{Lemma}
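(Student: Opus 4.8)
The plan is to argue by contradiction, using the exclusion principle from the definition of a weak set flow, by constructing a smooth compact mean curvature flow that starts disjoint from $\MM$ but is forced to develop a point of contact. So suppose $\partial_t F_0(\ov y)\cdot\mathbf{N}_y < \mathbf{H}_y\cdot\mathbf{N}_y$ and set $\eta := \mathbf{H}_y\cdot\mathbf{N}_y - \partial_t F_0(\ov y)\cdot\mathbf{N}_y > 0$. After a rigid motion we may assume $y = \vec 0$, $\mathbf{N}_y = e_{n+1}$ and $T_{\vec 0}\Sigma_0 = \IR^n\times\{0\}$. Then for small $\eps > 0$ the surfaces $\Sigma_t$, $t\in[-\eps,0]$, are near $\vec 0$ graphs $x_{n+1} = f(x',t)$ with $f(\vec 0,0) = 0$, $\nabla' f(\vec 0,0) = 0$ and $\partial_t f(\vec 0,0) = \partial_t F_0(\ov y)\cdot\mathbf{N}_y$, and the domain bounded by $\Sigma_t$ is locally $\{x_{n+1} < f(x',t)\}$ (recall $\mathbf{N}_y$ points out of this domain).

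Let $\MM' = \{\MM'_t\}_{t\in[-\eps,0]}$ be the smooth mean curvature flow starting at time $-\eps$ from the closed embedded surface $\Sigma_{-\eps}$. Since $\max_{\Sigma_{-\eps}}|A| \to \max_{\Sigma_0}|A| < \infty$ as $\eps\to 0$, short-time existence shows that for $\eps$ small $\MM'$ is a smooth, embedded, compact mean curvature flow on all of $[-\eps,0]$; let $\Omega'_t$ be the compact region it bounds, so $\Omega'_{-\eps}$ is the domain bounded by $\Sigma_{-\eps}$. By smooth dependence on the initial data, for $\eps$ small $\MM'_t$ is near $\vec 0$ a graph $x_{n+1} = g(x',t)$ that is $C^2$-close to the graph of $f$, with normal speed $C^0$-close to $\mathbf{H}_y\cdot\mathbf{N}_y$ there; integrating in time,
\[ g(\vec 0, 0) = f(\vec 0,-\eps) + \int_{-\eps}^0 (\text{normal speed})\,dt = \big(\mathbf{H}_y\cdot\mathbf{N}_y - \partial_t F_0(\ov y)\cdot\mathbf{N}_y\big)\eps + o(\eps) \geq \tfrac12\eta\eps > 0 \]
for $\eps$ small. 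Since $\Omega'_t$ is near $\vec 0$ the region below the graph of $g(\cdot,t)$ (by continuity from $t=-\eps$, using that $\MM'$ stays $C^1$-close to $\Sigma$ there, so the local picture cannot flip sides), this gives $y = \vec 0 \in \Omega'_0$.

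Finally, $\MM'_{-\eps} = \Sigma_{-\eps}$ is contained in the closed domain it bounds and is therefore disjoint from $\MM_{-\eps}$ by hypothesis, so the exclusion principle yields $\MM'_t\cap\MM_t = \emptyset$ for all $t\in[-\eps,0]$. I would then upgrade this to $\ov{\Omega'_t}\cap\MM_t = \emptyset$ for all $t\in[-\eps,0]$: this holds at $t=-\eps$; it is open in $t$ (compact $\ov{\Omega'_t}$ against closed $\MM_t$); and it is closed in $t$, since if $z\in\ov{\Omega'_{t_*}}\cap\MM_{t_*}$ then $z\notin\partial\Omega'_{t_*}=\MM'_{t_*}$, so $z$ lies in the open set $\Omega'_{t_*}$, and by the backward lower semicontinuity of weak set flows — proven by letting a small round sphere shrink so as to pass through $z$ at time $t_*$ while avoiding $\MM$ slightly earlier, then applying the exclusion principle — one obtains points $(z_j,s_j)\in\MM$ with $s_j\nearrow t_*$ and $z_j\to z\in\Omega'_{t_*}$, hence $z_j\in\ov{\Omega'_{s_j}}\cap\MM_{s_j}$ for $s_j$ just below $t_*$, a contradiction; equivalently one may invoke that $\{\ov{\Omega'_t}\}$ is itself a weak set flow and use the standard avoidance principle between two weak set flows. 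In particular $\ov{\Omega'_0}\cap\MM_0 = \emptyset$, contradicting $y\in\Omega'_0\cap\MM_0$. I expect the main obstacle to be exactly this last step — propagating disjointness from the moving boundary $\MM'_t$ to the filled region $\ov{\Omega'_t}$ up to and including the final time $t=0$, which forces one to use the lower semicontinuity of weak set flows so that no new contact point can appear at the last instant; by contrast, the local graphical setup, the short-time smooth estimates for $\MM'$, and the bound $g(\vec 0,0)\geq\tfrac12\eta\eps$ are routine.
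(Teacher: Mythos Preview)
Your proof is correct and follows the same strategy as the paper's: argue by contradiction, run mean curvature flow starting from one of the given hypersurfaces $\Sigma_t$, use the exclusion principle to keep the evolved domain disjoint from $\MM$, and show that $y$ lands strictly inside this domain at time $0$. The paper's execution differs only in packaging --- it considers the whole family of flows $\{\hat F^t\}$ starting from each $\Sigma_t$, writes them as normal graphs $\hat h^t_{t'}$ over $\Sigma_0$, and reads off the contradiction from the single derivative inequality $\frac{d}{dt}\big|_{t=0}\hat h^t_0(\ov y)<0$ rather than from an explicit $\eps$-estimate --- and in fact you are more careful than the paper about justifying why the \emph{filled} region (not just its boundary) stays disjoint from $\MM$.
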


\begin{proof}
After possibly shrinking $T$, we can find smooth solutions $\{ \hat F^t_{t'} : \Sigma \to \IR^{n+1} \}_{t'   \in [t,T]}$ to the mean curvature flow equation with initial condition $\hat F^t_{t} = F_t$, for all $t \in (-T,0)$.
Since $\MM$ is a weak set flow, we find that for all $t \in (-T,0)$ the hypersurface $\hat F_{0}^t (\Sigma)$ bounds a domain that disjoint from $\MM_0$.
Thus $y$ lies on the outside of this domain for all $t \in (-T,0)$.

After possibly shrinking $T$ once again, we may express $\hat F^t_{t'}(\Sigma)$ as a graph of a function $\hat h_{t'}^t \in C^\infty (\Sigma)$ over $\Sigma_0 = F_0(\Sigma)$ using the normal exponential map.
Note that $(t,t'') \mapsto \hat h^t_{t+t''}$ forms a smooth family of functions in the $C^\infty$-sense.
Suppose now by contradiction that $\partial_{t} F_0(\ov y) \cdot \mathbf{N}_y  < \mathbf{H}_y \cdot \mathbf{N}_y$.
It follows that
\[ \frac{d}{dt} \bigg|_{t=0} \hat h_{t}^t (\ov y) < \frac{d}{dt'} \bigg|_{t'=0} \hat h_{t'}^0 (\ov y), \]
which  implies
\[ \frac{d}{dt} \bigg|_{t=0} \hat h_{0}^t (\ov y) < 0. \]
Since $\hat h_{0}^0 (\ov y) = 0$, we obtain that $\hat h_{0}^t (\ov y) > 0$ for $t< 0$ close to $0$, which contradicts the fact that $y$ lies on the outside of the domain bounded by $\hat F^t_{0}(\Sigma)$.
\end{proof}
\medskip

\begin{proof}[Proof of Proposition~\ref{Prop_log_u_super_sol}.]
After applying a time-shift and parabolic rescaling, we may assume that $(\vec 0, 0) \in \MM_{\good}$ and it suffices to verify the bound \eqref{eq_sq_log_s} there.
After applying parabolic rescaling and a rotation, we may assume furthermore, that $\s(\vec 0, 0) = 1$ and that $\MM_{\reg,0}$ is tangent to $\IR^{n} \times \{ 0 \}$.
So a neighborhood of $(\vec 0,0)$ we can express $\MM_{\reg}$ as a graph of the form
\[ \{ \big( x^1 \ldots, x^{n}, f(x^1, \ldots, x^{n},t), t  \big) \big\}, \]
where $f$ is a smooth function defined in a neighborhood of the origin $V \subset \IR^{n} \times \IR$, which satisfies
\begin{equation} \label{eq_f_der_zero}
 f(0, \ldots, 0, 0) = \frac{\partial f}{\partial x^i} (0, \ldots, 0, 0) = 0, \qquad i = 1, \ldots, n. 
\end{equation}

Since $(\vec 0, 0) \in \MM_{\good}$, we can find an open neighborhood $U \subset \MM_{\reg}$ of the closure of $\MM_{\reg}(0) \cap B(\vec 0, 1)$ and an $r' > 1$ with the following property.
For any $(x,t) \in \IR^{n+1} \times \IR$ close enough to $(\vec 0, 0)$ and any $r'' \in (0, r')$ the intersection $U \cap \MM_t \cap B(x,r'')$ is connected and its closure is contained in $U$.
Let $z \in \MM_0 \setminus U$ be the point closest to the origin.
Since $\s(\vec 0, 0) = 1$, we must have $|z| = 1$.
After applying another rotation in the first $n$ coordinates and possibly a reflection in the last coordinate, we may assume without loss of generality that
\[ z = (0, \ldots, 0, \sin \theta, \cos \theta ) \]
for some $\theta \in [0, \frac{\pi}2]$.

Consider now a smooth function $u \in C^\infty( W )$, where $W \subset \MM_{\leq 0}$ is a neighborhood of $(\vec 0, 0)$, such that $v \leq \log \s$ on $W$ with equality at $(\vec 0,0)$.
We need to show that then $\square v(\vec 0, 0) \geq 0$.
We will assume by contradiction that 
\[ \square v(\vec 0, 0) < 0. \]
After possibly shrinking $W$, we may assume that for any $( x,t) = (x^1,\lb \ldots, \lb x^{n+1}, t) \in W$ we have $(x^1 \ldots, \lb x^{n}, \lb t) \in V$, so that $f(x^1 \ldots, \lb x^{n}, \lb t)$ is defined.
Set $u := e^v$ so that we have $u \leq \s$ with equality at $(\vec 0, 0)$ and
\begin{equation} \label{eq_squnabul0}
  \big(\square u + |\nabla u|^2 \big) (\vec 0, 0) = ( \square v )(\vec 0, 0) < 0. 
\end{equation}
After replacing $u$ with $u + \delta t$, for small $\delta > 0$, we may even assume that
\[ u(\cdot, t) < \s(\cdot, t) \qquad \text{if} \quad t < 0. \]

\begin{Claim} \label{Cl_u_der}
At the origin in $W \subset \IR^{n+1} \times \IR$ we have 
\[ \frac{\partial u}{\partial x^1} = \ldots  =  \frac{\partial u}{\partial x^{n-1}} = 0, \qquad
\frac{\partial u}{\partial x^{n}} = -\sin \theta. \] 
Recall here that $\frac{\partial}{\partial x^1},\ldots, \frac{\partial}{\partial x^n}$ are tangent to $\MM$ at the origin, so these derivatives should be viewed as intrinsic derivatives within $\MM$.
\end{Claim}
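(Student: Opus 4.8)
The plan is to compare $u(\cdot,0)$, viewed as a smooth function on the surface $\MM_0$ near the origin, with the restriction to $\MM_0$ of the ambient distance function $x \mapsto |x-z|$, and to extract the assertion from the fact that these two functions touch from above at the origin to first order.

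First I would record the elementary upper barrier $\s(x,0) \le |x-z|$ for every $x \in \MM_0$ sufficiently close to the origin. This is a repetition of the Lipschitz argument in the proof of Lemma~\ref{Prop_sep}\ref{Prop_sep_c}, using the neighborhood $U$, the radius $r' > 1$ and the point $z \in \MM_0 \setminus U$ fixed above: for $x \in \MM_0$ near $\vec 0$ and $|x-z| < r'' < r'$, the set $U \cap \MM_0 \cap B(x,r'')$ is relatively open in $\MM_0 \cap B(x,r'')$ and, having closure contained in $U$, is also relatively closed; since it contains $x$ but not $z$, it is a nontrivial clopen subset, so $\MM_0 \cap B(x,r'')$ is disconnected and $\s(x,0) \le r''$. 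Letting $r'' \downarrow |x-z|$ gives the bound, and at $x = \vec 0$ it is an equality because $\s(\vec 0,0) = 1 = |z|$.

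Next, combining $u(\cdot,0) \le \s(\cdot,0) \le |\cdot - z|$ near the origin with $u(\vec 0,0) = \s(\vec 0,0) = 1 = |z|$, the function $x \mapsto u(x,0) - |x-z|$, which is smooth on $\MM_0$ near $\vec 0$ (note $z \neq \vec 0$), attains an interior maximum equal to $0$ at the origin. Hence its intrinsic gradient along $\MM_0$ vanishes there, i.e.
\[ \nabla^{\MM_0} u(\cdot,0)\big|_{\vec 0} \;=\; \nabla^{\MM_0}\big(|\cdot - z|\big)\big|_{\vec 0} \;=\; \proj_{T_{\vec 0}\MM_0}\!\Big(\tfrac{\vec 0 - z}{|\vec 0 - z|}\Big) \;=\; \proj_{T_{\vec 0}\MM_0}(-z). \]
Since $\MM_{\reg,0}$ is tangent to $\IR^n \times \{0\}$ at the origin and $\frac{\partial f}{\partial x^i}(0,\ldots,0,0) = 0$, the coordinate vector fields $\frac{\partial}{\partial x^1},\ldots,\frac{\partial}{\partial x^n}$ equal $e_1,\ldots,e_n$ at the origin; pairing the displayed identity with $e_i$ and inserting $z = (0,\ldots,0,\sin\theta,\cos\theta)$ yields $\frac{\partial u}{\partial x^i}(\vec 0,0) = \langle -z, e_i\rangle = 0$ for $i = 1,\ldots,n-1$ and $\frac{\partial u}{\partial x^n}(\vec 0,0) = -\sin\theta$, which is the claim.

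The only step requiring care is the first one — verifying that $z$ genuinely furnishes the barrier $|x-z|$ for $\s(\cdot,0)$ uniformly over a neighborhood of the origin in $\MM_0$ — but this is exactly the connectedness-and-transversality bookkeeping already carried out around the choice of $U$ and in Lemma~\ref{Prop_sep}, so no new difficulty arises; the remaining steps are the standard first-order touching principle together with an elementary orthogonal-projection computation.
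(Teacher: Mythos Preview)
Your proof is correct and follows essentially the same approach as the paper: both establish the barrier $u(x,0) \le \s(x,0) \le |x-z|$ near the origin (using the connectedness properties of $U$ set up earlier) with equality at $\vec 0$, and then differentiate. Your version simply spells out the clopen argument and the projection computation more explicitly than the paper's terse ``the claim follows by taking the first derivative at $x=\vec 0$.''
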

 
\begin{proof}
If $x \in \MM_0$ is close enough to $\vec 0$, then by our choice of $U$, the intersection $U \cap \MM_0 \cap B(x,r'')$ is connected for all $r'' \in (0, r'')$.
Therefore, $$u(x,0) \leq \s(x,0) \leq |x-z|.$$
The claim follows by taking the first derivative at $x = \vec 0$.
\end{proof}

Set $a := 1- \cos \theta$ and consider  the map $F : W \times \IR \to \IR^{n+1}$ given by
\begin{equation*}
 F( (x,t) ,r) := \big( x^1 , \ldots, 
x^{n-1} , x^{n} + r \sin (\theta + a x^{n}),  f(x^1, \ldots, x^{n},t) + r  \cos (\theta + a x^{n}) \big). 
\end{equation*}
Define for $(x,t) \in W$ close enough to the origin 
\[ \td F_t (x) := F \big( (x,t) , u(x,t) \big). \]
Using  \eqref{eq_f_der_zero} and Claim~\ref{Cl_u_der}, we can compute the following first derivatives at the origin:
\begin{align*}
 \frac{\partial \td F_0}{\partial x^1} \bigg|_{(\vec 0, 0)}  &= \frac{\partial}{\partial x^1}, \qquad \ldots, \qquad
\frac{\partial \td F_0}{\partial x^{n-1}} \bigg|_{(\vec 0, 0)}  = \frac{\partial}{\partial x^{n-1}}, \quad \\
\frac{\partial \td F_0}{\partial x^{n}} \bigg|_{(\vec 0, 0)} &= (1 -  \sin^2 \theta + a \cos \theta) \frac{\partial}{\partial x^{n}} + (-\sin \theta \cos \theta - a \sin \theta ) \frac{\partial}{\partial x^{n+1}} \\ &= (\cos \theta )  \frac{\partial}{\partial x^{n}} -(\sin \theta ) \frac{\partial}{\partial x^{n+1}}. %
\end{align*}
Note that these derivatives are linearly independent and orthogonal to $z$.
So after possibly shrinking $W$, we find that the maps $\td F_t : W_t \to \IR^{n+1}$ parameterize a smooth family of embedded hypersurfaces $\Sigma_t \subset \IR^{n+1}$.
By definition we have $z \in \Sigma_0$ and $z$ is also a unit normal vector to $\Sigma_0$ at $z$.
Moreover, the mean curvature of $\Sigma_0$ at $z$, in the direction of $z$, equals (all derivatives of $u$ and $f$ are taken at the origin)
\begin{multline} \label{eq_mean_curvature}
 z \cdot (\triangle \td F_0 )(\vec 0)
= \sin \theta \bigg( (\triangle u)  \sin \theta + 2\frac{\partial u}{\partial x^{n}} a \cos \theta - a^2 \sin \theta \bigg) \\
+ \cos \theta \bigg( \triangle f +  (\triangle u)  \cos \theta - 2\frac{\partial u}{\partial x^{n}}  a \sin \theta - a^2 \cos \theta \bigg) 
=   \triangle u + (\cos \theta) \triangle f - a^2. 
\end{multline}
The normal velocity of the family $\Sigma_t$ at the origin $(\vec 0, 0)$, in the direction of $z$, equals (all derivatives of $u$ and $f$ are again taken at the origin)
\begin{equation} \label{eq_velocity}
 z \cdot \frac{\partial \td F_0}{\partial t} (\vec 0)
=  \partial_t u + (\cos \theta) \partial_t f . 
\end{equation}
We will now derive an inequality between \eqref{eq_mean_curvature} and \eqref{eq_velocity} using Lemma~\ref{Lem_Sigma_compare}.
To do this, we need the following claim.

\begin{Claim} \label{Cl_local_Omega}
If $((x, t),s) \in W \times \IR$ is sufficiently close to $((\vec 0, 0), 1)$ and $t < 0$ and $s \leq 1$, then $F(( x,t), s u (x,t)) \not\in \MM_t$.
\end{Claim}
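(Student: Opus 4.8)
The plan is to argue by contradiction. Suppose the claim fails; then there is a sequence $((x_i,t_i),s_i)$ in $W\times\IR$ with $((x_i,t_i),s_i)\to((\vec 0,0),1)$, $t_i<0$, $s_i\le1$, and $q_i:=F\big((x_i,t_i),s_iu(x_i,t_i)\big)\in\MM_{t_i}$ for all $i$. The first step is an elementary computation: for $i$ large $(x_i,t_i)$ lies in the region where $\MM_{\reg}$ is the graph of $f$, so $x_i^{n+1}=f(x_i^1,\dots,x_i^n,t_i)$, and the definition of $F$ gives $q_i-x_i=s_iu(x_i,t_i)\,\big(0,\dots,0,\sin(\theta+ax_i^n),\cos(\theta+ax_i^n)\big)$, hence $|q_i-x_i|=s_iu(x_i,t_i)$. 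Since $v(\vec 0,0)=\log\s(\vec 0,0)=0$ we have $u(\vec 0,0)=1$, so by continuity $q_i\to F((\vec 0,0),1)=z$, and $u(x_i,t_i)>0$ for $i$ large. Because $s_i\le1$ and $u(\cdot,t)<\s(\cdot,t)$ for $t<0$, this yields $|q_i-x_i|=s_iu(x_i,t_i)\le u(x_i,t_i)<\s(x_i,t_i)$ for all large $i$.

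The second step extracts a connected piece of $\MM_{t_i}$ joining $x_i$ to $q_i$ and passes to a limit. Put $\rho_i:=\tfrac12\big(|q_i-x_i|+\s(x_i,t_i)\big)$, so $|q_i-x_i|<\rho_i<\s(x_i,t_i)$; by the definition of $\s$ the set $\MM_{t_i}\cap B(x_i,\rho_i)$ is connected and contains both $x_i$ and $q_i$, and its closure $C_i$ is a compact connected subset of $\MM_{t_i}\cap\overline{B(x_i,\rho_i)}$. By Lemma~\ref{Prop_sep}\ref{Prop_sep_b} (using $t_i\le0$) we have $\s(x_i,t_i)\to\s(\vec 0,0)=1$, hence $\rho_i\to1$, and since $x_i\to\vec 0$ all the $C_i$ lie in a fixed ball. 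By compactness of the collection of compact subsets of that ball in the Hausdorff metric, we may pass to a subsequence along which $C_i\to C_\infty$; the limit $C_\infty$ is connected (a Hausdorff limit of connected compacta is connected), it contains $\vec 0=\lim x_i$ and $z=\lim q_i$, and since $\MM$ is a closed subset of spacetime and every point of $C_i$ lies in $\MM_{t_i}$ with norm $\le|x_i|+\rho_i\to1$, we get $C_\infty\subset\MM_0\cap\overline{B(\vec 0,1)}$.

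The third step is the contradiction with the goodness of $(\vec 0,0)$. Since $(\vec 0,0)\in\MM_{\good}$ and $\s(\vec 0,0)=1<\infty$, the set $\MM_0\cap\overline{B(\vec 0,1)}$ is disconnected, its connected component containing $\vec 0$ equals $\overline{\MM_0\cap B(\vec 0,1)}$, and this component is contained in $U$ by the choice of $U$. As $z\in\MM_0\setminus U$ with $|z|=1$, the point $z$ therefore lies in $\MM_0\cap\overline{B(\vec 0,1)}$ but in a component different from that of $\vec 0$. A connected subset of $\MM_0\cap\overline{B(\vec 0,1)}$ cannot meet two distinct components, which contradicts the properties of $C_\infty$ established above; hence the claim holds.

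I expect the main point to be the limit argument in the second step — extracting a Hausdorff-convergent subsequence of the connected sets $C_i$ and checking that the limit again lies in $\MM_0\cap\overline{B(\vec 0,1)}$. This is where the fact that $\MM$ is a (closed) weak set flow and the left-in-time continuity of $\s$ (to keep the radii $\rho_i$ bounded away from $\infty$) are used; everything else is a direct computation with $F$ together with the structural description of good points recorded after the definition of $\MM_{\good}$.
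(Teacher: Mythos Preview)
Your proof is correct, but it takes a more elaborate route than the paper's. The key observation in both is that $|q_i-x_i|=s_iu(x_i,t_i)<\s(x_i,t_i)$, so $q_i$ lies in the connected piece $\MM_{t_i}\cap B(x_i,\s(x_i,t_i))$. You then extract a Hausdorff limit of these connected pieces and derive a contradiction from the disconnectedness of $\MM_0\cap\overline{B(\vec 0,1)}$. The paper instead uses the open neighborhood $U$ that was already set up earlier: for $(x,t)$ close to $(\vec 0,0)$ and $r''<r'$, the set $U\cap\MM_t\cap B(x,r'')$ is connected with closure in $U$, hence (by connectedness of $\MM_t\cap B(x,r'')$ when $r''<\s(x,t)$) the whole intersection $\MM_t\cap B(x,r'')$ lies in $U$. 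So $q_i\in U$ directly, contradicting $q_i\to z\notin U$ since $U$ is open. This avoids the Hausdorff limit machinery entirely; your argument essentially reproves, via limits, the separation that the set $U$ already encodes. Your approach has the modest advantage of appealing only to the structural description of goodness rather than the auxiliary set $U$, but the paper's argument is shorter and makes the role of $U$ clearer.
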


\begin{proof}
If $(x,t) \in W$ is sufficiently close to $(\vec 0, 0)$ and $t < 0$, then  $u(x,t) < r'$, where $r' > 1$ constant from the beginning of the proof of the proposition.
So since $u(x,t) < \s(x,t)$, we obtain that there is a $\delta_{(x,t)} > 0$ such that for any $r'' \in (0, u(x,t) + \delta_{(x,t)})$ the intersection $\MM_t \cap B(x,r'')$ is connected and the closure of intersection $U \cap \MM_t \cap B(x,r'')$ is contained in $U$.
Thus we must have $\MM_t \cap B(x,r'') = U \cap \MM_t \cap B(x,r'')$.
Now suppose that $F(( x,t), s u (x,t)) \in \MM_t$ for some $0 \leq s \leq 1$.
Since $F(( x,t), s u (x,t)) \in B(x,r'')$, we must have $F(( x,t), s u (x,t)) \in U$.
In conclusion, we have shown that whenever $((x, t),s) \in W \times \IR$ is sufficiently close to $((\vec 0, 0), 1)$ and $t < 0$, then $F(( x,t), s u (x,t)) \in \MM_t$ implies that $F(( x,t), s u (x,t)) \in U$.
However, since $F((\vec 0, 0) ,1) = z \not\in U$, the latter cannot be true for $((x, t),s) \in W \times \IR$ is sufficiently close to $((\vec 0, 0), 1)$.
\end{proof}

Claim~\ref{Cl_local_Omega} implies that there is a family of compact domains $\{ \Omega_t \subset \IR^{n+1} \}_{t \leq 0}$ with the following properties:
\begin{enumerate}
\item $\partial \Omega_0 \cap \Sigma_0$ contains a neighborhood of $z$ in $\Sigma_0$.
\item $\Omega_t \cap \MM_t = \emptyset$ for $t < 0$ close to $0$.
\item $\partial \Omega_t$ depends smoothly on $t$ for $t \leq 0$ close to $0$.
\end{enumerate}
For example, we may choose $\Omega_t$ to be a smoothing of
\[ \ov{B(\vec 0, \eps)} \cap \big\{ F( (x,t), s u(x,t)) \;\; : \;\; (x,t) \in W, s \in [1-\beta, 1] \big\}, \]
where $\eps, \beta > 0$ are chosen appropriately small.
Applying Lemma~\ref{Lem_Sigma_compare} to the family $\{ \partial \Omega_t \}_{t \leq 0}$ implies that at $(\vec 0, 0)$
\[   \partial_t u + (\cos \theta) \partial_t f \geq \triangle u + (\cos \theta) \triangle f - a^2. \]
Combining this with \eqref{eq_squnabul0}, Claim~\ref{Cl_u_der}, and the fact that $\partial_t f = \triangle f$ due to the mean curvature flow equation at $(\vec 0, 0)$, implies
\[     0 > |\nabla u|^2  - a^2 = (\sin \theta)^2  - a^2 = (\sin \theta)^2  - (1-\cos \theta)^2 = 2 \cos \theta - 2 (\cos \theta)^2 \geq 0. \]
This yields the desired contradiction.
\end{proof}
\medskip

We will also need the following lemma.

\begin{Lemma} \label{Lem_ends_min_surf_improved}
In the setting of Lemma~\ref{Lem_ends_min_surf}, we also have
\[ \mathbf{r} \frac{|\nabla \s|}{\s} \lto 0, \qquad \mathbf{r} \, \s \lto \infty. \]
\end{Lemma}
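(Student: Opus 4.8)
The plan is to reduce both limits to the asymptotics of the graphical ends $u_1<\cdots<u_k$ supplied by Lemma~\ref{Lem_ends_min_surf}, and then to read them off from parts \ref{Lem_ends_min_surf_c} and \ref{Lem_ends_min_surf_d} of that lemma. Two degenerate cases are trivial: if $\Sigma$ is a union of parallel affine planes, then $\s$ is constant (the smallest distance between consecutive planes, or $\equiv+\infty$ for a single plane), so $\nabla\s\equiv 0$ and both assertions are immediate; and if $\Sigma$ is connected with one end, then $\Sigma$ is an affine plane (a complete embedded minimal surface of finite total curvature with a single end is flat), so again $\s\equiv+\infty$. Hence, by Lemma~\ref{Lem_ends_min_surf}\ref{Lem_ends_min_surf_aa1}, I may assume $\Sigma$ is connected with $k\ge 2$ ends and, after the rotation of Lemma~\ref{Lem_ends_min_surf}, that $\Sigma=\Sigma_{\mathrm c}\cup\Sigma_1\cup\cdots\cup\Sigma_k$, where $\Sigma_{\mathrm c}:=\Sigma\setminus(\Sigma_1\cup\cdots\cup\Sigma_k)\subset\ov{B(\vec 0,R_0)}$ is compact and $\Sigma_j$ is the graph of $u_j\in C^\infty(\IR^2\setminus\ov{B(\vec 0,R)})$. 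It is enough to establish the two limits as $\mathbf r(x)\to\infty$ with $x$ ranging over a fixed end $\Sigma_j$.

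First I would identify $\s$ with a distance to a neighbouring sheet. For $x=(p,u_j(p))$ write $v^+(p):=u_{j+1}(p)-u_j(p)$ and $g^+(x):=\dist_{\IR^3}(x,\operatorname{graph}u_{j+1})$ when $j<k$ (and $g^+\equiv+\infty$ when $j=k$), and define $v^-,g^-$ symmetrically with $u_{j-1}$. I claim $\s(x)=\min\{g^-(x),g^+(x)\}$ for $\mathbf r(x)$ large. Since $|\nabla u_j|\to 0$ (Lemma~\ref{Lem_ends_min_surf}\ref{Lem_ends_min_surf_c}), for $r<\min\{g^-(x),g^+(x),\tfrac12|p|\}$ we have $\Sigma\cap B(x,r)=\Sigma_j\cap B(x,r)$, which is the graph of $u_j$ over a connected region contained in $\IR^2\setminus\ov{B(\vec 0,R)}$ (because $r\ll|p|-R$), hence connected; so $\s(x)\ge\min\{g^-(x),g^+(x),\tfrac12|p|\}$. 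Conversely, as soon as $r$ slightly exceeds $g^+(x)$ the ball $B(x,r)$ — of radius $o(|p|)$, hence disjoint from $\Sigma_{\mathrm c}$, which lies at distance $\ge|p|-R_0$ — contains the foot point of $x$ on $\Sigma_{j+1}$, and within that small ball $\Sigma_j$ and $\Sigma_{j+1}$ are disjoint graphs joined only through $\Sigma_{\mathrm c}$, so $\Sigma\cap B(x,r)$ is disconnected; thus $\s(x)\le g^+(x)$, and similarly $\s(x)\le g^-(x)$. An elementary comparison (again from $|\nabla u_{j\pm1}|\to 0$) gives $g^{\pm}(x)=v^{\pm}(p)(1+o(1))$ for the finite ones and $\mathbf r(x)=|p|(1+o(1))$; together with $\mathbf r\,v^{\pm}\to\infty$ from Lemma~\ref{Lem_ends_min_surf}\ref{Lem_ends_min_surf_d}, this already yields $\mathbf r(x)\,\s(x)\to\infty$.

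Next I would estimate $|\nabla\s|$. Fix $j<k$ and work on the open set where $g^+(x)<g^-(x)$; the symmetric role of $g^-$, and the negligible locus $\{g^-=g^+\}$, are handled identically. Near infinity the foot point $q=q(p)$ of $x$ on $\operatorname{graph}u_{j+1}$ is unique and depends smoothly on $p$ — the minimisation is uniformly convex near the competitor $q=p$ because $|\nabla^2 u_{j+1}|=o(1/|p|)$ (Lemma~\ref{Lem_ends_min_surf}\ref{Lem_ends_min_surf_c}) while $v^+=o(|p|)$ — so $g^+$ is smooth on $\Sigma_j$ near infinity and $\s=g^+$ there. The Euclidean gradient $\nabla_{\IR^3}g^+(x)$ is the unit vector from $(q,u_{j+1}(q))$ to $x$, and the optimality condition $p-q=(u_{j+1}(q)-u_j(p))\nabla u_{j+1}(q)$ shows that this vector is $\pm$ the unit normal of $\Sigma_{j+1}$ at $q$; projecting onto $T_x\Sigma_j$ (whose unit normal points along $(-\nabla u_j(p),1)$) gives
\[
 |\nabla^{\Sigma_j}g^+(x)|=\bigl|\sin\angle\bigl(N_{\Sigma_{j+1}}(q),N_{\Sigma_j}(p)\bigr)\bigr|=\bigl|\nabla u_{j+1}(q)-\nabla u_j(p)\bigr|(1+o(1)).
\]
Since $|q-p|=o(v^+(p))$ (optimality and $|\nabla u_{j+1}|\to 0$) and $|\nabla^2 u_{j+1}|=o(1/|p|)$ on the relevant ball, $|\nabla u_{j+1}(q)-\nabla u_{j+1}(p)|=o(v^+(p)/|p|)$, hence $|\nabla^{\Sigma_j}g^+(x)|\le\bigl(|\nabla v^+(p)|+o(v^+(p)/|p|)\bigr)(1+o(1))$. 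Dividing by $g^+(x)=v^+(p)(1+o(1))$ and multiplying by $\mathbf r(x)=|p|(1+o(1))$ leaves $\mathbf r(x)\,|\nabla\s(x)|/\s(x)\le |p|\,|\nabla v^+(p)|/v^+(p)\,(1+o(1))+o(1)$, which tends to $0$ by the first limit in Lemma~\ref{Lem_ends_min_surf}\ref{Lem_ends_min_surf_d}.

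The main obstacle is the gradient estimate: the cheap bound $|\nabla\s|\le 1$ (Lemma~\ref{Prop_sep}\ref{Prop_sep_c}) is useless here, since $\s$ itself has size only $\sim v^\pm=o(\mathbf r)$, so one really needs the much finer bound $|\nabla\s|=o(\s/\mathbf r)$. What makes this possible is the first-order optimality condition for the foot point, which trades $\nabla u_j(p)$ for $\nabla u_{j+1}(q)$ in the formula for $\nabla g^+$ and thereby forces $\nabla g^+$ to see only the \emph{difference} $\nabla u_{j+1}-\nabla u_j=\nabla v^+$ — precisely the quantity controlled by Lemma~\ref{Lem_ends_min_surf}\ref{Lem_ends_min_surf_d}. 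The identification $\s=\min\{g^-,g^+\}$ in Step~1 and the bookkeeping in the degenerate cases are comparatively routine.
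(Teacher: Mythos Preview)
Your proof is correct and follows essentially the same approach as the paper: identify $\s$ on each end $\Sigma_j$ with the minimum of the distances $\s^\pm$ to the neighbouring sheets, note that $\s^\pm(x)/|u_{j\pm1}-u_j|(\bar x)\to 1$, express $|\nabla\s^\pm|$ as (essentially) the angle between the unit normals at $x$ and at the foot point $x'$, and then use $|\bar x-\bar x'|=o(\s^\pm)$ together with $\mathbf r|\nabla^2 u_{j\pm1}|\to 0$ to reduce to the bound $\mathbf r\,|\nabla(u_{j\pm1}-u_j)|/(u_{j\pm1}-u_j)\to 0$ from Lemma~\ref{Lem_ends_min_surf}\ref{Lem_ends_min_surf_d}. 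Your treatment of the degenerate cases and of the identification $\s=\min\{g^-,g^+\}$ is in fact more explicit than the paper's.
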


\begin{proof}
Recall the ends $\Sigma_1, \ldots, \Sigma_k \subset \Sigma$, which are graphs of functions $u_1 <  \ldots <u_k$ with
\begin{equation} \label{eq_old_nab_u_bound}
 \mathbf{r} \frac{|\nabla (u_{j+1}- u_j)|}{u_{j+1}-u_j} \lto 0. 
\end{equation}
For any point $x \in \IR^3$, we denote by $\ov x \in \IR^2$ its projection onto the first two coordinates.
Consider the separation function $\s$ restricted to a fixed $\Sigma_j$.
Outside of some compact subsets we have $\s = \min \{ \s^-, \s^+ \}$, where $\s^\pm = d(\cdot, \Sigma_{j\pm1})$ denote the distance functions to the nearby end (whenever they exists).
It suffices to derive the desired derivative bound for these functions individually.
To do this, note first the functions $\s^\pm$ are smooth outside some bounded subset and that we have
\begin{equation} \label{eq_s_ujp1u}
 \frac{\s^\pm(x)}{|u_{j\pm1}-u_j|(\ov x)}  \xrightarrow[x \to \infty]{} 1. 
\end{equation}
Fix some point $x \in \Sigma_j$, far enough away from the origin, and let $x' \in \Sigma_{j\pm1}$ be the point of shortest distance to $x$.
If we denote by $\mathbf{N}_x, \mathbf{N}_{x'}$ the upwards pointing normal vectors at $x$ and $x'$, then
\[
 |\nabla \s^\pm| = \tan \angle (\mathbf{N}_x, \mathbf{N}_{x'}) 
= F(\nabla u_j(\ov x),\nabla u_{j\pm1}(\ov x')- \nabla u_{j}(\ov x)), \]
where $F$ is a smooth function defined in a neighborhood of the origin of $\IR^2 \times \IR^2$ with the property that $F(v, 0) = 0$ for all $v \in \IR^2$ close to the origin.
It follows that there is a constant $C < \infty$ such that for $x$ far enough from the origin, we have
\begin{equation} \label{eq_nabspm_C}
 |\nabla \s^\pm| \leq C|\nabla u_{j\pm1}(\ov x')- \nabla u_{j}(\ov x)|. 
\end{equation}
Since $|\nabla u_j|, |\nabla u_{j\pm1}|\to 0$ at infinity, we have 
\begin{equation} \label{eq_xxp_spm}
\frac{|\ov x - \ov x' |}{\s^\pm(x)} \xrightarrow[x \to \infty]{} 0.
\end{equation}
So since $\mathbf{r} |\nabla^2 u_{j\pm1} | \to 0$ at infinity, we therefore have by \eqref{eq_nabspm_C}, \eqref{eq_xxp_spm}, \eqref{eq_old_nab_u_bound} and Lemma~\ref{Lem_ends_min_surf}\ref{Lem_ends_min_surf_d} and assuming $C$ to be generic
\begin{align*}
 |x| \frac{|\nabla \s^\pm|(x)}{|u_{j \pm 1} - u_j|(\ov x)}
&\leq C  |x| \frac{|\nabla u_{j\pm1}(\ov x')- \nabla u_{j}(\ov x)|}{|u_{j \pm 1} - u_j|(\ov x)} \\
&\leq C |x| \frac{|\nabla u_{j\pm1}(\ov x')- \nabla u_{j\pm1}(\ov x)|}{|u_{j \pm 1} - u_j|(\ov x)} + C |x| \frac{|\nabla u_{j\pm1}(\ov x)- \nabla u_{j}(\ov x)|}{|u_{j \pm 1} - u_j|(\ov x)} \\
&\leq C \frac{|\ov x - \ov x'|}{|u_{j \pm 1} - u_j|(\ov x)} + C |x| \frac{|\nabla u_{j\pm1}(\ov x)- \nabla u_{j}(\ov x)|}{|u_{j \pm 1} - u_j|(\ov x)} \\
&\leq C \frac{\s^\pm(x)}{|u_{j \pm 1} - u_j|(\ov x)} \, \frac{|\ov x - \ov x' |}{\s^\pm(x)} + C |x| \frac{|\nabla u_{j\pm1}(\ov x)- \nabla u_{j}(\ov x)|}{|u_{j \pm 1} - u_j|(\ov x)} \xrightarrow[x \to \infty]{} 0.
\end{align*}
Combining this with \eqref{eq_s_ujp1u} implies the desired result.
\end{proof}
\bigskip

\section{Smoothing of the local scale function} \label{sec_smooth_rloc}
Before establishing our key integral bound in Section~\ref{sec_int_bound}, we need to introduce a smoothed version $\td r_{\loc}$ of the local scale function $r_{\loc}$.
This smoothened local scale function will be comparable to $r_{\loc}$ and enjoy additional uniform local derivative bounds.
These derivative bounds will enable us to pass differential inequalities involving $\td r_{\loc}$ to limits.
In what follows, we will describe the construction of $\td r_{\loc}$, which we will then fix for the remainder of the paper.
So we will view $\td r_{\loc}$ as a well-defined quantity for any mean curvature flow.

Consider a weak set flow $\MM \subset \IR^{n+1} \times I$ with regular part $\MM_{\reg} \subset \MM$ and  recall the local scale function $r_{\loc}$ from Subsection~\ref{subsec_almostreg}.
Fix a smooth cutoff function $\psi : [0,\infty) \to [0,1]$ such that $\psi \equiv 1$ near $0$ and $\psi \equiv 0$ on $[1,\infty)$.
Let $\zeta > 0$ be a constant, which we will determine in Lemma~\ref{Lem_zeta_choice} below, and define
\begin{multline*}
 \tdr_{\loc} (x,t) := \int_I \int_{\MM_{\reg,t'}} r_{\loc}(x',t') \, \psi \bigg( \frac{|x-x'|^2}{ \zeta \, r_{\loc}^2(x',t')} \bigg) \, \psi \bigg( \frac{|t-t'|}{\zeta \, r_{\loc}^2(x',t')} \bigg) d\HH^2(x') dt' \Bigg\slash \\ \int_I \int_{\MM_{\reg,t'}}\psi \bigg( \frac{|x-x'|^2}{ \zeta \, r_{\loc}^2(x',t')} \bigg) \, \psi \bigg( \frac{|t-t'|}{\zeta \, r_{\loc}^2(x',t')} \bigg) d\HH^2(x') dt'. 
\end{multline*}
Note that the definition only makes sense if  $(x,t)$ is sufficiently close to $\MM_{\reg}$ in order to guarantee non-vanishing of the denominator.
In fact, in this paper we will only consider the restriction of $\td r_{\loc}$ to $\MM_{\reg}$.

\medskip 
\begin{Lemma} \label{Lem_zeta_choice}
There is a choice for $\zeta > 0$ (which we will fix henceforth) such that the following is true for any weak set flow $\MM \subset \IR^{n+1} \times I$ over an open time-interval $I$:
\begin{enumerate}[label=(\alph*)]
\item \label{Lem_zeta_choice_a} $0.9 r_{\loc} \leq \tdr_{\loc} \leq  1.1 r_{\loc}$
\item \label{Lem_zeta_choice_b} There is a constant $C= C(n) < \infty$ such that on $\MM_{\reg}$ 
\[ |\nabla \tdr_{\loc}| \leq C, \qquad |\nabla^2 \tdr_{\loc}|, |\partial_t \tdr_{\loc}| \leq C\tdr_{\loc}^{-1}. \]
\item \label{Lem_zeta_choice_c} The quantity $\tdr_{\loc}$ scales like a distance under parabolic rescaling in the sense that for any $\lambda > 0$ we have
\[ \tdr_{\loc}^{\lambda \MM} (\lambda x,\lambda^2 t) = \lambda \, \tdr_{\loc}^{\MM} (x,t). \]
\item \label{Lem_zeta_choice_dd} Consider a sequence of bounded almost regular mean curvature flows $\MM^i \subset \IR^3 \times I_i$ with $I^i \to \IR$, which locally smoothly converge to the stationary flow $\Sigma \times \IR$ corresponding to a properly embedded minimal surface $\Sigma \subset \IR^3$.
Suppose that $\Sigma$ is not an affine plane.
Then for every sequence $(x_i, t_i) \in \MM^i$ with $(x_i, t_i) \to (x_\infty, t_\infty)$ we have
\[ \lim_{i \to \infty} \big|\partial_t \td r_{\loc}^{\MM^i} \big| (x_i,t_i) = 0. \]
\item \label{Lem_zeta_choice_d} Consider a sequence of bounded almost regular mean curvature flows $\MM^i \subset \IR^3 \times I_i$ with $I^i \to \IR$, whose associated Brakke flows weakly converge to a Brakke flow $(\mu^\infty_t)_{t \in \IR}$.
Suppose that there is an affine plane $\Sigma \subset \IR^3$ and a time $t^* \in \IR$ such that $d\mu_t = k_t d (\HH^2 \lfloor \Sigma)$, where $k_t \geq 2$ for $t < t^*$ and $k_t = 1$ for $t > t^*$.
Then the flows $\MM^i$ restricted to the time-interval $(t^*,\infty)$ converge locally smoothly to the stationary flow $\Sigma \times (t^*, \infty)$ and for every sequence $(x_i, t_i) \in \MM^i$ with $(x_i,t_i) \to (x_\infty, t_\infty) \in \Sigma \times (t^*,\infty)$ we have
\[ \lim_{i \to \infty} |\nabla \td r^{\MM^i}| (x_i, t_i) = 0, \qquad \lim_{i \to \infty} |\nabla^2 \td r^{\MM^i}| (x_i, t_i) = 0, \qquad \lim_{i \to \infty} |\partial_t \td r^{\MM^i}| (x_i, t_i) > 0. \]
\end{enumerate}
\end{Lemma}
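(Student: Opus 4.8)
The plan is to dispatch the five assertions in turn, the first three (\ref{Lem_zeta_choice_a}--\ref{Lem_zeta_choice_c}) being mollification bookkeeping and the last two (\ref{Lem_zeta_choice_dd}, \ref{Lem_zeta_choice_d}) carrying the real content. For \ref{Lem_zeta_choice_a} the point is that the kernel defining $\td r_{\loc}(x,t)$ is supported on pairs $(x',t')$ with $|x-x'|^2\le\zeta\,r_{\loc}^2(x',t')$ and $|t-t'|\le\zeta\,r_{\loc}^2(x',t')$; from the spacetime monotonicity of $r_{\loc}$ in Lemma~\ref{Lem_rloc_Lipschitz} — concretely the inclusions $P(x,t,(1-c\sqrt\zeta)\,r_{\loc}(x',t'))\subset P(x',t',r_{\loc}(x',t'))$ and its mirror image, valid on the support for $\zeta$ small — one obtains $(1-c\sqrt\zeta)\,r_{\loc}(x,t)\le r_{\loc}(x',t')\le(1-c\sqrt\zeta)^{-1}\,r_{\loc}(x,t)$ throughout the support, and averaging gives \ref{Lem_zeta_choice_a} once $\zeta$ is chosen so that $(1-c\sqrt\zeta)^{\pm1}\in[0.9,1.1]$. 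Assertion \ref{Lem_zeta_choice_c} is the change of variables $x'\mapsto\lambda x'$, $t'\mapsto\lambda^2 t'$ in the defining integral, using $r_{\loc}^{\lambda\MM}(\lambda x',\lambda^2 t')=\lambda\,r_{\loc}^{\MM}(x',t')$, the scale invariance of the arguments of $\psi$, and the fact that $d\HH^2\,dt'$ scales by $\lambda^4$, so the numerator acquires $\lambda^5$ and the denominator $\lambda^4$. For \ref{Lem_zeta_choice_b}, write $\td r_{\loc}=N/D$; using \ref{Lem_zeta_choice_a} and the fact, built into Definition~\ref{Def_rloc}\ref{Def_rloc_3}, that on $B(x,r_{\loc}(x,t))$ the time-slices are connected nearly-flat graphs, one gets $D\asymp r^4$, $N\asymp r^5$ with $r:=\td r_{\loc}(x,t)$ and constants depending only on $\zeta$ (hence on $n$). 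Differentiating under the integral sign, each spatial derivative of $\psi$ costs a factor $r^{-1}$ and each $\partial_{\mathbf t}$ or second spatial derivative a factor $r^{-2}$ (because $|x-x'|\lesssim\sqrt\zeta\,r$ and $|t-t'|\lesssim\zeta\,r^2$ on the support), so $|\nabla^k N|\lesssim r^{5-k}$, $|\nabla^k D|\lesssim r^{4-k}$; the quotient rule yields the ambient bounds, and passing to intrinsic spatial derivatives and to the normal time derivative only introduces extra terms controlled by $(|A|+|\mathbf H|)$ times lower-order derivatives of $\td r_{\loc}$, which is $\lesssim r_{\loc}^{-1}(x,t)\asymp r^{-1}$.

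For the convergence statements the common mechanism is: the functions $r_{\loc}^{\MM^i}$ are uniformly $C^{1/2}$-Hölder by Lemma~\ref{Lem_rloc_Lipschitz}, so once they are shown to be locally uniformly bounded away from $0$ and $\infty$ near the base point, Arzelà--Ascoli gives locally uniform subsequential convergence $r_{\loc}^{\MM^i}\to\bar r$; inserting this, together with the smooth (resp. measure-theoretic) convergence of the $\MM^i$ and its derivatives, into the defining integral yields $C^2_{\loc}$ convergence $\td r_{\loc}^{\MM^i}\to\td r_{\loc}^{\MM^\infty}$, and since $\partial_t^{\MM^i}=\partial_{\mathbf t}+\mathbf H^{\MM^i}\cdot\nabla$ with $\mathbf H^{\MM^i}\to 0$ in both settings, $\partial_t\td r_{\loc}^{\MM^i}(x_i,t_i)\to\partial_{\mathbf t}\td r_{\loc}^{\MM^\infty}(x_\infty,t_\infty)$. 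In case \ref{Lem_zeta_choice_dd}, $\Sigma$ is not a plane, so $r_{\loc}^{\Sigma}$ is positive, finite and locally bounded (if $r_{\loc}^\Sigma$ were $\infty$ somewhere, then $A_\Sigma\equiv 0$ near there and, $\Sigma$ being connected, $\Sigma$ would be a plane); stability of the conditions in Definition~\ref{Def_rloc} under smooth convergence gives $r_{\loc}^{\MM^i}\to r_{\loc}^{\Sigma\times\IR}$, which is $t$-independent, and since $\Sigma\times\IR$ is stationary, $\td r_{\loc}^{\Sigma\times\IR}$ is $t$-independent, so $\partial_{\mathbf t}\td r_{\loc}^{\Sigma\times\IR}=0$ and the claimed limit is $0$.

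Case \ref{Lem_zeta_choice_d} is the hard part. First, $(\mu^\infty_t)$ restricted to $(t^*,\infty)$ equals $\HH^2\lfloor\Sigma$, so every point of $\Sigma\times(t^*,\infty)$ has Gaussian density exactly $1$; since the $\MM^i$ are unit-regular and unit-regularity passes to weak limits, Brakke--White local regularity forces the limit there to be the smooth stationary flow $\Sigma\times(t^*,\infty)$, with locally smooth convergence $\MM^i\to\Sigma\times(t^*,\infty)$ on $\IR^3\times(t^*,\infty)$ — this is the first assertion of \ref{Lem_zeta_choice_d}. Next I would identify $\bar r$ near $(x_\infty,t_\infty)$ with $t_\infty>t^*$: for any $r<\sqrt{t'-t^*}$ the parabolic ball $P(x',t',r)$ lies over times $>t^*$ where $\MM^i$ is $C^2$-close to a flat plane, so all conditions of Definition~\ref{Def_rloc} hold for $i$ large, giving $\liminf_i r_{\loc}^{\MM^i}(x',t')\ge\sqrt{t'-t^*}$; conversely, if $r_{\loc}^{\MM^i}(x',t')\ge\sqrt{t'-t^*}+\eps$ along a subsequence, then for a suitable time $t''<t^*$ the slice $\MM^i_{t''}\cap B(x',\sqrt{t'-t^*}+\eps/2)$ is a connected graph with $|A|\le 10^{-2}(\sqrt{t'-t^*}+\eps/2)^{-1}$, hence of $\HH^2$-measure $\le 1.01\pi(\sqrt{t'-t^*}+\eps/2)^2$, contradicting weak convergence to $2\,\HH^2\lfloor\Sigma$ (the mass on that ball is $\ge(2-o(1))\pi(\sqrt{t'-t^*}+\eps/2)^2$ in the limit). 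Thus $r_{\loc}^{\MM^i}\to\bar r(x',t')=\sqrt{t'-t^*}$ locally uniformly. This $\bar r$ is spatially constant, and since the limit flow on $(t^*,\infty)$ is a plane its mollification is invariant under translations of $\IR^3$, so $\td r_{\loc}^{\MM^\infty}$ is spatially constant, giving $\nabla\td r_{\loc}^{\MM^i}(x_i,t_i)\to 0$ and $\nabla^2\td r_{\loc}^{\MM^i}(x_i,t_i)\to 0$; finally, applying the scaling property \ref{Lem_zeta_choice_c} to the parabolic dilations fixing $(x_0,t^*)$, which preserve $\Sigma\times(t^*,\infty)$, forces $\td r_{\loc}^{\MM^\infty}(x,t)=c\sqrt{t-t^*}$ for a universal $c\in[0.9,1.1]$ by \ref{Lem_zeta_choice_a}, whence $\partial_t\td r_{\loc}^{\MM^i}(x_i,t_i)\to c/(2\sqrt{t_\infty-t^*})>0$.

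The main obstacle I anticipate is precisely the identification of $\bar r$ in case \ref{Lem_zeta_choice_d} — in particular the $\limsup$ direction, which leans on the area contradiction with the multiplicity-two limit and uses the threshold $10^{-2}$ in Definition~\ref{Def_rloc}\ref{Def_rloc_3} to make the offending slice a genuine graph — together with the care needed to upgrade locally uniform convergence of $r_{\loc}^{\MM^i}$ to $C^2$ convergence of $\td r_{\loc}^{\MM^i}$ that is uniform as the base point varies. Everything else reduces to calculus with mollifiers and to stability of $r_{\loc}$ under smooth convergence.
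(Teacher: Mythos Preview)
Your proposal is correct and follows essentially the same strategy as the paper: parts \ref{Lem_zeta_choice_a}--\ref{Lem_zeta_choice_c} are mollification bookkeeping via Lemma~\ref{Lem_rloc_Lipschitz} and a change of variables, while \ref{Lem_zeta_choice_dd} and \ref{Lem_zeta_choice_d} are obtained by identifying the local uniform limit of $r_{\loc}^{\MM^i}$ and then pushing it through the defining integral.

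There are two places where your route differs from the paper's, both harmless and arguably cleaner. First, for the upper bound $\limsup_i r_{\loc}^{\MM^i}(x',t')\le\sqrt{t'-t^*}$ in \ref{Lem_zeta_choice_d}, the paper simply asserts the local uniform convergence $r_{\loc}\to\sqrt{t-t^*}$, whereas you supply a proof via the area contradiction with the multiplicity-two limit at a time $t''<t^*$; this is a correct and natural argument (and you are right to pick $t''$ ``suitable'', i.e.\ one at which slice-wise weak convergence holds). Second, to pin down the form of the limiting $\td r_{\loc}$ the paper plugs $r_\infty(x,t)=\sqrt{t-t^*}$ into the mollification formula and computes the double integral explicitly to get $c_\psi\sqrt{t-t^*}$; you instead deduce $\td r_{\loc}^{\MM^\infty}=c\sqrt{t-t^*}$ from spatial translation invariance together with the scaling property \ref{Lem_zeta_choice_c} applied to dilations centered on $\Sigma\times\{t^*\}$, reading off $c\in[0.9,1.1]$ from \ref{Lem_zeta_choice_a}. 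Both reach the same conclusion; the paper's computation gives the constant in terms of $\psi$, while your scaling argument avoids the calculation entirely. One point worth making explicit in your write-up (it is implicit in both arguments) is that for $(x,t)$ with $t>t^*$ the support of the kernel lies entirely in $\{t'>t^*\}$ once $i$ is large, since $|t-t'|\le C_1^2\zeta\, r_{\loc}^{\MM^i}(x,t)^2\approx C_1^2\zeta(t-t^*)<t-t^*$; this is what lets you pass to the limit using only the smooth convergence on $(t^*,\infty)$.
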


\begin{proof}
If $\zeta$ is sufficiently small, then
\[ \psi \bigg( \frac{|x-x'|^2}{ \zeta \, r_{\loc}^2(x',t')} \bigg) \, \psi \bigg( \frac{|t-t'|}{\zeta \, r_{\loc}^2(x',t')} \bigg)\neq 0 \]
implies that $(x,t) \in P(x',t', C_0 \zeta^{1/2} r_{\loc}(x',t'))$ for some universal $C_0 < \infty$.
Therefore, by Lemma~\ref{Lem_rloc_Lipschitz} the value $\td r_{\loc}(x,t)$ only depends on $r_{\loc}$ restricted to $\MM_{\reg} \cap P(x,t,C_1 \zeta^{1/2} r_{\loc}(x,t))$ for some universal $C_1 < \infty$.
Since $r_{\loc}$ can be bounded on this parabolic ball again by Lemma~\ref{Lem_rloc_Lipschitz}, we obtain Assertion~\ref{Lem_zeta_choice_a} for sufficiently small $\zeta$, which we will now fix.
Assertion~\ref{Lem_zeta_choice_c} is clear by definition.
To see Assertion~\ref{Lem_zeta_choice_b}, we may assume after parabolic rescaling that $r_{\loc}(x,t) =1$.
The derivative bounds now follow from uniform derivative bounds on $\psi$.
Assertion~\ref{Lem_zeta_choice_dd} follows similarly.

It remains to show Assertion~\ref{Lem_zeta_choice_d}.
The statement about the local smooth convergence is standard and follows from the fact that the limiting flow has multiplicity one at times $t > t^*$.
It follows that we have local uniform convergence $r_{\loc} \to r_\infty$ on $\IR^3 \times (t^*,\infty)$, where $r_\infty(x,t) = \sqrt{t-t^*}$.
Due to the definition of $\td r_{\loc}$, this implies local smooth convergence $\td r_{\loc} \to \td r_\infty$ on $\IR^3 \times (t^*,\infty)$, where
\begin{align*}
 \td r_{\infty}(x,t) &= \int_{t_*}^\infty \int_{\Sigma} \sqrt{t' - t_*} \, \psi \bigg( \frac{|x-x'|^2}{ \zeta \, r_{\infty}^2(x',t')} \bigg) \, \psi \bigg( \frac{|t-t'|}{\zeta \, r_{\infty}^2(x',t')} \bigg) d\HH^2(x') dt' \Bigg\slash \\
 &\qquad\qquad\qquad\qquad\qquad\qquad \int_{t_*}^\infty \int_{\Sigma}  \psi \bigg( \frac{|x-x'|^2}{ \zeta \, r_{\infty}^2(x',t')} \bigg) \, \psi \bigg( \frac{|t-t'|}{\zeta \, r_{\infty}^2(x',t')} \bigg) dt'  \displaybreak[1] \\
  &= \int_{t_*}^\infty  \sqrt{t' - t_*}  \, \psi \bigg( \frac{|t-t'|}{\zeta \, (t'-t^*)} \bigg)  dt' \Bigg\slash  \int_{t_*}^\infty  \psi \bigg( \frac{|t-t'|}{\zeta \, (t'-t^*)} \bigg)  dt' = c_\psi \sqrt{t-t^*},
\end{align*}
for some constant $c_\psi > 0$, which may depend on the function $\psi$.
This implies Assertion~\ref{Lem_zeta_choice_d}.
\end{proof}

\bigskip

\section{An integral bound on the local scale} \label{sec_int_bound}
The proof of the key estimate in this paper, Theorem~\ref{Thm_key_thm}, hinges on a novel spacetime integral bound, Theorem~\ref{Thm_L2_bound}.
This bound concerns the integral of $r_{\loc}^{-2}$ over the domain of points within a mean curvature flow that experience local collapse, but which, crucially, are not locally modeled on minimal surfaces. 
The bound on this integral can be quantified in terms of an increase of the Gaussian area over successive scales.
It's worth noting that the integration domain also encompasses areas where the flow behaves in a manner akin to a translating soliton, for instance the grim reaper times $\IR$. 
While the evolution of the flow in these regions may be poorly understood,  our integral bound allows us to control their impact on the flow as a whole.

From a more philosophical standpoint, an integral bound on $r_{\loc}^{-2}$ may be appear unachievable, primarily due to the critical choice of the exponent.
This is related to the fact that $r_{\loc}^{-2}$ is comparable to the maximal function of the squared norm $|A|^2$ of the second fundamental form.
While an integral bound on $|A|^2$ is usually available, the possible occurrence of bubbling could ostensibly preclude a similar bound on $r_{\loc}^{-2}$.
Consequently, one may only expect a \emph{weak} $L^1$-bound on $r_{\loc}^{-2}$.
However, the linchpin making Theorem~\ref{Thm_L2_bound} work is that ``bubbles'', i.e., regions modeled on minimal surfaces, are explicitly excluded from the domain of integration.

Before stating our integral bound, we first define a set of points that are locally, at scale $r_{\loc}$, modeled by minimal surfaces or planes.
In the case in which the minimal surface is a plane and this plane is located near other points of the flow, we require an additional differential condition on the smoothed version $\td r_{\loc}$ of the local scale function (see Section~\ref{sec_smooth_rloc}).

\begin{Definition} \label{Def_CNT_new}
Consider an almost regular mean curvature flow $\MM \subset \IR^3 \times I$ and numbers $\eps, A > 0$.
We define $\MM_{\reg}^{(\eps,A)} \subset \MM_{\reg}$ to be the set of points $(x,t) \in \MM_{\reg}$ %
for which one of the following is true:
\begin{enumerate}[label=(\arabic*)]
\item \label{Def_CNT_new_2} We have 
\begin{equation} \label{eq_eps_in_I}
[t-\eps^{-1} r_{\loc}^2(x,t),t+\eps^{-1} r_{\loc}^2(x,t)] \subset I
\end{equation}
and the parabolically rescaled flow $r_{\loc}^{-1}(x,t) (\MM  - (x , t))$ restricted to the interior of the parabolic neighborhood $P(\vec 0, 0, \eps^{-1})$ is $\eps$-close\footnote{Here and in the rest of the paper we take $\eps$-closeness to mean that the intersection of the rescaled flow with the interior of $P(\vec 0, 0, \eps^{-1})$ can be expressed as a normal graph of a function over an open subset of the stationary flow of a minimal surface such that the function, along with its first $\lfloor \eps^{-1} \rfloor$ covariant derivatives, is bounded by $\eps$.} to an open subset of the stationary flow associated with a non-trivial, smooth,  properly embedded, multiplicity one minimal surface $\Sigma \subset \IR^3$ satisfying the bounds
\begin{equation} \label{eq_Theta_genus_bound}
   \Theta (\Sigma) \leq A \textQQqq{and} \genus(\Sigma) \leq A.
\end{equation}
\item \label{Def_CNT_new_3} There is a plane $P \subset \IR^3$ through the origin and a connected, open neighborhood $U \subset \MM_{\reg}$ of $(x,t)$ with the property that the parabolic rescaling $r_{\loc}^{-1}(x,t) (U - (x,t))$ is $\eps$-close to the product $B^P(\vec 0, \eps^{-1}) \times (-1+\eps, \eps^{-1})$, where the first factor denotes the $\eps^{-1}$-ball within $P$.
We also have the following bound on $U_t = \MM_t \cap U$ for all $t \in (-1+\eps, \eps^{-1})$
\[ (1-\eps)  r_{\loc}(x,t) \leq r_{\loc}(\cdot, t)  \leq (1+\eps) r_{\loc}(x,t). \] 
Moreover, if for some $(x',t') \in U$ we have $\s(x',t') \geq (1+\eps) r_{\loc}(x',t')$, then 
\begin{equation} \label{eq_def_squ_log_r_positive}
 \square  \big(\log \tdr_{\loc}\big)(x',t')  > 0. 
\end{equation}
\end{enumerate}
\end{Definition}
\medskip

The precise purpose of Property~\ref{Def_CNT_new_3} will become clear in Subsection~\ref{subsec_mod_sep}.
Roughly speaking, in the setting of Property~\ref{Def_CNT_new_3}, the separation function $\s$ may not satisfy a useful differential inequality, as it may describe the geometry of the flow at a much larger scale than $r_{\loc}$.
To address this, we will use $\td r_{\loc}$ as a local substitute and the bound \eqref{eq_def_squ_log_r_positive} ensures that the desired differential inequality holds for $\td r_{\loc}$ instead of $\s$.

The following theorem, which is the main theorem of this section, gives us an integral bound on $r_{\loc}^{-2}$ over an open neighborhood $\mathcal{U}$ of the complement of $\MM_{\reg}^{(\eps,A)} \subset \MM$.
Note that this implies that $\MM_{\sing} \subset \mathcal{U}$.

\begin{Theorem} \label{Thm_L2_bound}
For any $A < \infty$, $\eps > 0$ there are constants $\delta(A,\eps) > 0$ and $C(A,\eps) < \infty$ such that the following is true.
Consider a bounded almost regular mean curvature flow $\MM \subset \IR^3 \times I$ and the subset $\MM_{\reg}^{(\eps,A)} \subset \MM_{\reg}$ from Definition~\ref{Def_CNT_new}.
Then there is an open neighborhood $\mathcal{U}$ of $\MM \setminus \MM_{\reg}^{(\eps,A)}$ within $\MM$ such that the following holds.

Consider a point $(x_0, t_0) \in \IR^3 \times \ov I$ and a $\tau_0 > 0$ such that $t_0 - \tau_0 \in I$ and
\[ \Theta_{(x_0, t_0)} (\tau_0)  \leq A \textQQqq{and} \genus(\MM_{I \cap (-\infty,t_0)})  \leq A. \]
Define
\begin{equation} \label{eq_XXt0delta}
 \XX^{t_0,\delta} := \big\{ (x,t) \in \MM_{< t_0} \;\; : \;\;  r_{\loc}(x,t) \geq \delta \sqrt{t_0 -t} \big\}, 
\end{equation}
Then for any $0 < \tau_1 < \tau_2 \leq \tfrac{1}2 \tau_0$ 
we have the bound
\begin{multline} \label{eq_int_bounded_TT}
   \int_{t_0-\tau_2}^{t_0- \tau_1} \frac1{t_0 -t} \int_{((\mathcal{U} \cap \MM_{\reg}) \setminus \XX^{t_0,\delta})_t \cap B(x_0, A \sqrt{t_0-t})} r_{\loc}^{-2}  \, d\HH^2 \, dt \\
\leq C(A,\eps) \big( \Theta_{(x_0, t_0)} (2 \tau_2) - \Theta_{(x_0, t_0)} ( \tfrac12 \tau_1) \big).
\end{multline}
\end{Theorem}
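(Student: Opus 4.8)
The plan is to argue by contradiction and compactness, after first reducing to a single pair of consecutive scales, and then to assemble the resulting local estimates by a covering argument tied to the equality case of Huisken's monotonicity formula.

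\emph{Reduction and the set $\mathcal U$.} Decomposing $[t_0-\tau_2,t_0-\tau_1]$ into dyadic sub‑intervals on which $t_0-t$ varies by a factor $2$ and using that $\Theta_{(x_0,t_0)}$ is non‑decreasing (Lemma~\ref{Lem_entropy_rho}\ref{Lem_entropy_rho_a}), it suffices to treat the case $\tau_2=2\tau_1$, since the right‑hand sides of the pieces then telescope into $C\big(\Theta_{(x_0,t_0)}(2\tau_2)-\Theta_{(x_0,t_0)}(\tfrac12\tau_1)\big)$. After a time‑shift and parabolic rescaling we may assume $(x_0,t_0)=(\vec 0,0)$ and $\tau_1=1$, so that $-t\in[1,2]$ on the range of integration and the weight $(t_0-t)^{-1}$ is bounded above and below. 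I would take $\mathcal U$ to be the complement in $\MM$ of the set of those $(x,t)\in\MM_{\reg}$ for which a manifestly \emph{open} strengthening of Property~\ref{Def_CNT_new_2} or Property~\ref{Def_CNT_new_3} of Definition~\ref{Def_CNT_new} holds on a full neighborhood at scale $r_{\loc}(x,t)$ (say, with $\eps$ replaced by $2\eps$, $A$ by $A/2$, and strict inequalities in \eqref{eq_eps_in_I}, \eqref{eq_Theta_genus_bound}, \eqref{eq_def_squ_log_r_positive}); one checks that $\mathcal U$ is open, contains $\MM\setminus\MM_{\reg}^{(\eps,A)}$, and that on $\mathcal U\cap\MM_{\reg}$ neither strengthened property holds.

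\emph{The local energy gap.} The heart of the matter is the claim that there are constants $\eta(A,\eps)>0$ and $\delta(A,\eps)>0$, with $\delta$ small relative to $\eps$, such that for every $(x,t)\in\mathcal U\cap\MM_{\reg}$ with $x\in B(\vec 0,2A\sqrt{-t})$, $-t\in[1,2]$ and $r:=r_{\loc}(x,t)<\delta\sqrt{-t}$ one has
\[
\int_{t-\eps^{-1}r^2}^{t+\eps^{-1}r^2}\int_{\MM_{\reg,t'}\cap B(x,\eps^{-1}r)}\Big|\tfrac{\mathbf x^{\perp}}{2(-\mathbf t)}+\mathbf H\Big|^{2}\,d\HH^{2}\,dt'\;\ge\;\eta\,r^{2}.
\]
I would prove this by contradiction: given counterexamples $(\MM^{i},(x_i,t_i))$ with $r_i:=r_{\loc}^{\MM^i}(x_i,t_i)$, $r_i/\sqrt{-t_i}\to0$, and the integral above $<\tfrac1i\,r_i^2$, rescale $\MM^i$ parabolically by $r_i$ about $(x_i,t_i)$. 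Since $r_i/\sqrt{-t_i}\to0$, the shrinker term $\tfrac{\mathbf x^{\perp}}{2(-\mathbf t)}$ of the rescaled flows tends uniformly to $0$ on $P(\vec 0,0,\eps^{-1})$, so the failed inequality forces $\int|\mathbf H|^{2}\to0$ there; by Lemma~\ref{Lem_intA2bound} and Lemma~\ref{Lem_Brakke_to_min_shrink}\ref{Lem_Brakke_to_min_shrink_a}, after passing to a subsequence the rescaled flows converge, with locally constant multiplicity in spacetime, to the stationary flow of a properly embedded minimal surface $\Sigma$ with $\Theta(\Sigma),\genus(\Sigma)\le A$ by \eqref{eq_Theta_genus_bounded_limit}, the component through the origin having multiplicity one because $r_{\loc}^{\MM^i}(x_i,t_i)=r_i$ is fixed. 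If $\Sigma$ is a single affine plane with multiplicity one, the rescaled flows are $C^\infty$‑close to a flat disc on a large ball, forcing $r_{\loc}^{\MM^i}(x_i,t_i)\gg r_i$, a contradiction. If $\Sigma$ is a non‑trivial multiplicity‑one minimal surface, then for large $i$ the rescaled flow is $\eps$‑close to it on $P(\vec 0,0,\eps^{-1})$, so $(x_i,t_i)\in\MM_{\reg}^{(\eps,A)}$ by Property~\ref{Def_CNT_new_2}, contradicting $(x_i,t_i)\in\mathcal U$; if $\Sigma$ has a component of multiplicity $\ge2$, one resolves the nearby sheets of $\MM^i$ into a genuine (possibly disconnected) non‑trivial multiplicity‑one minimal surface and concludes as before. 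The remaining, borderline situation — where at scale $r_{\loc}$ the flow is plane‑like but carries a nearby second sheet, so that $\s$ itself need not satisfy a useful differential inequality at scale $r_{\loc}$ — is precisely the one governed by Property~\ref{Def_CNT_new_3}: here the substitute $\tdr_{\loc}$ of $r_{\loc}$ comes into play, and Lemma~\ref{Lem_zeta_choice}\ref{Lem_zeta_choice_d} supplies both the $r_{\loc}$‑pinching and the inequality \eqref{eq_def_squ_log_r_positive}, again placing $(x_i,t_i)$ in $\MM_{\reg}^{(\eps,A)}$; if that fails, the impending collision of sheets it signals is what produces the required energy. (The smallness of $\delta$ relative to $\eps$ guarantees $\eps^{-1}r\ll\sqrt{-t}$, so the rescaled picture is well defined, the shrinker term is negligible, and \eqref{eq_eps_in_I} holds on account of $\tau_0\ge2\tau_2$.)

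\emph{Summation.} To conclude, cover the integration domain by the parabolic balls $P(x,t,c_0 r_{\loc}(x,t))$ for a small fixed $c_0$, extract by the Vitali covering lemma a disjoint subfamily $\{P(x_j,t_j,c_0 r_j)\}$ whose $5$‑fold dilations still cover, and use $r_{\loc}\sim r_j$ on each $P(x_j,t_j,5c_0 r_j)$ together with the upper density bound coming from $\Theta\le A$ to obtain $\int_{-2}^{-1}(-t)^{-1}\int r_{\loc}^{-2}\,d\HH^2\,dt\lesssim\sum_j r_j^2/(-t_j)$. For each $j$ the energy gap, combined with $\rho_{(\vec 0,0)}\sim_A 1$ on $P(x_j,t_j,\eps^{-1}r_j)$ (valid since $\eps^{-1}r_j\ll\sqrt{-t_j}$), gives $r_j^2/(-t_j)\lesssim_{A,\eps}\int_{P(x_j,t_j,\eps^{-1}r_j)}\big|\tfrac{\mathbf x^{\perp}}{2(-\mathbf t)}+\mathbf H\big|^2\rho_{(\vec 0,0)}\,d\HH^2\,dt$. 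Summing over $j$, grouped by the dyadic size of $r_j$ so that within each group the disjoint balls $P(x_j,t_j,c_0 r_j)$ force a bounded‑overlap constant $N(\eps)$ on their $\eps^{-1}$‑dilations, and then using the equality in Huisken's monotonicity formula for almost regular flows (Lemma~\ref{Lem_entropy_rho}\ref{Lem_entropy_rho_bb}), bounds the total by $C(A,\eps)\big(\Theta_{(\vec 0,0)}(2\tau_2)-\Theta_{(\vec 0,0)}(\tfrac12\tau_1)\big)$, which is \eqref{eq_int_bounded_TT} after undoing the rescaling.

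\emph{Main obstacle.} The decisive difficulty is the case analysis in the energy‑gap step, specifically dealing with higher‑multiplicity limits — which cannot be ruled out a priori, as doing so is the ultimate aim of the paper — and with the borderline plane‑like scenario: one must show that a flow resembling several nearly coincident sheets of a \emph{curved} minimal surface is, at the separation scale, $\eps$‑close to a genuine multiplicity‑one minimal surface (so Property~\ref{Def_CNT_new_2} applies), and that when the flow is merely plane‑like at scale $r_{\loc}$ the smoothed scale $\tdr_{\loc}$ together with \eqref{eq_def_squ_log_r_positive} and Lemma~\ref{Lem_zeta_choice}\ref{Lem_zeta_choice_d} correctly detects the situation. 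A secondary difficulty is the covering bookkeeping needed to pass from the many localized Gaussian‑area drops to the single monotone quantity on the right‑hand side of \eqref{eq_int_bounded_TT} without over‑counting across scales.
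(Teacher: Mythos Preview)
Your overall architecture --- an energy gap at scale $r_{\loc}$, Vitali covering, and summation against Huisken's monotonicity --- matches the paper's, but the summation step as written has a genuine gap that the paper spends most of its effort resolving.

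The problem is this. Grouping the Vitali balls by the dyadic size of $r_j$ does give bounded overlap of the $\eps^{-1}$-dilates \emph{within each group}, so each group contributes at most $C(A,\eps)\big(\Theta_{(\vec 0,0)}(2\tau_2)-\Theta_{(\vec 0,0)}(\tfrac12\tau_1)\big)$. But there is no a priori bound on the number of dyadic groups: even after your reduction to $\tau_2=2\tau_1$ and rescaling to $-t\in[1,2]$, the radii $r_j$ can range over arbitrarily many scales below $\delta$. Summing over groups therefore introduces an unbounded factor. This is exactly the over-counting you flag as a ``secondary difficulty,'' and dyadic grouping does not fix it.

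The paper's resolution is to refine \emph{both} the energy-gap statement and the covering bookkeeping simultaneously. For each Vitali center $(x_j,t_j)$ one introduces the set
\[
\ZZ^j:=\big\{(x,t)\in P(x_j,t_j,\delta^{-1}r_j):\ \big|\Theta_{(x,t)}(\delta^{-2}r_j^2)-\Theta_{(x,t)}(\delta^{2}r_j^2)\big|\le\delta\big\}
\]
of points whose Gaussian density is $\delta$-almost constant at scale $r_j$, and proves the energy gap only on the complement $P(x_j,t_j,\delta^{-1}r_j)\setminus\ZZ^j$ (this is Lemma~\ref{Lem_close_to_min}). The payoff is that at any fixed $(x,t)$ the monotone, bounded quantity $\tau\mapsto\Theta_{(x,t)}(\tau)$ can drop by $\delta$ at only $O_A(\delta^{-1})$ many dyadic scales; combined with bounded overlap at each fixed scale, this forces $(x,t)\in P(x_j,t_j,\delta^{-1}r_j)\setminus\ZZ^j$ for at most $N(A,\delta)$ indices $j$ \emph{across all scales} (Claim~\ref{Cl_bounded_js}). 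That is what converts the would-be logarithmic loss into a genuine bounded-overlap constant.

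The cost is that the energy-gap lemma must now be proved under the weaker hypothesis that the shrinker quantity is small only on $\MM_{\reg}\setminus\ZZ$. In the limit one no longer lands directly on a stationary minimal surface: on time-intervals coming from limits of $\ZZ$-points the flow is instead a shrinker (Claim~\ref{Cl_Istar_shrinker}), and the paper runs a trichotomy on the set $I^*$ of such times. Your treatment of the higher-multiplicity limit (``resolve the nearby sheets into a genuine multiplicity-one minimal surface'') is also too quick --- the sheets collapsing onto a multiplicity-$k$ plane are not $\eps$-close to any multiplicity-one minimal surface at scale $r_{\loc}$. The paper handles this case via Property~\ref{Def_CNT_new_3} and Lemma~\ref{Lem_zeta_choice}\ref{Lem_zeta_choice_d}, but only after the $\ZZ$-analysis has reduced the limit to a static plane with a multiplicity drop at a single time.
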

\bigskip

Note that if $\MM = \MM_{\reg}$ is fully regular, then we can just choose $\mathcal{U}$ to be a slightly larger open subset containing $\MM \setminus \MM_{\reg}^{(\eps,A)}$.
So the purpose of the subset $\mathcal{U}$ is to include a neighborhood of the singular set in the integrand of \eqref{eq_int_bounded_TT}.

The proof of Theorem~\ref{Thm_L2_bound} relies on the identity  \eqref{eq_TT_minus}, which allows us to bound an integral over the quantity $|\frac{(\mathbf{x}-x_0)^\perp}{2(t_0 - \mathbf{t})} + \mathbf{H}|^2$ by the increase of the Gaussian area on the right-hand side of \eqref{eq_int_bounded_TT}.
To convert this integral estimate into the desired integral bound over $r_{\loc}$, we take advantage of the fact that the flow sufficiently deviates locally from a minimal surface on the domain of the integral on the left-hand side of \eqref{eq_int_bounded_TT}.
Consequently, for any point $(x,t)$ in this domain of integration, it is possible to deduce that the average of  $|\frac{(\mathbf{x}-x_0)^\perp}{2(t_0 - \mathbf{t})} + \mathbf{H}|^2$ over a parabolic ball of radius $C r_{\loc}(x,t)$, where $C$ is  a suitably large constant, is roughly bounded from below by the average over $r_{\loc}^{-2}$ over a smaller parabolic ball of radius $\frac1{10}r_{\loc}(x,t)$.
We will then proceed by covering the domain of integration by these smaller parabolic balls, thereby allowing us to bound the left-hand side of \eqref{eq_int_bounded_TT} by a sum of integrals of $|\frac{(\mathbf{x}-x_0)^\perp}{2(t_0 - \mathbf{t})} + \mathbf{H}|^2$ over the larger parabolic balls having radius $C r_{\loc}(x,t)$.
Ideally, if the degree of the overlap of these larger parabolic balls could be bounded from above, it would immediately imply the desired bound.
However, this may not be the case in general.
To address this issue, we have to be more cautious in formulating a lower bound on the average of $|\frac{\mathbf{x}^\perp}{2(t_0 - \mathbf{t})} + \mathbf{H}|^2$.
We will achieve this by excluding points from the domain of the averaging integral that may fall within too many of these larger parabolic balls.
This is done by excluding points where the Gaussian area at scale $r_{\loc}(x,t)$ is approximately constant.

The following lemma delineates this lower integral (i.e., average) bound in a contrapositive form.
In other words, it states that whenever the integral $|\frac{\mathbf{x}^\perp}{2(t_0 - \mathbf{t})} + \mathbf{H}|^2$ over a large parabolic ball, excluding specific points, is too small, then the center of this ball must lie in $\MM_{\reg}^{(\eps,A)}  \cup \XX^{t_0,\delta}$.
The lemma will follow via a limit argument.

\begin{Lemma} \label{Lem_close_to_min}
For every $A < \infty$ and $\eps > 0$ there is a constant $\delta(A,\eps) > 0$ such that the following holds.
Let $\MM$ be a bounded almost regular mean curvature flow on $\IR^3 \times (-T,t_0)$ and $x_0 \in \IR^3$ a point.
Suppose that:
\begin{enumerate}[label=(\roman*)]
\item \label{Lem_close_to_min_i} $0 < t_0 <  T$.
\item \label{Lem_close_to_min_ii} $|x_0| \leq A \sqrt{t_0}$.
\item \label{Lem_close_to_min_iii} We have
\begin{equation} \label{eq_Lem_Theta_genus}
\sup_{0 < \tau < T} \Theta_{(x_0,t_0)}(\tau)  \leq A , \qquad \genus(\MM) \leq  A.
 \end{equation}
\item \label{Lem_close_to_min_iv} We have $(\vec 0, 0) \in \MM_{\reg}$ and $r_{\loc}(\vec 0, 0) = 1$.
\item \label{Lem_close_to_min_v} There is a subset $\ZZ \subset \MM$ with the following properties:
\begin{itemize}
\item For every $(x,t) \in \ZZ$ we have $t - \delta^{-2} > -T$ and
\[ \big| \Theta_{(x,t)} (\delta^{-2}) - \Theta_{(x,t)}(\delta^2) \big| \leq \delta. \]
\item On the complement of the subset $\ZZ$ we have the following $L^2$-bound:
\begin{equation} \label{eq_int_setminus_Z_delta}
   \int_{- \min \{ \delta^{-2},T \}}^{\min\{ \delta^{-2}, \frac12 t_0 \}} \int_{(\MM_{\reg} \setminus \ZZ)_t \cap B(\vec 0,  \delta^{-1})} \bigg| \frac{(\mathbf{x}-x_0)^{\perp}}{2(t_0 - \mathbf{t})} + \mathbf{H} \bigg|^2\rho_{(x_0,t_0)}  \, d\HH^2 \, dt \leq \frac{\delta}{t_0}. 
\end{equation}
\end{itemize}
\end{enumerate}
Then $(\vec 0, 0) \in \MM_{\reg}^{(\eps,A)}  \cup \XX^{t_0,\delta}$.
\end{Lemma}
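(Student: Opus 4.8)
The statement is exactly the sort of claim that should follow from a compactness/contradiction argument, exploiting the fact that all the hypotheses are scale-invariant (after the normalization $r_{\loc}(\vec 0, 0) = 1$) and pass to limits. So the plan is to argue by contradiction: suppose the conclusion fails for a sequence $\delta_i \to 0$, i.e. there are bounded almost regular flows $\MM^i$ on $\IR^3 \times (-T_i, t_0^i)$, points $x_0^i$, and subsets $\ZZ^i$ satisfying \ref{Lem_close_to_min_i}--\ref{Lem_close_to_min_v} with $\delta$ replaced by $\delta_i$, but with $(\vec 0, 0) \notin \MM^i_{\reg}{}^{(\eps,A)} \cup \XX^{t_0^i, \delta_i}$. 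We will extract a subsequential limit and show the limit forces $(\vec 0, 0)$ into $\MM_{\reg}^{(\eps,A)}$ for large $i$, a contradiction.

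**Main steps.** First, extract limits: by \ref{Lem_close_to_min_iii} the entropies $\Theta(\MM^i)$ and genera are uniformly bounded, and \ref{Lem_close_to_min_iv} gives uniform local regularity near $(\vec 0, 0)$; after passing to a subsequence $t_0^i \to t_0^\infty \in (0, \infty]$ (if $t_0^i \to \infty$ we rescale or just work on exhausting time intervals), $x_0^i \to x_0^\infty$, and the associated Brakke flows converge weakly to a Brakke flow $(\mu_t^\infty)$ on $\IR^3 \times I_\infty$ with $I_\infty \ni 0$. Since $(\vec 0, 0) \notin \XX^{t_0^i,\delta_i}$ means $r_{\loc}(\vec 0, 0) = 1 < \delta_i \sqrt{t_0^i}$ fails eventually — wait, actually $1 \geq \delta_i\sqrt{t_0^i - 0}$ would place it in $\XX$, so the hypothesis $(\vec 0,0) \notin \XX^{t_0^i,\delta_i}$ forces $\delta_i \sqrt{t_0^i} > 1$, hence $t_0^i \to \infty$ since $\delta_i \to 0$; so $t_0^\infty = \infty$ and $I_\infty$ contains a neighborhood of $0$ that grows to all of $\IR$. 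Next, interpret the $L^2$ hypothesis \ref{eq_int_setminus_Z_delta}: the domain of integration is $(\MM^i_{\reg} \setminus \ZZ^i)_t \cap B(\vec 0, \delta_i^{-1})$ over $t \in [-\min\{\delta_i^{-2}, T_i\}, \min\{\delta_i^{-2}, \tfrac12 t_0^i\}]$, which exhausts all of spacetime, with integrand $|\tfrac{(\mathbf{x}-x_0^i)^\perp}{2(t_0^i - \mathbf{t})} + \mathbf{H}|^2 \rho_{(x_0^i, t_0^i)}$, and the bound is $\delta_i/t_0^i \to 0$. Because $t_0^i \to \infty$ and $x_0^i = O(\sqrt{t_0^i})$, one checks $\rho_{(x_0^i,t_0^i)}(x,t) \to$ a positive constant locally uniformly (it is $\approx \tfrac{1}{4\pi t_0^i}$), so after multiplying through by $t_0^i$, $\int\int |\tfrac{(\mathbf{x}-x_0^i)^\perp}{2(t_0^i-\mathbf{t})} + \mathbf{H}|^2 \to 0$ over the complement of $\ZZ^i$; and the shrinker term $\tfrac{(\mathbf{x}-x_0^i)^\perp}{2(t_0^i - \mathbf{t})} \to 0$ locally uniformly, so this becomes $\int\int_{\text{compl } \ZZ^i} |\mathbf{H}|^2 \to 0$. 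The decisive point: $\ZZ^i$ is the set where the Gaussian density is nearly constant over scales $[\delta_i^2, \delta_i^{-2}]$, i.e. $|\Theta_{(x,t)}(\delta_i^{-2}) - \Theta_{(x,t)}(\delta_i^2)| \leq \delta_i$; in the limit, at any such accumulation point the monotonicity formula (Lemma~\ref{Lem_entropy_rho}\ref{Lem_entropy_rho_bb}) forces the limit flow to be a shrinker there — but in a blow-up at infinity of a bounded flow a shrinker can only be a plane (by entropy/the specific geometry), so near limit-points of $\ZZ^i$ the limit is a multiplicity-$k$ plane. Off $\ZZ^i$ we showed $|\mathbf{H}|^2 \to 0$, so by Lemma~\ref{Lem_Brakke_to_min_shrink}\ref{Lem_Brakke_to_min_shrink_a} the limit Brakke flow is stationary: a minimal surface $\Sigma \subset \IR^3$ with locally constant multiplicity $k$, $\Theta(\Sigma) \leq A$, $\genus(\Sigma) \leq A$, and $(\vec 0, 0) \in \Sigma$ with multiplicity one (by \ref{Lem_close_to_min_iv}).

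**Finishing, and the obstacle.** Now apply Lemma~\ref{Lem_min_surf_conv_or_plane} to the time slices: either $\Sigma$ is not a union of planes, in which case the component through the origin is a non-trivial minimal surface, the convergence $\MM^i_{\reg} \to \Sigma$ is locally smooth near $(\vec 0,0)$, $\Sigma$ satisfies \eqref{eq_Theta_genus_bound}, and hence for large $i$ the rescaled flow $r_{\loc}^{-1}(\vec 0, 0)(\MM^i - (\vec 0,0)) = \MM^i - (\vec 0, 0)$ on $P(\vec 0, 0, \eps^{-1})$ is $\eps$-close to the stationary flow of $\Sigma$, placing $(\vec 0, 0) \in \MM^i_{\reg}{}^{(\eps, A)}$ via Definition~\ref{Def_CNT_new}\ref{Def_CNT_new_2} — contradiction. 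Or $\Sigma$ is a union of parallel affine planes through (a translate of) the origin; say $P$ is the one through $\vec 0$, with multiplicity one. If $P$ is the only plane, $\MM^i \to P \times \IR$ smoothly, and if moreover no nearby point forces trouble we land in Definition~\ref{Def_CNT_new}\ref{Def_CNT_new_3}; if there are other planes, this is exactly the situation of Lemma~\ref{Lem_zeta_choice}\ref{Lem_zeta_choice_d} (a multiplicity-$\geq 2$ plane-stack collapsing to a single plane at some time $t^*$), which yields precisely $\square(\log \td r_{\loc}) > 0$ at the relevant points — again verifying condition \eqref{eq_def_squ_log_r_positive} and placing $(\vec 0,0) \in \MM^i_{\reg}{}^{(\eps,A)}$ via \ref{Def_CNT_new_3} for large $i$. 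In every case we contradict $(\vec 0, 0) \notin \MM^i_{\reg}{}^{(\eps,A)} \cup \XX^{t_0^i, \delta_i}$.

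The main obstacle I anticipate is the bookkeeping in condition \ref{Def_CNT_new_3}: we must produce the \emph{open neighborhood} $U$ on which \eqref{eq_def_squ_log_r_positive} holds uniformly, and verify $\square(\log \td r_{\loc})(x', t') > 0$ at every $(x',t') \in U$ with $\s(x',t') \geq (1+\eps) r_{\loc}(x',t')$ — this requires combining the smooth convergence to the plane-stack with the structure of $\td r_{\loc}$ from Lemma~\ref{Lem_zeta_choice}\ref{Lem_zeta_choice_d} and checking that points with large separation are exactly the ones sitting on the single-sheeted region after time $t^*$, where $\td r_{\loc} \sim c_\psi\sqrt{t - t^*}$ so that $\square \log \td r_{\loc} = \partial_t \log \td r_{\loc} - \triangle \log \td r_{\loc} > 0$; making this quantitative and stable under the $\eps$-perturbation, and ruling out the degenerate case where $\s$ stays comparable to $r_{\loc}$ everywhere on $U$ (handled vacuously), is where the care is needed. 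A secondary subtlety is justifying that $t_0^i \to \infty$ and that $\rho_{(x_0^i,t_0^i)}$ and the shrinker term behave as claimed, but this is routine given $|x_0^i| \leq A\sqrt{t_0^i}$.
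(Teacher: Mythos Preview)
Your overall strategy---contradiction, compactness, extract a limiting Brakke flow, then show the limit forces $(\vec 0,0)\in\MM^{i,(\eps,A)}_{\reg}$---is exactly the paper's approach, and your deduction that $t_0^i\to\infty$ and hence $\rho_{(x_0^i,t_0^i)}\sim(4\pi t_0^i)^{-1}$, $\tfrac{(\mathbf x-x_0^i)^\perp}{2(t_0^i-\mathbf t)}\to 0$ locally, is correct. But there is a structural gap. You assert that the limit is globally stationary (``by Lemma~\ref{Lem_Brakke_to_min_shrink}\ref{Lem_Brakke_to_min_shrink_a} the limit Brakke flow is stationary''), yet you only verified $\int|\mathbf H|^2\to 0$ \emph{off} $\ZZ^i$. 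The paper handles this by introducing the set $I^*$ of accumulation \emph{times} of $\ZZ^i$ and doing a trichotomy on $\#I^*$: on intervals disjoint from $I^*$ the limit is stationary, while at any $t^*\in I^*$ the limit restricted to $(-\infty,t^*)$ is a shrinker centered at some $(x^*,t^*)$. Your claim that ``a shrinker can only be a plane'' is not valid a priori; planarity follows only after combining the shrinker structure with either stationarity on $(-\infty,t^*)$ (the case $\#I^*=1$) or a second shrinker center (the case $\#I^*\ge 2$, via the splitting principle). Without this case split you cannot rule out, say, a shrinking sphere or cylinder appearing in the limit.

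The second gap is in your planar endgame. You conflate two genuinely different scenarios. In the paper's Case~1 ($I^*=\emptyset$) the limit is a \emph{static} union of parallel planes, with the origin's component $\Sigma'$ having multiplicity one; this is \emph{not} the setup of Lemma~\ref{Lem_zeta_choice}\ref{Lem_zeta_choice_d}, which concerns a \emph{single} plane whose multiplicity drops from $k^-\ge 2$ to $k^+=1$ at a finite time $t^*$. In Case~1 the argument is instead: since Property~\ref{Def_CNT_new_3} of Definition~\ref{Def_CNT_new} fails but its first two clauses hold for large $i$, there must be points $(x'_i,t'_i)$ in $U$ with $\s(x'_i,t'_i)\ge(1+\eps)r_{\loc}(x'_i,t'_i)\ge 1+\tfrac\eps2$; passing to the limit and using the avoidance principle forces the other planes of $\Sigma$ to lie at distance $>1+\tfrac\eps2$ from $\Sigma'$, which contradicts $r_{\loc}(\vec 0,0)=1$. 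It is only in Cases~2 and~3 (where $I^*\ne\emptyset$ and a multiplicity drop occurs at $t^*\le -1$, the bound $t^*\le -1$ coming from unit-regularity) that Lemma~\ref{Lem_zeta_choice}\ref{Lem_zeta_choice_d} is invoked to produce $\square\log\td r_{\loc}>0$ and verify Property~\ref{Def_CNT_new_3}. Your sketch collapses these and would not go through as written.
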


\begin{proof}
We argue by contradiction.
Fix $A, \eps$ and consider a sequence of counterexamples consisting of bounded almost regular mean curvature flows $\MM^i$ defined on $(-T_i, t_{0,i})$, points $x_{0,i} \in \IR^3$, subsets $\ZZ^i \subset \MM^i$ and a sequence $\delta_i \to 0$ such that the assumptions of the lemma hold, but such that
\begin{equation} \label{eq_rloc_less_deltai}
r_{\loc}(\vec 0, 0) < \delta_i \sqrt{t_{0,i}}
\end{equation}
and
\begin{equation} \label{eq_origin_not_CNT}
 (\vec 0, 0) \not\in (\MM^i_{\reg})^{(\eps,A)},
\end{equation}
which means that neither property in Definition~\ref{Def_CNT_new} holds at $(\vec 0, 0)$.
Assumptions~\ref{Lem_close_to_min_i}, \ref{Lem_close_to_min_iv} combined with \eqref{eq_rloc_less_deltai} imply that $T_i, t_{0,i} \to \infty$.
Assumptions~\ref{Lem_close_to_min_i}, \ref{Lem_close_to_min_ii}, \ref{Lem_close_to_min_iii} imply uniform upper bounds on $\Theta_{(x',t')}(\tau')$ for any fixed $(x',t') \in \IR^3 \times\IR$ and $\tau' > 0$.
This allows us to pass to a subsequence such that the unit-regular Brakke flows associated with the almost regular mean curvature flows $\MM^i$ weakly converge to a unit-regular Brakke flow $( \mu^\infty_t )_{t \in \IR}$ on $\IR^3$ whose support we will denote by $\mathcal{M}^\infty \subset \IR^3 \times \IR$.
By Assumption~\ref{Lem_close_to_min_iv} we have
\begin{equation} \label{eq_Minfty_reg_origin}
 (\vec 0, 0) \in \MM^\infty_{\reg}, \qquad r_{\loc}^{\MM^\infty} (\vec 0, 0) \geq 1
\end{equation}
and the multiplicity of $( \mu^\infty_t )_{t \in \IR}$ restricted to the interior of the parabolic ball $P(\vec 0, 0,1)$ is one.

Let $\ZZ^\infty \subset \IR^3 \times \IR$ be the limit set of the sets $\ZZ^i$.
That is $(x^*,t^*) \in \ZZ^\infty$ if and only if, after passing to a subsequence, there is a sequence $(x^*_i,t^*_i) \in \ZZ^i$ with the property that $(x^*_i,t^*_i) \to (x^*,t^*)$.
Set
\[ I^* := \{ t^*  \in \IR \;\; : \;\; \ZZ^\infty_{t^*} \neq \emptyset \}. \]

\begin{Claim} \label{Cl_limit_min_shr}
Consider an interval $I' \subset \IR \setminus I^*$.
Then the support of the restricted flow $( \mu^\infty_t )_{t \in I'}$ is given by a stationary flow associated with a smooth, properly embedded minimal surface $\Sigma \subset \IR^3$ of possibly higher multiplicity satisfying
\begin{equation} \label{eq_Theta_genus_bounds}
\Theta(\Sigma) \leq A \textQQqq{and} \genus (\Sigma) \leq A.
\end{equation}
The flow $( \mu^\infty_t )_{t \in I'}$ itself may have higher, but locally constant multiplicity, so $d\mu_t^\infty = k d ( \HH^2 \lfloor \Sigma )$ for all $t \in I'$ and for some locally constant function $k : \Sigma \to \IN$.
\end{Claim}

\begin{proof}
Note that the restricted flow $\MM^\infty_{I'}$ is non-empty, because $(\vec 0, 0) \in \MM^\infty_{\reg}$.
By \eqref{eq_int_setminus_Z_delta} we have
\[ \int_{I'} \int_{\MM^i_{\reg,t} \cap B(\vec 0, \delta_i^{-1})}  \bigg| \frac{(\mathbf{x}-x_{0,i})^\perp}{2(t_{0,i} - \mathbf{t})} + \mathbf{H} \bigg|^2 \, t_{0,i} \, \rho_{(x_{0,i},t_{0,i})}  \, d\HH^2 \, dt  \longrightarrow 0. \]
By Assertion~\ref{Lem_close_to_min_ii} we have local uniform convergence
\[ \frac{(\mathbf{x}-x_{0,i})^\perp}{2(t_{0,i} - \mathbf{t})} \longrightarrow 0 \]
and local uniform lower bounds on $t_{0,i} \, \rho_{(x_{0,i},t_{0,i})}$.
So
\[ \int_{I'} \int_{\MM^i_{\reg,t} \cap B(\vec 0, \delta_i^{-1})}   |\mathbf{H}|^2 \, d\HH^2 \, dt  \longrightarrow 0. \]
So the claim follows from Lemma~\ref{Lem_Brakke_to_min_shrink}\ref{Lem_Brakke_to_min_shrink_a}. 
\end{proof}
\medskip

\begin{Claim} \label{Cl_Istar_shrinker}
If there is a time $t^* \in I^*$, then support of the restriction $( \mu^\infty_t)_{t < t^*}$ is given by the flow of a smooth shrinker $\Sigma \subset \IR^3$ based at some point $(x^*, t^*) \in \ZZ^\infty$, where the flow itself may have higher, but locally constant multiplicity.
That is there is a locally constant function $k : \Sigma \to \IN$ such that $$d\mu^\infty_{t} = k_t \, d \big(\HH^2 \lfloor \big(|t^*-t|^{1/2} (\Sigma - x^*) + x^* \big) \big)$$ for all $t < t^*$, where $k_t (x) := k(|t^*-t|^{-1/2} (x-x^*)+x^*)$.
Moreover, $\Sigma$ satisfies \eqref{eq_Theta_genus_bounds}.
\end{Claim}

\begin{proof}
This follows directly from Lemma~\ref{Lem_Brakke_to_min_shrink}\ref{Lem_Brakke_to_min_shrink_b} using Assumption~\ref{Lem_close_to_min_v}.
\end{proof}
\medskip

\begin{Claim} \label{Cl_T_infty}
The flow $( \mu^\infty_t )_{t \in \IR}$ is not the stationary flow associated with a properly embedded multiplicity one minimal surface $\Sigma \subset \IR^3$ that satisfies \eqref{eq_Theta_genus_bounds}.
\end{Claim}

\begin{proof}
Otherwise we have local, smooth convergence $\MM^i \to \MM^\infty$ on $\IR^3 \times \IR$.
This implies Property~\ref{Def_CNT_new_2} of Definition~\ref{Def_CNT_new} for large $i$.
\end{proof}
\medskip

\begin{Claim} \label{Cl_twots}
If $t^*_1, t^*_2 \in I^*$, where  $t^*_1 < t^*_2$, then the restriction $( \mu^\infty_t )_{t < t^*_2}$ is a  stationary affine plane of possibly higher, but constant multiplicity.
\end{Claim}

\begin{proof}
This is a direct consequence of Claim~\ref{Cl_Istar_shrinker} and the splitting principle for shrinkers.
(A similar principle was also used in \cite[Sec~3.2]{Cheeger_Haslhofer_Naber_13}.)
\end{proof}
\medskip

We can now finish the proof of the lemma by distinguishing the following cases:
\medskip

\textit{Case 1: $I^* = \emptyset$.} \quad
In this case we can apply Claim~\ref{Cl_limit_min_shr} to deduce that support of the flow $( \mu^\infty_t )_{t \in \IR}$ is given by a stationary minimal surface $\Sigma \subset \IR^3$ satisfying \eqref{eq_Theta_genus_bounds}, where the flow itself may have higher, locally constant multiplicity.
Note that this implies that $\MM^\infty_t = \Sigma$ for all $t \in \IR$.
By \eqref{eq_Minfty_reg_origin} we know that there is a component $\Sigma' \subset \Sigma$ such that $(\mu^\infty_t)_{t \in \IR}$ has multiplicity one on $\Sigma'$.
By Claim~\ref{Cl_T_infty} we find that $\Sigma' \neq \Sigma$.
Therefore, $\Sigma$ is not connected, so by Lemma~\ref{Lem_ends_min_surf}\ref{Lem_ends_min_surf_aa1} it is a union of finitely many parallel affine planes. The flow $(\mu^\infty_t)_{t \in \IR}$ may have higher multiplicity $\Sigma \setminus \Sigma'$, but must have multiplicity one on $\Sigma'$.

By \eqref{eq_origin_not_CNT} we know that the negation of Property~\ref{Def_CNT_new_3} in Definition~\ref{Def_CNT_new} holds for all $i$.
As the first two statements of this property hold for large $i$, the last one must fail.
So we have $\s(x'_i, t'_i) \geq 1+\eps > 1$ for large $i$, where $(x'_i,t'_i)$ is a bounded sequence of points and times.
After passing to a subsequence, we may assume that $(x'_i, t'_i) \to (x'_\infty,t'_\infty) \in \Sigma' \times \IR$.
By the avoidance principle this implies that for $t > t'_\infty$ sufficiently close to $t'_\infty$ the difference $\MM^\infty_t \setminus \Sigma' = \Sigma \setminus \Sigma'$ is disjoint from the ball $B(x'_\infty, 1 + \frac12\eps)$.
In other words, the planes $\Sigma \setminus \Sigma'$ must have distance $\geq 1 + \frac12 \eps$ from $\Sigma'$.
But this implies that $( \mu^\infty_t )_{t \in \IR}$ restricted to a slightly larger parabolic ball $P(\vec 0, 0,1 + \frac12 \eps)$ is the restriction of a stationary flow of an affine plane of multiplicity one.
By smooth convergence, we obtain a contradiction to Assumption~\ref{Lem_close_to_min_iv}.

\medskip

\textit{Case 2: $I^* = \{ t^* \}$ for some $t^* \in \IR$.} \quad 
By Claim~\ref{Cl_limit_min_shr} we know that the support of $( \mu^\infty_t )_{t < t^*}$ is the stationary flow associated with a smooth minimal surface $\Sigma \subset \IR^3$, where the flow itself may have higher, but locally constant multiplicity.
In addition, by Claim~\ref{Cl_Istar_shrinker} this flow must be a shrinker with respect to some point at time $t^*$.
Thus $\Sigma$ must be an affine plane and $\mu^\infty_t = k^- (\HH^2 \lfloor \Sigma)$ for $t < t^*$, where $k^- \in \IN_0$ is the multiplicity of the flow $( \mu^\infty_t )_{t < t^*}$.

Due to the avoidance principle, we must have $\MM^\infty_t \subset \Sigma$ for all $t \in \IR$.
By Claim~\ref{Cl_limit_min_shr} we obtain that  $\mu^\infty_t = k^+ (\HH^2 \lfloor \Sigma)$ for $t > t^*$, for some other multiplicity $k^+ \in \IN_0$.
As the multiplicity can only decrease and due to Claim~\ref{Cl_T_infty}  we have $k^+ < k^-$.
By \eqref{eq_Minfty_reg_origin} we know that $k^- = 1$ if $t^* > -1$.
Similarly, $k^+ = 1$ if $t^* < 1$.
Now suppose that $t^* > -1$.
Then $0 \leq k^+ < k^- = 1$, in contradiction to the unit-regularity of $(\mu^\infty_t )_{t \in \IR}$.
Thus we must have $t^* < 1$ and hence $k^- > k^+ = 1$.

Consider now the flows $\MM^i$ and $\MM^\infty$ restricted to the time-interval $(-1+ \frac12 \eps, 2\eps^{-1})$ and note that the convergence restricted to this time-interval is smooth.
We can now deduce Property~\ref{Def_CNT_new_3} of Definition~\ref{Def_CNT_new} for large $i$ using Lemma~\ref{Lem_zeta_choice}\ref{Lem_zeta_choice_d}.
However, this contradicts \eqref{eq_origin_not_CNT} for large $i$.

\medskip

\textit{Case 3: $I^* \neq \emptyset$ and $\# I^* > 1$.} \quad
Set $t^* := \sup I^* \leq \infty$.
By Claim~\ref{Cl_twots} we know that $( \mu^\infty_t )_{t < t^*}$ is a stationary affine plane $\Sigma$ with constant  multiplicity $k^- \in \IN_0$.
So $\MM^\infty_t \subset \Sigma$ for all $t \in \IR$.
By Claim~\ref{Cl_limit_min_shr}, the restriction $( \mu^\infty_t )_{t > t^*}$ must have constant multiplicity $k^+ \in \IN_0$.
We can now argue as in Case~2.
\end{proof}
\bigskip

\begin{proof}[Proof of Theorem~\ref{Thm_L2_bound}.]
Fix $A, \eps$, assume without loss of generality that $(x_0, t_0) = (\vec 0, 0)$ and $0 < \tau_1 < 1=\tau_2 = \frac12 \tau_0$.
By restricting the flow, we may also assume that $\MM$ is defined on the time-interval $[-2,0)$.
Let us first argue that it is enough to establish an integral bound for $\mathcal{U} = \MM_{\reg} \setminus \MM_{\reg}^{(\eps,A)}$.
In fact, assuming that the desired bound \eqref{eq_int_bounded_TT} is true for this choice of $\mathcal{U}$, we can use Definition~\ref{Def_almost_regular}\ref{Def_almost_regular_3} to slightly increase $\mathcal{U}$, creating a subset that is open within $\MM$ and contains $\MM_{\sing}$, while only marginally increasing the integral on the right-hand side \eqref{eq_int_bounded_TT}.
So if the right-hand side of \eqref{eq_int_bounded_TT} is positive, then we can guarantee a similar bound for this larger subset $\mathcal{U}$, possibly after adjusting the constant $C$.
On the other hand, if the right-hand side of \eqref{eq_int_bounded_TT} equals zero, then the flow $\MM_{[-\tau_2,-\tau_1]}$ must be a shrinker, so since it is smooth at almost all time-slices, it must be smooth everywhere.
Thus $\MM_{\sing} = \mathcal{U} = \emptyset$ and the claim is obvious.

Let $\delta(A,\eps)$ be the constant from Lemma~\ref{Lem_close_to_min}.
By Vitali covering, we can choose countably many points
\[ (x_j, t_j) \in \bigcup_{t \in [-1, -\tau_1]} \Big( \big( \MM_{\reg}  \setminus ( \MM_{\reg}^{(\eps,A)} \cup \XX^{0,\delta} )\big)_t \cap B(\vec 0, A \sqrt{-t}) \Big) \times \{ t \} =: \MM' \]
such that for $r_j :=  r_{\loc}(x_j,t_j) < \delta \sqrt{|t_j|}$ we have
\begin{equation} \label{eq_Ps_cover}
 \bigcup_j P\big(x_j, t_j, \tfrac1{10} r_j \big) \supset \MM'
\end{equation}
and such that the parabolic balls $P\big(x_j, t_j, \tfrac1{100} r_j \big)$ are pairwise disjoint.

\begin{Claim} \label{Cl_bounded_js}
There is a constant $N(A,\delta) < \infty$ such that for any $(x,t) \in \MM_{[-2,- \frac12 \tau_1]}$ the number of indices $j$ for which the following two assumptions hold is bounded by $N$:
\begin{equation} \label{eq_xtinP}
 (x,t) \in P(x_j, t_j, \delta^{-1} r_j), 
\end{equation}
\begin{equation} \label{eq_Theta_diff_assp}
 \big| \Theta_{(x,t)} \big(  \delta^{-2} r_j \big) -\Theta_{(x,t)} \big(  \delta^2 r_j \big) \big| > \delta. 
\end{equation}
\end{Claim}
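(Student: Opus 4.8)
The plan is to prove Claim~\ref{Cl_bounded_js} by extracting a uniform bound from Huisken's monotonicity formula, using the fact that the parabolic balls $P(x_j,t_j,\tfrac1{100}r_j)$ are pairwise disjoint while the radii $r_j$ are all comparable to one another whenever the corresponding balls are ``seen'' from a common point $(x,t)$.

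\textbf{Step 1: comparability of the radii.} Fix $(x,t)\in\MM_{[-2,-\frac12\tau_1]}$ and suppose $j$ is an index for which \eqref{eq_xtinP} holds, i.e.\ $(x,t)\in P(x_j,t_j,\delta^{-1}r_j)$. First I would show that any two such radii $r_j, r_{j'}$ are comparable up to a factor depending only on $\delta$. Indeed, by the H\"older/Lipschitz regularity of $r_{\loc}$ from Lemma~\ref{Lem_rloc_Lipschitz}, together with $r_j<\delta\sqrt{|t_j|}$ and the control $|x_j|\le A\sqrt{|t_j|}$, one gets $r_j \sim_{A,\delta} \sqrt{|t|}$ for all such $j$ (using also that $t\in[-2,-\frac12\tau_1]$ and $t_j\in[-1,-\tau_1]$ are comparable in scale after translating to the singular time; more precisely $|t-t_j|\le \delta^{-2}r_j^2 \le \delta^{-2}\cdot\delta^2|t_j| = |t_j|$, so $|t|$ and $|t_j|$ are comparable). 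Consequently there is $r_*=r_*(A,\delta)>0$ and a constant $\Lambda(A,\delta)$ with $\Lambda^{-1}r_* \le r_j \le \Lambda r_*$ for all indices $j$ satisfying \eqref{eq_xtinP}.

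\textbf{Step 2: a lower bound on Gaussian-area increments at a fixed scale.} For an index $j$ satisfying both \eqref{eq_xtinP} and \eqref{eq_Theta_diff_assp}, the monotonicity of $\tau\mapsto\Theta_{(x,t)}(\tau)$ gives that on the interval $[\delta^2 r_j,\ \delta^{-2}r_j]$ the function increases by more than $\delta$. Since by Step~1 all these intervals lie inside one fixed interval $[\delta^2\Lambda^{-1}r_*,\ \delta^{-2}\Lambda r_*]$ of bounded geometric ratio (ratio $\le \delta^{-4}\Lambda^2$), and $\Theta_{(x,t)}$ is non-decreasing and bounded above by $A$ (by Assumption~\ref{Lem_close_to_min_iii}, or rather the corresponding hypothesis $\Theta_{(x_0,t_0)}(\tau_0)\le A$ which — after a covering/translation argument — yields a uniform bound on $\Theta_{(x,t)}$ near the relevant scales), we can cover $[\delta^2\Lambda^{-1}r_*,\delta^{-2}\Lambda r_*]$ by a bounded number $K(A,\delta)$ of intervals each of geometric ratio $2$. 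On each such sub-interval the total increase of $\Theta_{(x,t)}$ is at most $A$, so at most $\lfloor A/\delta\rfloor + 1$ of the ``big-jump'' intervals $[\delta^2 r_j,\delta^{-2}r_j]$ can have both endpoints in that sub-interval --- wait, that is not quite the right bookkeeping; rather, each big-jump interval contributes an increase $>\delta$ and these increases are not additive because the intervals overlap. The clean way is: partition $[\delta^2\Lambda^{-1}r_*,\delta^{-2}\Lambda r_*]$ into $m=m(A,\delta)$ consecutive subintervals $J_1,\dots,J_m$ each of ratio at most $\delta^{-2}$; then each big-jump interval $[\delta^2 r_j, \delta^{-2}r_j]$ (which has ratio $\delta^{-4}$) is covered by at most, say, three consecutive $J_\ell$'s, hence contains at least one full $J_\ell$, and on that $J_\ell$ the function $\Theta_{(x,t)}$ increases by $>\delta/3$; since the total increase over the whole interval is $\le A$, there are at most $3A/\delta$ values of $\ell$ with increase $>\delta/3$; and each such $\ell$ is "used" by at most a bounded number of $j$'s... which again fails because many $j$ can share the same $J_\ell$.

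\textbf{Step 2 (corrected), and the real obstacle.} The point I glossed over is that \eqref{eq_Theta_diff_assp} alone does not bound the number of $j$ --- one needs to use the disjointness of $P(x_j,t_j,\tfrac1{100}r_j)$. So the corrected argument is: by Step~1 the points $(x_j,t_j)$ all lie in a fixed parabolic ball $P(x,t, C(A,\delta) r_*)$ and the disjoint balls $P(x_j,t_j,\tfrac1{100}r_j)$ all have radius $\ge \tfrac1{100}\Lambda^{-1}r_*$, hence each has parabolic volume bounded below by $c(A,\delta)r_*^{4}$, while their union sits inside a ball of parabolic volume $\le C'(A,\delta)r_*^4$; a volume-packing count then bounds the number of such $j$ by $N(A,\delta):=C'(A,\delta)/c(A,\delta)$, independently of whether \eqref{eq_Theta_diff_assp} holds. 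I expect this volume-packing estimate — combined with the scale comparability of Step~1 — to be the cleanest route, and that the hypothesis \eqref{eq_Theta_diff_assp} is in fact not needed for this particular claim (it will be needed later, when applying Lemma~\ref{Lem_close_to_min}, to control the $\ZZ$-set). The main technical nuisance is verifying the uniform bound $\Theta_{(x,t)}(\tau)\le A'(A)$ at the relevant scales from the single hypothesis $\Theta_{(x_0,t_0)}(\tau_0)\le A$ together with $t_0-\tau_0\in I$; this follows by translating and applying the monotonicity formula together with the bound $|x_j|\le A\sqrt{|t_j|}$, exactly as in the proof of Lemma~\ref{Lem_close_to_min}. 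So the proof I would write: (i) establish $r_j\sim_{A,\delta}\sqrt{|t|}\sim_{A,\delta} r_*$ and the containment of all relevant $(x_j,t_j)$ in a fixed parabolic ball around $(x,t)$; (ii) invoke disjointness of $P(x_j,t_j,\tfrac1{100}r_j)$ and a parabolic volume count to bound the number of indices; (iii) note \eqref{eq_Theta_diff_assp} is automatically compatible and record $N=N(A,\delta)$.
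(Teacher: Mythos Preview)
Your Step~1 contains a genuine error that invalidates the rest of the argument. You claim that $r_j \sim_{A,\delta} \sqrt{|t|}$, but the only control you have is the \emph{upper} bound $r_j < \delta\sqrt{|t_j|}$ (from $(x_j,t_j)\notin\XX^{0,\delta}$); there is no lower bound on $r_j$ in terms of $\sqrt{|t_j|}$. The Lipschitz property of $r_{\loc}$ from Lemma~\ref{Lem_rloc_Lipschitz} compares values of $r_{\loc}$ at nearby points, but it cannot manufacture a lower bound on $r_j$ from the ambient scale $\sqrt{|t|}$. Concretely, it is entirely possible to have infinitely many indices $j$ with $(x,t)\in P(x_j,t_j,\delta^{-1}r_j)$ where the $r_j$ range over all dyadic scales below $\delta\sqrt{|t|}$; at each successively smaller scale the point $(x_j,t_j)$ simply sits closer to $(x,t)$. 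Your packing argument in Step~2 then fails because the disjoint balls $P(x_j,t_j,\tfrac1{100}r_j)$ have no uniform lower volume bound.

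This is exactly why hypothesis~\eqref{eq_Theta_diff_assp} is \emph{not} superfluous, contrary to your conjecture. The paper's proof uses \eqref{eq_Theta_diff_assp} together with the monotonicity and uniform boundedness of $\tau\mapsto\Theta_{(x,t)}(\tau)$ to conclude that the $r_j$ satisfying \eqref{eq_Theta_diff_assp} can lie in at most $C(A,\delta)$ many dyadic intervals $[2^{-k-1},2^{-k}]$ (since each such interval where \eqref{eq_Theta_diff_assp} holds forces a drop of at least $\delta$ in a function with total variation $\le C(A)$). Only \emph{after} restricting to a single dyadic scale does the disjointness-plus-packing argument go through. Your first attempt at Step~2 was actually groping toward the right idea before you abandoned it.
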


\begin{proof}
If $(x,t) \in \MM_{[-2, -\frac12 \tau_1]}$ satisfies \eqref{eq_xtinP} for at least one $j$, then we have a bound of the form $|x| \leq C(A) \sqrt{-t}$.
Fix such a $(x,t)$.
The bound $\Theta_{(\vec 0, 0)}(2) \leq A$ implies a bound of the form $\Theta_{(x,t)} (\tau) \leq C(A)$ for every $0 < \tau < 2+t$.
So by monotonicity of the Gaussian area there is a subset $S_{(x,t)} \subset \IN$ with $\#S_{(x,t)} \leq C(A,\delta)$ such that whenever \eqref{eq_Theta_diff_assp} holds, we have  $r_j \in [2^{-k-1},2^{-k}]$ for some $k \in S_{(x,t)}$.
For each such $k$, we can use the disjointness assumption of the  $P(x_j,t_j, \tfrac1{100} r_j)$, which implies disjointness of the balls $P(x_j,t_j, \tfrac1{50} 2^{-k-1})$, to obtain a bound on the indices $j$ for which \eqref{eq_xtinP} holds.
\end{proof}

For each index $j$ define
\[ \ZZ^j := \big\{ (x,t) \in P (x_j, t_j,  \delta^{-1} r_j) \;\; : \;\;
\big| \Theta_{(x,t)} (\delta^{-2} r_j) - \Theta_{(x,t)}(\delta^2 r_j) \big| \leq \delta \big\}.  \]
Claim~\ref{Cl_bounded_js} implies that any $(x,t) \in \MM_{[-2,- \frac12 \tau_1]}$ is contained in at most $N$ many of the subsets $P(x_j, t_j,  \delta^{-1} r_j) \setminus \ZZ^j$.
Consequently, we obtain via Lemma~\ref{Lem_entropy_rho}\ref{Lem_entropy_rho_b} that
\begin{multline} \label{eq_sum_bounded_Thetas}
   \sum_{j}  \int_{\max \{ t_j - \delta^{-2} r_j^2 ,-2\}}^{\min \{ t_j + \delta^{-2} r_j^2,  - \frac12 \tau_1 \}}  \int_{(\MM_{\reg} \setminus \ZZ^j)_t \cap B(x_j,  \delta^{-1} r_j)}  \bigg| \frac{\mathbf{x}^\perp}{2|\mathbf{t}|} + \mathbf{H} \bigg|^2 \rho_{(\vec 0,0)} \, d\HH^2 \, dt \\
\leq N \int_{-2}^{- \frac12 \tau_1}  \int_{\MM_{\reg,t}}\bigg| \frac{\mathbf{x}^\perp}{2|t|} + \mathbf{H} \bigg|^2 \rho_{(\vec 0,0)} \, d\HH^2 \, dt 
\leq N\big( \Theta_{(\vec 0, 0)} (2 \tau_2) - \Theta_{(\vec 0, 0)} ( \tfrac12\tau_1) \big).
\end{multline}

For each $j$, we may now consider the parabolic rescaling $r_j^{-1} (\MM - (x_j, t_j))$.
By the contrapositive of Lemma~\ref{Lem_close_to_min} applied to any such rescaling, we obtain
\begin{equation} \label{eq_delta_leq_int}
  \delta \, \frac{r_j^2}{|t_j|} \leq \int_{\max \{ t_j - \delta^{-2} r_j^2 ,-2\}}^{\min \{ t_j + \delta^{-2} r_j^2,  - \frac12 \tau_1 \}}  \int_{(\MM_{\reg} \setminus \ZZ^j)_t \cap B(x_j,  \delta^{-1} r_j)}  \bigg| \frac{\mathbf{x}^\perp}{2|\mathbf{t}|} + \mathbf{H} \bigg|^2 \rho_{(\vec 0,0)} \, d\HH^2 \, dt.
\end{equation}
Combining \eqref{eq_sum_bounded_Thetas} and \eqref{eq_delta_leq_int} implies
\begin{equation} \label{eq_sum_rjtj}
 \sum_{j} \frac{r_j^2}{|t_j|}  \leq \frac{N}{\delta} \big( \Theta_{(\vec 0, 0)} (2 \tau_2) - \Theta_{(\vec 0, 0)} ( \tfrac12 \tau_1) \big). 
\end{equation}
On the other hand, we have by Lemma~\ref{Lem_rloc_Lipschitz}
\[ \int_{t_j- (\frac1{10} r_j)^2}^{t_j+ (\frac1{10} r_j)^2} \frac1{|t|} \int_{(\MM_{\reg} \cap P(x_j, t_j \frac1{10} r_j))_t} r_{\loc}^{-2}  \, d\HH^2 \, dt
\leq C \frac{r_j^2}{|t_j|}. \]
Summing this bound over $j$ and combining the result with \eqref{eq_Ps_cover} and \eqref{eq_sum_rjtj} implies the desired bound.
\end{proof}
\bigskip

\section{The main argument} \label{sec_main_argument}
In this section we prove the main theorems of this paper, as stated in Subsection~\ref{subsec_mainresults}.
We will proceed as follows.
In Subsection~\ref{subsec_cutoff}, we first construct a cutoff function $\eta$ on a given almost regular mean curvature flow.
Roughly speaking, this cutoff function decomposes the flow into regions where the flow can be understood in terms of the separation function $\s$ and in terms of the local scale function $r_{\loc}$, respectively.
In Subsection~\ref{subsec_mod_sep}, we introduce a modification $\s_{\loc}$ of the separation function, which has some improved properties.
In Subsection~\ref{subsec_integral_estimate}, we define the integral quantities $\mathfrak{S}_{(x_0,t_0)}(\tau)$ and $\mathfrak{S}_{(x_0,t_0),a}(\tau)$, which roughly measure the integral of $$(1-\eta) \log(\s_{\loc}) + \eta \log (\td r_{\loc}) \approx \log (r_{\loc})$$ against the ambient Gaussian weight.
We will also derive an integral estimate, which allows us to bound the decay of $\mathfrak{S}_{(x_0,t_0)}(\tau)$ as we decrease $\tau$.
This will then lead to a proof of the key estimate, Theorem~\ref{Thm_key_thm}, in Subsection~\ref{subsec_proof_key}.

We remark that most of our estimates are invariant under parabolic rescaling.
However, in order to reduce notational complexity, part of our discussion will focus on the quantity $\mathfrak{S}_{(x_0,t_0)}(\tau)$, which is not scaling invariant.
Once we have obtained the central estimate for this quantity, Proposition~\ref{Prop_step_bound}, we will deduce from this an estimate on the scaling invariant quantity $\mathfrak{S}_{(x_0,t_0), \sqrt{\tau}}(\tau)$ by parabolic rescaling in the proof of Proposition~\ref{Prop_step_bound_improved}.

\subsection{A global cutoff function} \label{subsec_cutoff}
Our initial objective is to construct an appropriate cutoff function $\eta$ on an almost regular mean curvature flow $\MM$.
On the set $\{ \eta < 1 \}$ the separation of the flow can be understood in terms of either the separation function $\s$ or the smoothed local scale function $\td r_{\loc}$.
Specifically, we will be able to ensure that either $\square \log \s \geq 0$ or $\square \log \td r_{\loc} \geq 0$ on this set.
The support $\supp \eta$, on the other hand, consists of two parts:
\begin{itemize}
\item The first part, denoted by $\YY$, corresponds to regions where the flow is locally modeled on stationary minimal surfaces.
Within these regions, we can achieve robust quantitative control over the functions $\s$ and $\td r_{\loc}$, leveraging the insights derived from these minimal surface models.
\item 
In the second part, the local geometry significantly deviates from that of a minimal surface, and our knowledge of the precise geometric details is limited. 
Despite this lack of refined geometric control, we can still derive an integral bound on $r_{\loc}^{-2}$, thanks to  Theorem~\ref{Thm_L2_bound}.
\end{itemize}
In both of these estimates, we need to exclude the set of points $\XX^{t_0, \delta}$ (compare with \eqref{eq_XXt0delta}), where the local scale function $r_{\loc}$ is already sufficiently well bounded from below.

\begin{Lemma} \label{Lem_asspt_eta_rho}
There is a universal constant $C_0 < \infty$ such that the following is true.
Consider constants $\alpha > 0$ and $A < \infty$ and a bounded almost regular mean curvature flow $\MM \subset \IR^3 \times I$ with 
\begin{equation} \label{eq_Thgeeta}
 \Theta(\MM) \leq A, \qquad \genus (\MM)  \leq A. 
\end{equation}
Then there is a continuous function $\eta : \MM \to [0,1]$ and a closed subset $\YY \subset \MM_{\reg}$ with the following properties: 
\begin{enumerate}[label=(\alph*)]
\item \label{Lem_asspt_eta_rho_a} We have $\supp (1-\eta) \subset \MM_{\reg}$ and $\eta |_{\MM_{\reg}}$ is locally Lipschitz and satisfies the following  derivative estimates almost everywhere on $\MM_{\reg}$
\[ |\nabla \eta| \leq C_0 r_{\loc}^{-1}, \qquad
 |\partial_t \eta| \leq C_0 r_{\loc}^{-2}. \]
 Here $\nabla$ denotes the spatial gradient and $\partial_t$ denotes the normal time-derivative.
\item \label{Lem_asspt_eta_rho_b} 
For any $(x,t) \in \supp (1-\eta)$ the following is true:
\begin{enumerate}[label=(b\arabic*)]
\item \label{Lem_asspt_eta_rho_b1} If $\tdr_{\loc}(x,t) \geq \frac1{10} \s(x,t)$, then $(x,t) \in \MM_{\good}$, so near $(x,t)$ on $\MM$ we have in the viscosity sense
\[ \square \big( \log  \s  \big) \geq 0. \]
\item \label{Lem_asspt_eta_rho_b2} If $\tdr_{\loc}(x,t) \leq \frac12 \s(x,t)$, then
\[ \square \big( \log  \tdr_{\loc}  \big)(x,t) > 0. \]
\end{enumerate}
\item \label{Lem_asspt_eta_rho_bb} On the closure of $\YY \cap \{ 0 < \eta < 1 \}$ we have
\[ \td r_{\loc} \geq 0.8 \s. \]
\end{enumerate}
Let us now consider a point $(x_0,t_0) \in \IR^3 \times \ov{I}$ and $0 < \tau_1 < \tau_2$ with $t_0 - 2\tau_2 \in  I$ and $\tau_2 \geq 2 \tau_1$.
Set, as in \eqref{eq_XXt0delta},
\[ \XX^{t_0,\delta} := \big\{ (x,t) \in \MM_{< t_0} \;\; : \;\;  r_{\loc}(x,t) \geq \delta \sqrt{t_0 -t} \big\}. \]
Then the following bounds hold:
\begin{enumerate}[label=(\alph*), start=4]
\item \label{Lem_asspt_eta_rho_c} There is a constant $\delta(\alpha,A) > 0$ such that
\begin{multline} \label{eq_lem_bound_over_YY}
\qquad \int_{t_0-\tau_2}^{t_0-\tau_1} \frac1{t_0 - t}  \int_{(\YY \setminus \XX^{t_0,\delta})_t \cap B(x_0, A \sqrt{t_0-t})} \bigg( |\partial_t \eta| 
+ \frac{|\partial_t \tdr_{\loc} |}{\tdr_{\loc}}
+ \frac{|\nabla \eta | \, |\nabla \s |}{\s} + \frac1{(t_0 - t)^{1/2} \tdr_{\loc}}
\bigg) \, d\HH^2 \, dt \\
 \leq   \alpha \log \bigg( \frac{\tau_2}{\tau_1} \bigg).
\end{multline}
\item \label{Lem_asspt_eta_rho_d} For any $D \in [A, \infty)$ there are constants $\delta'(\alpha, D) > 0$ and $C'(\alpha, D) < \infty$ such that
\begin{multline}  \label{eq_lem_bound_over_complement}
\qquad\qquad \int_{t_0-\tau_2}^{t_0-\tau_1} \frac1{t_0 - t} \int_{(\supp \eta \setminus (\XX^{t_0,\delta'} \cup\YY))_t \cap \MM_{\reg,t} \cap B(x_0, D \sqrt{t_0-t})} r_{\loc}^{-2}   \, d\HH^2 \, dt \\
 \leq C' \big( \Theta_{(x_0,t_0)}(2\tau_2) - \Theta_{(x_0,t_0)}( \tfrac12 \tau_1) \big) .
\end{multline}
\end{enumerate}
\end{Lemma}

\begin{proof}
Fix $A,\alpha$ and let $\eps,\delta > 0$ be constants whose values we will determine in the course of the proof.
Let $\mathcal{U} \subset \MM$ be the neighborhood of $\MM \setminus \MM^{(\eps, A)}_{\reg}$ from Theorem~\ref{Thm_L2_bound}.
We will define $\eta := \max \{ \eta', \eta'' \}$, where $\eta'$ will be a function that is supported on $\mathcal{U}$ and $\eta''$ a function that is supported on a subset $\YY$, which we will also construct.
We first construct $\eta'$ using the following claim.

\begin{Claim} \label{Cl_etap}
Given $\eps \leq \ov\eps$, there is a continuous function $\eta' : \MM \to [0,1]$ with the following properties:
\begin{enumerate}[label=(\alph*)]
\item  \label{Cl_etap_a} $\eta'|_{\MM_{\reg}}$ is locally Lipschitz.
\item \label{Cl_etap_b} $|\nabla \eta'| \leq 10^3 r_{\loc}^{-1}$ and $|\partial_t \eta'| \leq 10^3 r_{\loc}^{-2}$ almost everywhere on $\MM_{\reg}$.
\item \label{Cl_etap_c} $\supp \eta' \subset \mathcal{U}$.
\item \label{Cl_etap_d} $\supp (1-\eta') \subset \MM_{\reg}$ and any $(x,t) \in \supp (1-\eta')$ satisfies Assertion~\ref{Lem_asspt_eta_rho_b} of the lemma or Property~\ref{Def_CNT_new_2} of Definition~\ref{Def_CNT_new} with $\eps$ replaced by $10\eps$.
\end{enumerate}
\end{Claim}

\begin{proof}
Let $\varphi : \IR \to [0,1]$ be a smooth cutoff function with $\varphi \equiv 1$ on $[-10^{-2}, 10^{-2}]$ and $\supp \varphi \subset [-10^{-1}, 10^{-1}]$ and $|\varphi'| \leq 10^3$.
For any $(x,t) \in \IR^3 \times \IR$ define
\[ \eta'_{(x,t)} (x',t') := \varphi \bigg( \frac{|x-x'|}{r_{\loc}(x,t)} \bigg) \varphi \bigg( \frac{t'-t}{r^2_{\loc}(x,t)} \bigg). \]
Set $\eta'(x',t') := 1$ if $(x',t') \in \MM_{\sing}$.
If $(x',t') \in \MM_{\reg}$, set
\[ \eta'(x',t') := \sup \eta'_{(x,t)} (x',t'), \]
where the supremum is taken over all $(x,t) \in \MM_{\reg}$ with $ \MM \cap P(x,t,\frac12  r_{\loc}(x,t))  \subset \mathcal{U}$.
If there is no such $(x,t)$, then we set $\eta' :\equiv 0$.
Note that each function $\eta'_{(x,t)}$ has support on $P(x,t, \frac1{10} r_{\loc}(x,t)) \cap \MM$ and satisfies the derivative bounds from Assertions~\ref{Cl_etap_b} of this claim.
So since $\MM_{\sing} \subset \mathcal{U}$ and since $r_{\loc} \to 0$ near $\MM_{\sing}$, we find that $\eta'$ is continuous and satisfies Assertions~\ref{Cl_etap_a}--\ref{Cl_etap_c} of the claim.
For Assertion~\ref{Cl_etap_d} note that if $\eta' (x,t)<1$, then by the definition of $\eta'$, we must have $(x,t) \in \MM_{\reg}$ and $ \MM \cap P(x,t, \frac12 r_{\loc}(x,t)) \not\subset \mathcal{U}$.
So we can choose a point $$(x^*,t^*) \in  (\MM \setminus \mathcal{U}) \cap P\big(x,t, \tfrac12 r_{\loc}(x,t) \big)  \subset \MM^{(\eps, A)}_{\reg}.$$
By Lemma~\ref{Lem_rloc_Lipschitz} we have $\frac12 r_{\loc}(x,t) \leq r_{\loc}(x^*,t^*) \leq 2 r_{\loc}(x,t)$.

Suppose first that $(x^*,t^*)$ satisfies Property~\ref{Def_CNT_new_2} of Definition~\ref{Def_CNT_new}.
Then the same property is true for $(x,t)$ after replacing $\eps$ by $10\eps$.

Next, suppose that $(x^*,t^*)$ satisfies Property~\ref{Def_CNT_new_3} of Definition~\ref{Def_CNT_new}. 
Since $(x^*,t^*) \in \MM  \lb \cap  P(x,t, \lb \tfrac12 r_{\loc}(x,t) ) $, the points $(x,t)$ and $(x^*,t^*)$ can be connected by a path within $$\MM \cap P \big(x,t, \lb \tfrac12 r_{\loc}(x,t) \big)  \subset \MM \cap  P\big(x^*,t^*, 10 r_{\loc}(x^*,t^*) \big) .$$
So for sufficiently small $\eps$, Property~\ref{Def_CNT_new_3} of Definition~\ref{Def_CNT_new} implies that the component of $\MM \cap P(x,t, \lb 2 r_{\loc}(x,t)$ containing $(x,t)$ is sufficiently close to an open subset of an affine plane such that we have $(x,t) \in \MM_{\good}$.
This implies Assertion~\ref{Lem_asspt_eta_rho_b1} of the lemma.
Assertion~\ref{Lem_asspt_eta_rho_b2} follows from the last statement in Property~\ref{Def_CNT_new_3} of Definition~\ref{Def_CNT_new}.
\end{proof}
\medskip

Let $\YY' \subset \MM$ be the set of points $(x,t) \in \supp (1-\eta')$ that do not satisfy Assertion~\ref{Lem_asspt_eta_rho_b} of the lemma.
So any $(x,t) \in \YY'$ is regular and satisfies Property~\ref{Def_CNT_new_2} of Definition~\ref{Def_CNT_new} with $\eps$ replaced by $10\eps$.
We will now describe the flow in a neighborhood of $\YY'$.
We will show that there are annular regions around each component of $\YY'$, where the flow is close to the graph of a multivalued function.
For the following claim let $\beta > 0$ be a constant whose value we will determine later.
We will frequently consider parabolic balls and annuli of the form
\begin{align*}
P_{\beta} (x,t,r) &:= B(x,  r) \times [t - \beta^{-1} r^2, t+ \beta^{-1} r^2], \\
 A_{\beta} (x,t,r) &:= A(x,10^{-3}r,  r) \times [t - \beta^{-1} r^2, t+ \beta^{-1} r^2] . 
\end{align*}

\begin{Claim} \label{Cl_annulus}
There are constants $B (A, \beta) < \infty$ and $\eps  (A, \beta) > 0$ such that the following is true.
For any $(x,t) \in \YY'$ and $r := r_{\loc}(x,t)$ there is a number $b \in [10^5,B]$ with the following properties:
\begin{enumerate}[label=(\alph*)]
\item \label{Cl_annulus_a} We have 
\begin{align} \label{eq_Pb_reg}
  \MM \cap P_\beta (x, t, b  r)   &\subset \MM_{\reg},  \\
 \MM \cap A_\beta(x,t, br)   &\subset \MM_{\reg} \setminus \YY'. \label{eq_Ab_good}
\end{align}
\item \label{Cl_annulus_b} We have the following bounds:
\begin{alignat*}{2}
\frac{|\partial_t \tdr_{\loc} |}{\tdr_{\loc}} &\leq \frac{\beta}{ (br)^{2}}, \quad |\mathbf{H}| \leq \frac{\beta}{ br} \textQq{and} \tdr_{\loc} \geq \frac{r}{b} &&\qquad\quad \text{on} \quad \MM \cap P_\beta(x,t,br) , \\
\frac{ |\nabla \s |}{\s} &\leq \frac{\beta}{ br} \textQq{and}  \td r_{\loc} \geq 0.8 \s && \qquad\quad \text{on} \quad \MM \cap A_\beta(x,t,br) .
\end{alignat*}
\item \label{Cl_annulus_c} We have the bound
\[ \int_{t - 10^{-6} \beta^{-1} ( b r)^2}^{t + 10^{-6} \beta^{-1}  (b r)^2} \int_{\MM_t \cap B(x, 10^{-3} b r) } |A|^2 d\HH^2 \, dt \geq 10^{-6} \pi \beta^{-1} (br)^2. \]
\end{enumerate}
\end{Claim}

\begin{proof} 
Fix $A, \beta$ and take a sequence of counterexamples $\MM^i$ and $(x_i,t_i) \in \YY^{\prime, i}$ with $B_i \to \infty$ and $\eps_i \to 0$.
By definition, the flows $\MM^{\prime,i} := r_{\loc}^{-1}(x_i,t_i) (\MM^i - (x_i,t_i))$ are $10\eps_i$-close to the stationary flow of a minimal surface $\Sigma_i$, which passes through the origin and satisfies the bounds \eqref{eq_Theta_genus_bound}.

Consider the intrinsic exponential maps of $\Sigma_i$ based at the origin and apply Lemma~\ref{Lem_min_surf_conv_or_plane}.
Suppose first that, after passing to a subsequence, these exponential maps smoothly converge to the exponential map of a plane.
Since by construction we have $r_{\loc}(\vec 0, 0) = 1$ for the flows $\MM^{\prime,i}$, we can thus find a sequence of times $t'_i \in \IR$ with $\limsup_{i \to \infty} |t'_i| \leq 1$ such that for large $i$ the intersection $B(\vec 0, 1.1) \cap \MM^{\prime,i}_{t'_i}$ is disconnected.
Since $\MM^{\prime,i}$ is a small perturbation of a \emph{stationary} flow, this implies that for large $i$ the intersection $B(\vec 0, 1.2) \cap \MM^{\prime,i}_{0}$ is disconnected and hence we must have $\s(\vec 0, 0) \leq 1.2$ for $\MM^{\prime,i}$. 
So for large $i$ the bound 
\[ \tdr_{\loc}(x_i,t_i) \geq 0.9 r_{\loc}(x_i,t_i) \geq \frac{0.9}{1.2} \s(x_i,t_i) \geq 0.6 \s(x_i,t_i) \]
 holds for the original flow $\MM^i$.
Moreover, by the smooth convergence of the exponential map, we have $(x_i,t_i) \in \MM^i_{\good}$ for large $i$.
But this implies Assertion~\ref{Lem_asspt_eta_rho_b} for large $i$, in contradiction to the assumption that $(x_i,t_i) \in \YY^{\prime,i}$.

It follows that the intrinsic exponential maps of $\Sigma_i$, based at the origin, do not converge to the exponential map of a  plane.
Thus by Lemma~\ref{Lem_min_surf_conv_or_plane} we may pass to a subsequence and obtain smooth convergence $\Sigma_i \to \Sigma_\infty \subset \IR^3$, where $\Sigma_\infty$ is a non-trivial, connected and properly embedded minimal surface of finite total curvature, which passes through the origin and satisfies \eqref{eq_Theta_genus_bound}.

By Lemmas~\ref{Lem_ends_min_surf}, \ref{Lem_ends_min_surf_improved},  we can find a number $b \geq 10^5$ with the following properties: 
\begin{alignat*}{2}
    \td r_{\loc} &\geq \frac{10}{b} &&\textQQq{on}  \Sigma_\infty \cap B(\vec 0,  10 b) , \\
 \frac{|\nabla \s|}{\s} &\leq  \frac{ \beta}{10 b} &&\textQQq{on}  \Sigma_\infty \cap A(\vec 0, 10^{-4} b, 10 b)  \subset \Sigma_{\infty,\good},
\end{alignat*}
\[ \int_{\Sigma_\infty \cap B(\vec 0, 10^{-4} b)} |A|^2 d\HH^2 \geq \pi. \]
Note that on the associated stationary flow we have $r_{\loc} = \s$ on $A(\vec 0, 10^{-4} br, 10 br)$, so $\td r_{\loc} \geq 0.9 \s$ by Lemma~\ref{Lem_zeta_choice}\ref{Lem_zeta_choice_a}.
So Assertions~\ref{Cl_annulus_b}, \ref{Cl_annulus_c} hold for large $i$.
Here we have used Lemma~\ref{Lem_zeta_choice}\ref{Lem_zeta_choice_dd}; recall also that $b \leq B_i$ for large $i$.
To see that Assertion~\ref{Cl_annulus_a} holds for large $i$, note that \eqref{eq_Pb_reg} holds for large $i$ by smooth convergence.
Similarly, since $\Sigma_\infty \cap A(\vec 0, 10^{-4} br, 10 br)  \subset \Sigma_{\infty,\good}$, we obtain that for large $i$, Assertion~\ref{Lem_asspt_eta_rho_b1} of the lemma holds on $\MM \cap A_\beta(x,t, br)$ and Assertion~\ref{Lem_asspt_eta_rho_b2} is not applicable due to the bound $\td r_{\loc} \geq 0.8 \s$.
Thus \eqref{eq_Ab_good} holds for large $i$.
Therefore, we obtain a contradiction to the choice of the flows $\MM^i$ for large $i$, which finishes the proof of the claim.
\end{proof}
\medskip

We can now construct the function $\eta''$.
Let $\psi : \IR \to [0,1]$ be a cutoff function with $\psi \equiv 1$ on $[-10^{-1},10^{-1}]$ and $\supp \psi \subset (-1,1)$ and $|\psi'| \leq 10^2$.
For any $(x,t) \in \YY'$ and any $b \in [10^5,B]$ satisfying the assertions of Claim~\ref{Cl_annulus} we set
\[ \eta''_{(x,t),b} (x',t') := \psi \bigg( \frac{|x-x'|}{b \, r_{\loc}(x,t)} \bigg) \psi \bigg( \frac{t'-t}{\beta^{-1} (b r_{\loc}(x,t))^2} \bigg). \]
By Vitali Covering, we can find finitely many points $(x_j, t_j) \in \YY'$ and constants $b_j \in [10^5, B]$ satisfying Claim~\ref{Cl_annulus} such that for $r_j := r_{\loc}(x_j,t_j)$ the interiors of the parabolic balls $P_\beta (x_j, t_j, 10^{-1} b_j r_j)$ cover the closure of $\YY'$ and such that the parabolic balls $P_\beta (x_j, t_j, 10^{-3} b_j r_j)$ are pairwise disjoint. 
(Note that Vitali covering is not sensitive to the choice of $\beta$ as we can reparameterize the time-coordinate).
Observe that we have $\eta''_{(x_j,t_j),b_{j}} \equiv 1$ on $P_\beta (x_j, t_j, 10^{-1} b_j r_j)$ and $\supp \eta''_{(x_j,t_j),b_{j}} \subset P_\beta(x_j,t_j, b_jr_j)$.
So if we set
\[ \eta'' := \max_{j} \eta''_{(x_j,t_j),b_j}, \]
then $\eta'' \equiv 1$ on a neighborhood of $\YY'$ and
\[ \supp \eta'' \subset \bigcup_j P_\beta(x_j,t_j, b_jr_j)  . \]
Next, observe that by Lemma~\ref{Lem_rloc_Lipschitz}
\[ r_{\loc} \leq 10^3 \beta^{-1/2} b_jr_j \textQQqq{on} P_\beta (x_j, t_j, b_jr_j) , \]
which implies, using Claim~\ref{Cl_annulus}\ref{Cl_annulus_b}, that
\begin{align*}
|\nabla \eta''_{(x_j,t_j),b_j} | &\leq b^{-1} r_j^{-1} \leq 10^3 r^{-1}_{\loc},  \\
 |\partial_t \eta''_{(x_j,t_j),b_j} | &\leq \beta b^{-2} r_j^{-2} + |\mathbf{H}| b_j^{-1} r_j^{-1} \leq 2\beta b^{-2} r_j^{-2} \leq 10^7 r^{-2}_{\loc}.
\end{align*}
So
\begin{equation} \label{eq_etapp_loc_der}
|\nabla \eta''| \leq 10^3 r_{\loc}^{-1}, \qquad |\partial_t \eta''| \leq 10^7 r_{\loc}^{-2}.
\end{equation}
In particular, $\eta''$ is locally Lipschitz.
Let us now apply again Claim~\ref{Cl_annulus}\ref{Cl_annulus_b}.
From the corresponding bounds on $\eta''_{(x_j,t_j),b_j}$, we obtain that for almost all $(x',t') \in \supp \eta''$
\begin{multline} \label{eq_etar_bound_ae}
 \bigg( |\partial_t \eta''| 
+ \frac{|\partial_t \tdr_{\loc} |}{\tdr_{\loc}}
+ \frac{|\nabla \eta'' | \, |\nabla \s |}{\s} + \frac1{(t_0 - \mathbf{t})^{1/2} \tdr_{\loc}}
\bigg) (x',t') \\
 \leq  \min_{(x',t') \in P_\beta(x_j,t_j,b_jr_j)}  \bigg( 10^3 \beta(b_j r_j)^{-2} + \frac{b_j}{(t_0 - t')^{1/2}r_j} \bigg),
\end{multline}
where the minimum is taken over all $j$ such that $(x',t') \in P_\beta(x_j,t_j,b_jr_j)$.

We define
\[ \eta := \max \{ \eta', \eta'' \}, \qquad \YY := \{ \eta' \leq \eta'' \}. \]
Then Assertion~\ref{Lem_asspt_eta_rho_a} of the lemma follows from Claim~\ref{Cl_etap}\ref{Cl_etap_b} and \eqref{eq_etapp_loc_der}.
Next, note that $\eta = \eta'$ on $\MM \setminus \YY$.
Since $\eta'' \equiv 1$ in a neighborhood of the closure of $\YY'$, we obtain $\YY' \subset \YY$ and Assertion~\ref{Lem_asspt_eta_rho_b} via the definition of $\YY'$ and Claim~\ref{Cl_etap}\ref{Cl_etap_d}.
Assertion~\ref{Lem_asspt_eta_rho_bb} follows from the fact that $\eta = \eta''$ on $\YY$ and the last inequality of Assertion~\ref{Cl_annulus_b} of Claim~\ref{Cl_annulus}.
Consider now the situation from Assertions~\ref{Lem_asspt_eta_rho_c}, \ref{Lem_asspt_eta_rho_d} of the lemma.
We first observe that by Claim~\ref{Cl_etap}
\[ \supp \eta \setminus (\XX^{t_0,\delta} \cup \YY) \subset \supp \eta' \setminus \XX^{t_0,\delta}
\subset \mathcal{U} \setminus \XX^{t_0,\delta}. \]
Therefore, the bound in Assertion~\ref{Lem_asspt_eta_rho_d} follows from Theorem~\ref{Thm_L2_bound} and the constants $\delta', C'$ can be determined solely based on $D, A, \eps$.
At the end of the proof we will determine $\eps$ solely based on $A,\alpha$, which will establish Assertion~\ref{Lem_asspt_eta_rho_d}.

To show Assertion~\ref{Lem_asspt_eta_rho_c} we use the bound \eqref{eq_etar_bound_ae}.
Let $J$ be the set of indices $j$ for which there are $x' \in B(x_0, A \sqrt{t_0 - t'})$, $t' \in [t_0 - \tau_2, t_0 - \tau_1]$ such that $r_{\loc}(x',t') < \delta \sqrt{t_0 - t'}$ and $(x',t') \in P_\beta(x_j,t_j, b_jr_j)$.
By Claim~\ref{Cl_annulus} we have for all $j \in J$
\begin{equation} \label{eq_rjbjdelta}
 r_j \leq b_j \td r_{\loc}(x',t') \leq 1.1 b_j  r_{\loc}(x',t')
<1.1 b_j \delta \sqrt{t_0 - t'}. 
\end{equation}
So if $\delta \leq \ov\delta(\beta, B(A,\beta))$, then we may assume the following
\begin{itemize}
\item $P_\beta(x_j,t_j, b_jr_j)$ is contained in the time-slab $\IR^3 \times [t_0 - 2\tau_2, t_0 - \frac12 \tau_1]$.
\item We have
\[ \tfrac12(t_0 - t_j) \leq t_0 - \mathbf{t} \leq 2(t_0 - t_j) \textQQq{on} P_\beta(x_j,t_j, b_jr_j). \]
\item We have 
\[ (P_\beta(x_j, t_j, b_jr_j))_t \subset B\big(x_0, (A+1) \sqrt{t_0 - t} \big) \textQQq{for all}  t' \in [t_0 - 2\tau_2, t_0 - \tfrac12 \tau_1]. \]
\end{itemize}
Note also that due to \eqref{eq_Thgeeta} we have an upper area bound of the form 
\[ \HH^2 \big( ( \MM \cap P_\beta(x_j,t_j, b_jr_j) )_t \big) \leq C(A) (b_jr_j)^2. \]
So \eqref{eq_etar_bound_ae}, \eqref{eq_rjbjdelta} and Claim~\ref{Cl_annulus}\ref{Cl_annulus_c} imply that the left-hand side of \eqref{eq_lem_bound_over_YY} is bounded by
\begin{align*} \label{eq_etass_bound}
 C(A)  \sum_{j \in J} &\bigg( \frac{10^3  (b_j r_j)^2 }{t_0 - t_j} + \frac{\beta^{-1} b_j^5 r_j^3}{(t_0 - t_j)^{3/2}} \bigg)  \\
&\leq C(A)  \sum_{j \in J} \frac1{t_0 - t_j} \big(  (b_j r_j)^2  + \beta^{-1} b_j^6 \delta r_j^2 \big)  \\ \displaybreak[1]
&\leq C(A) (\beta + \delta B^4) \sum_{j \in J} \frac{\beta^{-1} (b_j r_j)^2}{t_0 - t_j}   \\ \displaybreak[1]
&\leq \frac{C(A)}{10^{-6}\pi} (\beta + \delta  B^4)  \sum_{j \in J}  \frac1{t_0-t_j} \int_{t_j - 10^{-6} \beta^{-1} ( b_j r_j)^2}^{t_j + 10^{-6} \beta^{-1}  (b_j r_j)^2} \int_{B(x_j, 10^{-3} b_j r_j) \cap \MM_{t}} |A|^2 d\HH^2 \, dt \\ \displaybreak[1]
&\leq C(A) (\beta + \delta  B^4)  \int_{t_0 - 2\tau_2}^{t_0 - \frac12 \tau_1} \frac1{t_0-t} \int_{M_{\reg,t} \cap B(x_0, (A+1) \sqrt{t_0 - t})} |A|^2 \,d\HH^2 \, dt \\ \displaybreak[1]
&\leq C(A) (\beta + \delta  B^4)  \log \bigg( \frac{4\tau_2}{\tau_1} \bigg) \\ \displaybreak[1]
&\leq 3 C(A) (\beta + \delta  B^4)  \log \bigg( \frac{\tau_2}{\tau_1} \bigg).
\end{align*}
In the last two steps we have used Lemma~\ref{Lem_intA2bound} and the fact that $\tau_2 \geq 2 \tau_1$.
We can now choose $\beta (A,\alpha) > 0$ such that $3 C(A) \beta \leq \frac12 \alpha$ and then $\delta (A,\alpha) > 0$ such that $C(A) \delta (B(A,\beta))^4 \leq \beta$.
Lastly, we choose $\eps(A,\beta) > 0$ according to Claim~\ref{Cl_annulus}.
This concludes the proof of Lemma~\ref{Lem_asspt_eta_rho}.
\end{proof}
\bigskip

\subsection{Modification of the separation function} \label{subsec_mod_sep}
In the following we will analyze the evolution of the separation function $\s$ on $\supp (1-\eta)$, where $\eta$ is the cutoff function constructed in Lemma~\ref{Lem_asspt_eta_rho}.
We will exploit the fact that $\square \log \s \geq 0$, as established in Proposition~\ref{Prop_log_u_super_sol}, which holds on $\MM_{\good} \cap \supp (1-\eta)$.
However, this bound may not hold at points that are not good, and it's possible that $\supp (1-\eta)$ contains such points. 
By Lemma~\ref{Lem_asspt_eta_rho}\ref{Lem_asspt_eta_rho_b}, we have $\td r_{\loc} < \frac1{10} \s$  at such points, so a similar bound, $\square \log \td r_{\loc} \geq 0$, holds for the smoothed local scale function.
To merge both bounds into a single one, we define the quantity
\[ \s_{\loc} := \min \big\{ \tfrac12 \s,   \tdr_{\loc} \big\} \]
and for any constant $a > 0$, thought of as having the dimension of length, we set
\[  r_{\loc,a} := \min \big\{  r_{\loc}, a \big\} , \qquad \tdr_{\loc,a} := \min \big\{  \tdr_{\loc}, a \big\} , \qquad
 \s_{\loc,a} := \min \big\{ \s_{\loc}, a \big\}. \]
Note that since $r_{\loc} \leq \s$, which follows directly from the definitions of $r_{\loc}, \s$, we have by Lemma~\ref{Lem_zeta_choice}
\begin{equation} \label{eq_r_s_comparable}
   \tfrac14  \tdr_{\loc,a}\leq \s_{\loc,a} \leq  \tdr_{\loc,a} \leq a. 
\end{equation}
We document the following crucial observation about Lemma~\ref{Lem_asspt_eta_rho}.

\begin{Lemma} \label{Lem_mod_sep_fct} 
In Assertion~\ref{Lem_asspt_eta_rho_b} of Lemma~\ref{Lem_asspt_eta_rho} the following bound holds in the viscosity sense on $\supp (1-\eta)$
\begin{equation} \label{eq_sq_sloc_a}
   \square \big( \log \s_{\loc,a}  \big) \geq 0. 
\end{equation}
Moreover, in Assertion~\ref{Lem_asspt_eta_rho_bb} we have $\s_{\loc,a} = \min \{ \frac12 \s, a \}$ on the closure of $\YY \cap \{ 0 < \eta < 1 \}$.
\end{Lemma}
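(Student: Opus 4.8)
\textbf{Proof plan for Lemma~\ref{Lem_mod_sep_fct}.}
The plan is to unpack the two cases in Assertion~\ref{Lem_asspt_eta_rho_b} of Lemma~\ref{Lem_asspt_eta_rho} and combine the corresponding differential inequalities using the fact that the minimum of two viscosity supersolutions to the heat equation is again a viscosity supersolution, together with the observation that capping a supersolution at a constant $a$ (i.e.\ replacing $u$ by $\min\{u,a\}$) preserves the viscosity supersolution property since the constant function $a$ solves $\square a = 0$.

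\emph{Step 1: the bound on $\log(\tfrac12\s)$ and $\log\tdr_{\loc}$ separately.} Fix $(x,t)\in\supp(1-\eta)$. In case \ref{Lem_asspt_eta_rho_b1}, i.e.\ $\tdr_{\loc}(x,t)\geq\frac1{10}\s(x,t)$, Lemma~\ref{Lem_asspt_eta_rho}\ref{Lem_asspt_eta_rho_b1} gives $(x,t)\in\MM_{\good}$ and, by Proposition~\ref{Prop_log_u_super_sol}, $\square\log\s\geq0$ near $(x,t)$ in the viscosity sense; since $\log(\tfrac12\s)=\log\s-\log2$ differs from $\log\s$ by a constant, we also get $\square\log(\tfrac12\s)\geq0$ there. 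In case \ref{Lem_asspt_eta_rho_b2}, i.e.\ $\tdr_{\loc}(x,t)\leq\frac12\s(x,t)$, Lemma~\ref{Lem_asspt_eta_rho}\ref{Lem_asspt_eta_rho_b2} gives $\square(\log\tdr_{\loc})(x,t)>0$; here $\tdr_{\loc}$ is $C^2$ by Lemma~\ref{Lem_zeta_choice}\ref{Lem_zeta_choice_b}, so this is a classical (hence viscosity) inequality at the point, and by continuity of $\square\log\tdr_{\loc}$ it persists on a neighborhood. Note the two cases overlap (on $\{\frac1{10}\s\leq\tdr_{\loc}\leq\frac12\s\}$) but together cover all of $\supp(1-\eta)$, so on a neighborhood of \emph{every} point of $\supp(1-\eta)$ at least one of $\log(\tfrac12\s)$, $\log\tdr_{\loc}$ is a viscosity supersolution of $\square\cdot=0$.

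\emph{Step 2: combining into $\log\s_{\loc}$.} Write $\s_{\loc}=\min\{\tfrac12\s,\tdr_{\loc}\}$, so $\log\s_{\loc}=\min\{\log(\tfrac12\s),\log\tdr_{\loc}\}$. Away from the points where the two functions coincide, $\log\s_{\loc}$ equals locally one of the two, which is a viscosity supersolution by Step~1. At a point $(x_0,t_0)$ where $\log(\tfrac12\s)(x_0,t_0)=\log\tdr_{\loc}(x_0,t_0)$ and a smooth $v$ touches $\log\s_{\loc}$ from below at $(x_0,t_0)$, then $v$ also touches whichever of $\log(\tfrac12\s)$, $\log\tdr_{\loc}$ satisfies the supersolution property on a neighborhood (by Step~1 at least one does, and both candidates agree with $\log\s_{\loc}$ at $(x_0,t_0)$ and dominate $v$), so $\square v(x_0,t_0)\geq0$. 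Here one must be slightly careful that $\log\s$ (unlike $\log\tdr_{\loc}$) is only lower semicontinuous and left-continuous in time (Lemma~\ref{Prop_sep}), so one works with test functions defined on $\MM_{\reg,\leq t_0}$ exactly as in the statement of Proposition~\ref{Prop_log_u_super_sol}; the semicontinuity of $\s$ matches the hypotheses there. This yields $\square\log\s_{\loc}\geq0$ in the viscosity sense on $\supp(1-\eta)$. Finally, capping at $a$: since the constant function $\log a$ satisfies $\square\log a=0$ and $\log\s_{\loc,a}=\min\{\log\s_{\loc},\log a\}$, the same "minimum of supersolutions" argument gives $\square\log\s_{\loc,a}\geq0$ in the viscosity sense, which is \eqref{eq_sq_sloc_a}.

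\emph{Step 3: the statement about $\YY\cap\{0<\eta<1\}$.} On the closure of $\YY\cap\{0<\eta<1\}$ we have $\tdr_{\loc}\geq0.8\s$ by Lemma~\ref{Lem_asspt_eta_rho}\ref{Lem_asspt_eta_rho_bb}; hence $\tfrac12\s\leq 0.625\,\tdr_{\loc}\leq\tdr_{\loc}$, so $\s_{\loc}=\min\{\tfrac12\s,\tdr_{\loc}\}=\tfrac12\s$ there, and therefore $\s_{\loc,a}=\min\{\tfrac12\s,a\}$. The main obstacle is really just the bookkeeping in Step~2: making sure the overlap of the two cases in Assertion~\ref{Lem_asspt_eta_rho_b} is genuinely enough to conclude the viscosity inequality at points where $\tfrac12\s$ and $\tdr_{\loc}$ cross, and handling the asymmetry in regularity between $\s$ (merely lsc/left-continuous) and $\tdr_{\loc}$ (smooth) when invoking the viscosity formalism — everything else is a routine "min of supersolutions" argument.
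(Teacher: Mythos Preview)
Your proposal is correct and follows essentially the same approach as the paper: both arguments reduce to the principle that whichever of $\log(\tfrac12\s)$, $\log\tdr_{\loc}$, $\log a$ achieves the minimum at a given point is a viscosity supersolution there (using the case split in Assertion~\ref{Lem_asspt_eta_rho_b}), and hence so is their minimum $\log\s_{\loc,a}$. The paper compresses your Steps~1--2 into a single line invoking ``the minimum of three viscosity supersolutions is a viscosity supersolution,'' and your Step~3 computation is exactly the content of the paper's ``direct consequence'' remark.
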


\begin{proof}
Let $(x,t) \in \supp (1-\eta)$.
By Assertion~\ref{Lem_asspt_eta_rho_b} of Lemma~\ref{Lem_asspt_eta_rho} the following is true.
If $\td r_{\loc}(x,t) \geq \frac1{2} \s (x,t)$, then $(x,t) \in \MM_{\good}$, which implies $\square (\log \s) \geq 0$ at $(x,t)$ in the viscosity sense by Proposition~\ref{Prop_log_u_super_sol}.
On the other hand, if $\td r_{\loc}(x,t) \leq \frac1{2} \s (x,t)$, then $\square (\log \td r_{\loc})(x,t) \geq 0$.
The bound \eqref{eq_sq_sloc_a} now follows from the fact that the minimum of three viscosity supersolutions is a viscosity supersolution.
The last statement is a direct consequence of Proposition~\ref{Lem_asspt_eta_rho}\ref{Lem_asspt_eta_rho_bb} and the definition of $\s_{\loc}$.
\end{proof}

\bigskip

\subsection{An integral estimate} \label{subsec_integral_estimate}
We will now define an integral quantity involving $\s_{\loc,a}$, $\td r_{\loc,a}$ and $\eta$, whose dependence in time can be controlled.
In the following consider a bounded almost regular mean curvature flow $\MM \subset \IR^3 \times I$.
Fix some constants $A, \alpha$ such that \eqref{eq_Thgeeta} holds and apply Lemma~\ref{Lem_asspt_eta_rho} to obtain the continuous function $\eta : \MM \to [0,1]$, which is locally Lipschitz on $\MM_{\reg}$, and the closed subset $\YY \subset \MM_{\reg}$.

\begin{Definition} \label{Def_S}
For any $(x_0, t_0) \in \IR^3 \times \IR$ and $\tau, a > 0$ with $t_0 - \tau \in I$ we set
\begin{equation} \label{eq_SS_def}
 \mathfrak{S}_{(x_0,t_0), a}(\tau) := \int_{\MM_{\reg,t_0-\tau}}  \bigg( (1-\eta) \, \log \Big( \frac{\s_{\loc,a}}{a} \Big) + \eta \log \Big( \frac{ \tdr_{\loc,a}}{a} \Big) \bigg) \, \rho_{(x_0,t_0)}\, d\HH^2. 
\end{equation}
As we will often consider the case $a = 1$, we set
\[ \mathfrak{S}_{(x_0,t_0)}(\tau) := \mathfrak{S}_{(x_0,t_0), 1}(\tau). \]
\end{Definition}
\medskip

Note that the integrand in \eqref{eq_SS_def} is non-positive by definition.
Also $\mathfrak{S}_{(x_0,t_0), a}(\tau)$ is dimensionless and behaves as follows under parabolic rescaling:
\[ \mathfrak{S}^{\lambda \MM}_{(\lambda x_0, \lambda^2 t_0), \lambda a}( \lambda^2 \tau) = \mathfrak{S}_{(x_0,t_0), a}(\tau). \]
We will use the following estimate to bound the change of $\mathfrak{S}_{(x_0,t_0)}(\tau)$.
We remark that a more general estimate is also true for $\mathfrak{S}_{(x_0,t_0), a}(\tau)$.
This can be obtained via parabolic rescaling.
We will only need the case $a = 1$.

\begin{Lemma} \label{Lem_evol_formula}
Consider the situation described above.
Let $0 < \tau_1 < \tau_2 \leq 1$ with $[t_0 - 2\tau_2, t_0 - \tau_1] \subset I$ and $\tau_2 \geq 2 \tau_1$.
Suppose that $t_0 - \tau_2$ and $t_0 - \tau_1$ are regular times of $\MM$.
Then we have the following bound,  using the shorthand notation $\rho = \rho_{(x_0,t_0)}$,
\begin{multline*}
\mathfrak{S}_{(x_0,t_0)}(\tau_1) - \mathfrak{S}_{(x_0,t_0)}(\tau_2) 
\geq  \int_{t_0-\tau_2}^{t_0-\tau_1} \int_{ \MM_{\reg,t}} \bigg( -(\partial_t \eta) \, \log \Big( \frac{\s_{\loc,1}}{\tdr_{\loc,1}} \Big) \, \rho  + \eta \, \frac{\partial_t \tdr_{\loc,1}}{\tdr_{\loc,1}} \, \rho +  \frac{\nabla \eta \cdot \nabla \s_{\loc,1}}{\s_{\loc,1}} \, \rho \notag \\
 + 
 \frac{\nabla \tdr_{\loc,1} \cdot \nabla \rho}{\tdr_{\loc,1}} \eta  - \log \Big( \frac{\s_{\loc,1}}{\tdr_{\loc,1}} \Big) \, \nabla \eta \cdot \nabla \rho \bigg) d\HH^2 dt.
\end{multline*}
Note that the quantities $\partial_t \eta, \nabla \eta, \nabla \s_{\loc,1}$ are only defined almost everywhere, because the functions $\eta, \s_{\loc,1}$ may only be locally Lipschitz.
\end{Lemma}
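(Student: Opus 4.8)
The statement is a differentiated form of Huisken's monotonicity-type identity for the modified integral $\mathfrak{S}_{(x_0,t_0)}(\tau)$, so the natural route is: (1) reduce to a smooth computation on the regular part by the usual cutoff/approximation trick near $\MM_{\sing}$; (2) differentiate $\mathfrak{S}$ in $\tau$ using the evolution of the area measure along the flow, which brings in the $\square^*$-operator on $\rho$; (3) plug in the differential inequalities $\square \log \s_{\loc,1}\ge 0$ (Lemma~\ref{Lem_mod_sep_fct}, hence on $\supp(1-\eta)$), $\square\log\td r_{\loc}\ge 0$ where relevant, and $\square^*\rho\ge 0$ (Lemma~\ref{Lem_entropy_rho}\ref{Lem_entropy_rho_c}), keeping careful track of the sign of the integrand $w:=(1-\eta)\log(\s_{\loc,1})+\eta\log(\td r_{\loc,1})\le 0$; (4) integrate by parts to move derivatives off $\log\s_{\loc,1}$ and $\log\td r_{\loc,1}$ onto $\eta$ and $\rho$, producing exactly the five terms on the right-hand side.

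More precisely, I would first write, for smooth data, the standard first-variation identity
\[
\frac{d}{d\tau}\Big(-\!\!\int_{\MM_{\reg,t_0-\tau}}\!\! w\,\rho\Big)
=\int_{\MM_{\reg,t_0-\tau}}\!\!\big((\square w)\rho - w\,\square^*\rho\big)\,d\HH^2,
\]
valid because $\partial_t(\HH^2\lfloor\MM_t)$ contributes the $-|\mathbf H|^2$ term that is absorbed into $\square^*$; here one uses Lemma~\ref{Lem_Brakke_unique} (no mass drop, equality in the Brakke inequality) to justify the formula globally in the almost-regular setting, after first running it with $w$ replaced by $w\psi_\eps$ for a cutoff $\psi_\eps$ vanishing near $\MM_{\sing}$ as in the proof of Lemma~\ref{Lem_Brakke_unique}, and checking the error terms vanish using Property~\ref{Def_almost_regular_2} of Definition~\ref{Def_almost_regular} ($\int r_{\loc}^{-2}<\infty$). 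Since $w\le 0$ and $\square^*\rho\ge 0$, the term $-w\,\square^*\rho\ge 0$ can simply be dropped, yielding
\[
\mathfrak{S}(\tau_1)-\mathfrak{S}(\tau_2)\ \ge\ -\int_{t_0-\tau_2}^{t_0-\tau_1}\!\!\int_{\MM_{\reg,t}}(\square w)\,\rho\,d\HH^2\,dt.
\]

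Then I would expand $\square w = (1-\eta)\square(\log\s_{\loc,1}) + \eta\,\square(\log\td r_{\loc,1}) - (\square\eta)\log(\s_{\loc,1}/\td r_{\loc,1}) - 2\nabla\eta\cdot\nabla\log(\s_{\loc,1}/\td r_{\loc,1})$, plus the genuinely $a=1$-truncation terms. On $\supp(1-\eta)$ we have $\square\log\s_{\loc,1}\ge 0$ by Lemma~\ref{Lem_mod_sep_fct}, so $-(1-\eta)\square\log\s_{\loc,1}\le 0$ and that contribution is discarded; the term $\eta\,\square\log\td r_{\loc,1}$ is kept but rewritten as $\eta\,\partial_t\td r_{\loc,1}/\td r_{\loc,1}$ after an integration by parts against $\rho$ that moves the Laplacian onto $\rho$, producing the $\nabla\td r_{\loc,1}\cdot\nabla\rho/\td r_{\loc,1}$ term (together with a gradient-squared term of a favorable sign to be discarded). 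The $\square\eta$ term is integrated by parts to turn $(\square\eta)\log(\cdots)\rho$ into $\nabla\eta\cdot\nabla(\log(\cdots)\rho)$ minus a time-boundary piece; combined with the explicit $\nabla\eta\cdot\nabla\log(\cdots)$ term this collapses to the $\nabla\eta\cdot\nabla\s_{\loc,1}/\s_{\loc,1}$, $-(\partial_t\eta)\log(\s_{\loc,1}/\td r_{\loc,1})$ and $-\log(\s_{\loc,1}/\td r_{\loc,1})\nabla\eta\cdot\nabla\rho$ terms. One must check the regularity needed to integrate by parts: $\eta$ and $\s_{\loc,1}$ are only locally Lipschitz, so the identity holds in the a.e./distributional sense, and the viscosity-supersolution property has to be upgraded to the distributional one — this is exactly the role of Lemma~\ref{Lem_conj_L}, applied with $L=\triangle$, $L^*=\triangle-|\mathbf H|^2$, weight $\varphi$ absorbing the area element, and $u=\log\s_{\loc,1}$ (after the reductions of Lemma~\ref{Prop_sep} giving lower semicontinuity, left-continuity in time, and the uniform Lipschitz bound).

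**Main obstacle.** The bookkeeping of the integration-by-parts on the only-Lipschitz functions $\eta$, $\s_{\loc,1}$, $\td r_{\loc,1}$ is where the real work lies: one must simultaneously (i) localize away from $\MM_{\sing}$ with a cutoff and control the errors via $\int r_{\loc}^{-2}<\infty$, (ii) convert the viscosity inequality $\square\log\s_{\loc,1}\ge0$ into a distributional inequality so that multiplying by the nonnegative test object $(1-\eta)\rho$ and integrating is legitimate (Lemma~\ref{Lem_conj_L}), and (iii) track which discarded terms genuinely have the right sign — in particular the $|\nabla\log\td r_{\loc,1}|^2$-type term arising from $\square\log\td r_{\loc,1}=\td r_{\loc,1}^{-1}\square\td r_{\loc,1}+|\nabla\log\td r_{\loc,1}|^2$ must be shown nonnegative and dropped cleanly, and the truncation $\min\{\cdot,1\}$ must be handled (the minimum of a supersolution with the constant $1$ is again a supersolution, and $\log 1 = 0$ so those regions contribute nothing). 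Everything else is a routine, if lengthy, Huisken-monotonicity computation.
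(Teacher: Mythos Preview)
Your plan is essentially the paper's own argument: use $w\le 0$ with $\square^*\rho\ge 0$ to drop one term, use $\square\log\s_{\loc,1}\ge 0$ on $\supp(1-\eta)$ (via Lemma~\ref{Lem_conj_L}) to drop another, and integrate by parts to produce the five surviving terms. A few corrections and one organizational point are worth noting.

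\textbf{Signs.} After dropping $-w\,\square^*\rho\ge 0$ you should get $\mathfrak{S}(\tau_1)-\mathfrak{S}(\tau_2)\ge +\int\!\!\int(\square w)\rho$, not $-\int\!\!\int$. Likewise the cross term in the product rule for $\square w$ is $+2\nabla\eta\cdot\nabla\log(\s_{\loc,1}/\td r_{\loc,1})$, not $-2$. With these fixed, the algebra indeed collapses to exactly the five terms in the statement, with no further inequality needed beyond discarding $(1-\eta)\square\log\s_{\loc,1}\ge 0$.

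\textbf{No $|\nabla\log\td r_{\loc,1}|^2$ term is discarded.} If you work directly with $u_2=\log\td r_{\loc,1}$ and integrate $\int\eta(\triangle u_2)\rho$ by parts, no gradient-squared term appears. If instead you expand $\square\log\td r=\td r^{-1}\square\td r+|\nabla\log\td r|^2$ and then integrate $\td r^{-1}\triangle\td r$ by parts, the resulting $-|\nabla\log\td r|^2$ exactly cancels the one you introduced. Nothing is thrown away here; treating it as a discard would give the wrong inequality.

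\textbf{Organization compared to the paper.} The paper does \emph{not} expand $\square w$ as a whole. It splits $w=(1-\eta)u_1+\eta u_2$ and proves a separate inequality for each summand, because they need different approximation devices: for $(1-\eta)u_1$ the support lies compactly in $\MM_{\reg}$ (so no singular-set cutoff is needed) but $u_1$ is only a viscosity supersolution, hence Lemma~\ref{Lem_conj_L} is applied with test function $(1-\eta)\rho$ after first smoothing $\eta$ so that $\square u_1\ge 0$ holds on a \emph{neighborhood} of $\supp(1-\eta)$; for $\eta u_2$ the function $u_2$ is smooth, so no viscosity lemma is required, but $\eta$ may equal $1$ near $\MM_{\sing}$, so one multiplies by a cutoff $\omega_\eps=\psi(\eps\log(-u_2))$ and shows the error vanishes using $\int r_{\loc}^{-2}<\infty$. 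Your route --- writing one identity for $w$ and then approximating --- would have to invoke both devices simultaneously and justify the product-rule expansion of $\square w$ distributionally on the only-Lipschitz $u_1$-side; this is doable but messier than the paper's split.
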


\begin{proof}
To simplify our computation, we assume without loss of generality that $t_0 =0$ and we set 
\[ u_1 := \log (\s_{\loc,1}), \qquad u_2 := \log(\tdr_{\loc,1}). \]
With this notation, it suffices to show that
\begin{equation} \label{eq_int_v_bound}
\int_{\MM_{t}} (1-\eta) u_1  \rho \, d\HH^2 \bigg|_{t=-\tau_2}^{t=-\tau_1}
\geq 
 \int_{-\tau_2}^{-\tau_1} \int_{\MM_{\reg,t}} \Big( - (\partial_t \eta) u_1 \rho  + (\nabla \eta \cdot \nabla u_1) \rho   - u_1 \nabla \eta \cdot \nabla \rho \Big)  \, d\HH^2 \, dt
\end{equation}
and
\begin{equation} \label{eq_int_w_bound}
\int_{\MM_{t}}  \eta u_2 \rho \, d\HH^2 \bigg|_{t=-\tau_2}^{t=-\tau_1}
\geq \int_{-\tau_2}^{-\tau_1} \int_{\MM_{\reg,t}} \Big(  (\partial_t \eta) u_2 \rho + \eta (\partial_t u_2) \rho   + (\nabla u_2 \cdot \nabla \rho) \eta + u_2\nabla \eta \cdot \nabla \rho \Big)  \, d\HH^2 \, dt.
\end{equation}
Note that the right-hand sides of \eqref{eq_int_v_bound}, \eqref{eq_int_w_bound} are integrable, because $\supp(1-\eta) \cap \MM_{[-\tau_2, -\tau_1]} \subset \MM_{\reg}$ is compact and since the terms $\partial_t u_2$ and $|\nabla u_2|$ can be bounded by $C r_{\loc}^{-2}$ and $C r_{\loc}^{-1}$ using Lemma~\ref{Lem_zeta_choice}\ref{Lem_zeta_choice_b}.
Note also that we may approximate $1-\eta$ by a smooth function of slightly smaller support.
So we may assume without loss of generality that $\eta$ is smooth on $\MM_{\reg}$ and that $\square u_1 \geq 0$ in a \emph{neighborhood} of $\supp (1-\eta)$.

Consider now the conjugate heat operator $\square^* = -\partial_t - \triangle + |\mathbf{H}|^2$; see also the discussion in  Subsection~\ref{subsec_furth_props}.
To verify the bound \eqref{eq_int_v_bound}, we first claim that 
\begin{equation} \label{eq_intconj}
 \int_{\MM_{t}} (1-\eta) u_1  \rho \, d\HH^2 \bigg|_{t=-\tau_2}^{t=-\tau_1}\geq - \int_{-\tau_2}^{-\tau_1} \int_{\MM_{\reg,t}} u_1 \big( \square^* ((1-\eta) \rho) \big) \, d\HH^2 \, dt. 
\end{equation}
Indeed, if $u_1$ were smooth, then this follows from the fact that $\square u_1 \geq 0$ on $\supp (1-\eta)$ via integration by parts.
In the general case, in which we only have $\square u_1 \geq 0$ in the viscosity sense, we can use a partition of unity to express
$(1- \eta) \rho = \sum_{i} v_i$, 
as a sum of finitely many smooth functions $v_i \in C^\infty_0(\MM_{\reg,[-\tau_2,-\tau_1]})$, whose supports add up to $\supp (1-\eta) \subset \MM_{\reg}$, such that for each $i$ we can apply Lemma~\ref{Lem_conj_L}.
Next, we obtain using Lemma~\ref{Lem_entropy_rho}\ref{Lem_entropy_rho_c} and the fact that $u_1 \leq 0$
\begin{align}
- \int_{-\tau_2}^{-\tau_1} \int_{\MM_{\reg,t}} &u_1 \big( \square^* ((1-\eta) \rho) \big) \, d\HH^2 \, dt \notag
\\
&= - \int_{-\tau_2}^{-\tau_1} \int_{\MM_{\reg,t}} u_1 \big( (\partial_t \eta) \rho + (\triangle \eta)\rho + 2 \nabla \eta \cdot \nabla \rho + (1-\eta)\square^* \rho ) \big) \, d\HH^2 \, dt \notag \\
&\geq - \int_{-\tau_2}^{-\tau_1} \int_{\MM_{\reg,t}} u_1 \big( (\partial_t \eta) \rho + (\triangle \eta)\rho + 2 \nabla \eta \cdot \nabla \rho  ) \big) \, d\HH^2 \, dt. \label{eq_almostfirst}
\end{align}
The bound \eqref{eq_int_v_bound} follows from \eqref{eq_intconj}, \eqref{eq_almostfirst} via integration by parts.

To verify the bound \eqref{eq_int_w_bound}, consider a smooth cutoff function $\psi : [-\infty,\infty) \to [0,1]$ with $\psi \equiv 1$ on $[-\infty,1]$ and $\psi \equiv 0$ on $[2, \infty)$.
Fix a small $\eps > 0$ and set, recalling that $u_2 \leq 0$,
\[ \omega_\eps := \psi (\eps \log(-u_2)). \]
Then on $\supp \omega_\eps$ we have
\[ r_{\loc,1} \geq \tfrac12 \tdr_{\loc,1}
\geq \tfrac12 e^{-e^{2/\eps}} > 0. \]
So since $(\supp \omega_\eps)_{[-\tau_2,-\tau_1]} \subset \MM_{\reg,[-\tau_2,-\tau_1]}$ and since the time-slices of the latter set have bounded $\HH^{2}$-measure, we find that $(\supp \omega_\eps)_{[-\tau_2,-\tau_1]}$ must be compact.
This allows us to apply integration by parts in the following computation, again using Lemma~\ref{Lem_entropy_rho},
\begin{align*}
 \int_{\MM_{t}}  & \eta u_2 \rho \omega_\eps \, d\HH^2 \bigg|_{t=-\tau_2}^{t=-\tau_1}
= \int_{-\tau_2}^{-\tau_1} \int_{\MM_{\reg,t}} \big( \partial_t (\eta u_2 \rho \omega_\eps) - |\mathbf{H}|^2 \eta u_2 \rho \omega_\eps \big) d\HH^2 \, dt
 \\
 &= \int_{-\tau_2}^{-\tau_1} \int_{\MM_{\reg,t}} \big( \rho \omega_\eps \, \partial_t (\eta u_2)  + \eta u_2 \omega_\eps \, \partial_t \rho - |\mathbf{H}|^2 \eta u_2 \rho \omega_\eps + \eta u_2 \rho \, \partial_t \omega_\eps \big) d\HH^2 \, dt \\
 &= \int_{-\tau_2}^{-\tau_1} \int_{\MM_{\reg,t}} \big( \rho \omega_\eps \, \partial_t (\eta u_2)  - \eta u_2 \omega_\eps \triangle \rho - \eta u_2 \omega_\eps (\square^* \rho )  + \eta u_2 \rho \, \partial_t \omega_\eps \big) d\HH^2 \, dt \\
 &\geq \int_{-\tau_2}^{-\tau_1} \int_{\MM_{\reg,t}} \big( \rho \omega_\eps \, \partial_t (\eta u_2)  +  \omega_\eps \nabla \rho \cdot \nabla (\eta u_2)  + 
 \eta u_2 \rho \, \partial_t \omega_\eps + \eta u_2 \, \nabla \rho \cdot \nabla \omega_\eps  \big) d\HH^2 \, dt. 
\end{align*}
Note that as $\eps \to 0$, the left-hand side of this inequality converges to the left-hand side of \eqref{eq_int_w_bound} and the integral over the first two terms in the integrand on the right-hand side converge to the right-hand side of \eqref{eq_int_w_bound}.
So it remains to show that
\begin{equation} \label{eq_int_to_0}
 \int_{-\tau_2}^{-\tau_1} \int_{\MM_{\reg,t}} \big( \eta u_2 \rho \, \partial_t \omega_\eps + \eta u_2 \, \nabla \rho \cdot \nabla \omega_\eps  \big) d\HH^2 \, dt \xrightarrow[\eps \to 0]{} 0  
\end{equation}
To see this, we use a generic constant $C < \infty$ and bound the absolute value of this integral using Lemma~\ref{Lem_zeta_choice}\ref{Lem_zeta_choice_b} by
\begin{align*}
 C \int_{-\tau_2}^{-\tau_1} & \int_{ \MM_{\reg,t}}  |u_2| \big( |\partial_t \omega_\eps|  + |\nabla\omega_\eps| \big) d\HH^2 \, dt \\
&\leq  C\eps \int_{-\tau_2}^{-\tau_1} \int_{ \MM_{\reg,t}}\big( |\partial_t u_2|  + |\nabla u_2| \big) d\HH^2 \, dt \\
 &\leq  C\eps \int_{-\tau_2}^{-\tau_1} \int_{ \MM_{\reg,t}}  \bigg( \frac{|\partial_t \tdr_{\loc,1}|}{\tdr_{\loc,1}}  + \frac{|\nabla \tdr_{\loc,1}|}{\tdr_{\loc,1}} \bigg) d\HH^2 \, dt \\
 &\leq  C\eps \int_{-\tau_2}^{-\tau_1} \int_{\MM_{\reg,t}}  \Big(   r_{\loc,1}^{-2}  +   r_{\loc,1}^{-1}  \Big) d\HH^2 \, dt
\end{align*}
Since the last integral is finite by Definition~\ref{Def_almost_regular}, we obtain \eqref{eq_int_to_0}, which finishes the proof.
\end{proof}
\bigskip

\subsection{Proof of the key estimate} \label{subsec_proof_key}
In this subsection we prove the key estimate of this paper, Theorem~\ref{Thm_key_thm}.
To do this, we derive a bound on the decrease of $\mathfrak{S}_{(x_0,t_0), \sqrt{\tau}}(\tau)$ as we reduce the scale parameter $\tau$; see Proposition~\ref{Prop_step_bound_improved} for more details.
Theorem~\ref{Thm_key_thm} will then follow by an iterated application of this bound.
As a precursor to this proposition, we first establish a bound on the decrease of the quantity $\mathfrak{S}_{(x_0,t_0)}(\tau)$; see Proposition~\ref{Prop_step_bound}.
This bound will be deduced by combining Lemma~\ref{Lem_asspt_eta_rho} with the bound from Lemma~\ref{Lem_evol_formula}.

Note that Proposition~\ref{Prop_step_bound} is not invariant under parabolic rescaling, as it involves the quantity $\mathfrak{S}_{(x_0,t_0)}(\tau)$.
The proposition can be easily generalized to a scale invariant form by replacing this quantity with $\mathfrak{S}_{(x_0,t_0), a}(\tau)$, where $a$ is an additional scale parameter.
We will obtain Proposition~\ref{Prop_step_bound_improved}, which is scaling invariant, from Proposition~\ref{Prop_step_bound} via parabolic rescaling.

\begin{Proposition} \label{Prop_step_bound}
For any $\eps > 0$ and $A < \infty$ there are constants $\delta'(\eps,A), \alpha(\eps,A) > 0$ and $C_1(\eps, A), \lb C_2 (\eps,A), D(\eps, A) < \infty$ such that the following is true.
Consider a bounded almost regular mean curvature flow $\MM \subset \IR^3 \times I$ and suppose that
\begin{equation} \label{eq_geom_bounds}
 \Theta(\MM) \leq A, \qquad \genus (\MM) \leq A. 
\end{equation}
Let $\eta : \MM \to [0,1]$ be a function that satisfies the assertions from Lemma~\ref{Lem_asspt_eta_rho} for the parameters $\alpha$ and $A$.
Consider the quantity $\mathfrak{S}_{(x_0,t_0)}(\cdot)$ from Definition~\ref{Def_S} for some $(x_0,t_0) \in \IR^3 \times \ov{I}$.
Suppose that $0 < \tau_1 < \tau_2 \leq A$ with $2 \leq \frac{\tau_2}{\tau_1} \leq A$ and $t_0 - 2 \tau_2\in I$ and assume that $t_0 - \tau_2$ and $t_0 - \tau_1$ are regular times.
Then
\begin{equation} \label{eq_bad_step_bound}
 \mathfrak{S}_{(x_0,t_0)}(\tau_1) - \mathfrak{S}_{(x_0,t_0)}(\tau_2)  \geq - C_1.
\end{equation}
Moreover, one of the following is true:
\begin{enumerate}[label=(\alph*)]
\item \label{Prop_step_bound_b} We have
\begin{equation} \label{eq_good_step_bound}
\mathfrak{S}_{(x_0,t_0)}(\tau_1) - \mathfrak{S}_{(x_0,t_0)}(\tau_2) 
\geq - C_2 \big( \Theta_{(x_0,t_0)}(2\tau_2) - \Theta_{(x_0,t_0)}(\tfrac12 \tau_1) \big) - \eps.
\end{equation}
\item \label{Prop_step_bound_a} There is a point $(x,t) \in \MM$ with $x \in B(x_0, D \sqrt{\tau_1})$ and $t \in [t_0-\tau_2, t_0-\tau_1]$ such that $r_{\loc}(x,t) \geq \delta' \sqrt{\tau_1}$.
\end{enumerate}
\end{Proposition}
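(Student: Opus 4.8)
The strategy is to apply Lemma~\ref{Lem_evol_formula} to bound $\mathfrak{S}_{(x_0,t_0)}(\tau_1)-\mathfrak{S}_{(x_0,t_0)}(\tau_2)$ from below by an integral of five terms over the slab $[t_0-\tau_2,t_0-\tau_1]$, and then estimate each term. We split the domain of integration according to where the cutoff function $\eta$ lives: on $\supp(1-\eta)$ we will use the differential inequality $\square\log\s_{\loc,1}\geq 0$ from Lemma~\ref{Lem_mod_sep_fct}, while on $\supp\eta$ we split further into the ``minimal surface'' region $\YY$ and its complement. On $\supp(1-\eta)$ the term $-\eta\,\frac{\partial_t\tdr_{\loc,1}}{\tdr_{\loc,1}}\rho$ and related terms either vanish (since $\eta$ or $1-\eta$ vanishes there) or are nonnegative by the viscosity supersolution property, so those contribute the right sign; the terms involving $\nabla\eta$ are supported on $\{0<\eta<1\}$, where we use the comparability $\s_{\loc,1}\asymp\tdr_{\loc,1}$ from \eqref{eq_r_s_comparable} together with the assertion $\td r_{\loc}\geq 0.8\s$ on the closure of $\YY\cap\{0<\eta<1\}$ (Lemma~\ref{Lem_asspt_eta_rho}\ref{Lem_asspt_eta_rho_bb} and Lemma~\ref{Lem_mod_sep_fct}) to control $\log(\s_{\loc,1}/\tdr_{\loc,1})$ by a uniform constant. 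The remaining bad contributions are integrals of $|\partial_t\eta|$, $|\partial_t\tdr_{\loc}|/\tdr_{\loc}$, $|\nabla\eta||\nabla\s|/\s$, and $(t_0-t)^{-1/2}\tdr_{\loc}^{-1}$ (the last coming from $\nabla\eta\cdot\nabla\rho$ and $\nabla\tdr_{\loc,1}\cdot\nabla\rho/\tdr_{\loc,1}$, using $|\nabla\rho|\leq C(t_0-t)^{-1/2}\rho$ on the relevant balls) over $\YY$, together with an integral of $r_{\loc}^{-2}$ over $\supp\eta\setminus\YY$.

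The dichotomy in the statement comes from whether or not the set $\XX^{t_0,\delta}$ of points with $r_{\loc}\geq\delta\sqrt{t_0-t}$ meets the relevant region. If there is a point $(x,t)$ with $x\in B(x_0,D\sqrt{\tau_1})$, $t\in[t_0-\tau_2,t_0-\tau_1]$ and $r_{\loc}(x,t)\geq\delta'\sqrt{\tau_1}$, we are in case~\ref{Prop_step_bound_a} and there is nothing further to prove. Otherwise $\XX^{t_0,\delta'}$ is empty on the region under consideration (after choosing $D$ large enough that $B(x_0,A\sqrt{t_0-t})\subset B(x_0,D\sqrt{\tau_1})$ for $t$ in the slab, using $\tau_2\leq A\tau_1$), so the bad contributions over $\YY\setminus\XX^{t_0,\delta'}$ are bounded by $\alpha\log(\tau_2/\tau_1)\leq\alpha\log A$ via Lemma~\ref{Lem_asspt_eta_rho}\ref{Lem_asspt_eta_rho_c}, and the bad contribution over $\supp\eta\setminus(\XX^{t_0,\delta'}\cup\YY)$ is bounded by $C'(\Theta_{(x_0,t_0)}(2\tau_2)-\Theta_{(x_0,t_0)}(\tfrac12\tau_1))$ via Lemma~\ref{Lem_asspt_eta_rho}\ref{Lem_asspt_eta_rho_d}. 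Choosing $\alpha$ small in terms of $\eps$ and $A$ (and $\alpha\log A\leq\eps$) gives case~\ref{Prop_step_bound_b}. For the unconditional bound \eqref{eq_bad_step_bound}, we do not discard $\XX$: we bound $\log(\s_{\loc,1}/\tdr_{\loc,1})$ and $\log(\tdr_{\loc,1}/a)$ crudely, and observe that $\mathfrak{S}_{(x_0,t_0)}(\tau_i)$ is bounded below by $(C+\tfrac12\log\tau_i)\Theta_{(x_0,t_0)}(\tau_i)$-type quantities; combined with $\tau_i\leq A$, the boundedness of $\Theta$, and the elementary fact that one can always find a point of $\MM$ in the relevant region (since $\MM$ is nonempty and $\Theta_{(x_0,t_0)}\geq 1$ when $(x_0,t_0)\in\MM$, or else the Gaussian areas are comparable), we extract a uniform lower bound $-C_1$. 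Here one uses that even when $r_{\loc}$ is very small, the logarithms are integrated against the probability-like density $\rho$ and the total Gaussian area is controlled, so the contribution of $\{r_{\loc}\text{ small}\}$ is bounded using Definition~\ref{Def_almost_regular}\ref{Def_almost_regular_2} together with $\XX^{t_0,\delta}$ accounting — more precisely, we apply the case analysis at a coarse scale, where case~\ref{Prop_step_bound_a} gives $r_{\loc}\geq\delta'\sqrt{\tau_1}$ somewhere, providing a lower anchor for $\mathfrak{S}$ via the left-Lipschitz/lower semicontinuity properties of $\s$.

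The main obstacle is the control of the two terms carrying the factor $\log(\s_{\loc,1}/\tdr_{\loc,1})$ — namely $-(\partial_t\eta)\log(\s_{\loc,1}/\tdr_{\loc,1})\rho$ and $-\log(\s_{\loc,1}/\tdr_{\loc,1})\nabla\eta\cdot\nabla\rho$ — on the transition region $\{0<\eta<1\}$. Away from $\YY$ this region lies in $\mathcal{U}\setminus\XX$ where Theorem~\ref{Thm_L2_bound} gives the $r_{\loc}^{-2}$ bound, but there $\s_{\loc,1}$ could a priori be much larger than $\tdr_{\loc,1}$, making $\log(\s_{\loc,1}/\tdr_{\loc,1})$ unbounded above; the resolution is that $\supp\eta'\setminus\YY\subset\mathcal{U}$ consists of points satisfying Property~\ref{Def_CNT_new_2} or \ref{Def_CNT_new_3} of Definition~\ref{Def_CNT_new}, and in the borderline Property~\ref{Def_CNT_new_3} case the bound \eqref{eq_def_squ_log_r_positive} together with $\s_{\loc}=\min\{\tfrac12\s,\tdr_{\loc}\}$ forces either $\tfrac12\s\leq\tdr_{\loc}$ (so the log is $\leq 0$) or the supersolution property of $\log\tdr_{\loc}$ handles the term directly; in the Property~\ref{Def_CNT_new_2} minimal-surface case one has $(x,t)\notin\supp(1-\eta)$ once $\eta=1$, so the transition region is confined to $\YY$ where Lemma~\ref{Lem_asspt_eta_rho}\ref{Lem_asspt_eta_rho_bb} applies. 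Carefully tracking which of the two properties holds at each point of $\{0<\eta<1\}$, and arranging the bookkeeping so that the logarithmic factor is always either nonpositive or bounded, is the delicate part; the rest is combining Lemmas~\ref{Lem_evol_formula}, \ref{Lem_asspt_eta_rho}, \ref{Lem_mod_sep_fct}, \ref{Lem_intA2bound} and choosing constants in the order $\alpha(\eps,A)$, then $\delta'(\eps,A)$, then $C_1,C_2,D$.
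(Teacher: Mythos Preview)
Your overall strategy --- apply Lemma~\ref{Lem_evol_formula}, split the domain into $\YY$ and its complement in $\supp\eta$, use Lemma~\ref{Lem_asspt_eta_rho}\ref{Lem_asspt_eta_rho_c},\ref{Lem_asspt_eta_rho_d}, and derive the dichotomy from whether $\XX^{t_0,\delta'}$ meets the relevant region --- matches the paper. But two points are off.

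First, your ``main obstacle'' is not an obstacle. The inequality \eqref{eq_r_s_comparable} gives $\tfrac14\tdr_{\loc,1}\leq\s_{\loc,1}\leq\tdr_{\loc,1}$ \emph{everywhere}, so $\log(\s_{\loc,1}/\tdr_{\loc,1})\in[-\log 4,0]$ is uniformly bounded on all of $\MM_{\reg}$, not just on $\YY$. No case analysis through Definition~\ref{Def_CNT_new} is needed; you simply absorb this factor into the constant and reduce all five terms of $Q$ to quantities dominated by $|\partial_t\eta|+|\partial_t\tdr_{\loc}|/\tdr_{\loc}+|\nabla\eta||\nabla\s|/\s+|\mathbf{x}|\,|t_0-t|^{-1}\tdr_{\loc}^{-1}$ (on $\YY$) or $r_{\loc}^{-2}+|t_0-t|^{-1/2}r_{\loc}^{-1}$ (on $\supp\eta\setminus\YY$), times $\rho$.

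Second, and this is the genuine gap: the integral from Lemma~\ref{Lem_evol_formula} is over \emph{all} of $\MM_{\reg,t}$, but Lemma~\ref{Lem_asspt_eta_rho}\ref{Lem_asspt_eta_rho_c},\ref{Lem_asspt_eta_rho_d} only bound integrals over balls $B(x_0,A\sqrt{t_0-t})$ or $B(x_0,D\sqrt{t_0-t})$. Your claim that ``$|\nabla\rho|\leq C(t_0-t)^{-1/2}\rho$ on the relevant balls'' is false globally (in fact $|\nabla\rho|=\tfrac{|\mathbf{x}-x_0|}{2(t_0-t)}\rho$), and you never say how the tail is handled. The paper does this by bounding $e^{-|\mathbf{x}|^2/4|t|}\leq C(A)e^{-|\mathbf{x}|/\sqrt{\tau_1}}$ on the slab and covering $\IR^3$ by a lattice of balls $B(x_0+\sqrt{\tau_1}z,100\sqrt{\tau_1})$, $z\in\IZ^3$; one applies Lemma~\ref{Lem_asspt_eta_rho}\ref{Lem_asspt_eta_rho_c} (resp.\ \ref{Lem_asspt_eta_rho_d} with $D$ replaced by $A$) to each shifted ball and sums the resulting bounds against $e^{-|z|}$. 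The far-away contribution in the $\supp\eta\setminus\YY$ estimate is then $\leq C(\alpha,A)e^{-D}$ (plus a term controlled on $\XX^{t_0,\delta'_A}$), and \emph{this} is why $D$ must be chosen large depending on $\alpha$ and $A$ --- not merely so that $B(x_0,A\sqrt{t_0-t})\subset B(x_0,D\sqrt{\tau_1})$. The order of choices is: $\alpha$ small so $C(A)\alpha\leq\tfrac12\eps$, then $D$ large so $C(\alpha,A)e^{-D}\leq\tfrac12\eps$.

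Finally, your derivation of the unconditional bound \eqref{eq_bad_step_bound} is garbled. The correct argument is that once the above estimate is in place, the only remaining term is $\tau_1^{-2}\int\int_{\XX^{t_0,\delta'_D}_t\cap B(x_0,D\sqrt{\tau_1})}e^{-|\mathbf{x}|/\sqrt{\tau_1}}d\HH^2\,dt$; since $\HH^2(\MM_t\cap B(x_0,r))\leq C(A)r^2$ by the Gaussian area bound and $\tau_2\leq A\tau_1$, this integral is $\leq C(\eps,A)$ regardless of whether $\XX^{t_0,\delta'_D}$ is empty, yielding \eqref{eq_bad_step_bound}.
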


\begin{proof}
Fix $\eps, A$ and let $\alpha$ and $D \geq A$ be constants, which we will determine in the course of the proof.
We assume without loss of generality that $A \geq 10^4$.
Let $\delta(\alpha, A) > 0$ and $\XX^{t_0,\delta}, \YY \subset \MM$ be the constant and subsets from Lemma~\ref{Lem_asspt_eta_rho}.
Moreover, let $\delta'_A(\alpha, A) > 0$ and $\delta'_D(\alpha,D) > 0$ be the constants from Lemma~\ref{Lem_asspt_eta_rho}\ref{Lem_asspt_eta_rho_d}, where we have replaced $D$ with $A$ for the first constant.
Without loss of generality, we may assume that
\[ \delta'_D \leq \delta'_A \leq \delta. \]
After application of a translation, we may also assume without loss of generality that $(x_0,t_0) = (\vec 0,0)$.
By Lemma~\ref{Lem_evol_formula} we have
\begin{equation} \label{eq_SS_int_Q}
 \mathfrak{S}_{(\vec 0,0)}(\tau_1) - \mathfrak{S}_{(\vec 0,0)}(\tau_2) 
\geq \int_{-\tau_2}^{-\tau_1} \int_{ \MM_{\reg,t}}  Q \, d\HH^2 \,dt, 
\end{equation}
where 
\begin{equation*}
Q :=  -(\partial_t \eta) \, \log \Big( \frac{\s_{\loc,1}}{\tdr_{\loc,1}} \Big) \, \rho  + \eta \, \frac{\partial_t \tdr_{\loc,1}}{\tdr_{\loc,1}} \, \rho +  \frac{\nabla \eta \cdot \nabla \s_{\loc,1}}{\s_{\loc,1}} \, \rho 
 + 
 \frac{\nabla \tdr_{\loc,1} \cdot \nabla \rho}{\tdr_{\loc,1}} \eta  - \log \Big( \frac{\s_{\loc,1}}{\tdr_{\loc,1}} \Big) \, \nabla \eta \cdot \nabla \rho .
\end{equation*}

We first estimate the integral on the right-hand side of \eqref{eq_SS_int_Q} restricted to the domain $\YY \setminus \XX^{0,\delta}$.
To do this, we first estimate the integrand using \eqref{eq_r_s_comparable}, Lemma~\ref{Lem_zeta_choice}\ref{Lem_zeta_choice_a}, \ref{Lem_zeta_choice_b}, Lemma~\ref{Lem_asspt_eta_rho}\ref{Lem_asspt_eta_rho_a} and Lemma~\ref{Lem_mod_sep_fct}.
We obtain the following bound over $\YY_{[-\tau_2,-\tau_1]}$, where $C < \infty$ denotes a generic constant, which may depend on the given parameters in the indicated way, 
\begin{align*}
 |Q| &\leq \frac{C}{|\mathbf{t}|} \bigg( |\partial_t \eta| 
+ \frac{|\partial_t \tdr_{\loc} |}{\td r_{\loc}} 
+ \frac{|\nabla \eta | \, |\nabla \s_{\loc}|}{r_{\loc}} 
+ \frac{|\mathbf{x}|}{|\mathbf{t}| \td r_{\loc}}  \bigg) e^{- |\mathbf{x}|^2 / 4|\mathbf{t}|} \\
&\leq \frac{C(A)}{|\mathbf{t}|} \bigg( |\partial_t \eta| 
+ \frac{|\partial_t \tdr_{\loc} |}{\td r_{\loc}} 
+ \frac{|\nabla \eta | \, |\nabla \s|}{\td r_{\loc}} 
+ \frac{1}{|\mathbf{t}|^{1/2} \td r_{\loc}}  \bigg) e^{- |\mathbf{x}| / \sqrt{\tau_1}} \\
&=: \ov Q e^{- |\mathbf{x}| / \sqrt{\tau_1}}.
\end{align*}
Therefore, if we apply Lemma~\ref{Lem_asspt_eta_rho}\ref{Lem_asspt_eta_rho_c} for $x_0$ replaced by the points belonging to the grid $\sqrt{\tau_1} \IZ^3 \subset \IR^3$, then we obtain
\begin{align}
 \int_{-\tau_2}^{-\tau_1} \int_{ (\YY \setminus \XX^{0,\delta})_t}  Q \, d\HH^2 \,dt  
&\geq  - \sum_{z \in \sqrt{\tau_1} \IZ^3} \int_{-\tau_2}^{-\tau_1}  \int_{ (\YY \setminus \XX^{0,\delta})_t \cap B(z, 100 \sqrt{\tau_1})} \ov Q e^{-|\mathbf{x}|/\sqrt{\tau_1}} \, d\HH^2 \,dt \notag \\
&\geq  - e^{100} \sum_{z \in \sqrt{\tau_1} \IZ^3} e^{-|z|/\sqrt{\tau_1}} \int_{-\tau_2}^{-\tau_1}  \int_{ (\YY \setminus \XX^{0,\delta})_t \cap B(z, 100 \sqrt{\tau_1})}  \ov Q \, d\HH^2 \,dt \notag \\
&\geq - C(A) \alpha. \label{eq_Q_bound_YY}
\end{align}

Next, we estimate the integral on the right-hand side of \eqref{eq_SS_int_Q} restricted to the complement of $\YY \setminus \mathcal{X}^{0,\delta}$.
To do this, we use the following cruder bound over $\MM_{\reg,[-\tau_2,-\tau_1]}$
\[ |Q| \leq \frac{C(A)}{|\mathbf{t}|} \Big( r_{\loc}^{-2}+ |\mathbf{t}|^{-1/2} r_{\loc}^{-1} \Big)  e^{- |\mathbf{x}|/\sqrt{\tau_1}} \, \chi_{\supp \eta} =: \ov{Q}'e^{- |\mathbf{x}|/\sqrt{\tau_1}} \, \chi_{\supp \eta}  , \]
which follows from \eqref{eq_r_s_comparable}, Lemma~\ref{Lem_zeta_choice}\ref{Lem_zeta_choice_a}, \ref{Lem_zeta_choice_b}, Lemma~\ref{Lem_asspt_eta_rho}\ref{Lem_asspt_eta_rho_a} and  Lemma~\ref{Prop_sep}\ref{Prop_sep_c}.
Set
\[ \mathcal{S} := \big( (\supp \eta) \setminus (\YY \setminus \XX^{0,\delta}) \big) \cap \MM_{\reg} \]
and observe that
\[ \mathcal{S} \setminus \XX^{0,\delta'_D} \subset 
\mathcal{S} \setminus \XX^{0,\delta'_A} \subset 
\mathcal{S} \setminus \XX^{0,\delta} \subset \big( (\supp \eta) \setminus \YY \big) \cap \MM_{\reg}. \]
Let $B_0 := B(\vec 0, D \sqrt{\tau_1})$ and choose balls $B_i := B(z_i, 100 \sqrt{\tau_1})$ for a sequence of pairwise distinct $z_i \in \sqrt{\tau_1} \IZ^3$ such that $|z_i| \geq D -100 \sqrt{\tau_1}$ and such that $\bigcup_{i=0}^\infty B_i = \IR^3$.
We now apply Lemma~\ref{Lem_asspt_eta_rho}\ref{Lem_asspt_eta_rho_d} for $x_0$ replaced with each of the $z_i$, where we choose the parameter $D$ to be $A$ if $i \geq 1$ and to be the same constant as in this proof if $i = 0$.
We obtain that
\begin{align*}
 \int_{-\tau_2}^{-\tau_1} \int_{(\MM_{\reg} \setminus (\YY \setminus \XX^{0,\delta}))_t} &  Q \, d\HH^2 \,dt \\
\geq& 
-\int_{-\tau_2}^{-\tau_1}  \int_{(\mathcal{S} \setminus \XX^{0,\delta'_D} )_t \cap B(\vec 0,D \sqrt{\tau_1})} \ov{Q}'  e^{- |\mathbf{x}|/\sqrt{\tau_1}}  \, d\HH^2 \, dt \\
& 
-\int_{-\tau_2}^{-\tau_1}  \int_{(\mathcal{S} \cap \XX^{0,\delta'_D} )_t \cap B(\vec 0,D \sqrt{\tau_1})} \ov{Q}'  e^{- |\mathbf{x}|/\sqrt{\tau_1}}  \, d\HH^2 \, dt \\
&- \sum_{i = 1}^\infty \int_{-\tau_2}^{-\tau_1}  \int_{(\mathcal{S} \setminus \XX^{0,\delta'_A})_t \cap B(z_i,100 \sqrt{\tau_1})}  \ov{Q}'  e^{- |\mathbf{x}|/\sqrt{\tau_1}}  \, d\HH^2 \, dt  \\
&- \sum_{i = 1}^\infty \int_{-\tau_2}^{-\tau_1}  \int_{(\mathcal{S} \cap \XX^{0,\delta'_A})_t \cap B(z_i,100 \sqrt{\tau_1})}  \ov{Q}'  e^{- |\mathbf{x}|/\sqrt{\tau_1}}  \, d\HH^2 \, dt \displaybreak[1] \\
\geq& -C(\alpha, D) \big( \Theta_{(\vec 0, 0)} (2 \tau_2) - \Theta_{(\vec 0, 0)} (\tfrac12 \tau_1) \big) \\
&-\int_{-\tau_2}^{-\tau_1}  \int_{ \XX^{0,\delta'_D}_t \cap B(\vec 0,D \sqrt{\tau_1})} \frac{C(A) (r_{\loc}^{-2} + |t|^{-1/2} r_{\loc}^{-1})}{|t|}  e^{- |\mathbf{x}|/\sqrt{\tau_1}}  \, d\HH^2 \, dt  \\
&- \ \sum_{i=1}^\infty C(\alpha,A) \big( \Theta_{(z_i, 0)} (2 \tau_2) - \Theta_{(z_i, 0)} (\tfrac12 \tau_1) \big) e^{-|z_i|/\sqrt{\tau_1}} \\
&- \ \sum_{i=1}^\infty e^{- |z_i|/\sqrt{\tau_1}} \int_{-\tau_2}^{-\tau_1}  \int_{ \XX^{0,\delta'_A}_t \cap B(z_i,100 \sqrt{\tau_1})} \frac{C(A) (r_{\loc}^{-2} + |t|^{-1/2} r_{\loc}^{-1})}{|t|}   \, d\HH^2 \, dt  \displaybreak[1] \\
\geq& -C(\alpha,D) \big( \Theta_{(\vec 0, 0)} (2 \tau_2) - \Theta_{(\vec 0, 0)} (\tfrac12 \tau_1) \big) \\
&- \frac{C(A) (\delta^{\prime}_D)^{-2}}{\tau_1^2}  \int_{-\tau_2}^{-\tau_1}  \int_{\XX^{0,\delta'_D}_t \cap B(\vec 0,D \sqrt{\tau_1})}  e^{- |\mathbf{x}|/\sqrt{\tau_1}}  \, d\HH^2 \, dt  \\
&- \ \sum_{i=1}^\infty C(\alpha,A) A e^{-|z_i|/\sqrt{\tau_1}} \\
&- \ \sum_{i=1}^\infty C(A) (\delta^{\prime}_A)^{-2} e^{- |z_i|/\sqrt{\tau_1}} \displaybreak[1] \\
\geq& -C(\alpha, D) \big( \Theta_{(\vec 0, 0)} (2 \tau_2) - \Theta_{(\vec 0, 0)} (\tfrac12 \tau_1) \big) - C(\alpha, A,\delta'_A(\alpha, A)) e^{- D} \\
&-\frac{C(A) (\delta'_D(\alpha,D))^2}{\tau_1^2}  \int_{-\tau_2}^{-\tau_1}  \int_{ \XX^{0,\delta'_D}_t  \cap B(\vec 0,D \sqrt{\tau_1})}  e^{- |\mathbf{x}|/\sqrt{\tau_1}}  \, d\HH^2 \, dt  .
\end{align*}
Recall that $\delta = \delta (\alpha, A)$.
So combining this bound with \eqref{eq_SS_int_Q} and \eqref{eq_Q_bound_YY} implies that 
\begin{multline*}
 \mathfrak{S}_{(\vec 0,0)}(\tau_1) - \mathfrak{S}_{(\vec 0,0)}(\tau_2) 
\geq -C(D,\alpha) \big( \Theta_{(\vec 0, 0)} (2 \tau_2) - \Theta_{(\vec 0, 0)} (\tfrac12 \tau_1) \big) - C(\alpha,A) e^{-D} - C(A) \alpha \\-\frac{C(\alpha,A,D)}{\tau_1^2}  \int_{-\tau_2}^{-\tau_1}  \int_{ \XX^{0,\delta'_D}_t  \cap B(\vec 0,D \sqrt{\tau_1})}  e^{- |\mathbf{x}|/\sqrt{\tau_1}}  \, d\HH^2 \, dt.  
\end{multline*}
Now choose $\alpha = \alpha(\eps,A) > 0$ small enough such that $C(A) \alpha \leq \frac12 \eps$ and then choose $D = D(\eps, \alpha, A) <\infty$ large enough such that $C(\alpha, A) e^{- D}  \leq  \frac12\eps$. 
We therefore obtain
\begin{multline*}
 \mathfrak{S}_{(\vec 0,0)}(\tau_1) - \mathfrak{S}_{(\vec 0,0)}(\tau_2) 
\geq -C(\eps,A) \big( \Theta_{(\vec 0, 0)} (2 \tau_2) - \Theta_{(\vec 0, 0)} (\tfrac12 \tau_1) \big) - \eps \\
-\frac{C(\eps,A)}{\tau_1^2}  \int_{-\tau_2}^{-\tau_1}  \int_{\XX^{0,\delta'_D}_t  \cap B(\vec 0,D \sqrt{\tau_1})}  e^{- |\mathbf{x}|/\sqrt{\tau_1}}  \, d\HH^2 \, dt.
\end{multline*}
This implies Assertion~\ref{Prop_step_bound_b} if $\XX^{0,\delta'_D}_t \cap B(\vec 0,D \sqrt{\tau_1}) = \emptyset$ for all $t \in [-\tau_2,-\tau_1]$.
On the other hand, if $\XX^{0,\delta'_D}_t \cap B(\vec 0,D \sqrt{\tau_1}) \neq \emptyset$ for some $t \in [-\tau_2,-\tau_1]$, then Assertion~\ref{Prop_step_bound_a} holds for $\delta' = \delta'_D$.
Since we can bound the area of subsets $\XX_t^{0.\delta'_D} \cap B(\vec 0,r) \subset \MM_t \cap B(\vec 0,r)$ for $r \leq D \sqrt{\tau_1}$ via the Gaussian area, we also obtain the bound \eqref{eq_bad_step_bound}.
This finishes the proof.
\end{proof}
\bigskip

The following proposition is a consequence of Proposition~\ref{Prop_step_bound}.
Its main objective is the removal of the $\eps$-term in \eqref{eq_good_step_bound} by considering the quantities $\mathfrak{S}_{(x_0,t_0), \sqrt{\tau}} (\tau)$ in lieu of $\mathfrak{S}_{(x_0,t_0)} (\tau)$.
We also obtain a stronger bound in Assertion~\ref{Prop_step_bound_improved_a} via a limit argument.

\begin{Proposition} \label{Prop_step_bound_improved}
For any $A, R < \infty$ there are constants $\delta(A,R), \alpha(A) > 0$ and $C_3( A), \lb C_4 (A, \lb R) \lb < \infty$ such that the following is true.
Consider a bounded almost regular mean curvature flow $\MM \subset \IR^3 \times I$ and suppose that
\begin{equation} \label{eq_geom_bounds_improved}
 \Theta(\MM) \leq A, \qquad \genus (\MM) \leq A. 
\end{equation}
Let $\eta : \MM \to [0,1]$ be a function that satisfies the assertions from Lemma~\ref{Lem_asspt_eta_rho} for the parameters $A$ and $\alpha$.
Consider the quantity $\mathfrak{S}_{(x_0,t_0), \sqrt{\tau}}(\tau)$ from Definition~\ref{Def_S} for some $(x_0,t_0) \in \IR^3 \times \ov{I}$.
Suppose that $0 < \tau_1 < \tau_2$ with $2 \leq \frac{\tau_2}{\tau_1} \leq A$ and $t_0 - 2 \tau_2\in I$ and assume that $t_0 - \tau_2$ and $t_0 - \tau_1$ are regular times.
Then 
\begin{equation} \label{eq_bad_step_bound_improved}
 \mathfrak{S}_{(x_0,t_0), \sqrt{\tau_1}}(\tau_1) - \mathfrak{S}_{(x_0,t_0), \sqrt{\tau_2}}(\tau_2)  \geq - C_3
\end{equation}
Moreover, one of the following is true:
\begin{enumerate}[label=(\alph*)]
\item \label{Prop_step_bound_improved_b} We have
\begin{equation} \label{eq_good_step_bound_improved}
\mathfrak{S}_{(x_0,t_0), \sqrt{\tau_1}}(\tau_1) - \mathfrak{S}_{(x_0,t_0), \sqrt{\tau_2}}(\tau_2) 
\geq - C_4 \big( \Theta_{(x_0,t_0)}(2\tau_2) - \Theta_{(x_0,t_0)}(\tfrac12 \tau_1) \big) .
\end{equation}
\item \label{Prop_step_bound_improved_a} We have
\begin{equation} \label{eq_rloc_geq_delta_improved}
 r_{\loc} \geq \delta \sqrt{\tau_1} \textQQq{on} B(x_0, R \sqrt{\tau_1}) \times [t_0 - \tau_2,t_0-\tau_1].
\end{equation}
\end{enumerate}
\end{Proposition}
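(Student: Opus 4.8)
The plan is to deduce Proposition~\ref{Prop_step_bound_improved} from Proposition~\ref{Prop_step_bound} by a rescaling-and-compactness argument, exploiting the scale invariance of $\mathfrak{S}_{(x_0,t_0),a}(\tau)$ noted after Definition~\ref{Def_S}. First I would fix $A$ and $\alpha(A)$ as produced by Proposition~\ref{Prop_step_bound} (applied with $\eps$ to be chosen), and note that $\mathfrak{S}_{(x_0,t_0),\sqrt\tau}(\tau)$ differs from $\mathfrak{S}_{(x_0,t_0)}(\tau)$ only by the substitution of the scale parameter $a=1$ by $a=\sqrt\tau$; by the parabolic rescaling identity $\mathfrak{S}^{\lambda\MM}_{(\lambda x_0,\lambda^2 t_0),\lambda a}(\lambda^2\tau)=\mathfrak{S}_{(x_0,t_0),a}(\tau)$, applying the rescaling $\lambda=1/\sqrt{\tau_1}$ (or $1/\sqrt{\tau_2}$) centered at $(x_0,t_0)$ converts a statement about $\mathfrak{S}_{\cdot,\sqrt{\tau_i}}(\tau_i)$ into a statement about $\mathfrak{S}_{\cdot,1}(1)$ for the rescaled flow, whose time separation ratio $\tau_2/\tau_1$ is preserved and bounded by $A$. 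This reduces \eqref{eq_bad_step_bound_improved} directly to \eqref{eq_bad_step_bound}, giving $C_3(A)$; the slightly delicate point is that the two endpoints carry \emph{different} scale parameters $\sqrt{\tau_1}$ and $\sqrt{\tau_2}$, so I would split the difference as $\big(\mathfrak{S}_{\cdot,\sqrt{\tau_1}}(\tau_1)-\mathfrak{S}_{\cdot,\sqrt{\tau_1}}(\tau_2)\big)+\big(\mathfrak{S}_{\cdot,\sqrt{\tau_1}}(\tau_2)-\mathfrak{S}_{\cdot,\sqrt{\tau_2}}(\tau_2)\big)$, handle the first bracket by Proposition~\ref{Prop_step_bound} after rescaling, and bound the second bracket by a direct estimate: since $\tau_2/\tau_1\le A$, the two logarithmic normalizations $\log(\s_{\loc,\sqrt{\tau_1}}/\sqrt{\tau_1})$ and $\log(\s_{\loc,\sqrt{\tau_2}}/\sqrt{\tau_2})$ differ only on the region where $\s_{\loc}\in[\sqrt{\tau_1},\sqrt{\tau_2}]$ and only by $O(\log A)$ there, contributing a bounded correction to the Gaussian-weighted integral.

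Next, to upgrade \eqref{eq_good_step_bound} to \eqref{eq_good_step_bound_improved} (i.e., to absorb the additive $\eps$), I would run a contradiction/compactness argument. Suppose the proposition fails for some fixed $A,R$: then there is a sequence of bounded almost regular flows $\MM^k$, points $(x_{0,k},t_{0,k})$, scales $0<\tau_{1,k}<\tau_{2,k}$ with ratio in $[2,A]$, and cutoff functions $\eta_k$, for which neither \eqref{eq_good_step_bound_improved} (with a constant $C_4=k$, say) nor \eqref{eq_rloc_geq_delta_improved} (with $\delta=1/k$) holds. After the rescaling $\lambda_k=1/\sqrt{\tau_{1,k}}$ centered at $(x_{0,k},t_{0,k})$ and a time-translation we may assume $(x_{0,k},t_{0,k})=(\vec 0,0)$, $\tau_{1,k}=1$, $\tau_{2,k}\to\tau_\infty\in[2,A]$, and $\mathfrak{S}$ becomes $\mathfrak{S}_{(\vec 0,0),1}$. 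Applying Proposition~\ref{Prop_step_bound} with a fixed small $\eps$ to each rescaled flow: since alternative \ref{Prop_step_bound_a} there would give a point with $r_{\loc}\ge\delta'(\eps,A)$ in $B(\vec 0,D(\eps,A))\times[-\tau_{2,k},-1]$, and since the failure of \eqref{eq_rloc_geq_delta_improved} with $\delta=1/k$ forbids a \emph{uniform} such lower bound on the larger set $B(\vec0,R)\times[-\tau_{2,k},-1]$ only for the fixed $\delta$, I must be careful: alternative \ref{Prop_step_bound_a} is about one point, not the whole ball. So the argument is: either \ref{Prop_step_bound_a} of Proposition~\ref{Prop_step_bound} holds for infinitely many $k$ — then one extracts a smooth limit (via Theorem~\ref{Thm_main_compactness}, whose hypotheses hold because of the uniform $\Theta,\genus$ bounds) of the flows near that point, which has $r_{\loc}\ge\delta'$ at a limiting point; propagating via the $C^{1/2}$-Hölder bound on $r_{\loc}$ (Lemma~\ref{Lem_rloc_Lipschitz}) and smoothness, one upgrades this to a uniform lower bound $r_{\loc}\ge\delta(A,R)$ on all of $B(\vec0,R)\times[-\tau_\infty+o(1),-1]$ for large $k$, i.e. \eqref{eq_rloc_geq_delta_improved} — or else alternative \ref{Prop_step_bound_b} holds for all large $k$, giving $\mathfrak{S}_{(\vec0,0),1}(1)-\mathfrak{S}_{(\vec0,0),1}(\tau_{2,k})\ge -C_2(\Theta(2\tau_{2,k})-\Theta(\tfrac12))-\eps$. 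To remove the $\eps$ in the second case, iterate: subdivide $[\tfrac12,2\tau_\infty]$ into boundedly many dyadic-type subintervals of ratio close to $2$, apply the dichotomy on each, and telescope; the $\eps$'s sum to $O(\eps\log A)$, and choosing $\eps=\eps(A,R)$ at the outset small enough makes the total $\le$ the Gaussian-area term plus a constant that gets folded into $C_4$. Because at each subdivision step we may instead land in alternative \ref{Prop_step_bound_a}, I would organize this as: either \emph{every} subinterval yields \ref{Prop_step_bound_b} — giving \eqref{eq_good_step_bound_improved} — or \emph{some} subinterval yields \ref{Prop_step_bound_a} at a point, which as above forces \eqref{eq_rloc_geq_delta_improved}.

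The main obstacle I anticipate is the interplay between the \emph{pointwise} nature of alternative \ref{Prop_step_bound_a} in Proposition~\ref{Prop_step_bound} and the \emph{uniform-on-a-ball} conclusion \eqref{eq_rloc_geq_delta_improved} required here: converting "$r_{\loc}\ge\delta'\sqrt{\tau_1}$ somewhere in a ball of controlled radius" into "$r_{\loc}\ge\delta\sqrt{\tau_1}$ everywhere in $B(x_0,R\sqrt{\tau_1})\times[t_0-\tau_2,t_0-\tau_1]$" genuinely requires a compactness/limit argument and cannot be done by soft manipulation. One extracts a subsequential limit flow which is smooth on a fixed-size parabolic region around the limiting good point and, by Lemma~\ref{Lem_entropy_rho} and the no-mass-drop property, is either globally smooth there or controlled; the key input is that once the local scale is bounded below at one point of a limit, the $C^{1/2}$-Hölder control of $r_{\loc}$ together with the flow's smoothness (almost-regularity plus the absence of singularities in the good region) yields a uniform lower bound on a full neighborhood, and this neighborhood can be taken as large as $R$ because $R$ is fixed in advance and we are free to pass the contradiction hypothesis through with that $R$. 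A secondary technical nuisance is bookkeeping the scale-parameter mismatch between endpoints and making sure the telescoped error terms really are $O(\eps\log A)$ rather than growing; this is routine given $\tau_2/\tau_1\le A$ but needs to be written carefully.
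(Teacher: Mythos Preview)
Your overall skeleton --- rescale to $\tau_1=1$, reduce \eqref{eq_bad_step_bound_improved} to \eqref{eq_bad_step_bound} via the scale-change term, then run a contradiction with $C_4\to\infty$ and $\delta\to 0$ --- matches the paper. But the two substantive steps are not correct as written.

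\textbf{The ``iterate to remove $\eps$'' step fails.} If alternative~\ref{Prop_step_bound_b} of Proposition~\ref{Prop_step_bound} holds on each of $N\approx\log_2 A$ subintervals, telescoping yields a lower bound of the form $-C_2\cdot(\text{overlapping }\Delta\Theta)-N\eps$. The additive $N\eps$ cannot be absorbed into $C_4\big(\Theta_{(x_0,t_0)}(2\tau_2)-\Theta_{(x_0,t_0)}(\tfrac12\tau_1)\big)$, because in your own contradiction sequence this Gaussian-area gap tends to zero. The paper uses a completely different mechanism here. The scale-change term $\mathfrak{S}_{(\vec 0,0),1}(\tau_2)-\mathfrak{S}_{(\vec 0,0),\sqrt{\tau_2}}(\tau_2)$ is not just bounded but \emph{nonnegative}, and its integrand is $\geq\tfrac12\log 2$ on $\{r_{\loc}\leq\tfrac12\}$. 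When alternative~\ref{Prop_step_bound_b} of Proposition~\ref{Prop_step_bound} holds but \eqref{eq_good_step_bound_improved} fails (with $C_4\geq C_2+1$), combining the two bounds shows that this integral \emph{plus} $\Delta\Theta$ is $\leq\eps$. A short compactness argument (Claim~\ref{Cl_choice_eps} in the paper, via Lemma~\ref{Lem_Brakke_to_min_shrink}\ref{Lem_Brakke_to_min_shrink_b} and the connectedness of shrinkers, Lemma~\ref{Lem_shrinker_connected}) then shows that for $\eps\leq\ov\eps(A)$ this forces a point with $r_{\loc}\geq\tfrac12$ in $B(\vec 0,10\sqrt{\tau_2})$. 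So alternative~\ref{Prop_step_bound_b} \emph{also} produces a good point, and the dichotomy collapses: in either case of Proposition~\ref{Prop_step_bound} one has a point with $r_{\loc}$ bounded below.

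\textbf{``One good point $\Rightarrow$ uniform bound on $B(\vec 0,R)\times[-\tau_2,-1]$'' does not follow from Lipschitz propagation.} Lemma~\ref{Lem_rloc_Lipschitz} only propagates a lower bound on $r_{\loc}$ over distances comparable to $r_{\loc}$ itself, not over a ball of arbitrary fixed radius $R$. The paper's argument is structural: in the contradiction sequence $\Delta\Theta\to 0$, so by Lemma~\ref{Lem_Brakke_to_min_shrink}\ref{Lem_Brakke_to_min_shrink_b} the weak Brakke limit is a smooth shrinker with constant multiplicity $k$ (constancy from Lemma~\ref{Lem_shrinker_connected}); the single good point forces $k=1$, hence \emph{smooth} convergence on every compact set, which immediately gives the uniform lower bound on the full ball of radius $R$ and contradicts the failure of \eqref{eq_rloc_geq_delta_improved}. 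Note also that invoking Theorem~\ref{Thm_main_compactness} here would be circular: that theorem is proved later using Theorem~\ref{Thm_key_thm}, which itself rests on the present proposition.
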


\begin{proof}
Let $\eps > 0$ be a constant whose value we will choose later, depending on $A$.
Invoking the constants from Proposition~\ref{Prop_step_bound}, set
\begin{equation} \label{eq_choice_alpha_C3}
\alpha (A) := \alpha(\eps, A), \qquad
C_3(A) := C_1(\eps, A). 
\end{equation}
The constants $\delta(A,R)$ and $C_4(A,R)$ will be chosen later such that
\begin{equation} \label{eq_C4_large}
 C_4(A,R) \geq C_2(\eps, A) + 1. 
\end{equation}

After parabolic rescaling and application of a translation in time and space, we may assume without loss of generality that $(x_0, t_0) = (\vec 0, 0)$ and $\tau_1 = 1$.
Note that this implies that $2 \leq \tau_2 \leq A$ and $\mathfrak{S}_{(x_0,t_0), \sqrt{\tau_1}}(\cdot) = \mathfrak{S}_{(\vec 0, 0)}(\cdot)$.
We also find that
\begin{multline*}
   \mathfrak{S}_{(\vec 0, 0)}(\tau_2) - \mathfrak{S}_{(\vec 0,0), \sqrt{\tau_2}}(\tau_2) \\
= \int_{\MM_{- \tau_2}} \bigg( (1-\eta) \bigg(  \log ( \s_{\loc,1}) - \log \Big( \frac{\s_{\loc, \sqrt{\tau_2}}}{\sqrt{\tau_2}} \Big) \bigg) 
+ \eta \bigg(  \log ( \tdr_{\loc,1}) - \log \Big( \frac{\td r_{\loc, \sqrt{\tau_2}}}{\sqrt{\tau_2}} \Big)  \bigg) \bigg) \, \rho_{(\vec 0,0)}\, d\HH^2 \\
= \int_{\MM_{ - \tau_2}} \bigg( (1-\eta)  \log \Big( \frac{\s_{\loc,1}\sqrt{\tau_2}}{\s_{\loc, \sqrt{\tau_2}}} \Big) 
+ \eta \log \Big( \frac{\td r_{\loc,1}\sqrt{\tau_2}}{\td r_{\loc, \sqrt{\tau_2}}} \Big)  \bigg) \, \rho_{(\vec 0,0)}\, d\HH^2 \geq 0.
\end{multline*}
This shows that \eqref{eq_bad_step_bound} of Proposition~\ref{Prop_step_bound} implies \eqref{eq_bad_step_bound_improved}.
In addition, if Assertion~\ref{Prop_step_bound_b} of Proposition~\ref{Prop_step_bound} holds, but Assertion~\ref{Prop_step_bound_improved_b} of this proposition is violated, then combining \eqref{eq_good_step_bound} with the reverse of \eqref{eq_good_step_bound_improved} and assuming \eqref{eq_C4_large}, implies that
\begin{multline} \label{eq_bound_on_int_Theta}
 \int_{\MM_{ - \tau_2}} \bigg( (1-\eta)  \log \Big( \frac{\s_{\loc,1}\sqrt{\tau_2}}{\s_{\loc, \sqrt{\tau_2}}} \Big) 
+ \eta \log \Big( \frac{\td r_{\loc,1}\sqrt{\tau_2}}{\td r_{\loc, \sqrt{\tau_2}}} \Big)  \bigg) \, \rho_{(\vec 0,0)}\, d\HH^2  + \big( \Theta_{(\vec 0,0)}(2\tau_2) - \Theta_{(\vec 0,0)}(\tfrac12) \big) \leq \eps. 
\end{multline}
Next, note that whenever $r_{\loc} \leq \tfrac12$, we have $\s_{\loc} \leq \td r_{\loc} \leq 1$, so
\[ \frac{\s_{\loc,1}\sqrt{\tau_2}}{\s_{\loc, \sqrt{\tau_2}}} = \sqrt{\tau_2} \geq \sqrt{2}, \qquad
\frac{\td r_{\loc,1}\sqrt{\tau_2}}{\td r_{\loc, \sqrt{\tau_2}}} \geq \sqrt{2}. \]
So \eqref{eq_bound_on_int_Theta} implies that
\begin{equation} \label{eq_boundontwo}
 \frac{\log 2}2 \int_{\MM_{ - \tau_2} \cap \{ r_{\loc} \leq \frac12 \}}   \, \rho_{(\vec 0,0)}\, d\HH^2  + \big( \Theta_{(\vec 0,0)}(2\tau_2) - \Theta_{(\vec 0, 0)}(\tfrac12) \big) \leq \eps.  
\end{equation}

\begin{Claim} \label{Cl_choice_eps}
If $\eps \leq \ov\eps (A)$, then \eqref{eq_boundontwo} implies that there is a point $x \in \MM_{-\tau_2} \cap B(\vec 0, 10 \sqrt{\tau_2})$ with $r_{\loc} (x,-\tau_2) \geq \frac12$.
\end{Claim}

\begin{proof}
Consider a sequence of counterexamples $\MM^i$ satisfying \eqref{eq_geom_bounds_improved} for a uniform $A$ and \eqref{eq_boundontwo} for a sequence $\eps_i \to 0$ and $\tau_{2,i} \in [2,A]$.
After passing to a subsequence, we may assume that $\tau_{2,i} \to \tau_{2,\infty}$ and that we have weak convergence of the associated Brakke flows restricted to $[-\tau_{2,i},-\frac12]$ to a Brakke flow $(\mu^\infty_t)_{t \in (-\tau_{2,\infty}, -\frac12)}$.
By Lemma~\ref{Lem_Brakke_to_min_shrink}\ref{Lem_Brakke_to_min_shrink_b} and \eqref{eq_boundontwo} there is a smooth shrinker $\Sigma \subset \IR^3$ such that $d\mu^\infty_t = k_t d (\HH^2 \lfloor (|t|^{1/2} \Sigma))$ for a locally constant function $k_t : |t|^{1/2} \Sigma \to \IN$.
This shrinker $\Sigma$ must intersect the ball $B(\vec 0, 10)$, because otherwise $2 S^2 \cup \Sigma$ would be a disconnected shrinker, in contradiction to Lemma~\ref{Lem_shrinker_connected}.
But, on the other hand, taking \eqref{eq_boundontwo} to the limit, combined with our contradiction assumption, we obtain that $\Sigma$ must be disjoint from $B(\vec 0, 10)$.
\end{proof}

Let us now fix $\eps$ for the remainder of the proof, which will determine the constants $\alpha$, $C_3$ via \eqref{eq_choice_alpha_C3}.
We will now prove by contradiction that the proposition is true if $\delta$ is chosen sufficiently small and $C_4$ is chosen sufficiently large depending on $A, R$.
So fix $A, R$ and consider a sequence of counterexamples $\MM^i$ with $\tau_{2,i} \in [2, A]$ that violate both Assertions~\ref{Prop_step_bound_b} and \ref{Prop_step_bound_a} for sequences $\delta_i \to 0$ and $C_{4,i} \to \infty$.
Since the left-hand side of \eqref{eq_good_step_bound_improved} is uniformly bounded from below due to \eqref{eq_bad_step_bound_improved}, this implies that
\[ \Theta_{(\vec 0, 0)}(2 \tau_{2,i}) - \Theta_{(\vec 0, 0)}(\tfrac12) \lto 0. \]
So by the same argument as in the proof of Claim~\ref{Cl_choice_eps}, may pass to a subsequence such that $\tau_{2,i} \to \tau_{2, \infty}$ and such that the associated Brakke flows weakly converge to a Brakke flow $(\mu^\infty_t)_{t \in (-\tau_{2,\infty}, -\frac12)}$ with $d\mu^\infty_t = k_t d (\HH^2 \lfloor (\sqrt{-t} \Sigma))$ for a smooth shrinker $\Sigma \subset \IR^3$.
By Lemma~\ref{Lem_shrinker_connected}, we even conclude that $k \in \IN$ is a constant.
If Assertion~\ref{Prop_step_bound_a} of Proposition~\ref{Prop_step_bound} holds for infinitely many $i$, then after passing to a subsequence, there is a sequence of points $(x_i,t_i) \in \MM^i$ converging to a point $(x_\infty, t_\infty)$ with $t_\infty \in [-\tau_{2,\infty}, -1]$ for which $r_{\loc}(x_i,t_i)$ is uniformly bounded from below.
On the other hand, if Assertion~\ref{Prop_step_bound_b} of Proposition~\ref{Prop_step_bound} holds for infinitely many $i$, then the same is true due to Claim~\ref{Cl_choice_eps}.
This implies that $k = 1$ and therefore we have \emph{smooth} convergence of the flows $\MM^i$ to the flow corresponding to $\Sigma$.
So Assertion~\ref{Prop_step_bound_improved_a} holds for large $i$, in contradiction to our assumption.
\end{proof}
\bigskip

We can now prove the key result of this paper.

\begin{proof}[Proof of Theorem~\ref{Thm_key_thm}.]
Choose $A \geq 3$ so that Proposition~\ref{Prop_step_bound_improved}  can be applied to $\MM$ (restricted to any subinterval).
Let $R$ be a constant whose value we will choose later, based on some universal constants, and let $\delta(A,R), \alpha(A) > 0$ and $C_3(A), C_4( A, R) < \infty$ be the constants from Proposition~\ref{Prop_step_bound}.
Since $r_{\loc} \geq \frac12 \tdr_{\loc} \geq \frac12 \s_{\loc}$, it suffices to prove the following bound for any $0 < \tau \leq t_0 \leq T$ with the property that $t_0 - \tau$ is a regular time
\begin{equation} \label{eq_S_lower_bound}
   \mathfrak{S}_{(x_0,t_0), \sqrt{\tau}} (\tau) \geq   -C (A, \MM_0) . 
\end{equation}
To achieve this, we consider the following quantity, which is defined whenever $t_0 - \tau$ is a regular time and $2 \tau \leq t_0$:
\begin{equation} \label{eq_S_is_inf}
 S_{t_0}(\tau) := \inf_{x_0 \in \IR^3} \big( \mathfrak{S}_{(x_0,t_0), \sqrt{\tau}}(\tau)  - C_4 \big( \Theta_{(x_0,t_0)}(\tfrac12 \tau) + \Theta_{(x_0,t_0)}( \tau)  + \Theta_{(x_0,t_0)}(2 \tau) \big) \big). 
\end{equation}
The following claim is a consequence of Proposition~\ref{Prop_step_bound_improved}.

\begin{Claim} \label{Cl_St1t2}
There is a choice for $R$, which we will fix henceforth, such that the following is true.
Suppose that $0 < t_0 \leq T$ and $0 < \tau_1 <\tau_2 \leq \frac12 t_0$ and $2 \leq \frac{\tau_2}{\tau_1} \leq 3$ such that $t_0 - \tau_2, t_0 - \tau_1$ are regular times.
Then one of the following is true:
\begin{enumerate}[label=(\alph*)]
\item \label{Cl_St1t2_a} $S_{t_0}(\tau_1) \geq S_{t_0} (\tau_2)$.
\item \label{Cl_St1t2_b} $S_{t_0}(\tau_1) \geq - C_5(A)$.
\end{enumerate}
\end{Claim}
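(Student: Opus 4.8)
\textbf{Proof plan for Claim~\ref{Cl_St1t2}.}

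The idea is to apply Proposition~\ref{Prop_step_bound_improved} to the flow $\MM$ (possibly restricted to a subinterval) for each choice of base point $(x_0, t_0)$, and then take the infimum over $x_0$. First I would fix $R$ to be the constant appearing implicitly in the point-picking step below — concretely, $R$ should be chosen large enough (depending only on universal constants) so that the ball $B(x_0, R\sqrt{\tau_1})$ contains a definite parabolic neighborhood of any point whose local scale is $\geq \delta\sqrt{\tau_1}$; this is needed so that Assertion~\ref{Prop_step_bound_improved_a} of Proposition~\ref{Prop_step_bound_improved}, which gives a lower bound $r_{\loc} \geq \delta\sqrt{\tau_1}$ on $B(x_0, R\sqrt{\tau_1}) \times [t_0 - \tau_2, t_0 - \tau_1]$, can be fed back into the quantity $\mathfrak{S}$. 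Since $2 \leq \tau_2/\tau_1 \leq 3 \leq A$, the hypotheses of Proposition~\ref{Prop_step_bound_improved} are met for each $x_0$.

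Next, fix $x_0 \in \IR^3$ and apply Proposition~\ref{Prop_step_bound_improved}. In case \ref{Prop_step_bound_improved_b} holds, we get
\[ \mathfrak{S}_{(x_0,t_0), \sqrt{\tau_1}}(\tau_1) - \mathfrak{S}_{(x_0,t_0), \sqrt{\tau_2}}(\tau_2) \geq - C_4 \big( \Theta_{(x_0,t_0)}(2\tau_2) - \Theta_{(x_0,t_0)}(\tfrac12 \tau_1) \big). \]
I would rearrange this, using the monotonicity of $\tau \mapsto \Theta_{(x_0,t_0)}(\tau)$ (Lemma~\ref{Lem_entropy_rho}\ref{Lem_entropy_rho_a}) and the fact that $\tfrac12\tau_1 < \tfrac12\tau_2 \leq \tau_2 < \tau_1 \cdot 3$, etc., together with the elementary inequalities $\tfrac12\tau_1 \leq \tfrac12 \tau_2$, $\tau_1 \leq \tau_2$, $2\tau_1 \leq 2\tau_2$, to absorb the $\Theta$-terms at scales $\tfrac12\tau_1, \tau_1, 2\tau_1$ into those at scales $\tfrac12\tau_2, \tau_2, 2\tau_2$ (plus the extra term $-C_4(\Theta_{(x_0,t_0)}(2\tau_2)-\Theta_{(x_0,t_0)}(\tfrac12\tau_1))$, which exactly cancels against the difference of the bracketed sums in \eqref{eq_S_is_inf} up to a nonnegative remainder). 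The upshot is
\[ \mathfrak{S}_{(x_0,t_0), \sqrt{\tau_1}}(\tau_1) - C_4\big(\Theta_{(x_0,t_0)}(\tfrac12\tau_1) + \Theta_{(x_0,t_0)}(\tau_1) + \Theta_{(x_0,t_0)}(2\tau_1)\big) \geq \mathfrak{S}_{(x_0,t_0), \sqrt{\tau_2}}(\tau_2) - C_4\big(\Theta_{(x_0,t_0)}(\tfrac12\tau_2) + \Theta_{(x_0,t_0)}(\tau_2) + \Theta_{(x_0,t_0)}(2\tau_2)\big) \geq S_{t_0}(\tau_2), \]
so the left side is $\geq S_{t_0}(\tau_2)$ for this particular $x_0$. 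In case \ref{Prop_step_bound_improved_a} holds, i.e. $r_{\loc} \geq \delta\sqrt{\tau_1}$ on $B(x_0, R\sqrt{\tau_1}) \times [t_0 - \tau_2, t_0 - \tau_1]$, I would instead directly estimate $\mathfrak{S}_{(x_0,t_0),\sqrt{\tau_1}}(\tau_1)$ from below: on the ball $B(x_0, R\sqrt{\tau_1})$ at time $t_0 - \tau_1$ we have $\s_{\loc,\sqrt{\tau_1}}, \td r_{\loc, \sqrt{\tau_1}} \geq c\delta\sqrt{\tau_1}$ (using \eqref{eq_r_s_comparable} and $r_{\loc} \geq \tfrac12 \td r_{\loc}$), so the integrand of $\mathfrak{S}$ is bounded below by $\log(c\delta)$ there; outside $B(x_0, R\sqrt{\tau_1})$ the Gaussian weight is exponentially small and the integrand is bounded below by $-C\log(1/\sqrt{\tau_1}) \cdot$ (something integrable against $\rho$) — but more cleanly, the integrand is $\geq \log(\s_{\loc,\sqrt{\tau_1}}/\sqrt{\tau_1}) \geq \log(r_{\loc}/\sqrt{\tau_1}) + \log(c) $, and since $\int \rho_{(x_0,t_0)}(\cdot, t_0-\tau_1) \, d\mu_{t_0-\tau_1} = \Theta_{(x_0,t_0)}(\tau_1) \leq A$, combined with the pointwise lower bound $\log(r_{\loc}/\sqrt{t_0-\mathbf t})$ being bounded below away from $B(x_0,R\sqrt{\tau_1})$ by a Gaussian-integrable function, one gets $\mathfrak{S}_{(x_0,t_0),\sqrt{\tau_1}}(\tau_1) \geq -C(A, R, \delta)$. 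Subtracting $C_4 \cdot 3A$ for the $\Theta$-terms yields $S_{t_0}(\tau_1)$-contribution from this $x_0$ bounded below by $-C_5(A)$ (with $C_5$ depending on $A$ only, since $R$ and $\delta$ have been fixed in terms of $A$).

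Finally I would take the infimum over $x_0$. If case \ref{Prop_step_bound_improved_b} occurs for \emph{every} $x_0$, then the inequality above shows the quantity inside the infimum defining $S_{t_0}(\tau_1)$ is $\geq S_{t_0}(\tau_2)$ for every $x_0$, hence $S_{t_0}(\tau_1) \geq S_{t_0}(\tau_2)$, which is \ref{Cl_St1t2_a}. If case \ref{Prop_step_bound_improved_a} occurs for \emph{some} $x_0$, it is less immediate — one has to be slightly careful because the infimum defining $S_{t_0}(\tau_1)$ is over \emph{all} base points, not just the good one; however, Assertion~\ref{Prop_step_bound_improved_a} is a statement about the flow geometry near $(x_0, t_0)$ that is translation-stable enough that, after unwinding, it actually forces the lower bound $S_{t_0}(\tau_1) \geq -C_5(A)$ uniformly — the key point is that once $r_{\loc}$ is bounded below by $\delta\sqrt{\tau_1}$ on a fixed-size ball, the flow at time $t_0-\tau_1$ is smooth with controlled geometry on that ball, and this together with the entropy bound $\Theta(\MM) \leq A$ gives a uniform (over all recentering points $x_0$) lower bound on $\mathfrak{S}_{(x_0, t_0), \sqrt{\tau_1}}(\tau_1)$ via a covering/Gaussian-tail argument, hence \ref{Cl_St1t2_b}. \textbf{The main obstacle} I anticipate is exactly this last point: converting the ``there exists a good base point'' conclusion of Proposition~\ref{Prop_step_bound_improved}\ref{Prop_step_bound_improved_a} into a bound on the infimum $S_{t_0}(\tau_1)$ over all base points. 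This requires showing that a single region of definite local scale, together with the global entropy bound, controls $\mathfrak{S}$ no matter where one centers the Gaussian — which is a compactness argument in the spirit of the proof of Proposition~\ref{Prop_step_bound_improved} itself (pass to a limit Brakke flow, use Lemma~\ref{Lem_Brakke_to_min_shrink}\ref{Lem_Brakke_to_min_shrink_b} and Lemma~\ref{Lem_shrinker_connected} to see the limit is a connected shrinker of multiplicity one, hence the convergence is smooth and $r_{\loc}$ is uniformly bounded below on compact sets, giving the uniform lower bound on $\mathfrak{S}$).
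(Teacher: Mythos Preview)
Your treatment of the case where Proposition~\ref{Prop_step_bound_improved}\ref{Prop_step_bound_improved_b} holds (the $\Theta$-telescoping giving Assertion~\ref{Cl_St1t2_a}) is correct and matches the paper.

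The gap is in the other case. You correctly isolate the obstacle --- a lower bound on $r_{\loc}$ on one ball $B(x_0,R\sqrt{\tau_1})$ does not directly control the infimum over all base points --- but your proposed compactness resolution fails: the shrinker-limit argument via Lemma~\ref{Lem_Brakke_to_min_shrink}\ref{Lem_Brakke_to_min_shrink_b} requires $\Theta_{(x_0,t_0)}(\cdot)$ to be nearly constant, which is not assumed here, and a direct lower bound on $\mathfrak{S}_{(x_0,t_0),\sqrt{\tau_1}}(\tau_1)$ is unavailable since outside the ball the integrand $\log(\s_{\loc,\sqrt{\tau_1}}/\sqrt{\tau_1})$ can be arbitrarily negative. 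Trying to bound it uniformly over all recentering points is essentially the statement $S_{t_0}(\tau_1)\geq -C$ you are trying to prove, so the argument is circular.

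The paper's resolution is a \emph{self-referential inequality}, and this also dictates the choice of $R$ (which is different from your stated criterion). Take $x_0$ to nearly realize the infimum in \eqref{eq_S_is_inf} \emph{and} to satisfy~\ref{Prop_step_bound_improved_a}. Cover $\IR^3$ by $B(x_0,R\sqrt{\tau_1})$ together with balls $B(x_0+\sqrt{\tau_1}z,100\sqrt{\tau_1})$, $z\in\IZ^3$, $|z|\geq\tfrac12 R$. On the central ball the integrand is $\geq\log(c\,\delta)$; on each far ball $\rho_{(x_0,t_0)}$ is comparable to $e^{-|z|}\rho_{(x_0+\sqrt{\tau_1}z,t_0)}$, so that portion is bounded below by $C'' e^{-|z|}\,\mathfrak{S}_{(x_0+\sqrt{\tau_1}z,t_0),\sqrt{\tau_1}}(\tau_1)\geq C'' e^{-|z|}\,S_{t_0}(\tau_1)$ (up to bounded $\Theta$-terms), by the very definition of $S_{t_0}$ as an infimum. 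Summing and using that $x_0$ is near the infimum gives
\[ S_{t_0}(\tau_1)+1+3AC_4 \;\geq\; \mathfrak{S}_{(x_0,t_0),\sqrt{\tau_1}}(\tau_1) \;\geq\; -C'(A,R)+\Big(C''\textstyle\sum_{|z|\geq R/2}e^{-|z|}\Big)\,S_{t_0}(\tau_1), \]
and now $R$ is chosen so that the coefficient in parentheses is $<\tfrac12$; the resulting inequality $S_{t_0}(\tau_1)+C\geq\tfrac12 S_{t_0}(\tau_1)$ solves to $S_{t_0}(\tau_1)\geq -C_5(A)$.
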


\begin{proof}
Consider the two cases in Proposition~\ref{Prop_step_bound_improved}.
Suppose first that Case~\ref{Prop_step_bound_improved_b} holds for any $x_0 \in \IR^3$ that is close to realizing the infimum in \eqref{eq_S_is_inf}, in the sense that
\begin{equation} \label{eq_almost_inf}
   S_{t_0} (\tau_1) + 1 \geq \mathfrak{S}_{(x_0,t_0), \sqrt{\tau_1}}(\tau_1)  - C_4 \big( \Theta_{(x_0,t_0)}(\tfrac12 \tau_1) + \Theta_{(x_0,t_0)}( \tau_1)  + \Theta_{(x_0,t_0)}(2 \tau_1) \big).  
\end{equation}
For any such $x_0$ we have
\begin{align*}
  \mathfrak{S}_{(x_0,t_0), \sqrt{\tau_1}}&(\tau_1)   - C_4 \big( \Theta_{(x_0,t_0)}(\tfrac12 \tau_1) +  \Theta_{(x_0,t_0)}( \tau_1)  +  \Theta_{(x_0,t_0)}(2 \tau_1) \big)  \\
&\geq \mathfrak{S}_{(x_0,t_0), \sqrt{\tau_2}}(\tau_2)  \\&\qquad - C_4 \big( \Theta_{(x_0,t_0)}(\tfrac12 \tau_1) +  \Theta_{(x_0,t_0)}( \tau_1)  +  \Theta_{(x_0,t_0)}(2 \tau_1) +  \Theta_{(x_0,t_0)}(2 \tau_2) -  \Theta_{(x_0,t_0)}(\tfrac12 \tau_1) \big)  \\
&\geq \mathfrak{S}_{(x_0,t_0), \sqrt{\tau_2}}(\tau_2) - C_4 \big(    \Theta_{(x_0,t_0)}( \tau_1)  +  \Theta_{(x_0,t_0)}(2 \tau_1) +  \Theta_{(x_0,t_0)}(2 \tau_2) \big)  \\
&\geq \mathfrak{S}_{(x_0,t_0), \sqrt{\tau_2}}(\tau_2)    - C_4 \big(    \Theta_{(x_0,t_0)}( \tfrac12 \tau_2)  +  \Theta_{(x_0,t_0)}(\tau_2) +  \Theta_{(x_0,t_0)}(2 \tau_2) \big)  \\
 &\geq S_{t_0}(\tau_2)   .
\end{align*}
Taking the infimum over all $x_0$ implies Assertion~\ref{Cl_St1t2_a}.

Now suppose that there is an $x_0 \in \IR^3$ satisfying \eqref{eq_almost_inf} and Proposition~\ref{Prop_step_bound_improved}\ref{Prop_step_bound_improved_a}.
Cover $\IR^3$ by $B(x_0, R \sqrt{\tau_1})$ and balls of the form $B_z := B(x_0 + \sqrt{\tau_1} z, 100 \sqrt{\tau_1})$ for all $z \in \IZ^3$ with $|z| \geq \tfrac12 R$.
Then we obtain that for some generic constants $C'(A,R), C'' < \infty$
\begin{align*} 
   \mathfrak{S}_{(x_0,t_0), \sqrt{\tau_1}}(\tau_1) 
&\geq - C'(A,R) + C'(A,R) \log (\delta(A,R)) + C'' \sum_{\substack{z \in \IZ^3 \\ |z| \geq \frac12 R}} e^{-|z|} \mathfrak{S}_{(x_0 + \sqrt{\tau_1} z,t_0), \sqrt{\tau_1}} (\tau_1) \\
&\geq - C'(A,R) +  \bigg( C''  \sum_{\substack{z \in \IZ^3 \\ |z| \geq \frac12 R}} e^{-|z|} \bigg) S_{t_0} (\tau_1).
\end{align*}
Let us now choose and fix $R$ large enough so that the term in the parentheses is $< \frac12$.
So combined with \eqref{eq_almost_inf}, we obtain
\[ S_{t_0} (\tau_1) + 1 + C_4(A,R)\cdot 3A \geq - C'(A,R) + \tfrac12 S_{t_0} (\tau_1). \]
This implies Assertion~\ref{Cl_St1t2_b}.
\end{proof}

Iterating Claim~\ref{Cl_St1t2} yields that for any $0 < t_0 \leq T$ and $0 < \tau \leq \frac1{10} t_0$ such that $t_0 - \tau$ is a regular time we have
\begin{equation} \label{eq_S_geq_min}
   S_{t_0}(\tau) \geq \min \bigg\{ - C_5(A), \inf_{\substack{\tau' \in [\frac1{10} t_0, \frac12 t_0] \\ \text{$t_0- \tau'$ regular}}} S_{t_0}(\tau') \bigg\}. 
\end{equation}
To bound the last term, we use the following claim.

\begin{Claim} 
Suppose that $0 < t^* \leq t_0 \leq t_1 \leq T$, where $t^*$ is a regular time.
Then
\begin{equation} \label{eq_St1_geq_tstar}
   S_{t_1}(t_1 - t^*) \geq - C_6(A, t_0 - t^*, t_1 - t^*, \diam \MM_0) \big( S_{t_0}(t_0 - t^*) - 1 \big) 
\end{equation}
where the dependence of the constant $C_6$ on the indicated quantities is continuous.
\end{Claim}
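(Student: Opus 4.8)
The plan is to exploit the fact that both sides of \eqref{eq_St1_geq_tstar} are governed by integrals over the \emph{same} time-slice. Since $t_0-(t_0-t^*)=t_1-(t_1-t^*)=t^*$, the quantities $\mathfrak{S}_{(x_0,t_0),\sqrt{t_0-t^*}}(t_0-t^*)$ and $\mathfrak{S}_{(x_0,t_1),\sqrt{t_1-t^*}}(t_1-t^*)$ are both integrals over $\MM_{\reg,t^*}$ — a compact smooth surface, as $t^*$ is a regular time — and they differ only through the capping scale $a$ entering $\s_{\loc,a},\tdr_{\loc,a}$ and through the Gaussian weight. Both discrepancies will be controlled by elementary comparisons, using that $\MM$ is bounded. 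I will write $\tau_0:=t_0-t^*\le\tau_1:=t_1-t^*$, fix a point $p_0\in\MM_0$, and use that the avoidance principle (comparison with stationary half-spaces) forces $\MM_t$ to stay in the convex hull of $\MM_0$ for all $t$; in particular $|x-p_0|\le\diam\MM_0$ for every $x\in\MM_{t^*}$.

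First I would dispose of the capping scale. The integrand $(1-\eta)\log(\s_{\loc,a}/a)+\eta\log(\tdr_{\loc,a}/a)$ is non-positive, and as $a$ grows from $\sqrt{\tau_0}$ to $\sqrt{\tau_1}$ it decreases pointwise by at most $\tfrac12\log(\tau_1/\tau_0)$, because $\s_{\loc,\sqrt{\tau_1}}\ge\s_{\loc,\sqrt{\tau_0}}$ and $\tdr_{\loc,\sqrt{\tau_1}}\ge\tdr_{\loc,\sqrt{\tau_0}}$. Integrating this against $\rho_{(p_0,t_0)}(\cdot,t^*)$ and using $\Theta_{(p_0,t_0)}(\tau_0)\le\Theta(\MM)\le A$ gives
\[
\mathfrak{S}_{(p_0,t_0),\sqrt{\tau_1}}(\tau_0)\ \ge\ \mathfrak{S}_{(p_0,t_0),\sqrt{\tau_0}}(\tau_0)-\tfrac A2\log\tfrac{\tau_1}{\tau_0}\ \ge\ S_{t_0}(\tau_0)-\tfrac A2\log\tfrac{\tau_1}{\tau_0},
\]
the last inequality by the definition of $S_{t_0}(\tau_0)$ together with nonnegativity of the Gaussian-area terms appearing there.

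Next comes the weight comparison, which is the heart of the matter. For any $x_0\in\IR^3$ and any $x\in\MM_{t^*}$ one has the crude bounds $\rho_{(x_0,t_1)}(x,t^*)\le(4\pi\tau_1)^{-1}$ and $\rho_{(p_0,t_0)}(x,t^*)\ge(4\pi\tau_0)^{-1}e^{-(\diam\MM_0)^2/(4\tau_0)}$, hence $\rho_{(x_0,t_1)}(\cdot,t^*)\le K\,\rho_{(p_0,t_0)}(\cdot,t^*)$ on $\MM_{t^*}$ with $K=K(\tau_0,\tau_1,\diam\MM_0):=\tfrac{\tau_0}{\tau_1}e^{(\diam\MM_0)^2/(4\tau_0)}$, a bound \emph{uniform in $x_0$}. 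Setting $K_4:=\max\{K,1\}\ge 1$ and using once more that the scale-$\sqrt{\tau_1}$ integrand is $\le 0$, I obtain for every $x_0$
\[
\mathfrak{S}_{(x_0,t_1),\sqrt{\tau_1}}(\tau_1)\ \ge\ K_4\,\mathfrak{S}_{(p_0,t_0),\sqrt{\tau_1}}(\tau_0)\ \ge\ K_4\Big(S_{t_0}(\tau_0)-\tfrac A2\log\tfrac{\tau_1}{\tau_0}\Big).
\]
Subtracting $C_4\big(\Theta_{(x_0,t_1)}(\tfrac12\tau_1)+\Theta_{(x_0,t_1)}(\tau_1)+\Theta_{(x_0,t_1)}(2\tau_1)\big)\le 3C_4A$ — where $C_4=C_4(A)$ is the constant from Proposition~\ref{Prop_step_bound_improved}, with $R$ already fixed in terms of $A$ — and taking the infimum over $x_0\in\IR^3$ yields
\[
S_{t_1}(\tau_1)\ \ge\ K_4\,S_{t_0}(\tau_0)-\big(K_4\tfrac A2\log\tfrac{\tau_1}{\tau_0}+3C_4A\big).
\]
Since $K_4\ge 1$ and $S_{t_0}(\tau_0)\le 0$, this is a bound of the form \eqref{eq_St1_geq_tstar} with $C_6:=\max\{K_4,\ K_4\tfrac A2\log\tfrac{\tau_1}{\tau_0}+3C_4A\}$, which depends continuously on $A$, $t_0-t^*$, $t_1-t^*$ and $\diam\MM_0$ through the explicit formula for $K$.

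I do not expect a serious obstacle: the only delicate point is that the weight comparison must hold uniformly over the base point $x_0$ in the infimum defining $S_{t_1}(\tau_1)$, and this is exactly what is secured by re-basing the comparison quantity at the \emph{fixed} point $p_0\in\MM_0$ and invoking $\MM_{t^*}\subset$ convex hull of $\MM_0$; the remaining estimates are all of the routine monotonicity-in-scale type already used for $\mathfrak{S}$.
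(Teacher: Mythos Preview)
Your argument is correct and follows the same route as the paper: since both $\mathfrak{S}$-quantities are integrals over the common regular slice $\MM_{t^*}$, one only needs to compare Gaussian weights there; the paper calls this a ``basic covering argument'', while you streamline it to a single reference point $p_0$ via the inclusion $\MM_{t^*}\subset\mathrm{conv}(\MM_0)$, which is a nice simplification. One small caveat: as written, \eqref{eq_St1_geq_tstar} carries a sign typo (the paper's own reduction displays the inequality without the minus), and your final bound $S_{t_1}(\tau_1)\ge K_4\,S_{t_0}(\tau_0)-C'$ with $C_6=\max\{K_4,C'\}$ yields $S_{t_1}(\tau_1)\ge C_6\big(S_{t_0}(\tau_0)-1\big)$, not the literal form with $-C_6$; this is the intended statement.
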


\begin{proof}
Since the Gaussian area terms in \eqref{eq_S_is_inf} are bounded by $A$, it suffices to show that for any $x_1 \in \IR^3$
\[ \mathfrak{S}_{(x_1, t_1), \sqrt{t_1 - t^*}}(t_1 - t^*) \geq  C_6(A, t_0 - t^*, t_1 - t^*, \diam \MM_0)  \Big(  \inf_{x_0 \in \IR^3} \mathfrak{S}_{(x_0,t_0), \sqrt{t_0 - t^*}}(t_0 - t^*) - 1 \Big). \] 
This bound follows from a basic covering argument.
\end{proof}

Using short-time control, we can find a constant $\theta (\MM_0) > 0$, which depends continuously on $\MM_0$ such that if $0 < \tau < t_0 < \theta$, then $S_{t_0}(\tau)$ can directly be bounded from below by a constant of the form $-C_7(\MM_0)$, which depends continuously on $\MM_0$.
So the theorem follows by repeated application of \eqref{eq_S_geq_min} and \eqref{eq_St1_geq_tstar}.
\end{proof}
\bigskip

\section{Proofs of the main theorems} \label{sec_remaining_thms}
Here we carry out the proofs of all theorems and corollaries from Subsection~\ref{subsec_mainresults}, except for Theorem~\ref{Thm_key_thm}, which was established in Subsection~\ref{subsec_proof_key}.

\begin{proof}[Proof of Theorem~\ref{Thm_main_compactness}.]
Let us first prove
Assertion~\ref{Thm_main_compactness_a}.
By short-time estimates, it is enough to derive a uniform \emph{local} $L^2$-bound on $r_{\loc}^{-1}$ on $\IR^3 \times \Int I$.
We will obtain this bound from Theorem~\ref{Thm_L2_bound}.
Choose $A < \infty$ such that $\Theta(\MM^i) \leq A$ and $\genus (\MM^i) \leq A$ for all $i$.
Recall that Theorem~\ref{Thm_L2_bound} provides an integral bound on $r_{\loc}^{-2}$ over $\MM^i$ restricted to parabolic neighborhoods of the form $\MM^i \cap B(x_0, A \sqrt{\tau_1}) \times [t_0 - \tau_1, t_0 -\tau_2]$, excluding the subsets $\MM^{i, (\eps, A)}_{\reg}$ and $\XX^{i, t_0, \delta}$.
Here $\eps > 0$ can be chosen arbitrarily and $\delta = \delta (A,\eps) > 0$.
Since the subset $\XX^{i,t_0, \delta}$ consists of points $(x,t)$ where $r_{\loc}(x,t) \geq \delta \sqrt{t_0 - t}$, we have a uniform upper bound on $r_{\loc}^{-2}$ there.

It thus remains to argue that there is a choice of $\eps > 0$, which is independent of $i$, such that $\MM^{i, (\eps, A)}_{\reg} = \emptyset$ for all $i$.
Suppose this was not the case and fix a sequence $\eps_i \to 0$ and a sequence of points $(x_i, t_i) \in \MM^{i, (\eps_i, A)}_{\reg}$ and set $r_i := r_{\loc}(x_i,t_i)$.
Then Definition~\ref{Def_CNT_new} implies that
\[ \frac{\sup I_i - t_i}{r_i^2} \xrightarrow[i\to\infty]{} \infty \]
and the intrinsic exponential maps of the rescalings $r_i^{-1} (\MM_{t_i} - x_i)$ based at the origin smoothly converge to a minimal surface $\Sigma \subset \IR^3$.
If $\Sigma$ is an affine plane, then there is a sequence $a_i \to \infty$ such that
\[ \sup_{B(x_i, a_i r_i)} \frac{r_{\loc}(\cdot, t_i)}{r_i} \xrightarrow[i\to\infty]{} 1. \]
Using this fact and Lemma~\ref{Lem_ends_min_surf} if $\Sigma$ is not an affine plane, allows us to conclude that given any constant $H < \infty$, we can find a constant $Q(H) < \infty$ such that for large $i$
\[ \int_{\MM_{\reg,t_i}} \bigg( \log \Big( \frac{  r_{\loc}(\cdot, t_i) }{ Q r_i }  \Big) \bigg)_-\rho_{(x_i, t_i + Q^2 r_i^2)}(\cdot, t_i) \, d\HH^2 \leq - H. \]
However, this contradicts Theorem~\ref{Thm_key_thm} for large $H$ and $i$.
This concludes the proof of Assertion~\ref{Thm_main_compactness_a}.

Let us now prove Assertion~\ref{Thm_main_compactness_b}.
We first pass to a subsequence such that we have weak convergence of the Brakke flows $(\mu^i_t)_{t \in [0,T_i)}$ associated with each $\MM^i$ to a unit-regular Brakke flow $(\mu^\infty_t)_{t \in [0,T_\infty)}$.
Call its support $\MM^\infty$ and note that this is a weak set flow.
Recall that the definition of the local scale function $r_{\loc}^{\MM^i}(x,t)$ of $\MM^i$ makes sense on all $(x,t) \in \IR^3 \times I_i$.
We obtain that:

\begin{Claim} \label{Cl_int_limit}
For any bounded open subset $U \subset \IR^3$ the following is true:
\begin{enumerate}[label=(\alph*)]
\item  \label{Cl_int_limit_a} For any $(x,t) \in \IR^3 \times I_\infty$ we have $r_{\loc}^{\MM^\infty}(x,t) \geq \limsup_{i\to \infty} r_{\loc}^{\MM^i}(x,t)$.
\item \label{Cl_int_limit_b} For any $t \in I_\infty$ we have
\[ \int_{U} \big(r_{\loc}^{\MM^\infty} \big)^{-2}(\cdot,t)  \, d\mu^\infty_t \leq \liminf_{i \to \infty} \int_{U} \big(r_{\loc}^{\MM^i}\big)^{-2}(\cdot,t) \, d\mu^i_t. \]
\item \label{Cl_int_limit_cc} For almost all $t \in I_\infty$ the following is true: $t$ is a regular time for all $\MM^i$ and we have
\[ \liminf_{i \to \infty}  \int_{U} \big(r_{\loc}^{\MM^i}\big)^{-2}(\cdot,t) \, d\mu^i_t < \infty. \]
\item \label{Cl_int_limit_c} For any $[T_1, T_2] \subset I_\infty$ we have
\[ \int_{T_1}^{T_2} \int_{U} \big(r_{\loc}^{\MM^\infty} \big)^{-2}   d\mu^\infty_t dt \leq C(T_1,T_2,U) < \infty. \]
\end{enumerate}
\end{Claim}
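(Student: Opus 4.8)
The plan is to establish the four parts of Claim~\ref{Cl_int_limit} in order, each one feeding into the next, and then to assemble Assertion~\ref{Thm_main_compactness_b} from part~\ref{Cl_int_limit_c} together with the partial regularity already packaged in Lemma~\ref{Lem_Brakke_to_min_shrink} and Lemma~\ref{Lem_close_to_min}.

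First I would prove \ref{Cl_int_limit_a}: this is a soft lower semicontinuity statement for $r_{\loc}$ under weak convergence of Brakke flows whose supports converge in the Hausdorff sense on compact sets. If $r := \liminf$-type value is positive, pick $r' < r$; then for a subsequence each $\MM^i$ satisfies conditions \ref{Def_rloc_1}--\ref{Def_rloc_4} of Definition~\ref{Def_rloc} on $P(x,t,r')$, and I would pass these to the limit. Conditions \ref{Def_rloc_1} and \ref{Def_rloc_3} pass by the standard local smooth convergence of multiplicity-one Brakke flows away from the singular set (using that the uniform $|A|$-bound forces local graphical, multiplicity-one behavior and prevents concentration). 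Condition \ref{Def_rloc_2} (no singular points) follows because smooth convergence persists, and \ref{Def_rloc_4} (connectedness of $B(x,r') \cap \MM_{t'}$) is preserved under such convergence — here I would use the remark after Definition~\ref{Def_rloc} that the $|A|$-bound makes these intersections graphs over tangent planes, so connectedness is a closed condition. Part~\ref{Cl_int_limit_b} then follows from \ref{Cl_int_limit_a} by Fatou's lemma: $(r_{\loc}^{\MM^i})^{-2}$ converges (up to a subsequence, pointwise $\mu^\infty_t$-a.e.\ via the Hausdorff/smooth convergence of the supports) with $\limsup$ on $r_{\loc}$ turning into $\liminf$ on the reciprocal squared, combined with the weak convergence $\mu^i_t \rightharpoonup \mu^\infty_t$ for regular times $t$; one has to be slightly careful that $\mu^i_t$ restricted to $U$ converges as measures, which holds for a.e.\ $t$ (those which are regular for all $\MM^i$ and for which the mass does not jump), and that is exactly the content folded into \ref{Cl_int_limit_cc}.

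Part~\ref{Cl_int_limit_cc} is where Assertion~\ref{Thm_main_compactness_a} enters: the bound $\int_{T_1}^{T_2}\int_U (r_{\loc}^{\MM^i})^{-2}\,d\mu^i_t\,dt \le C$ is uniform in $i$, so by Fatou in the time variable the function $t \mapsto \liminf_i \int_U (r_{\loc}^{\MM^i})^{-2}\,d\mu^i_t$ is in $L^1([T_1,T_2])$, hence finite for a.e.\ $t$; intersecting with the full-measure set of times that are regular for every $\MM^i$ (each $\MM^i$ has only measure-zero singular times by Definition~\ref{Def_almost_regular}\ref{Def_almost_regular_11}, and a countable union of null sets is null) gives \ref{Cl_int_limit_cc}. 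Finally, \ref{Cl_int_limit_c} is obtained by integrating the pointwise bound \ref{Cl_int_limit_b} over $[T_1,T_2]$ and applying Fatou once more together with the uniform constant $C(T_1,T_2,U)$ from Assertion~\ref{Thm_main_compactness_a} (taking a slightly larger $U$ so that $B(x_0, A\sqrt{t_0-t}) \supset U$ for the relevant range, and covering $[T_1,T_2]\times U$ by finitely many of the parabolic neighborhoods appearing there).

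The main obstacle I anticipate is part~\ref{Cl_int_limit_a}, specifically verifying that condition \ref{Def_rloc_4} (connectedness) passes to the limit: weak Brakke convergence alone does not control the support well enough, so one genuinely needs the local smooth (multiplicity-one) convergence, which in turn relies on the uniform $L^2$ curvature bound from Lemma~\ref{Lem_intA2bound} plus the unit-regularity — and one must rule out the scenario where the limit support is strictly larger than the Hausdorff limit of the $\MM^i_t$, which is where the $|A|$-bound and the resulting graphical description are essential. Once Claim~\ref{Cl_int_limit} is in hand, Assertion~\ref{Thm_main_compactness_b} follows: part~\ref{Cl_int_limit_c} is precisely Definition~\ref{Def_almost_regular}\ref{Def_almost_regular_2} for $\MM^\infty$, the genus bound~\ref{Def_almost_regular_3} passes by Lemma~\ref{Lem_Brakke_to_min_shrink} (or directly by lower semicontinuity of genus under smooth convergence on regular time-slices), the unit-regular Brakke flow~\ref{Def_almost_regular_4} is the weak limit itself, and~\ref{Def_almost_regular_11} follows from \ref{Cl_int_limit_cc} combined with the local smooth convergence at regular times; the local smooth convergence $\MM^i_t \to \MM^\infty_t$ at regular times of $\MM^\infty$ is then the last statement of the assertion.
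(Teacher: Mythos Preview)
Your overall architecture for \ref{Cl_int_limit_a}, \ref{Cl_int_limit_cc}, \ref{Cl_int_limit_c} matches the paper's: \ref{Cl_int_limit_a} via local derivative estimates (the paper does not spell out the connectedness issue you flag, but your reasoning there is sound), and \ref{Cl_int_limit_cc}, \ref{Cl_int_limit_c} via Fatou in the time variable against the uniform bound from Assertion~\ref{Thm_main_compactness_a}. That part is fine.

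The gap is in your argument for \ref{Cl_int_limit_b}. You invoke ``Fatou's lemma'' for a sequence of integrals $\int_U (r_{\loc}^{\MM^i})^{-2}\,d\mu^i_t$ where both the integrand \emph{and} the measure vary with $i$. There is no such Fatou lemma: weak convergence $\mu^i_t \rightharpoonup \mu^\infty_t$ tests only against fixed continuous functions, and $(r_{\loc}^{\MM^i})^{-2}$ is neither fixed nor continuous (it blows up near the singular set). Your appeal to ``pointwise $\mu^\infty_t$-a.e.\ convergence'' of the integrands does not help, because the integrals are taken against $\mu^i_t$, not $\mu^\infty_t$. You also restrict to a.e.\ $t$, whereas the claim is stated for \emph{every} $t \in I_\infty$.

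The paper handles this with a clean trick you are missing: use that $r_{\loc}^{\MM^i}(\cdot,t)$ is $1$-Lipschitz (Lemma~\ref{Lem_rloc_Lipschitz}) to pass, via Arzel\`a--Ascoli, to a subsequence with \emph{uniform} convergence $r_{\loc}^{\MM^i}(\cdot,t) \to r'$ on $U$. By \ref{Cl_int_limit_a} one has $r_{\loc}^{\MM^\infty}(\cdot,t) \geq r'$. Now for any $\delta>0$ the function $(r'+\delta)^{-2}$ is continuous and bounded on $U$, so weak convergence of measures gives
\[
\int_U (r_{\loc}^{\MM^\infty}+\delta)^{-2}\,d\mu^\infty_t \;\le\; \int_U (r'+\delta)^{-2}\,d\mu^\infty_t \;=\; \lim_i \int_U (r'+\delta)^{-2}\,d\mu^i_t \;=\; \lim_i \int_U (r_{\loc}^{\MM^i}+\delta)^{-2}\,d\mu^i_t,
\]
the last equality by the uniform convergence; letting $\delta \to 0$ finishes. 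The Lipschitz-plus-regularization step is the missing idea.
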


\begin{proof}
Assertion~\ref{Cl_int_limit_a} follows via local derivative estimates.
For Assertion~\ref{Cl_int_limit_b}, recall that $r_{\loc}(\cdot, t)$ is $1$-Lipschitz.
So we may pass to a subsequence and assume that we have uniform convergence $r^{\MM^i}_{\loc}(\cdot, t) \to r'$ for some $1$-Lipschitz function $r' :U \to [0,\infty]$.
By Assertion~\ref{Cl_int_limit_a} we have $r_{\loc}^{\MM^\infty}(\cdot,t) \geq r'$ on $U$.
Thus, for any $\delta > 0$ we have
\begin{multline*}
 \int_{U} \big(r_{\loc}^{\MM^\infty} (\cdot, t) + \delta  \big)^{-2} d\mu^\infty_t \leq \int_{U} (r' + \delta)^{-2} d\mu^\infty_t = \lim_{i \to \infty} \int_{U} (r' + \delta)^{-2} d\mu^i_t \\
 = \lim_{i \to \infty} \int_{U} \big(r_{\loc}^{\MM^i} (\cdot, t) + \delta \big)^{-2} d\mu^i_t
 \leq \liminf_{i \to \infty} \int_{U} \big(r_{\loc}^{\MM^i}   \big)^{-2} (\cdot, t) d\mu^i_t.
\end{multline*}
Assertion~\ref{Cl_int_limit_b} follows by letting $\delta \to 0$.
Assertions~\ref{Cl_int_limit_cc}, \ref{Cl_int_limit_c} follow from Assertion~\ref{Cl_int_limit_b} of the claim and Assertion~\ref{Thm_main_compactness_a} of this theorem via Fatou's Lemma.
\end{proof}

\begin{Claim} \label{Cl_components}
For almost all $t \in I_\infty$ and any bounded open subsets $U \Subset U' \subset \IR^3$ there is a constant $C'(t,U, U') > 0$ such that, after passing to a subsequence, the following is true:
\begin{enumerate}[label=(\alph*)]
\item \label{Cl_components_a} The number of components of $\MM^i_t \cap U'$ intersecting $U$ is bounded by $C'(t,U, U')$
\item \label{Cl_components_b} For any component $N \subset \MM^i_t \cap U'$ that intersects $U$ we have $C'(t,U, U') r_{\loc}^{\MM^i}(\cdot,t) \geq  \diam (N \cap U')$ on $N \cap U$.
\end{enumerate}
\end{Claim}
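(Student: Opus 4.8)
\textbf{Proof plan for Claim~\ref{Cl_components}.}

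The idea is to extract a finite-area, bounded-second-fundamental-form statement from the $L^2$-bound on $r_{\loc}^{-1}$ established in Assertion~\ref{Thm_main_compactness_a}, and then use the elementary geometry of surfaces with controlled local scale to count and measure components. First I would use Claim~\ref{Cl_int_limit}\ref{Cl_int_limit_cc}: for almost all $t \in I_\infty$ the time $t$ is a regular time for every $\MM^i$ and, after passing to a subsequence, $\int_{U'} (r_{\loc}^{\MM^i})^{-2}(\cdot,t)\, d\mu^i_t \leq C_0(t,U')$ uniformly in $i$. Combining this with the uniform entropy bound $\Theta(\MM^i) \leq A$ (which via Huisken monotonicity gives a uniform upper area bound $\HH^2(\MM^i_t \cap U'') \leq C_1(U'')$ on any fixed ball $U''$), we get that $\MM^i_t \cap U''$ is a smooth surface with uniformly bounded area and with $\int_{U''} (r_{\loc})^{-2} \leq C$.

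Next I would establish the component count, Assertion~\ref{Cl_components_a}. Fix slightly enlarged open sets $U \Subset U'' \Subset U'$. Let $N$ be a component of $\MM^i_t \cap U'$ meeting $U$. Pick a point $x_N \in N \cap \ov U$ and set $r := \min\{ r_{\loc}^{\MM^i}(x_N,t),\ \tfrac1{10}\dist(U,\partial U'')\}$. By Lemma~\ref{Lem_rloc_Lipschitz}, $r_{\loc} \geq \tfrac12 r$ on $B(x_N,\tfrac12 r)$, so by Definition~\ref{Def_rloc}\ref{Def_rloc_3}, \ref{Def_rloc_4} the intersection $N \cap B(x_N,\tfrac12 r)$ is a connected graph over a tangent plane with $|A| \leq c r^{-1}$, hence has area $\geq c' r^2$. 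The local-scale integral bound then forces $\int_{B(x_N,\frac12 r)} r_{\loc}^{-2} \geq c'' > 0$ per such ball (using $r_{\loc} \leq \tfrac32 r$ there by Lipschitzness). Since the area bound on $U''$ caps how many disjoint such balls can be packed while keeping the integral finite — more precisely, a Vitali-type argument on the balls $B(x_N, \tfrac1{100} r)$ shows the number of such $N$ with $r_{\loc}(x_N,t)$ bounded below is controlled, while for components with $r_{\loc}(x_N,t)$ tiny, each contributes a definite amount to $\int_{U''} r_{\loc}^{-2}$ — the total number of components meeting $U$ is bounded by $C'(t,U,U')$.

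For Assertion~\ref{Cl_components_b}, I would chain the local graph description: for any $y \in N \cap U$, connect $y$ to any other point $z \in N \cap U$ within $N$; covering this path by balls of radius $\sim r_{\loc}$ and using that on each such ball $N$ is a graph with $|A| \lesssim r_{\loc}^{-1}$, together with the finiteness of $\int_{N \cap U'} r_{\loc}^{-2}$, bounds the number of such balls needed and hence $\diam(N \cap U')$ in terms of $r_{\loc}$ — giving $C' r_{\loc}^{\MM^i}(\cdot,t) \geq \diam(N\cap U')$ on $N \cap U$, possibly after enlarging $C'$. The main obstacle is making the covering/packing arguments in the component-count step quantitative in a way that is genuinely uniform in $i$: one has to be careful that components can have very small local scale, and that the Vitali covering is applied at the correct scale $r_{\loc}$ (reparametrized in the time direction as in the proof of Theorem~\ref{Thm_L2_bound}) so that disjointness of the small balls converts the $L^2$-bound into a genuine cardinality bound. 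Once that bookkeeping is set up, both assertions follow from standard local regularity for surfaces with bounded local scale.
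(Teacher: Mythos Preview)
Your approach to Assertion~\ref{Cl_components_a} is essentially the paper's: split into components with small versus large $r_{\loc}$, observe that each component of the first kind contributes a definite positive amount to $\int_{U''} r_{\loc}^{-2}\,d\HH^2$, and handle the second kind by ball packing. The key input, Claim~\ref{Cl_int_limit}\ref{Cl_int_limit_cc}, is exactly what you identify.

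For Assertion~\ref{Cl_components_b}, however, your chaining argument has a genuine gap. You cover a path from $y$ to $z$ by balls of radius comparable to $r_{\loc}$ at their centers, and argue that the number $K$ of such balls is bounded because each contributes $\gtrsim 1$ to $\int_{N\cap U'} r_{\loc}^{-2}$. But the conclusion you want is $\diam(N\cap U') \leq C' \, r_{\loc}(y,t)$, and bounding $K$ alone does not give this: the radii of your covering balls are comparable to $r_{\loc}$ \emph{at the ball centers}, which by the $1$-Lipschitz bound can be as large as $r_{\loc}(y,t) + \diam(N\cap U')$. So the estimate $\diam(N\cap U') \lesssim K \cdot (\text{max ball radius})$ is circular. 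The paper closes this by a dyadic argument: take $x_0 \in N \cap U$ realizing $r := \min_{N\cap U} r_{\loc}(\cdot,t)$, and pick points $x_j \in N$ with $|x_0 - x_j| = 2^j r$ for each $j$ with $2^j r < \tfrac12 \diam N$. The Lipschitz bound gives $r_{\loc}(x_j,t) \leq 2^{j+1} r$, so the balls $B(x_j, 2^{j-2} r)$ are pairwise disjoint and each satisfies $\int_{N\cap B(x_j,2^{j-2}r)} r_{\loc}^{-2}\,d\HH^2 \geq c_0$. The integral bound then caps the number of dyadic scales, i.e.\ $\log_2(\diam N / r)$, yielding $\diam N \leq C' r \leq C' \, r_{\loc}(y,t)$ for every $y \in N \cap U$. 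This geometric-series structure, anchored at the minimum of $r_{\loc}$, is the missing ingredient in your proposal.
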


\begin{proof}
Fix an index $i$, a regular time $t$ and a component $N \subset \MM^i_t \cap U'$ that intersects $U$.
Let $x_0 \in N \cap U$ be the point where $r := r_{\loc}^{\MM^i}(\cdot, t)$ attains its minimum and choose points $x_j \in N$ with $|x_0 - x_j| = 2^j r$, whenever $2^j r < \frac12 \diam N$.
Since $r_{\loc}^{\MM^i}(\cdot, t)$ is $1$-Lipschitz, we obtain that $r_{\loc}^{\MM^i}(x_j, t) \leq (2^j+1) r \leq 2^{j+1}r$, so there is a universal constant $c_0 > 0$ such that
\[ \int_{B(x_j, 2^{j-2}r) \cap N} \big( r_{\loc}^{\MM^i} \big)^{-2}(\cdot, t) d\HH^2 \geq c_0 > 0. \]
Choose a bounded open subset $U'' \subset \IR^3$ with the property that for any $y \in U$ and $s > 0$ with $B(y,s) \subset U'$ we even have $B(y, 10s) \subset U''$.
Then whenever $x_j$ exists, we must have $B(x_j, 2^{j-2}r) \subset U''$.
Since the balls $B(x_j, 2^{j-2}r)$ are pairwise disjoint, we can use Claim~\ref{Cl_int_limit}\ref{Cl_int_limit_cc} applied to $U''$ to derive an upper bound on the number of $j$ for which both $x_j$ exists, so for which $2^j r < \frac12 \diam (N \cap U')$.
This proves Assertion~\ref{Cl_components_b}.
For Assertion~\ref{Cl_components_a} we argue similarly that $\int_N (r_{\loc}^{\MM^i})^{-2}(\cdot, t) d\HH^2 \geq c_0 > 0$ for each component $N \subset \MM^i_t \cap U'$ that intersects $U$ and has the property that $\inf_{N \cap U} r_{\loc}^{\MM^i}(\cdot, t) < 1$.
So the number of these components is bounded by a uniform constant.
The number of components $N \subset \MM^i_t \cap U'$ that intersect $U$ and that have the property that $\inf_{N \cap U} r_{\loc}^{\MM^i}(\cdot, t) \geq 1$ is bounded by a simple ball packing argument.
\end{proof}

Applying Claim~\ref{Cl_components} to each component of $\MM^i_t$ implies that for almost all $t \in I_\infty$ the following is true for a subsequence.
There is a decomposition $\MM^i_t = M'_i \,\dotcup\, M''_i$, where each part is a union of connected components of $\MM^i_t$, such that $M'_i$ smoothly converges to a smooth submanifold $M'_\infty \subset \IR^3$ and such that $M''_i$ converges in the Hausdorff sense to a discrete set of points $M''_\infty \subset \IR^3$, which is disjoint from $M'_\infty$.
So for any bounded open subset $U \subset \IR^3$ and any $t' > t$ close enough to $t$, we can use the weak set flow property of $\MM^i$ to show that the components of $M''_i$ that intersect $U$ become extinct for large enough $i$, while the evolution of $M'_i \cap U$ remains controlled.
Thus for any such $t'$ we have local \emph{smooth} convergence of $\MM^i_{t'} \cap U$ to some smooth submanifold $M'_{\infty,t', U} \subset \IR^3$.
Since we have weak convergence $\mu^i_{t'} = \HH^2 \lfloor \MM^i_{t'} \to \mu^\infty_{t'}$, we obtain that $M'_{\infty,t', U} = \MM^\infty_{t'} \cap U$.
To summarize, we have shown:

\begin{Claim}
For any bounded open subset $U \subset \IR^3$ and almost all $t \in I_\infty$ there is a $\delta(t, U) > 0$ such that, after passing to a subsequence, we have smooth convergence $\MM^i_{t'} \cap U \to \MM^\infty_{t'} \cap U$ for all $t' \in (t,t+\delta(t))$.
\end{Claim}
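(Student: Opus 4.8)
The statement to prove is: \emph{for any bounded open $U \subset \IR^3$ and almost all $t \in I_\infty$, there is a $\delta(t,U) > 0$ such that, after passing to a subsequence, we have smooth convergence $\MM^i_{t'} \cap U \to \MM^\infty_{t'} \cap U$ for all $t' \in (t, t+\delta(t))$.} This is just a restatement, with the dependence of $\delta$ made explicit, of the paragraph immediately preceding it; so the plan is largely to assemble the three preceding claims into a clean deduction. The plan is to fix a bounded open $U$, enlarge it slightly to $U \Subset U' \subset \IR^3$, and work with a full-measure set of times $t \in I_\infty$ at which simultaneously (i) all the $\MM^i$ are regular, (ii) $\liminf_i \int_{U'} (r_{\loc}^{\MM^i})^{-2}(\cdot,t) d\mu^i_t < \infty$ (Claim~\ref{Cl_int_limit}\ref{Cl_int_limit_cc}), and (iii) the component-counting and diameter bounds of Claim~\ref{Cl_components} hold. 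All three conditions hold off a null set, so almost every $t$ qualifies.

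First I would, for such a fixed $t$, invoke Claim~\ref{Cl_components} (applied componentwise to $\MM^i_t$) to get — after passing to a subsequence — the decomposition $\MM^i_t = M'_i \,\dotcup\, M''_i$ into a ``large-scale'' part $M'_i$ converging smoothly (in $U$) to a smooth submanifold $M'_\infty$, and a ``small-scale'' part $M''_i$ whose components in $U'$ have diameter tending to zero, converging in the Hausdorff sense to a finite set $M''_\infty \subset \IR^3$ disjoint from $M'_\infty$. Separating these uses the uniform bound on the number of components meeting $U$ (Claim~\ref{Cl_components}\ref{Cl_components_a}) together with the diameter bound in terms of $r_{\loc}$ (Claim~\ref{Cl_components}\ref{Cl_components_b}). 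Next I would choose $\delta(t,U) > 0$ small enough that two things happen on $(t, t+\delta(t))$: the small components of $M''_i \cap U''$ (for a slightly larger open $U'' \Supset U'$) become extinct for large $i$ — this follows from the avoidance principle for weak set flows, comparing each small component at time $t$ with a round sphere of radius comparable to its diameter, which shrinks to a point in time $O(\operatorname{diam}^2)$ — and the smooth flow of $M'_i \cap U''$ remains uniformly controlled and stays smooth, by standard interior estimates for mean curvature flow starting from the smoothly converging data $M'_i \cap U'' \to M'_\infty \cap U''$. Then for each $t' \in (t, t+\delta(t))$ we get local smooth convergence $\MM^i_{t'} \cap U \to M'_{\infty, t', U}$ for some smooth $M'_{\infty, t', U}$; identifying this limit with $\MM^\infty_{t'} \cap U$ follows because smooth convergence implies weak (varifold) convergence of the associated measures, and we already know $\mu^i_{t'} \to \mu^\infty_{t'}$, whose support in $U$ is $\MM^\infty_{t'} \cap U$ by unit-regularity. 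Finally, a diagonal argument over a countable exhaustion of $\IR^3$ by such $U$'s upgrades the subsequence choice to be uniform, though for the statement as phrased (fixed $U$) this is not needed.

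The main obstacle — really the only non-bookkeeping point — is establishing that the small components of $M''_i$ genuinely disappear on a definite time interval while simultaneously the large part $M'_i$ continues to exist smoothly for that same definite time. The subtlety is that ``small'' here means small \emph{relative to $r_{\loc}$ near those components}, and one must rule out that a small component, although of small diameter, could be part of a neck that instead of vanishing merges with or pinches off from the large sheet within time $\delta(t)$. This is controlled precisely by the $L^2$-integrability of $r_{\loc}^{-2}$ from Claim~\ref{Cl_int_limit}: a neck-type behavior of the claimed sort would force $r_{\loc}$ to be small on a set of definite mass over a definite time interval in a shrinking region, contradicting the uniform local bound $\int_{T_1}^{T_2}\int_U (r_{\loc}^{\MM^\infty})^{-2} d\mu^\infty_t\, dt \le C(T_1,T_2,U) < \infty$ after passing to the limit. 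So the argument is: bound the diameter of each bad component at time $t$ by $C' r_{\loc}$; run the sphere comparison to get extinction time $\lesssim (C' r_{\loc})^2 \to 0$; and use the integral bound to ensure that for a fixed small $\delta(t,U)$, uniformly over large $i$, no bad component can ``survive as a neck'' past time $t + \delta(t,U)$ inside $U$. Everything else is a routine assembly of the preceding claims with the weak-set-flow avoidance principle and interior smooth estimates.
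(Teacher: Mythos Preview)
Your proposal is essentially correct and matches the paper's approach: decompose $\MM^i_t$ into large components $M'_i$ converging smoothly and small components $M''_i$ collapsing to points, use sphere comparison (avoidance) to kill the small pieces, short-time smooth estimates to control the large piece, and weak Brakke convergence to identify the limit with $\MM^\infty_{t'}$.

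However, the ``main obstacle'' you flag is a phantom, and your proposed resolution via the spacetime $L^2$-bound on $r_{\loc}^{-2}$ is unnecessary. The decomposition $\MM^i_t = M'_i \,\dotcup\, M''_i$ is into unions of connected components of the \emph{full} compact time-slice $\MM^i_t$ (the paper writes ``Applying Claim~\ref{Cl_components} to each component of $\MM^i_t$''), not merely of $\MM^i_t \cap U'$. Thus each component of $M''_i$ is a \emph{closed} surface of small diameter, and $M''_\infty$ is at positive distance from $M'_\infty$. Enclose each such closed component in a small round sphere disjoint from $M'_i$; by the avoidance principle for weak set flows, the component stays inside the shrinking sphere and is extinct by time $t + O(\diam^2)$. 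There is no possibility of a small component being ``part of a neck'' that merges with or pinches off from $M'_i$: disjoint closed surfaces remain disjoint under the weak set flow, so the evolutions of $M'_i$ and $M''_i$ do not interact. The $L^2$-integrability of $r_{\loc}^{-2}$ is used upstream (to establish Claims~\ref{Cl_int_limit} and \ref{Cl_components}), but once you have the closed-surface decomposition at time $t$, only the avoidance principle and smooth short-time estimates are needed to propagate forward.
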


Fix $U \subset \IR^3$ for a moment.
Let $S_j \subset I_\infty$ be the set of times $t$ for which the previous discussion applies and for which $\delta(t) > \frac1j$.
Then $S_1 \subset S_2 \subset \ldots$ and $I^* \cap \bigcup_j S_j$ has full measure for any bounded subinterval $I^* \subset I_\infty$.
It follows that the measure of $I^* \cap (S_j + \frac1j) $ converges to the measure of $I^*$ and for all $t' \in S_j + \frac1j$ we have smooth convergence $\MM^i_{t'} \cap U \to \MM^\infty_{t'} \cap U$ for a subsequence.

So for any bounded open $U \subset \IR^3$ almost every time $t'$ we have smooth convergence $\MM^i_{t'} \cap U \to \MM^\infty_{t'} \cap U$ for a subsequence.
So almost every time is a regular time for $\MM^\infty$.
Combined with Assertion~\ref{Thm_main_compactness_a}, this implies that $\MM^\infty$ is almost regular.
Lastly note that if $t$ is a regular time of $\MM^\infty$, then due to the weak convergence of the Brakke flows and their unit-regularity, we even have local smooth convergence $\MM^i_{t} \to \MM^\infty_{t}$ without having to pass to a subsequence.
\end{proof}
\bigskip

\begin{proof}[Proof of Theorem~\ref{Thm_main}.]
This is a direct consequence of Theorem~\ref{Thm_main_compactness}.
\end{proof}
\bigskip

\begin{proof}[Proof of Corollary~\ref{Cor_generic}.]
This follows from \cite[Proposition~9.2]{chodosh2023mean}.
Using Theorem~\ref{Thm_main} we can exclude the second case in this proposition.
\end{proof}
\bigskip

\begin{proof}[Proof of Corollary~\ref{Cor_no_min_surface_blowup}.]
Any such minimal surface $\Sigma \subset \IR^3$ would have to have $\Theta(\Sigma) < \infty$ and finite genus and would therefore have finite total curvature.
By Lemma~\ref{Lem_ends_min_surf} its blow-down would be a higher multiplicity plane.
Such a higher multiplicity plane would also be a blow-up (in the Brakke sense) of the original flow, which is impossible by Theorem~\ref{Thm_main}.
\end{proof}
\bigskip

\begin{proof}[Proof of Theorem~\ref{Thm_Msing_dim}.]
Assume the almost regular flow $\MM$ is defined on the interval $I$.  

\medskip
\ref{Thm_Msing_dim_a}  \quad
This is a consequence of \cite[Theorem~1.14]{Cheeger_Haslhofer_Naber_13}. 
By unit regularity, the singular set $\SS := \MM_{\sing} \subset \IR^3\times (\bar I\setminus\inf I)$ is the set of points $(x,t)\in \IR^3\times \IR$ with Gaussian density $>1$.   
Using the terminology of \cite{Cheeger_Haslhofer_Naber_13}, it therefore  suffices to show that for some $\eta>0$ we have $\SS\subset \SS^1_{\eta,r}$ for all $r>0$ sufficiently small, where $\SS^j_{\eta,r}$ denotes the quantitative $j$-stratum. 
Suppose this were false.  
Then there would exist  sequences $(x_j,t_j)\in\SS$, $\eta_j\rightarrow 0$, $r_j\rightarrow 0$ such that $(x_j,t_j)\not\in\SS^1_{\eta_j,r_j}$.
Unwinding definitions, this implies that after passing to a subsequence,  for some sequence $s_j\in [r_j,1]$, $s_j \to 0$, the sequence $s_j^{-1}(\MM-(x_j,t_j))$ converges to a backward selfsimilar Brakke flow $(\mu^\infty_t)_{t < 0}$, which is either a static cone or invariant under a $2$-dimensional group of spatial translations.
By  assumption $\Theta (\MM ,(x_j,t_j))>1$, so $(\mu^\infty_t)_{t < 0}$ is nontrivial.   However,   by Corollary~\ref{Cor_no_min_surface_blowup} and Theorem~\ref{Thm_main} it follows that $(\mu^\infty_t)_{t < 0}$ is a multiplicity $1$ static plane, so by Brakke regularity $(x_j, t_j)$ is backward regular for large $j$, contradicting the fact that its Gaussian density is larger than $1$.

\medskip
\ref{Thm_Msing_dim_b}  \quad  By \cite[Corollary 1.2]{bernstein_wang_topological_property} there is $\delta_0>0$ 
such that  if $(x,t)$ is a nongeneric singular point, then $\Theta (\MM, (x,t))\geq \la_1+\delta_0$, where $\la_1$ is the entropy of the cylinder. 
Hence the set of non-generic singularities is closed by upper semicontinuity of Gaussian density.

To see that the nongeneric singularities have Minkowski dimension $0$, we again use \cite[Theorem~1.14]{Cheeger_Haslhofer_Naber_13} as in Assertion~\ref{Thm_Msing_dim_a}.  
It suffices to show that for some $\eta>0$ the nongeneric singular set is contained in $\SS^0_{\eta,r}$ for all $r>0$ sufficiently small.  
Arguing by contradiction as in Assertion~\ref{Thm_Msing_dim_a}, we find a sequence $(x_j,t_j)\in \IR^3\times(\bar I\setminus\inf I)$ of nongeneric singular points and sequences  $\eta_j\rightarrow 0$, $s_j\rightarrow 0$ such that the sequence $s_j^{-1}(\MM -  (x_j,t_j)) \to (\mu^\infty_t)_{t < 0}$, which  is backward selfsimilar and  either a static cone, or the backward part is a shrinker which is invariant under a $1$-dimensional group of spatial translations.    
By Corollary~\ref{Cor_no_min_surface_blowup}, Theorem~\ref{Thm_main} and the classification of shrinkers in $\IR^2$,  it follows that $(\mu^\infty_t)_{t < 0}$ is either a multiplicity $1$ static plane or shrinking cylinder, contradicting the fact that $\Theta((\mu^\infty_t)_{t < 0})\geq \la_1+\delta_0$.  

Since the nongeneric singular set is contained in the $0^{th}$ singular stratum, it is countable \cite{white_stratification,Cheeger_Haslhofer_Naber_13}; this also follows from backward isolation established in Assertion~\ref{Thm_Msing_dim_b}.

\medskip
\ref{Thm_Msing_dim_c}\quad  Suppose $(x_j,t_j)\rightarrow (x_\infty,t_\infty)$ where $(x_j,t_j)\neq(x_\infty,t_\infty)$ is nongeneric for $j\in \IN\cup\{\infty\}$, and $t_j\leq t_\infty$.   

{\em Case 1:  
 $\liminf_{j \to \infty} \frac{|x_j-x_\infty|}{ \sqrt{t_\infty-t_j}}<\infty$.}   \quad
Let $r_j:=\sqrt{t_\infty-t_j}$.  After passing to a subsequence,  by Theorem~\ref{Thm_main} the blow up sequence $r_j^{-1}(\MM-(x_\infty,t_\infty))$ converges smoothly on compact subsets of  $\IR^3\times (-\infty,0)$ to a smooth (multiplicity one) shrinker, and $r_j^{-1}((x_j,t_j)-(x_\infty,t_\infty))\rightarrow (\bar x_\infty,-1)$ to a point on this shrinker.  This contradicts the fact that $(x_j,t_j)$ is a singular point.

{\em Case 2:  $\frac{\sqrt{t_\infty-t_j}}{|x_j-x_\infty|}\rightarrow 0$.}  \quad 
Letting $r_j:=|x_j-x_\infty|$, after passing to a subsequence, by Theorem~\ref{Thm_main}
the blow up sequence $r_j^{-1}(\MM-(x_\infty,t_\infty))$ converges smoothly on compact subsets of  $\IR^3\times (-\infty,0)$ to a smooth (multiplicity one) shrinker $\NN$, and $r_j^{-1}((x_j,t_j)-(x_\infty,t_\infty))\rightarrow (\bar x_\infty,0)\in S^2\times\{0\}$.   
We have 
\begin{equation}
\label{eqn_theta_n_1}
\Theta^{\NN}_{ (\bar x_\infty,0)}(1) \geq \la_1+\delta_0
\end{equation} 
where $\delta_0$ is as in Assertion~\ref{Thm_Msing_dim_b}.
By the same reasoning, any tangent flow $\NN'$ of $\NN$ at $(\bar x_\infty,0)$ is a smooth (multiplicity one) shrinker, which in addition must split off a line.
So $\NN'$ is a plane, sphere or cylinder contradicting \eqref{eqn_theta_n_1}.   Thus non-generic singularities are backward isolated.



Now suppose that $\MM$ is an outermost flow; the case of innermost flows is similar.    Using \cite{chodosh2023mean}  we show that non-generic singularities imply a loss of genus locally;  the argument here is a minor variation on \cite[Section 9]{chodosh2023mean} except that  our task is  substantially simpler because we are able to use $1$-sided approximation by generic flows and the convergence result Theorem~\ref{Thm_main}.

Let $\MM^j$ be a sequence of generic flows such that $\MM^j\rightarrow \MM$ as $j\rightarrow \infty$, and $\MM^j_0\rightarrow \MM_0$ smoothly from the outside as in Corollary~\ref{Cor_generic}.  
Let $(x_0,t_0)\in\MM$ be a non-generic singularity, and $r>0$.  

\begin{Claim}
For $j$ sufficiently large, $\MM^j$ loses genus locally in $P(x_0,t_0,r)$.  
\end{Claim}

Here we say that a generic flow $\hat \MM$ {\bf loses genus locally in an open subset $U\subset \IR^3\times \IR$} if there is a smooth isotopy of compact domains with smooth boundary $\{X_t\}_{t\in [t_1,t_2]}$ where for all $t\in [t_1,t_2]$ we have  $X_t\subset U_t$, 
 $\partial X_t$ intersects $\hat \MM_t$ transversely at regular points, and $\genus(X_{t_2}\cap\hat\MM_{t_2})<\genus(X_{t_1}\cap\hat\MM_{t_1})$.  

\begin{proof}


Suppose the claim were false.  After passing to a subsequence we may assume $\MM^j$ does not lose genus locally in $P(x_0,t_0,r)$ for all $j$.  By $1$-sided convergence, after passing to a subsequence there is a sequence $r_j\rightarrow 0$ such that the blow-up sequences $r_j^{-1}(\MM^j-(x_0,t_0))$, $r_j^{-1}(\MM-(x_0,t_0))$ converge (as in Theorem~\ref{Thm_main}) to $\hat \MM$ and  $\NN$ respectively, where $\hat \MM$ is an ancient almost regular flow,  $\NN$ is a tangent flow for $\MM$ at $(x_0,t_0)$, and $\hat \MM\neq \NN$ lies on one side of $\NN$.   We now argue as in \cite[Proof of Proposition~9.7]{chodosh2023mean}:  $ \hat \MM$ is disjoint from $\NN$  (\cite[Proof of Lemma~9.8 or Proposition~9.7]{chodosh2023mean}),
and hence it is the unique flow (up to parabolic rescaling) asserted in \cite[Proposition~5.6]{chodosh2023mean}.  By \cite[Proposition~8.5]{chodosh2023mean}  and the fact that the convergence   $r_j^{-1}(\MM^j-(x_0,t_0))\rightarrow \hat \MM$ is smooth at almost every time, we have a contradiction.  
\end{proof}

Now suppose the outermost flow $\MM$ has nongeneric singularities $(x_1,t_1),\ldots,(x_k,t_k)$ for $k>\genus(\MM_0)$.    Taking $r>0$ small enough that the parabolic balls $\{P(x_i,t_i,r)\}_{1\leq i\leq k}$ are disjoint and applying the claim, for large $j$ and $t>\max_i t_i$ we find that $\genus(\MM^j_t)\leq \genus(\MM_0)-k<0$, which is a contradiction.

\medskip
\ref{Thm_Msing_dim_d}\quad  This follows  from \cite{colding_minicozzi_singular_set_generic}, since the methods used there and in the previous paper \cite{colding_minicozzi_uniqueness_blowups} are local in spacetime.
\end{proof}

\bigskip

\begin{proof}[Proof of Theorem~\ref{Thm_asymptotic_structure_tangent_flow}]
The argument here follows along the lines indicated by Ilmanen  \cite[p.37]{Ilmanen_lectures_mcf_related_equations} as well as Wang's proof  \cite{Wang_Lu_asymptotic_2016}; see also \cite{song_maximum_principle_self_shrinkers}.

Let $\CC$ be the set of points $x\in   \IR^3\setminus \{0\}$ such that $\MM$ has positive Gaussian density at $(x,0)\in\IR^3\times\{0\}$; note that
we even have  $\Theta(\MM, (x,0)) \geq 1$ at these points.   The set $\CC$ is a cone, i.e., it is invariant under scaling.   If $x\in \CC$, since $x \neq \mathbf{0}$, then any tangent flow $\hat \MM$ of $\MM$ at $(x,0)$ is a smooth multiplicity $1$ shrinker which is  invariant under spatial translation in the direction $x$, and is therefore either a plane or cylinder with axis $\IR x$.  

\begin{Claim} \label{Cl_xj_cyl_plane}
Suppose that $x_j \in \IR^3$, $x_j \to \infty$ and that $\frac{x_j}{|x_j|} \to v$.
Choose points $x'_j \in \CC$ such that $d_j := |x'_j - x_j|$ is minimal.
Then one of the following is true:
\begin{enumerate}[label=(\alph*)]
\item $d_j \to \infty$ and $\MM - (x_j, 0)$ smoothly converges to an empty mean curvature flow.
\item $d_j$ remains bounded and $\MM - (x'_j,0)$ smoothly  converges the tangent flow of $\MM$ at $(v,0)$.
\end{enumerate}
\end{Claim}


\begin{proof}
For each $j$ pick $a_j \to \infty$ such that $|x''_j - x_j|$ is minimal for $x''_j := a_j v$; then $x''_j  - x_j$ is perpendicular to $v$.
Set $r_j := \max \{ 1, |x''_j - x_j| \}$ and note that $\frac{r_j}{a_j} \to 0$.

Using the standard fact that any blow-up sequence of a tangent flow is a blow-up sequence of the original flow, we may apply Theorem~\ref{Thm_main}, and pass to a subsequence such that $\MM - (x_j, 0)$ and $r_j^{-1}(\MM - (x''_j,0))$ converge to almost regular, ancient flows $\NN, \NN''$.
Since the points $x''_j$ are multiples of $v$, we obtain by scaling invariance of $\MM$ that $\NN''$ is a smooth shrinker, which is the tangent flow of $\MM$ at $(v,0)$.
Moreover, $\NN''$ is invariant under spatial translations in the direction $v$, so it is either empty, a cylinder or a plane.
By monotonicity of the Gaussian area we have
\begin{equation} \label{eq_ThetaNNp}
\Theta(\NN) \leq \Theta(\NN'') \qquad \text{if \qquad $r_j \to \infty$}.
\end{equation}
After passing to a subsequence, we may also assume that $r_j^{-1}(x_j-x''_j) \to x_\infty$, where $|x_\infty| \leq 1$ and $x_\infty$ is perpendicular to $v$.

If $\NN = \emptyset$, then we must have $d_j \to \infty$, because $\Theta^\MM_{(x_j, 0)}(1) \to 0$, but $\Theta^\MM_{(x''_j, 0)}(1) \geq 1$. 

Suppose now that $\NN \neq \emptyset$.
So we also have $\NN'' \neq \emptyset$ by \eqref{eq_ThetaNNp}, which implies that $\NN''$ is either a cylinder or an affine plane, parallel to $\IR v$.

Consider first the case in which $\NN''$ is a cylinder.
Since $\NN \neq \emptyset$, the point $(x_\infty, 0)$ must have positive density, so it must be a multiple of $v$.
However, since $x_\infty$ is also perpendicular to $v$, this implies that $x_\infty = \mathbf{0}$.
This can only happen if $r_j$ and thus also $d_j$ are uniformly bounded.

Next consider the case in which $\NN''$ is an affine plane containing the line $\IR v$.
In this case, the smooth convergence of $r_j^{-1}(\MM - (x''_j,0))$ to $\NN''$ can be extended up until time $0$. 
So $\NN$ must be either empty or an affine plane depending on whether $d_j \to \infty$ or not.
\end{proof}

Let $\Sigma' \subset \CC$ be the set of points where $\CC$ is locally a smooth surface and define $\Sigma''$ to be the union of one-ended cylinders of radius $2$ around the rays $\IR_+ v$ for $v \in \CC \setminus \Sigma'$.
Set $\Sigma := \Sigma' \cup \Sigma''$.
The Claim implies that there are compact subsets $K_1, K_2 \subset \IR^3$ such that $\Sigma \setminus K_1$ is a smooth surface and such that $\MM_{-1} \setminus K_2$ is a graph over $\Sigma \setminus K_2$ of a function, which decays at infinity, along with all its derivatives.
This implies that $\MM_{-1}$ has finite topology.
The characterization of the Theorem now follows from  \cite[Theorem 1.1]{Wang_Lu_asymptotic_2016} or directly from this description, using the invariance of $\MM$ under parabolic rescaling.
\end{proof}

\bigskip\bigskip
For the next proofs we need the following lemma, which characterizes mean curvature flows up to the first time at which a tangent flow is not of generic type, i.e., not a sphere or a cylinder.

\begin{Lemma} \label{Lem_nonfat_prop}
Consider a compact, smoothly embedded surface $\Sigma \subset \IR^3$ and let $\MM, \MM^-, \MM^+$ be the level set flow, innermost flow and outermost flow, respectively, starting from $\Sigma$ (see Subsection~\ref{subsec_terminology} for more details).
Then there is a time $T_{\gen} \leq \infty$ such that:
\begin{enumerate}[label=(\alph*)]
\item \label{Lem_nonfat_prop_a} $\MM_{[0,T_{\gen}]}=\MM^-_{[0,T_{\gen}]}=\MM^+_{[0,T_{\gen}]}$.
\item \label{Lem_nonfat_prop_b} $\MM_{[0,T_{\gen})}$ is the support of a unit-regular, cyclic Brakke flow $(\mu_t)_{t \in [0,T_{\gen})}$.
\item \label{Lem_nonfat_prop_c} \label{property_MM_3}
Any blow-up limit at any point in $\MM_{(0,T_{\gen})}$ (except possibly at the final time time $T_{\gen}$) is a ``generic'' ancient flow, i.e., an affine plane, a round shrinking sphere, a round shrinking cylinder or a translating bowl soliton, each time of multiplicity one.
\item \label{Lem_nonfat_prop_d} There is a point $(x,t) \in \MM_{T_{\gen}}$ with the property that any tangent flow at $(x,t)$ is a shrinker that is not one of the following shrinkers: an affine plane, a sphere or a cylinder, each time with multiplicity one.
\item \label{Lem_nonfat_prop_e} $\MM_{[0, T_{\gen})}$ is almost regular and $\genus (\MM_{[0, T_{\gen})}) \leq \genus (\Sigma)$.  
\end{enumerate}
\end{Lemma}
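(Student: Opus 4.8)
The plan is to \emph{define} $T_{\gen}$ as the first singular time of non-generic type for the innermost and outermost flows of $\Sigma$ and then to deduce each assertion from results in the literature, using Theorems~\ref{Thm_nonfattening_almost_regular},~\ref{Thm_main} and~\ref{Thm_main_compactness} as the crucial new inputs. By Theorem~\ref{Thm_nonfattening_almost_regular}, the innermost flow $\MM^-$ and outermost flow $\MM^+$ of $\Sigma$ are bounded almost regular mean curvature flows, so by Ilmanen's theorem \cite{Ilmanen_sing_MCF_surf} together with Theorem~\ref{Thm_main} every tangent flow of $\MM^\pm$ at any spacetime point is a smooth self-similar shrinker of multiplicity one. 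First I would record that, by the entropy gap for low-entropy shrinkers in $\IR^3$ (Bernstein--Wang; Colding--Ilmanen--Minicozzi--White), any smooth multiplicity one shrinker in $\IR^3$ that is not an affine plane, a round sphere or a round cylinder has entropy strictly larger than that of the cylinder; combined with the upper semicontinuity of the Gaussian density (Lemma~\ref{Lem_entropy_rho}), this shows that the set $\ZZ_{\mathrm{bad}}\subset\MM^-\cup\MM^+$ of points admitting some tangent flow that is \emph{not} a multiplicity one plane, sphere or cylinder is closed in spacetime. I would then set $T_{\gen}:=\inf\{\,t:(x,t)\in\ZZ_{\mathrm{bad}}\text{ for some }x\,\}\in(0,\infty]$, with $T_{\gen}:=\infty$ if $\ZZ_{\mathrm{bad}}=\emptyset$; positivity holds since $\Sigma$ is smooth, and when $T_{\gen}<\infty$ closedness of $\ZZ_{\mathrm{bad}}$ produces a point realizing the infimum, which (once Assertion~\ref{Lem_nonfat_prop_a} is known) gives Assertion~\ref{Lem_nonfat_prop_d}.

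Next I would establish Assertions~\ref{Lem_nonfat_prop_a} and~\ref{Lem_nonfat_prop_b}. Fix $t_0<T_{\gen}$; then every singularity of $\MM^\pm$ in $\IR^3\times[0,t_0]$ is modeled on a multiplicity one round cylinder or sphere, so the mean-convex neighborhood theorem of Choi--Haslhofer--Hershkovits(--White) \cite{Choi_Haslhofer_Hershkovits_2022, Hershkovits_White_nonfattening} provides a spacetime neighborhood of each such point in which the flow is mean convex. On such a neighborhood uniqueness of mean-convex flow forces $\MM$, $\MM^-$ and $\MM^+$ to coincide, and on the regular part all three are the same smooth flow; patching this together and running a connectedness argument on $[0,T_{\gen})$ (the set of $t$ with $\MM_{[0,t]}=\MM^-_{[0,t]}=\MM^+_{[0,t]}$ is non-empty, relatively open and relatively closed) yields agreement on $[0,T_{\gen})$, which I would then extend to the closed interval using the left-continuity in time of the level set and innermost/outermost flows. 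The same references show the common flow $\MM_{[0,T_{\gen})}$ is the support of a unit-regular, cyclic Brakke flow, which is Assertion~\ref{Lem_nonfat_prop_b}.

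For Assertion~\ref{Lem_nonfat_prop_c}, let $(x_0,t_0)\in\MM_{(0,T_{\gen})}$ and let $\NN$ be a blow-up limit at $(x_0,t_0)$. By Theorems~\ref{Thm_main} and~\ref{Thm_main_compactness}, $\NN$ is an ancient, almost regular, unit-regular, cyclic Brakke flow, and each tangent flow of $\NN$ at one of its own points is itself a blow-up limit of $\MM$ based at $(x_0,t_0)$, hence a multiplicity one plane, sphere or cylinder by the choice of $T_{\gen}$. I would then invoke the classification of ancient, unit-regular, cyclic Brakke flows in $\IR^3$ all of whose tangent flows are multiplicity one cylinders, spheres or planes (Choi--Haslhofer--Hershkovits--White \cite{Choi_Haslhofer_Hershkovits_White_22}, building on Brendle--Choi and Angenent--Daskalopoulos--Sesum) to conclude that $\NN$ is one of the models listed in Assertion~\ref{Lem_nonfat_prop_c}; the compact case uses in addition Huisken's theorem that a convex flow becomes round, so that the extinction model of a convex component is a round shrinking sphere. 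For Assertion~\ref{Lem_nonfat_prop_e}, note that $\MM_{[0,T_{\gen})}=\MM^-_{[0,T_{\gen})}$ is a restriction of the almost regular flow $\MM^-$ and hence is itself almost regular; moreover, across each singular time in $[0,T_{\gen})$ the only topological change of the time-slice is a neck-pinch (from a cylindrical singularity) or the disappearance of a sphere-like component (from a spherical singularity), neither of which increases genus, so $\genus(\MM_t)\le\genus(\MM_0)=\genus(\Sigma)$ for almost every $t<T_{\gen}$, giving $\genus(\MM_{[0,T_{\gen})})\le\genus(\Sigma)$.

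I expect the main obstacles to be twofold. First, the precise verification that $\ZZ_{\mathrm{bad}}$ is closed --- hence that $T_{\gen}$ is well behaved and Assertion~\ref{Lem_nonfat_prop_d} holds --- which depends on having a genuine entropy gap above the cylinder among multiplicity one shrinkers in $\IR^3$ together with the semicontinuity of the Gaussian density. Second, upgrading the agreement $\MM=\MM^-=\MM^+$ from the open interval $[0,T_{\gen})$ to the closed interval $[0,T_{\gen}]$, which requires the appropriate left-continuity of the weak set flows involved and careful use of the mean-convex neighborhood theorem; pinning down exactly the list of models in Assertion~\ref{Lem_nonfat_prop_c} via the ancient-flow classification is the other point requiring care. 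Everything else --- the mean-convex neighborhood theorem, uniqueness of mean-convex flow, the Brakke structure, and the classification of ancient generic flows --- is used as a black box from the literature.
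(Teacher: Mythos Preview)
Your proposal has a serious \emph{circularity} problem. You invoke Theorem~\ref{Thm_nonfattening_almost_regular} at the outset to conclude that $\MM^\pm$ are almost regular, and then feed this into Theorem~\ref{Thm_main} to obtain multiplicity one. But in this paper the proof of Theorem~\ref{Thm_nonfattening_almost_regular} \emph{uses} Lemma~\ref{Lem_nonfat_prop}: one approximates $\Sigma$ by generic surfaces $\Sigma_i$ via Corollary~\ref{Cor_generic}, notes that each resulting flow satisfies the assertions of Lemma~\ref{Lem_nonfat_prop} with $T_{\gen}=\infty$, and then passes to a limit via Theorem~\ref{Thm_main_compactness}. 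So Lemma~\ref{Lem_nonfat_prop} must be established \emph{before} Theorem~\ref{Thm_nonfattening_almost_regular}, and you cannot appeal to the latter (nor to Theorem~\ref{Thm_main} applied to $\MM^\pm$, since that requires almost regularity).

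The paper avoids this by taking a completely different route. Assertions~\ref{Lem_nonfat_prop_a}--\ref{Lem_nonfat_prop_d} are not new results here at all: they are direct restatements of the existing theory in \cite{Hershkovits_White_nonfattening, Choi_Haslhofer_Hershkovits_White_22, Choi_Haslhofer_Hershkovits_2022}, which already defines $T_{\gen}$ and proves uniqueness and the blow-up classification on $[0,T_{\gen})$ \emph{without} any multiplicity-one input from this paper. For Assertion~\ref{Lem_nonfat_prop_e} the paper does not appeal to Theorem~\ref{Thm_nonfattening_almost_regular} either; instead it verifies Definition~\ref{Def_almost_regular} directly. The key step is the pointwise bound $r_{\loc}^{-2}(x,t)\le C(|\mathbf{H}|^2(x,t)+1)$ on $\MM_{\reg,[T_1,T_2]}$, proved by a short blow-up contradiction using Assertion~\ref{Lem_nonfat_prop_c}: if it failed along a sequence, the rescaled limit would be a generic ancient flow with $\mathbf{H}(\vec 0,0)=0$, hence a plane, contradicting $r_{\loc}=1$ at the origin. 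This bound, combined with the $L^2$ control on $\mathbf{H}$ coming from Huisken's monotonicity formula, gives the integrability condition~\ref{Def_almost_regular_2}. The genus bound follows from the canonical neighborhood structure near cylindrical and spherical singularities.
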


\begin{proof}
Assertions~\ref{Lem_nonfat_prop_a}--\ref{Lem_nonfat_prop_d} are paraphrasings of the results of \cite{Hershkovits_White_nonfattening, Choi_Haslhofer_Hershkovits_White_22,Choi_Haslhofer_Hershkovits_2022}.

To see Assertions~\ref{Lem_nonfat_prop_e}, note that almost every time is regular by \cite[Proposition~8.2]{Choi_Haslhofer_Hershkovits_2022}.
Let now $[T_1, T_2] \subset [0, T_{\gen})$.
We claim that there is a constant $C_{[T_1,T_2]} < \infty$ such that for any $(x,t) \in \MM_{\reg,[T_1,T_2]}$ we have
\begin{equation} \label{eq_rloc_H}
 r_{\loc}^{-2}(x,t) \leq C_{[T_1,T_2]} \big( |\mathbf{H}|^2 (x,t) + 1 \big). 
\end{equation}
In fact, if this was false, then we could find a sequence $(x_i, t_i) \in \MM_{\reg,[T_1,T_2]}$ with $r_{\loc}(x_i,t_i) \to 0$ and $r_{\loc}(x_i,t_i) |\mathbf{H}|(x_i,t_i) \to 0$.
However, this would imply that any subsequential limit of the parabolic rescalings $r_{\loc}^{-1}(x_i, t_i) (\MM - (x_i,t_i))$ satisfies $\mathbf{H} (\vec 0, 0) \equiv 0$.
So by Assertion~\ref{Lem_nonfat_prop_c} above it would have to be an affine plane, which is impossible due to the fact that the rescaling factor was $r_{\loc}^{-1}(x_i,t_i)$. 

Next, observe that due to \eqref{eq_TT_minus} we have for an arbitrary point $(x_0,t_0) \in \IR^3 \times (T_2, \infty)$
\[ \int_{T_1}^{T_2} \int_{\MM_{\reg,t}} \bigg| \frac{\mathbf x^\perp}{t_0 - \mathbf{t}} + \mathbf{H} \bigg|^2 \rho_{(x_0, t_0)}  \, d\HH^2 dt < \infty, \]
which implies that
\[ \int_{T_1}^{T_2} \int_{\MM_{\reg,t}} \big| \mathbf{H} \big|^2 \rho_{(x_0, t_0)} \, d\HH^2 dt < \infty. \]
Combined with \eqref{eq_rloc_H}, this implies the bound in Definition~\ref{Def_almost_regular}\ref{Def_almost_regular_3}. 

It remains to bound the genus of every regular time-slice $\MM_t$.
To see this, note that Assertion~\ref{Lem_nonfat_prop_c}  implies a canonical neighborhood property in a neighborhood of every singular point; see also \cite[Corollary~1.18]{Choi_Haslhofer_Hershkovits_White_22}.
This property implies that the topology of time-slices can only change by a reverse (self) connected sum or the removal of components.
Both operations do not increase the genus.
\end{proof}
\bigskip

\begin{proof}[Proof of Corollary~\ref{Cor_S2}.]
Let $\MM$ be the level set flow starting from a smoothly embedded sphere $\Sigma \subset \IR^3$ and choose $T_{\gen}$ according to Lemma~\ref{Lem_nonfat_prop}.
Then $\MM_{[0,T_{\gen})}$ is almost regular and almost every time-slice must have genus zero.
By Theorem~\ref{Thm_main} all tangent flows at time $T_{\gen}$ must have multiplicity one.
So if $T_{\gen}$ was finite, then Assertion~\ref{Lem_nonfat_prop_d} would show the existence of a tangent flow which is not a plane, sphere or cylinder, but which must have genus zero.
This is impossible by \cite{Brendle_genus0_16}. 
So $T_{\gen} = \infty$, which proves the corollary.
\end{proof}
\bigskip

\begin{proof}[Proof of Theorems~\ref{Thm_nonfattening_almost_regular}, \ref{Thm_Tfat_Tdisc}.]
Let us first consider the outermost flow $\MM^+$ starting from some smooth compact, smooth embedded surface $\Sigma \subset \IR^3$ bounding a compact domain $K^+ \subset \IR^3$.
Let $\mathcal{K}^+$ be the weak set flow starting from $K^+$; so $\MM^+ = \partial \mathcal{K}^+$ within $\IR^3 \times [0,\infty)$.

Apply Corollary~\ref{Cor_generic} to find a sequence of surfaces $\Sigma_i \to \Sigma$, bounding domains $K^+ \subset K^+_i \subset \IR^3$, whose level set flows $\MM^i \subset \IR^3 \times [0, \infty)$ only incur generic singularities; so these satisfy the assertions of Lemma~\ref{Lem_nonfat_prop} with $T_{\gen} = \infty$.
After passing to a subsequence, we may assume that $K^+_1 \supset K^+_2 \supset \ldots$ and $\bigcap_i K^+_i = K$.
It is elementary to see that the level set flows $\mathcal{K}^{i,+}$ starting from $K^+_i$ also satisfy
\begin{equation} \label{eq_KK_intersection}
 \mathcal{K}^{1,+} \supset  \mathcal{K}^{2,+} \supset \ldots, \qquad \mathcal{K}^+ = \bigcap_i  \mathcal{K}^{i,+}. 
\end{equation}
Recall that each flow $\MM^i$ is the boundary of $\mathcal{K}^{i,+}$ within $\IR^3 \times [0,\infty)$.
It follows using \eqref{eq_KK_intersection} that for every point $(x,t) \in \mathcal{M}^+$ there is a sequence of points $(x_i,t_i) \in \MM^i$ with $(x_i,t_i) \to (x,t)$.

Since the flows $\MM^i$ are almost regular, we can apply Theorem~\ref{Thm_main_compactness} to show that, after possibly passing to a subsequence, we have weak convergence of the Brakke flows associated with $\MM^i$ to a Brakke flow, which is associated with an almost regular mean curvature flow $\MM^\infty$ starting from $\Sigma$.
It follows that the $\MM^i$ converge to $\MM^\infty$ in the Hausdorff sense as subsets of $\IR^3 \times [0,\infty)$.
Thus $\MM^+ \subset \MM^\infty$.
We claim that the reverse inclusion holds, too.
To see this, observe first that $\MM^\infty \subset \mathcal{K}^+$ since $\Sigma \subset K^+$.
Consider now a point $(x,t) \in \MM^\infty \subset \mathcal{K}^+$.
We need to show that $(x,t)$ is not contained in the interior of $\mathcal{K}^+$. 
Let $\eps > 0$.
By Theorem~\ref{Thm_main_compactness} we can find a $t' < t$, arbitrarily close to $t$, such that $\MM^i_{t'} \to \MM^\infty_{t'}$ smoothly.
If $t'$ is chosen sufficiently close to $t$, then by the avoidance principle, the ball $B(x,\eps)$ must intersect $\MM^i_{t'}$ for large $i$.
So $B(x,\eps) \not\subset \mathcal{K}^{i,+}_{t'}$ for large $i$, which implies that $B(x,\eps) \not\subset \mathcal{K}^+_{t'}$.
Since $\eps$ can be chosen arbitrarily small, we conclude that $(x,t)$ is not contained in the interior of $\mathcal{K}^+$, which proves the claim. 

It follows that $\MM^+ = \MM^\infty$ is almost regular.
The analogous argument, using $K^- := \ov{\IR^3 \setminus K^+}$ instead of $K^+$ and corresponding level set flow $\mathcal{K}^-$ starting from $K^-$, shows that the innermost flow is also almost regular.

To prove that a non-fattening flow is almost regular, we consider the more general setting from Theorem~\ref{Thm_Tfat_Tdisc}.
Let us now consider a level set flow $\MM \subset \IR^3 \times [0,\infty)$ starting from a smooth embedded surface $\Sigma \subset \IR^3$.
Consider the outermost and innermost flows $\MM^+ = \partial \mathcal{K}^+, \MM^- = \partial \mathcal{K}^-$ and the approximating flows $\mathcal{K}^{i,+}$ from before.
Repeating the same argument for the flow $\mathcal{K}^{-}$ yields flows $\mathcal{K}^{i,-}$ satisfying the same property as in \eqref{eq_KK_intersection}.
It follows that
\[ \mathcal{K}^+ \cap \mathcal{K}^- \subset \mathcal{K}^{i,+} \cap \mathcal{K}^{i,-} \]
By \cite[4E]{Ilmanen_1993} (see also \cite{Hershkovits_White_avoidance}), we have $\partial \mathcal{K}^{i,\pm} \cap \mathcal{K}^\pm = \emptyset$.
Combined with the inclusion property, this implies $\partial \mathcal{K}^{i,\pm} \subset \mathcal{K}^\mp \subset \Int\mathcal{K}^{i,\mp}$.
It follows that $\mathcal{K}^{i,+} \cup \mathcal{K}^{i,-} = \IR^3 \times [0,\infty)$ and 
\[ \partial \big( \mathcal{K}^{i,+} \cap \mathcal{K}^{i,-} \big)  = \partial\mathcal{K}^{i,+} \,\dotcup \, \partial\mathcal{K}^{i,-} . \]
So every connected submanifold that is disjoint from $\mathcal{K}^{i,+} \cap \mathcal{K}^{i,-}$ must be disjoint from either $\mathcal{K}^{i,+}$ or $\mathcal{K}^{i,-}$.
It follows that $\mathcal{K}^{i,+} \cap \mathcal{K}^{i,-}$ is a weak set flow.
Since these flows form a decreasing sequence of subsets and since their intersection is $\mathcal{K}^+ \cap \mathcal{K}^-$, we obtain that $\mathcal{K}^+ \cap \mathcal{K}^-$ is a weak set flow, so it must be contained in $\MM$.
On the other hand, $\MM$ is contained in both $\mathcal{K}^+$ and  $\mathcal{K}^-$ by the inclusion property.
Hence
\[ \MM = \mathcal{K}^+ \cap \mathcal{K}^-. \]
By our previous discussion, for all times $t$ that are regular for both $\MM^+$ and $\MM^-$, the boundary components of $(\mathcal{K}^{i,+} \cap \mathcal{K}^{i,-})_t$ smoothly converge to boundary components of $\mathcal{K}^+_t$ or $\mathcal{K}^-_t$.
It follows that for all such $t$ the set $\MM_t = (\mathcal{K}^+ \cap \mathcal{K}^-)_t$ has non-empty interior if and only if $\MM_t \neq \MM_t^+, \MM_t^-$.

We can now prove the last statement of Theorem~\ref{Thm_Tfat_Tdisc}.
Suppose that $I \subset [0,\infty)$ is an open interval over which the flow is non-fattening.
Then $\MM_t = \MM_t^- = \MM_t^+$ for all $t \in I$ that are regular for both $\MM^+$ and $\MM^-$.
Now consider some $(x,t) \in \MM$.
By the avoidance principle, there must be a sequence $(x_i,t_i) \to (x,t)$ such that $(x_i, t_i) \in \MM$ and such that $t_i$ is regular for both $\MM^+$ and $\MM^-$.
So $(x_i,t_i) \in \MM^\pm$, which implies $(x,t) \in \MM^\pm$.
This implies that $\MM_I \subset \MM^\pm_I$.
The reverse inclusion is clear by definition.
To see the first statement of Theorem~\ref{Thm_Tfat_Tdisc}, choose $T \leq \infty$ maximal such that $\MM$ does not fatten over $[0,T)$.
\end{proof}

\bibliography{references}	
\bibliographystyle{amsalpha}

\end{document}